\tikzset{every edge quotes/.style =
          { fill = white,
            sloped,
            execute at begin node = $,
            execute at end node   = $  }}
\newcommand{\proofpart}[2]{%
  \par
  \addvspace{\medskipamount}%
  \noindent\emph{Part #1: #2}\par\nobreak
  \addvspace{\smallskipamount}%
  \@afterheading
}
\newtheorem{dummy}{}[section]
\newtheorem{thm}[dummy]{Theorem}
\newtheorem{prop}[dummy]{Proposition}
\newtheorem{pr}[dummy]{Proposition}
\newtheorem{lem}[dummy]{Lemma}
\newtheorem{lemma}[dummy]{Lemma}
\newtheorem{cor}[dummy]{Corollary}
\theoremstyle{definition}
\newtheorem{definition}[dummy]{Definition}
\newtheorem{dfn}[dummy]{Definition}
\newtheorem{ntt}[dummy]{Notation}
\theoremstyle{remark}
\newtheorem{rmk}[dummy]{Remark}
\newtheorem{ex}[dummy]{Example}
\newtheorem{obs}[dummy]{Observation}
\newcommand{\CM}{{\mathcal{M}}}
\newcommand{\oCM}{{\overline{\mathcal{M}}}}
\newcommand{\tw}{\text{tw}}
\newcommand{\Ass}{{{\maltese}}}
\newcommand{\oPM}{{\overline{\mathcal{PM}}}^{1/r}}
\newcommand{\oPMb}{{\overline{\mathcal{PM}}}}
\newcommand{\oPMh}{{\overline{\mathcal{PM}}}^{1/r,\h}}
\DeclareMathOperator{\rk}{rank}
\numberwithin{equation}{section}
\def\Mbar{\overline{\mathcal M}}
\def\Mbarstar{\overline{\mathcal M^*}}
\def\GPI{\text{GPI}}
\def\sGPI{\text{sGPI}}
\def\PI{\text{PI}}
\def\ev{\text{ev}}
\def\h{\mathfrak h}
\def\m{\mathfrak m}
\def\b{\mathfrak b}
\def\c{\mathfrak c}
\def\N{\mathfrak N}
\DeclareMathOperator*{\bboxplus}{\scalerel*{\boxplus}{\sum}}
\title{The point insertion technique and open $r$-spin theories II: intersection theories in genus-zero}
\date{}
\author{Ran J. Tessler \and Yizhen Zhao}
\begin{document}

\maketitle

\begin{abstract}
    The papers \cite{BCT1,BCT2,BCT3,GKT,GKT2} initiated the study of open $r$-spin and open FJRW intersection theories, and related them to integrable hierarchies and mirror symmetry.
    This paper uses a new technique, the \emph{point insertion technique}, developed in the prequel \cite{TZ1}, to define new open $r$-spin and open FJRW intersection theories.
    These new constructions provide potential candidates for theories whose existence was conjectured before:
    \begin{enumerate}
    \item K. Hori \cite{Horiprivate} predicted the existence of open $r$-spin theory with $\lfloor\frac{r}{2}\rfloor$ types of boundary states. The one constructed in \cite{BCT1, BCT2} has only one type of boundary state.
    In this work we describe $\lfloor\frac{r}{2}\rfloor$ open $r$-spin theories, labelled by $\h\in\{0,\ldots,\lfloor\frac{r}{2}\rfloor-1\},$ where the $\h$th one has $\h+1$ boundary states. We prove that the $\h=0$ theory is equivalent to the \cite{BCT1,BCT2} construction, and calculate all intersection numbers for all these theories.
    \item In \cite{Melissa} K. Aleshkin and C.C.M. Liu conjectured the existence of a quintic Fermat FJRW theory. We construct such an FJRW theory, and provide evidence that this is the conjectured theory.
    \end{enumerate}
    We also explain how the point insertion technique can be used for constructing other open enumerative theories, satisfying the same universal recursions. 
\end{abstract}

\section{Introduction}
Witten's KdV conjecture \cite{Witten2DGravity} was one of the most influential results in the study of the moduli space of curves, as well as in the study of integrable hierarchies. This conjecture, which was proven a year later by Kontsevich~\cite{Kontsevich}, motivated Gromov--Witten theory and related enumerative geometry theories. To describe this theory, write $\psi_1, \ldots, \psi_n \in H^2(\Mbar_{g,n})$ for the first Chern classes of the relative cotangent line bundles at the $n$ marked points. Let $\{t_i\}_{i\geq 0}$ and~$u$ be formal variables and define the generating function $F^c(t_0,t_1,\ldots,u)$ ($c$ stands for `closed') by
\[
F^c(t_0,t_1,\ldots,u) = \!\!\!\sum_{\substack{g \geq 0, n\geq 1\\2g-2+n>0}} \sum_{d_1,\ldots,d_n\geq 0} \frac{u^{2g-2}}{n!}\left(\int_{\Mbar_{g,n}} \psi_1^{d_1} \cdots \psi_n^{d_n} \right)t_{d_1} \cdots t_{d_n}.
\]
Witten conjectured that $\exp(F^c)$ is the unique $\tau$-function of the KdV hierarchy satisfying the string equation. This conjecture uniquely determined $F^c,$ hence all $\psi$-integrals on $\Mbar_{g,n}$.

In 93' Witten proposed a surprising generalization of the above conjecture, Witten's $r$-spin conjecture \cite{Wi93b}. A (smooth) $r$-spin curve is a smooth marked curve $(C;z_1, \ldots, z_n)$, together with an \emph{$r$-spin structure}: a line bundle~$S$ which satisfies
\[S^{\otimes r} \cong \omega_{C}\left(-\sum_{i=1}^n a_i[z_i]\right),\]
where each \emph{twist} $a_i$ belongs to $\{0,1,\ldots, r-1\},$ and $\omega_C$ is the canonical line bundle. The moduli space of $r$-spin curves of a given genus and twists has a natural compactification to the space $\Mbar_{g,\{a_1, \ldots, a_n\}}^{1/r}.$ Witten's $r$-spin theory relies on the existence of a distinguished class $c_W$ in the cohomology of this space, the \emph{Witten class}. In the so-called concave cases, which include all genus $0$ cases,
$E=(R^1\pi_*\mathcal{S})^{\vee},$ where $\pi: \mathcal{C} \rightarrow \Mbar_{0,\{a_1, \ldots, a_n\}}^{1/r}$ is the universal curve and $\mathcal{S}$ is the universal $r$-spin structure, is an (orbifold) vector bundle. In this case
Witten's class is defined to be the Euler class of $E,$
\begin{equation}\label{eq Witten's class}
c_W = e(E) = e((R^1\pi_*\mathcal{S})^{\vee}).
\end{equation}
In positive genus $R^1\pi_*\mathcal{S}$ needs not to be a vector bundle, and the definition of Witten's class is more involved (see \cite{PV,ChiodoWitten,Moc06,FJR,CLL}).

The \emph{(closed) $r$-spin intersection numbers} are defined by
\begin{gather}\label{eq closed r-spin}
\left\langle\tau^{a_1}_{d_1}\cdots\tau^{a_n}_{d_n}\right\rangle^{\frac{1}{r},c}_g:=r^{1-g}\int_{\Mbar^{1/r}_{g,\{a_1, \ldots, a_n\}}} \hspace{-1cm} c_W \cup \psi_1^{d_1} \cdots \psi_n^{d_n}.
\end{gather}
Witten conjectured \cite{Witten93} that the exponential of the \emph{$r$-spin potential}
\[
F^{\frac{1}{r},c}(t^*_*,u):=\sum_{\substack{g \geq 0, n \geq 1\\2g-2+n>0}} \sum_{\substack{0 \leq a_1, \ldots, a_n \leq r-1\\ d_1, \ldots, d_n \geq 0}} \frac{u^{2g-2}}{n!}\left\langle\tau^{a_1}_{d_1}\cdots\tau^{a_n}_{d_n}\right\rangle^{\frac{1}{r},c}_g t^{a_1}_{d_1} \cdots t^{a_n}_{d_n},
\]
where $t_d^a$ are formal variables indexed by $d\geq 0$ and $0\leq a\leq r-1$, is, after a simple change of variables, a $\tau$-function of the $r$-KdV (or $r$-th Gelfand--Dikii) hierarchy.  This conjecture was proven by Faber, Shadrin and Zvonkine \cite{FSZ10}.

In \cite{FJR2,FJR,FJR3} Fan, Jarvis and Ruan vastly generalized Witten's $r$-spin theory to the FJRW theory of quantum singularities. This theory turned out to give rise to a cohomological field theory and satisfy interesting mirror symmetry statements \cite{FJR2}. It also has surprising relations with other enumerative geometric theories; a notable example is the LG/CY correspondence \cite{ChiodoRuan,CIR} which relates the Gromov--Witten theory of the Calabi--Yau quintic, to the FJRW theory of the Fermat quintic.

In \cite{PST14} Pandharipande, Solomon, and the first named author initiated the study of intersection theory on the moduli space of Riemann surfaces with boundary. Denote by $\Mbar_{0,k,l}$ the moduli space of stable marked disks $(\Sigma;x_1,\ldots,x_k;z_1\ldots,z_l)$, where $x_i \in \partial\Sigma$ are the boundary markings and $z_j \in \Sigma^\circ$ are the internal markings. The paper  \cite{PST14} constructed intersection numbers on $\Mbar_{0,k,l}$, which are the analogue of $\psi$-integrals over the moduli of stable marked disks. Let $F^o_0(t_0,t_1,\ldots,s)$ denote their potential, where the formal variable $s$ tracks the number of boundary marked points. The paper \cite{PST14} proved that the open disk potential satisfies the \emph{open KdV} equations, which are related to the (genus $0$) KdV wave function \cite{Bur16}. The all-genus construction was found in \cite{ST1,Tes15}, and the all-genus analogue of the result, also conjectured in \cite{PST14}, was proven in \cite{Tes15,BT17}, establishing an open analogue of the Witten--Kontsevich theorem.

More recently, Buryak, Clader and the first named author \cite{BCT1,BCT2,BCT3} have found and studied an open analogue of Witten's $r$-spin conjecture. 
The paper \cite{BCT1} constructed the moduli space of \emph{graded $r$-spin disks}, which is the disk analogue of $r$-spin curves, and its \emph{open Witten bundle}. The paper \cite{BCT2} then defined intersection numbers on this moduli space, which involved open $\psi$ classes and the \emph{open Witten class}, calculated them, and showed that their potential, after a simple transformation, becomes the genus-$0$ part of the $r$-KdV wave function. The paper \cite{BCT3} made an all-genus conjecture, and provided evidence for it. The paper \cite{GKT} showed that the open $r$-spin theory satisfies open mirror symmetry with Saito's theory of $A_{r-1}$ singularity.

A first step towards more general open FJRW theory, was taken in \cite{GKT2}, where it was also shown that the open FJRW theory of a rank $2$ Fermat polynomial satisfies mirror symmetry with the corresponding Saito theory. \cite{Melissa} conjectured the existence of a Fermat quintic FJRW theory, and moreover suggested that such a theory should satisfy an open version of the LG/CY correspondence (see Section \ref{sec:open_fjrw} for more details).

The construction of \cite{BCT1,BCT2,BCT3} allows the internal twists $a_i$ to vary in $\{0,\ldots,r-1\},$ but the only allowed boundary twist is $r-2$. It is natural to ask whether one can construct more general intersection numbers, allowing a larger variety of boundary twists. K. Hori (\cite{Horiprivate}, and implicitly also in \cite{Hori}) suggested the existence of an open intersection theory with $\lfloor\frac{r}{2}\rfloor$ possible boundary states.

In this work we justify this expectation, by showing the existence of $\lfloor\frac{r}{2}\rfloor$ open $r$-spin theories, indexed by $\h=0,\ldots,\lfloor\frac{r-2}{2}\rfloor$, where the $\h$-th theory has $\h+1$ boundary states, ranging in $\{r-2,r-4,\ldots,r-2-2\h\}.$ 
We also suggest a construction of an open FJRW quintic theory, and provide evidence that this is the sought-after theory by \cite{Melissa}. 

In the prequel \cite{TZ1} we described a \emph{point insertion technique}, which allowed the identification of certain pairs of boundary strata of different moduli spaces, which we call \emph{spurious boundaries}. We showed that the identification of moduli points lifts to identifications of the fibers of the Witten bundles and relative cotangent line at these points, and analyzed the behaviours of orientations under this identification. Roughly speaking, the outcome of \cite{TZ1} was that one can glue the moduli spaces, Witten bundles and lines $\mathbb{L}_i$ along the corresponding pairs of boundaries, to obtain an orbifold vector bundles $\widetilde{\mathcal{W}},~\widetilde{\mathbb{L}}_1,\ldots,\widetilde{\mathbb{L}}_l$ over a compact orbifold with corners $\widetilde{\mathcal M}^{\frac{1}{r},\h}_{0,\{b_1,\ldots,b_k\},\{a_1,\ldots,a_l\}}
    $, where $b_j$ are the boundary twists and $a_i$ are the internal twists, and \[{\widetilde{\mathcal{W}}}^{\oplus m} \oplus \bigoplus_{i=1}^l \widetilde{\mathbb{L}}_i^{\oplus d_i}\to\widetilde{\mathcal M}^{\frac{1}{r},\h}_{0,\{b_1,\ldots,b_k\},\{a_1,\ldots,a_l\}}\] is canonically relatively oriented, for all choices of $d_1,\ldots,d_n\geq 0$ and of odd $m$.

In this work we define \emph{canonical boundary conditions} to the bundles $\widetilde{\mathcal{W}},\widetilde{\mathbb{L}}_i$ over the boundary of $\widetilde{\mathcal M}^{\frac{1}{r},\h}_{0,\{b_1,\ldots,b_k\},\{a_1,\ldots,a_l\}},$ and we use them to define open FJRW  intersection numbers by setting
\[\left\langle\tau_{d_1}^{a_1}\cdots\tau^{a_l}_{d_l}\sigma^{b_1}\cdots\sigma^{b_k}\right\rangle^{\frac{1}{r},\text{o},\h,m }_0:=\int_{\widetilde{\mathcal M}^{\frac{1}{r},\h}_{0,\{b_1,\ldots,b_k\},\{a_1,\ldots,a_l\}}} e\left( {\widetilde{\mathcal{W}}}^{\oplus m} \oplus \bigoplus_{i=1}^l \widetilde{\mathbb{L}}_i^{\oplus d_i}, s_{\text{canonical}}\right),
\]
where the notation $e(E,s)$ stands for the Euler class of $E$ relative to the boundary conditions $s$ (see Section \ref{sec sections} for precise definitions), and $m$ is again assumed to be odd.
The case $m=1,$ on which we mainly concentrate, is called \emph{open $r$-spin}, and in this case we omit the superscript $m$ from the notations.

\subsection{Main results}
Our first main result is (see Theorem \ref{thm intersection numbers well-defined} for an accurate statement):
\begin{thm}
Let 
\[e=\frac{\sum_{i=1}^l a_i+\sum_{j=1}^k b_j-(r-2)}{r}.\] Suppose that $e$ is a non-negative integer and $e\equiv 1+k\mod 2$, let $d_1,\ldots, d_l$ be non-negative integers and $m$ an odd natural number, such that
\[me+2\sum_{i=1}^ld_i=k+2l-3.\]
Then the open intersection numbers $\left\langle
		\tau^{a_1}_{d_1}\tau^{a_2}_{d_2}\dots\tau^{a_l}_{d_l}\sigma^{b_1}\sigma^{b_2}\dots\sigma^{b_k}
		\right\rangle_0^{\frac{1}{r},\text{o},\h,m}$ are well-defined, independent of all choices.
\end{thm}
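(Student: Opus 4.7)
The plan is to apply the standard relative Euler-number formalism on orbifolds with corners. Under the hypothesis $me+2\sum d_i=k+2l-3$, the real rank of $\widetilde{\mathcal{W}}^{\oplus m}\oplus\bigoplus_{i=1}^l\widetilde{\mathbb{L}}_i^{\oplus d_i}$ matches the real dimension of $\widetilde{\mathcal{M}}^{\frac{1}{r},\h}_{0,\{b_j\},\{a_i\}}$ (since $\widetilde{\mathcal{W}}$ has real rank $e$ and each $\widetilde{\mathbb{L}}_i$ has real rank $2$). Provided $s_{\text{canonical}}$ is nowhere vanishing on $\partial\widetilde{\mathcal{M}}$, the integral is by definition the signed count of zeros of any generic interior extension, with signs determined by the canonical relative orientation from \cite{TZ1} (whose existence requires $m$ odd). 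Thus two things must be shown: (a) canonical sections exist and are nowhere vanishing on the boundary, and (b) the signed zero count is independent of every choice.

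Existence is established by induction on the complexity of the boundary strata. Each codimension-one face of $\widetilde{\mathcal{M}}$ is either a spurious face --- where the gluing of \cite{TZ1} identifies it with another face in the same moduli space, and we demand that the section descend through this identification --- or a genuine nodal face which factors (modulo the spurious identifications) as a product of smaller glued moduli spaces, on which we declare the canonical section to be the external product of the inductively constructed canonical sections of the factors. The congruence $e\equiv 1+k\pmod 2$ enters here as the parity condition needed to make each such external product nowhere vanishing on the corresponding nodal face; verifying this reduces to a finite case analysis split by node type (Ramond vs.\ Neveu--Schwarz, internal vs.\ boundary, spurious vs.\ genuine), using the explicit splittings of $\widetilde{\mathcal{W}}$ and the $\widetilde{\mathbb{L}}_i$ on the glued components from \cite{TZ1}.

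Independence of choices follows by a convexity/homotopy argument. With the inductively chosen data on the smaller moduli spaces fixed, the set of admissible canonical boundary sections on $\widetilde{\mathcal{M}}$ is convex, so the straight-line homotopy $s_t=(1-t)s_0+ts_1$ between any two such sections remains canonical and remains nowhere zero on $\partial\widetilde{\mathcal{M}}$ throughout $t\in[0,1]$; extending $s_t$ generically to the interior produces a compact oriented $1$-manifold of zeros whose oriented boundary is the difference of the two signed zero counts, so the counts agree. The inductive setup also requires the counts used on the smaller moduli spaces to be independent of the canonical sections chosen there, which is precisely the inductive hypothesis at lower complexity.

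The main obstacle is coherence at the codimension-two corners of $\widetilde{\mathcal{M}}$: at a corner where two boundary faces meet --- especially a corner involving a spurious face (with the non-trivial identification of \cite{TZ1}) and a genuine nodal face, or two spurious faces --- the two candidate canonical sections inherited from the two incident faces must agree as \emph{oriented} sections, i.e.\ satisfy a cocycle condition. The canonical relative orientation of $\widetilde{\mathcal{W}}^{\oplus m}$ for odd $m$, together with the gluing identifications and orientation formulas, were engineered in \cite{TZ1} precisely so that this corner cocycle vanishes automatically; nevertheless, tracking orientations and identifications uniformly across all corner types constitutes the bulk of the technical work.
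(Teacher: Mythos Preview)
Your proposal misses the central mechanism of the proof. The genuine (non-spurious) boundaries of $\widetilde{\mathcal{M}}^{\frac{1}{r},\h}_{0,B,I}$ are of three types: contracted boundary (CB), Ramond boundary node (R), and Neveu--Schwarz boundary node whose illegal half-node has twist greater than $2\h$ (NS+). These strata do \emph{not} factor as products of smaller glued moduli spaces on which an inductive canonical section is already defined; after detaching such a node, one of the resulting pieces carries an illegal boundary half-edge and is not a moduli of the same type. The paper does not define the canonical boundary condition there as an external product.

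Instead, the paper exploits a \emph{positivity} notion that comes from the grading. A section of the Witten bundle, viewed fiberwise as an element of $H^0(|J|)$, can be evaluated at boundary points; the grading singles out a positive half-line in each real fiber $|J|^{\tilde\phi}_p$. A canonical multisection of ${\mathcal{W}}$ is required, near each CB/R/NS+ stratum, to evaluate positively on a family of small boundary intervals approximating the node (Definitions~\ref{def positive neighbourhoods}--\ref{def canonical for Witten}). The nontrivial content is that such positive sections exist: this is Proposition~\ref{prop positivity pointwise}, whose proof constructs a section with prescribed boundary zeros via the surjectivity of evaluation maps (Lemma~\ref{lem surjection of total evaluation map}) together with a combinatorial rank/parity estimate (Lemma~\ref{lem degv}). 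The ${\mathbb{L}}_i$ summands require no positivity at all; canonicity for them is only the PI-matching condition, and the initial canonical multisection is $\mathbf{s}_0=s_{{\mathcal{W}}}^{\oplus m}\oplus 0\oplus\cdots\oplus 0$, perturbed afterward to transversality.

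Your reading of the parity hypothesis is also off: the congruence $e\equiv 1+k\pmod 2$ is exactly condition~\eqref{eq parity} for the existence of a legal grading, i.e.\ for the moduli space to be nonempty; it is not what forces nonvanishing on nodal faces. For independence of choices, the straight-line homotopy is indeed the starting point, but the argument is not that it stays nonvanishing on all of $\partial\widetilde{\mathcal{M}}$. Positivity is convex, so no zeros cross the CB/R/NS+ boundaries; zeros \emph{may} cross spurious (AI/BI) boundaries, but they do so in cancelling pairs because the PI identification reverses the induced relative orientation (Theorem~\ref{thm  PI boundaries paried}). Without the positivity apparatus your outline has no mechanism to produce nowhere-vanishing boundary data on the genuine faces, and the inductive external-product scheme you describe does not apply.
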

Even though the intersection numbers are defined in a complicated way, the following topological recursion relations (TRR) hold.
\begin{thm}[Theorem \ref{thm TRR}]\label{intro_thm TRR}
\begin{itemize}
\item[(a)] If $l,k\ge 1$, then
\begin{equation*}
		\begin{split}
		&\left\langle
		\tau^{a_1}_{d_1+1}\tau^{a_2}_{d_2}\dots\tau^{a_l}_{d_l}\sigma^{b_1}\sigma^{b_2}\dots\sigma^{b_k}
		\right\rangle_0^{\frac{1}{r},\text{o},\h,m}
		\\=&\sum_{\substack{s\ge 0\\-1\le a \le r-2}}\sum_{\substack{0 \le t_i \le \h\\\coprod_{j=-1}^{s}R_j=\{2,3,\dots,l\}\\\coprod_{j=0}^{s}T_j=\{2,3,\dots,k\}\\ \{(R_j,T_j,t_j)\}_{1\le j \le s}\text{ unordered}}}(-1)^s\left\langle
		\tau^{a}_{0}\tau^{a_1}_{d_1}\prod_{i\in R_{-1}}\tau^{a_i}_{d_i}\prod_{j=1}^{s}\tau^{t_j}_{0}
		\right\rangle_0^{\frac{1}{r},\text{ext},m}\\
		&\cdot\left\langle
		\tau^{r-2-a}_{0}\sigma^{b_1}\prod_{i\in R_0}\tau^{a_i}_{d_i}\prod_{i\in T_0}\sigma^{b_i}
		\right\rangle_0^{\frac{1}{r},\text{o},\h,m}
		\prod_{j=1}^{s}\left\langle
		\sigma^{r-2-2t_j}\prod_{i\in R_j}\tau^{a_i}_{d_i}\prod_{i\in T_j}\sigma^{b_i}
		\right\rangle_0^{\frac{1}{r},\text{o},\h,m}.
		\end{split}
		\end{equation*}

\item[(b)] If $l\ge 2$, then
\begin{equation*}
		\begin{split}
		&\left\langle
		\tau^{a_1}_{d_1+1}\tau^{a_2}_{d_2}\dots\tau^{a_l}_{d_l}\sigma^{b_1}\sigma^{b_2}\dots\sigma^{b_k}
		\right\rangle_0^{\frac{1}{r},\text{o},\h,m}
		\\=&\sum_{\substack{s\ge 0\\-1\le a \le r-2}}\sum_{\substack{0 \le t_i \le \h\\ \coprod_{j=-1}^{s}R_j=\{3,4,\dots,l\}\\\coprod_{j=0}^{s}T_j=\{1,2,\dots,k\}\\ \{(R_j,T_j,t_j)\}_{1\le j \le s}\text{ unordered}}}(-1)^s\left\langle
		\tau^{a}_{0}\tau^{a_1}_{d_1}\prod_{i\in R_{-1}}\tau^{a_i}_{d_i}\prod_{j=1}^{s}\tau^{t_j}_{0}
		\right\rangle_0^{\frac{1}{r},\text{ext},m}\\
		&\cdot\left\langle
		\tau^{r-2-a}_{0}\tau^{a_2}_{d_2}\prod_{i\in R_0}\tau^{a_i}_{d_i}\prod_{i\in T_0}\sigma^{b_i}
		\right\rangle_0^{\frac{1}{r},\text{o},\h,m}
		\prod_{j=1}^{s}\left\langle
		\sigma^{r-2-2t_j}\prod_{i\in R_j}\tau^{a_i}_{d_i}\prod_{i\in T_j}\sigma^{b_i}
		\right\rangle_0^{\frac{1}{r},\text{o},\h,m}.
		\end{split}
		\end{equation*}
\end{itemize}
\end{thm}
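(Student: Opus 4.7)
\emph{Proof plan.} The strategy is to derive both topological recursion relations from a single boundary expression for the class $\psi_1$ on $\widetilde{\mathcal M}^{\frac{1}{r},\h}_{0,\{b_1,\ldots,b_k\},\{a_1,\ldots,a_l\}}$ and then to factor the relative-Euler-class integral stratum by stratum. Choose the distinguished triple of markings to be (internal marking $1$, boundary marking $\sigma^{b_1}$, and any third anchor) in case (a), and (internal markings $1,2$, plus a third anchor) in case (b). Pulling back the relation $\psi_1 = [\text{pt}]$ on $\overline{\mathcal M}_{0,3}$ along the forgetful morphism to the corresponding three-pointed moduli yields an identity
\[
\psi_1 \;=\; \sum_{\Gamma} D_\Gamma \quad\text{in } H^*\bigl(\widetilde{\mathcal M}^{\frac{1}{r},\h}_{0,\{b_1,\ldots,b_k\},\{a_1,\ldots,a_l\}}\bigr),
\]
where $\Gamma$ ranges over graded nodal graph types in which marking $1$ sits on a component disjoint from the chosen anchors. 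Besides the strata with a single internal node (giving the $s=0$ terms), the spurious-boundary identifications of \cite{TZ1} allow an internal-node degeneration to extend through neighboring boundary-node degenerations, producing chains with $s\ge 1$ bubbled disks.

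\emph{Factorization step.} By \cite{TZ1} the canonical boundary sections of $\widetilde{\mathcal W}^{\oplus m} \oplus \bigoplus_i \widetilde{\mathbb L}_i^{\oplus d_i}$ extend compatibly across every codimension-one boundary and across each spurious identification, so that the relative Euler class restricts well to each $D_\Gamma$. The normalization sequence for the Witten bundle on a nodal graded $r$-spin curve then splits this restriction as an external direct sum of Witten bundles on the components, with a one-dimensional node fiber identified with a $\tau^a_0$ insertion at each internal node and a $\sigma^{r-2-2t_j}$ insertion at each boundary node. Integration over $D_\Gamma$ thus becomes a product of intersection numbers on the components, which match the factors on the right-hand side: the closed component carrying marking $1$ produces the extended correlator $\langle \tau^a_0\tau^{a_1}_{d_1}\prod_{R_{-1}}\tau^{a_i}_{d_i}\prod_{j=1}^s\tau^{t_j}_0\rangle^{\frac{1}{r},\text{ext},m}_0$, the main open disk produces the factor carrying $\tau^{r-2-a}_0$ together with the chosen anchor insertion ($\sigma^{b_1}$ in (a), $\tau^{a_2}_{d_2}$ in (b)), and the $j$-th bubbled disk produces the factor with $\sigma^{r-2-2t_j}$. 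The extended range $a\in\{-1,\ldots,r-2\}$ (including $a=-1$) arises from the admissible negative-twist behavior of the Witten bundle at an internal node, exactly as in \cite{BCT2}; the range $0\le t_j\le \h$ is precisely the set of boundary states admitted by the $\h$-th theory.

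\emph{Signs and main obstacle.} Each boundary-node degeneration flips the relative orientation by one sign, producing the factor $(-1)^s$, and the unordered product over $1\le j\le s$ reflects the combinatorial symmetry among the bubbled disks. The principal difficulty is the factorization at the mixed strata, in which internal and boundary nodes coexist due to the spurious-boundary identifications of \cite{TZ1}: one must verify that the canonical section glues through every such node with exactly one well-defined node insertion, that the Witten-bundle normalization sequence splits as required across the spurious identifications, and that no extra signs or multiplicities are generated. Granting this, the two TRRs follow by grouping the strata according to the number $s$ of boundary nodes and the bipartition $(R_{-1},R_0,\ldots,R_s;T_0,\ldots,T_s)$ induced on the remaining markings.
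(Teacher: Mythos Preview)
Your overall strategy---construct a section of $\mathbb{L}_1$ whose zero locus represents $\psi_1$, then factor the Euler-class integral on that locus---is the same as the paper's. But several steps in your plan do not match the actual geometry of the glued moduli, and one of them is a genuine gap.

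\textbf{The forgetful map does not exist in the form you need.} The moduli $\widetilde{\mathcal M}^{1/r,\h}_{0,B,I}$ is glued from components $\overline{\mathcal M}_{\mathbf G}$ indexed by $(r,\h)$-graphs $\mathbf G$; a generic point of $\overline{\mathcal M}_{\mathbf G}$ parameterizes a disjoint union of graded disks linked by dashed lines, so the anchor marking (your $b_1$ or $a_2$) need not lie on the same disk as $a_1$. There is therefore no global forgetful map to $\overline{\mathcal M}_{0,3}$. The paper instead constructs the section $t$ of $\mathbb L_1$ component-by-component: on the disk $\Gamma\in V(\mathbf G)$ carrying $a_1$, it uses a \emph{tracking map} $\zeta_\Gamma$ (Definition~\ref{dfn track point} in the paper) to locate the tail of $\Gamma$ that ``sees'' the anchor through the dashed-line tree, and then pulls back the standard section from $\overline{\mathcal M}_{0,1,1}$ using $a_1$ and that tracked tail. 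One must then check that these componentwise sections agree across every $PI$-identified pair of boundaries, which is a nontrivial lemma in the proof.

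\textbf{The source of the $s\ge 1$ terms is not what you describe.} You write that the spurious-boundary identifications ``allow an internal-node degeneration to extend through neighboring boundary-node degenerations, producing chains with $s\ge 1$ bubbled disks.'' This is backwards. On each $\overline{\mathcal M}_{\mathbf G}$ the zero locus of $t$ consists only of strata with a \emph{single internal node} (separating $a_1$ from the tracked anchor on $\Gamma$); no boundary nodes are involved. The $s$ side-disks in a term arise because the closed vertex $v^c_\Delta$ created by that one internal node may carry internal tails lying in $I'(\mathbf G)$, i.e.\ tails already connected via dashed lines to $s$ other disk components of $\mathbf G$. Reorganizing the zero locus over all smooth $\mathbf G$ then yields exactly the union in Lemma~\ref{lem zero locus trr section global}. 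So the multi-disk terms come from the pre-existing dashed-line structure of the $(r,\h)$-graphs, not from further degeneration.

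\textbf{Sign and factorization.} The $(-1)^s$ is not produced by ``boundary-node degenerations flipping orientation''; it is baked into the orientation convention $o_{\mathbf G}=(-1)^{|E(\mathbf G)|}\bigwedge_\Gamma o_\Gamma$ on $(r,\h)$-graphs (equation~\eqref{eq orientation point insertion}), and emerges when one compares $o_{\mathbf G}$ with the induced orientation on the detached pieces. Finally, the Witten bundle does not simply split as a direct sum across an internal node (in particular not at Ramond nodes); the paper uses the \emph{assembling operator} $\Ass$ of \cite{BCT2} together with coherent multisections to carry out the factorization, and this is where the closed \emph{extended} insertion (allowing $a=-1$) genuinely enters. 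Your ``normalization sequence splits'' is too coarse to produce the correct closed-extended factor.

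In short: the architecture is right, but the construction of the $\mathbb L_1$-section on the glued space requires the tracking map, the enumeration of strata must be done over $(r,\h)$-graphs rather than via chains of nodal degenerations, and the factorization needs the assembling machinery rather than a naive splitting.
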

This form of this TRR is very different from TRRs that appeared before in OGW theory, notably Solomon's Open WDVV \cite{horev2012open,solomon2023relative} and the TRRs of \cite{PST14,BCT2}. But as we will argue in Section \ref{sec:generalization}, this is the \emph{universal form} of TRRs for theories based on the point insertion technique.

Turning to consider $m=1,$ the open $r$-spin case, we first show in Section \ref{section:comparison} that the $\h=0$ theory is equivalent to \cite{BCT1,BCT2} open $r$-spin theory in a non-trivial way, by writing an explicit transformation relating the potentials of the two theories. 
Even though the theories for $\h>0$ have more states, hence more possible intersection numbers than the theory \cite{BCT1,BCT2}, we prove that many of them vanish:
 \begin{prop}[see Proposition \ref{prop:vanish small internal} for a more general statement]
If $l\ge 1$, $2l+k\ge 4$ and $a_1\le \h$ , we have
    $$
\left\langle
		\tau^{a_1}_{0}\tau^{a_2}_{d_2}\tau^{a_3}_{d_3}\dots\tau^{a_l}_{d_l}\sigma^{b_1}\sigma^{b_2}\dots\sigma^{b_k}
		\right\rangle_0^{\frac{1}{r},\text{o},\h}=0.
$$
\end{prop}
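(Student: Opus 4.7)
My approach is induction on the descendant sum $D := \sum_{i=2}^{l} d_{i}$, using the TRR of Theorem~\ref{intro_thm TRR}(b) in the inductive step and a direct geometric argument for the base case $D = 0$.

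\emph{Inductive step.} Suppose $D \geq 1$. By the symmetry of the correlator under permutation of internal markings, we may assume $d_{2} \geq 1$ and write $d_{2} = (d_{2}-1)+1$. Apply TRR(b) with $\tau^{a_{2}}_{d_{2}}$ playing the role of the lead marking and $\tau^{a_{1}}_{0}$ playing the role of the second distinguished marking. In every summand on the right-hand side, the insertion $\tau^{a_{1}}_{0}$ appears only inside the ``o distinguished'' factor
\[
\left\langle \tau^{r-2-a}_{0}\,\tau^{a_{1}}_{0} \prod_{i \in R_{0}} \tau^{a_{i}}_{d_{i}} \prod_{i \in T_{0}} \sigma^{b_{i}} \right\rangle_{0}^{\frac{1}{r},\text{o},\h}.
\]
After permuting $\tau^{a_{1}}_{0}$ to the first internal slot, this factor again satisfies the hypotheses of the proposition: $l' = 2+|R_{0}| \geq 1$, $2l' + k' = 4 + 2|R_{0}| + |T_{0}| \geq 4$, and the first internal twist is $a_{1} \leq \h$ with descendant $0$. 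Moreover its descendant sum $\sum_{i \in R_{0}} d_{i} \leq D - d_{2} < D$, so by the inductive hypothesis the factor vanishes. Hence every summand, and therefore the entire left-hand side, vanishes.

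\emph{Base case $D=0$.} Now all $d_{i} = 0$, and the correlator equals the Euler number of $(\widetilde{\mathcal{W}}, s_{\text{canonical}})$ over $\widetilde{\mathcal M}^{\frac{1}{r},\h}_{0,\{b_{1},\ldots,b_{k}\},\{a_{1},\ldots,a_{l}\}}$. My plan is to exploit the point insertion identification from \cite{TZ1}: since $a_{1} \leq \h$, the spin fiber at the first internal marking can be matched with that of a boundary marking of twist $r-2-2a_{1} \in \{r-2-2\h,\ldots, r-2\}$, which is an allowed boundary state in the $\h$-th theory. I expect this matching, together with the local structure of the Witten bundle at a small-twist internal marking, to yield a canonical nowhere-vanishing subsection of $\widetilde{\mathcal{W}}$ that is compatible with $s_{\text{canonical}}$, forcing the relative Euler class to vanish. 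In the degenerate situations where the dimension/parity constraints admit no solution with $a_{1}\leq\h$, the vanishing is automatic.

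\emph{Main obstacle.} The real difficulty is the base case $D=0$, since the TRR gives no direct leverage when every descendant is already zero; one must instead argue from the local geometry of $\widetilde{\mathcal{W}}$ at an internal marking of small twist and verify compatibility with the canonical boundary conditions of \cite{TZ1}. The inductive step, by contrast, is essentially bookkeeping: one only needs to track where $\tau^{a_{1}}_{0}$ lands in each TRR summand and invoke the permutation symmetry of the correlators.
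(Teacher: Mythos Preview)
Your inductive step is correct: applying TRR(b) with the descendant-carrying $\tau^{a_2}_{d_2}$ as lead and $\tau^{a_1}_{0}$ as the second distinguished marking does force $\tau^{a_1}_{0}$ into the central open factor of every summand, and that factor has strictly smaller descendant sum and still satisfies $2l'+k'\ge 4$. So the reduction to $D=0$ is fine.

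The base case, however, is not proved, and the idea you sketch does not work as stated. There is no reason for $\widetilde{\mathcal W}$ to admit a nowhere-vanishing section compatible with the canonical boundary conditions merely because one internal twist satisfies $a_1\le\h$; for generic choices of the other twists the Witten bundle has positive rank and genuinely obstructed sections. The ``matching'' you observe between an internal marking of twist $a_1$ and a boundary marking of twist $r-2-2a_1$ is indeed the key geometric input, but it does not produce a section of $\widetilde{\mathcal W}$.

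What the paper does instead is use that matching to build an orientation-reversing involution of the whole glued moduli space. For each component $\oCM_{\mathbf G}$ where $a_1$ sits on a non-trivial vertex $\Gamma$, one passes to the component $\oCM_{\nu(\mathbf G)}$ obtained by splitting off $a_1$ onto its own one-point disk $\Gamma_a$ (with boundary tail of twist $r-2-2a_1$) connected back by a new dashed line; since $\oCM_{\Gamma_a}$ is a point, this is an isomorphism $\operatorname{ref}_{\mathbf G}\colon \oCM_{\mathbf G}\to\oCM_{\nu(\mathbf G)}$. It lifts to ${\mathcal W}$ and to each $\mathbb L_i$ for $i\ge 2$, and because $|E(\nu(\mathbf G))|=|E(\mathbf G)|+1$ the sign $(-1)^{|E(\mathbf G)|}$ in the canonical relative orientation flips. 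Since $m$ is odd and the $\mathbb L_i$ keep their complex orientation, the relative orientation of $E=\mathcal W^{\oplus m}\oplus\bigoplus_{i\ge 2}\mathbb L_i^{\oplus d_i}$ is reversed, while $\operatorname{ref}^*$ takes canonical multisections to canonical multisections. Hence $e(E;\mathbf s)=-e(E;\operatorname{ref}^*\mathbf s)=-e(E;\mathbf s)$ and the correlator vanishes. Note that this argument handles arbitrary $d_2,\ldots,d_l$ in one stroke, so the induction you set up is in fact unnecessary once you have the right base-case mechanism.
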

In Theorem \ref{thm calculate} we show that the above TRRs and vanishing result are enough to calculate all open intersection numbers for all $r,\h$.

An important example for $m>1$ is the open Fermat quintic FJRW theory (see Section \ref{sec:open_fjrw} for details). Its intersection numbers are of the form 
$\langle\sigma_{1}^{5d-2}\rangle^{\frac{1}{r}=\frac{1}{5},\text{o},\h=1,m=5}_0$, for $5$-spin disks with $5d-2$ boundary markings of twist $1.$ We prove:
\begin{prop}[Proposition \ref{prop:even_vanishing_quintic}]
$\langle\sigma_{1}^{5d-2}\rangle^{\frac{1}{r}=\frac{1}{5},\text{o},\h=1,m=5}_0$ vanishes for even $d.$
\end{prop}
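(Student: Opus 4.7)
The plan is to establish the vanishing via an orientation-reversing involution of the moduli space. Write $\widetilde{\mathcal M}:=\widetilde{\mathcal M}^{1/5,\h}_{0,\{1^{5d-2}\},\emptyset}$ with $\h=1$. Since all boundary markings carry the same twist $1$ and the same state $\sigma_1$, the symmetric group $S_{5d-2}$ acts on $\widetilde{\mathcal M}$ by relabeling. The candidate involution is $\iota:=\iota_{\text{lab}}\circ\iota_{\text{disk}}$, where $\iota_{\text{disk}}$ is an antiholomorphic reflection on each disk (reversing the cyclic order of the boundary) and $\iota_{\text{lab}}$ is the reversal permutation $i\mapsto 5d-1-i$. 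If $\iota$ preserves $\widetilde{\mathcal W}^{\oplus 5}$ and the canonical section $s_{\text{canonical}}$ up to canonical isomorphism, but acts on the canonical relative orientation by $-1$ exactly when $d$ is even, then
\[\int_{\widetilde{\mathcal M}} e(\widetilde{\mathcal W}^{\oplus 5},s_{\text{canonical}})=\int_{\widetilde{\mathcal M}} \iota^* e(\widetilde{\mathcal W}^{\oplus 5},s_{\text{canonical}})=-\int_{\widetilde{\mathcal M}} e(\widetilde{\mathcal W}^{\oplus 5},s_{\text{canonical}}),\]
yielding the vanishing.

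First I would check that $\iota$ is well-defined on the glued space $\widetilde{\mathcal M}$, i.e.\ that the spurious-boundary identifications of \cite{TZ1} commute with $\iota$. By naturality this reduces to the observation that those identifications depend only on the twist data, which is invariant under $\iota_{\text{lab}}$ since all boundary twists equal $1$. Next I would verify that $\iota$ lifts to an isomorphism of $\widetilde{\mathcal W}^{\oplus 5}$ preserving $s_{\text{canonical}}$ up to homotopy through sections non-vanishing on $\partial\widetilde{\mathcal M}$: both properties follow because the Witten bundle is functorial in the spin data and the canonical section is built out of local data attached to each marking together with the same gluing identifications.

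The key step is the sign computation. The canonical relative orientation of \cite{TZ1} assembles orientations on interior strata with orientations attached to each spurious-boundary gluing. Under $\iota$, signs arise from: (i) the effect of $\iota_{\text{disk}}$ on the orientation of the open moduli space, which depends on the parity of the boundary-moduli dimension $k-3=5d-5$; (ii) the action of $\iota_{\text{lab}}$ on the ordered trivialization used in the construction of $\widetilde{\mathcal W}^{\oplus 5}$ in \cite{TZ1}; and (iii) the compatibility with each spurious-boundary identification. Combining the three, and using that $m=5$ is odd so the orientation conventions of \cite{TZ1} apply, I expect the net sign to be $(-1)^{d+1}$, which equals $-1$ exactly when $d$ is even.

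The main obstacle is item (iii): each spurious-boundary gluing in \cite{TZ1} carries its own sign convention, and tracking $\iota$ through the full gluing recipe for this Fermat quintic setup requires a careful case-by-case analysis. A secondary technicality is the fixed-point locus of $\iota$ (disks with an antiholomorphic symmetry), but since it has positive codimension it does not affect the cohomological identity $[\alpha]=[\iota^*\alpha]$ used above.
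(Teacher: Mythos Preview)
Your approach via an orientation-reversing involution is fundamentally different from the paper's, and the key step---the sign computation---is not actually carried out. You write ``I expect the net sign to be $(-1)^{d+1}$'' but give no argument; this is precisely the heart of the matter, and you yourself flag tracking signs through the spurious-boundary gluings as the main obstacle. There is a further gap: the grading (which fixes the positivity direction on boundary arcs) is tied to the boundary orientation, so under $\iota_{\text{disk}}$ the positive cone in each Witten fiber is likely to flip. Then $\iota^*s$ for a canonical $s$ is \emph{negative} on the boundary, not positive, and your claim that $\iota$ preserves canonical sections ``up to homotopy through sections non-vanishing on $\partial\widetilde{\mathcal M}$'' is exactly what would need proof.

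The paper's argument sidesteps all of this and uses $m>1$ in an essential way. Since $|B|=5d-2$ is even when $d$ is even, the rank of $E=\widetilde{\mathcal W}^{\oplus 5}$ equals the moduli dimension $5d-5$, which is odd; hence for any transverse canonical multisection $s$ one has $\#Z(-s)=-\#Z(s)$. Now $-s$ is not canonical, but one can homotope $s|_R$ to $-s|_R$ (on the boundary region $R$ where positivity is imposed) by flipping the sign of the five Witten components \emph{one at a time}: while the $i$th component is being homotoped, the remaining $m-1=4$ components still have a definite sign and hence do not vanish on $R$. Thus the homotopy introduces no boundary zeros, so $\#Z(s)=\#Z(-s)=-\#Z(s)$ and the correlator vanishes. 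This argument is cleaner, requires no sign bookkeeping through the point-insertion gluings, and explains transparently why $m>1$ is needed.
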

We conjecture that the odd $d$ numbers do not vanish, and that their potential satisfies the open LG/CY correspondence conjectured in \cite{Melissa}.

\subsection{The point insertion technique}
The open $r$-spin construction of \cite{BCT1,BCT2} was based on boundary conditions of two types: positivity boundary conditions and forgetful boundary conditions. The former uses a hidden intrinsic notion of positivity for sections of the Witten bundle, and the latter can be applied in certain cases where, after normalizing a node, one of its branches can be forgotten.
The forgetful boundary conditions appeared in \cite{PST14,ST1,BPTZ,GKT2} and a homotopical generalization of them also appeared in \cite{solomon2023relative,ZernikFP}. Many theories which allow the forgetful boundary conditions, also allow a definition of open intersection numbers which are independent of choices and satisfy the Solomon's \emph{Open WDVV} \cite{solomon2023relative,horev2012open}.

In the case of open $r$-spin, forgetful boundary conditions can be applied only with respect to a half-node of twist $0.$ The main novel geometric idea presented here is a new type of boundary treatment which strictly generalizes the forgetful boundary conditions, and allows defining well-defined open intersection theories in many cases where the forgetful scheme fails. In this paper we concentrate on applying this idea to open $r$-spin and open FJRW theories. Other examples are sketched in Section \ref{sec:generalization}, including open Gromov--Witten theories and open Hodge theories.
Interestingly, the open intersection theories which are defined via the point insertion technique all satisfy again universal topological recursion relations which generalize Theorem \ref{intro_thm TRR}. We leave the study of these generalizations to future work. 
\subsection{Structure of the paper}
This paper is constructed as follows.  Section~\ref{sec mod and bundle} reviews definitions and results from \cite{TZ1} concerning the graded $r$-spin disks, their moduli, associated bundles and orientations and the point insertion technique. In Section \ref{sec sections}, we define what are canonical multisections, and use them to define open $r$-spin and certain open FJRW intersection numbers.  In Section \ref{sec:trr} we prove the topological recursion relations of Theorem \ref{intro_thm TRR}. Section \ref{section:comparison} relates the $(r,\h=0)$ theory to the open $r$-spin theory of \cite{BCT1,BCT2}, while Section \ref{sec:computations} uses the topological recursion relations to calculate all open $r-$spin intersection numbers. 
We shortly discuss certain open FJRW theories, which include the Fermat quintic theory, in Section \ref{sec:open_fjrw}.
Finally, in Section \ref{sec:generalization} we explain how the point insertion idea can be used in many other enumerative theories of open topological string flavour.

\subsection{Acknowledgements}
The authors would like to thank A. Buryak, M.~Gross, Y.~Groman, T.~Kelly, K. Hori, J.~Solomon and E.~Witten for interesting discussions related to this work.  
R.T. (incumbent of the Lillian and George Lyttle Career Development Chair) and Y. Zhao were supported by the ISF grants No. 335/19 and 1729/23 later.

\section{Review of graded $r$-spin disks and point insertion}
\label{sec mod and bundle}
In this section, we review the definition of graded $r$-spin disks, their moduli space, the relevant bundles and the point insertion technique.  More details can be found in \cite{BCT1,TZ1}.

Throughout the paper we will use the notation $[N],$ where $N$ is a natural number, to denote the set $\{1,2,\dots,N\}.$

\subsection{The moduli space of graded $r$-spin disks}

The main objects of study in this paper are genus-zero marked Riemann surfaces with boundary; we view them as arising from closed spheres with an orientation-reversing involution.  More precisely, a \textit{nodal marked disk} is defined as a tuple
$$(C, \phi, \Sigma, \{z_i\}_{i \in I}, \{x_j\}_{j \in B}, m^I, m^B),$$
in which
\begin{itemize}
\item  $C$ is a nodal orbifold Riemann surface, which may be composed of disconnected components; each component is genus-zero and has isotropy only at special points;
\item $\phi: C \to C$ is an anti-holomorphic involution which, from a topological perspective, realizes the coarse underlying Riemann surface $|C|$  as the union of two Riemann surfaces, $\Sigma$ and $\overline{\Sigma}=\phi(\Sigma),$ glued along their common subset $\text{Fix}(|\phi|)$;
\item $\{z_i\}_{i \in I}\subset  C$ consists of distinct \textit{internal marked points} (or \textit{internal tails}) whose images in $|C|$ lie in $\Sigma\setminus\text{Fix}(|\phi|)$. We denote by $\overline{z_i}:= \phi(z_i)$ their \textit{conjugate marked points};
\item $\{x_j\}_{j \in B} \subset \text{Fix}(\phi)$ consists of distinct \textit{boundary marked points} (or \textit{boundary tails}) whose images in $|C|$ lie in ${\partial} \Sigma$;
\item $m^I$ (respectively $m^B$) is a marking of $I$ and (respectively $B$), \textit{i.e.} a one-to-one correspondence between  $I$ (respectively $m^B$) and $\left[\lvert I \rvert\right]$ (respectively $[\lvert B \rvert]$).
\end{itemize}
We say that a nodal marked disk is \textit{stable} if each irreducible component has at least three special points.

A node of a nodal marked disk can be internal, boundary or contracted boundary, as illustrated in Figure~\ref{fig three types of nodes} by shading $\Sigma \subseteq |C|$ in each case.  Note that ${\partial}\Sigma\subset\text{Fix}(|\phi|)$ is a union of cycles, and $\text{Fix}(|\phi|)\setminus{\partial}\Sigma$ is the union of the contracted boundary nodes.
\begin{figure}[h]
\centering
\begin{subfigure}{.3\textwidth}
  \centering

\begin{tikzpicture}[scale=0.3]
  \fill[color = gray, opacity = 0.3] (0,4) circle (2);
  \draw (0,4) circle (2);
  \draw (-2,4) arc (180:360:2 and 0.6);
  \draw[dashed] (2,4) arc (0:180:2 and 0.6);

  \draw (0,0) circle (2cm);
  \draw (-2,0) arc (180:360:2 and 0.6);
  \draw[dashed] (2,0) arc (0:180:2 and 0.6);
   \fill[color = gray, opacity = 0.3] (-2,0) arc (180:360:2 and 0.6) arc (0:180:2);

  \draw (0,-4) circle (2cm);
  \draw (-2,-4) arc (180:360:2 and 0.6);
  \draw[dashed] (2,-4) arc (0:180:2 and 0.6);
\end{tikzpicture}

  \caption{Internal node}
\end{subfigure}
\begin{subfigure}{.3\textwidth}
  \centering
\begin{tikzpicture}[scale=0.4]
 
  \draw (-2,0) circle (2cm);
  \draw (-4,0) arc (180:360:2 and 0.6);
  \draw[dashed] (0,0) arc (0:180:2 and 0.6);
  \fill[color=gray, opacity=0.3] (-4,0) arc (180:360:2 and 0.6) arc (0:180:2);

  \draw (2,0) circle (2);
  \draw (0,0) arc (180:360:2 and 0.6);
  \draw[dashed] (4,0) arc (0:180:2 and 0.6);
  \fill[color=gray, opacity=0.3] (0,0) arc (180:360:2 and 0.6) arc (0:180:2);
\end{tikzpicture}
\vspace{0.9cm}

  \caption{Boundary node}
\end{subfigure}
\begin{subfigure}{.3\textwidth}
  \centering

\begin{tikzpicture}[scale=0.4]
\vspace{0.1cm}
  \fill[color=gray, opacity=0.3] (0,2) circle (2);
  \draw (0,2) circle (2);
  \draw (-2,2) arc (180:360:2 and 0.6);
  \draw[dashed] (2,2) arc (0:180:2 and 0.6);

  \draw (0,-2) circle (2);
  \draw (-2,-2) arc (180:360:2 and 0.6);
  \draw[dashed] (2,-2) arc (0:180:2 and 0.6);
\end{tikzpicture}
\vspace{0.1cm}

  \caption{Contracted boundary node}
\end{subfigure}
\caption{The three types of nodes in a nodal marked disk.}
\label{fig three types of nodes}
\end{figure}
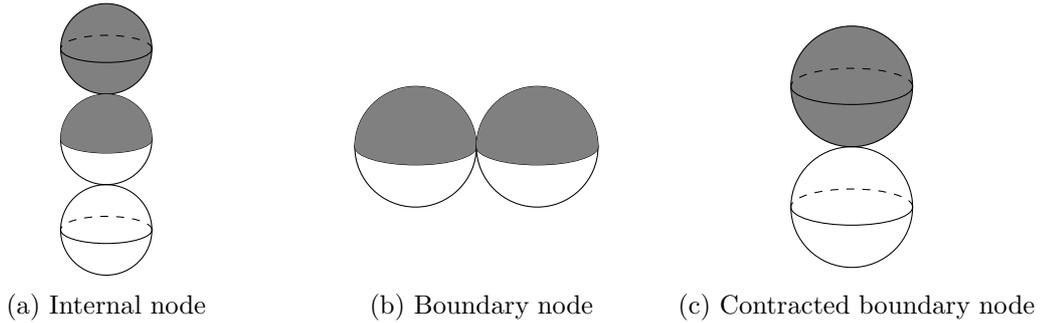

An \textit{anchored nodal marked disk} is a nodal marked disk together with a $\phi$-invariant choice of a distinguished internal marked point (called the \textit{anchor}) on each connected component $C'$ of $C$ that is disjoint from the set $\text{Fix}(\phi)$. We denote by $Anc\subseteq I$ the set of indexes of anchors lying on $\Sigma$. 
\begin{rmk}
We focus mainly on disks with boundaries and their degenerations, \textit{i.e.} connected nodal marked disks with non-empty $\text{Fix}(\phi)$. Disconnected disks will usually appear as the result of normalizations at nodes (and later also from the point insertion method we shall define). The anchors help us keep track of these normalizations: Consider a connected component $C'$ of a nodal marked disk $C$, that does not intersect with the set $\text{Fix}(\phi)$, and assume that $C'$ is obtained by normalizing internal or contracted boundary nodes. 
The anchor $z_i$ of $C'$ is the half-node corresponding to the internal node \emph{closest to $\text{Fix}(\phi)$} that we had normalized in order to obtain $C'$. By `closest to $\text{Fix}(\phi)$' we mean either the normalized contracted boundary node, if there is such a node. Otherwise it is the unique internal node whose normalization makes the connected component containing $C'$ disconnected from $\text{Fix}(\phi)$.
\end{rmk}
\subsubsection{$r$-spin structures}
Let $C$ be an anchored nodal marked disk with
order-$r$ cyclic isotopy groups at markings and nodes, a \textit{$r$-spin structure} on $C$ is 
\begin{itemize}
    \item 
 an orbifold complex line bundle $L$ on $C$,

\item an isomorphism 
$$\kappa:L^{\otimes r} \cong \omega_{C,log}:=\omega_{C} \otimes {\mathcal{O}}\left(-\sum_{i \in I}  [z_i] - \sum_{i \in I}  [\overline{z_i}] - \sum_{j \in B} [x_j]\right),$$

\item an involution $\widetilde{\phi}: L \to L$ lifting $\phi$.
\end{itemize}
The local isotopy of $L$ at a point $p$ is characterized by an integer $\operatorname{mult}_p(L)\in \{0,1,\dots,r-1\}$ in the following way: the local structure of the total space of $L$ near $p$ is $[\mathbb C^2/(\mathbb Z/r\mathbb Z)]$, where the canonical generator $\xi \in \mathbb Z/r\mathbb Z$ acts by $\xi\cdot (x,y)=(\xi x, \xi^{\operatorname{mult}_p(L)}y)$.

We denote by $RI\subseteq I$ and $RB\subseteq B$ the subsets of internal and boundary marked points $p$ satisfying $\operatorname{mult}_p(L)=0$. An \textit{associated twisted r-spin structure} $S$ is defined by
$$
S:=L\otimes \mathcal O\left( -\sum_{i \in \widetilde{RI}} r [z_i] - \sum_{i \in \widetilde{RI}} r [\overline{z_i}] - \sum_{j \in RB} r [x_j]\right),
$$
where $\widetilde{RI}\subseteq RI$ is a subset satisfying $ RI\setminus \widetilde{RI}\subseteq Anc$.

For an internal marked point $z_i$, we define the  \textit{internal twist} at $z_i$ to be $a_i:=\operatorname{mult}_{z_i}(L)-1$ if $i\in I\setminus \widetilde{RI}$
and $a_i:=r-1$ if $i\in \widetilde{RI}$. For a boundary marked point $x_i$, we define the  \textit{boundary twist} at $x_j$ as $b_j:=\operatorname{mult}_{x_j}(L)-1$ if $i\in B\setminus {RB}$ and as $b_j:=r-1$ if $j\in {RB}$. Note that all the marked points with twist $-1$ are indexed by $RI\setminus \widetilde{RI}\subseteq Anc$. When the disk $C$ is smooth, the coarse underlying bundle $|S|$ over the coarse underlying sphere $|C|$ satisfies
$$|S|^{\otimes r} \cong \omega_{|C|} \otimes {\mathcal{O}}\left(-\sum_{i \in I} a_i [z_i] - \sum_{i \in I} a_i [\overline{z_i}] - \sum_{j \in B} b_j[x_j]\right).$$

\begin{obs}
\label{obs rank open}
A connected nodal marked disk admits a twisted $r$-spin structure with twists $a_i$ and $b_j$ if and only if
\begin{equation}\label{eq rank open}
\frac{2\sum_{i \in I} a_i + \sum_{j\in B} b_j -r+2}{r}\in \mathbb Z.
\end{equation}
This formula is the specialization to our setting of the more well-known fact \cite{Witten93}: a (closed) connected nodal marked genus-zero curve admits a twisted $r$-spin structure twists $a_i$ if and only if 
\begin{equation}\label{eq rank close}
\frac{\sum_{i\in I} a_i -r+2}{r}\in {\mathbb{Z}}.
\end{equation}
\end{obs}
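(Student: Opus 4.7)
The strategy I would use is to reduce to the closed criterion \eqref{eq rank close} via the doubling construction. Concretely, a connected anchored nodal marked disk $(C,\phi)$ is equivalent to the data of its coarse double $|C|$, a connected nodal genus-zero curve, together with the antiholomorphic involution $|\phi|$; the relevant markings of $|C|$ are $\{z_i\}\sqcup\{\overline{z_i}\}\sqcup\{x_j\}$. Under the twist dictionary in the definition, a twisted $r$-spin structure on $C$ with internal twists $a_i$ and boundary twists $b_j$ is precisely a $|\phi|$-equivariant twisted $r$-spin structure on $|C|$ whose twist tuple is $(a_1,\ldots,a_{|I|},a_1,\ldots,a_{|I|},b_1,\ldots,b_{|B|})$, since $\phi$ pairs $z_i$ with $\overline{z_i}$ while fixing $x_j$.

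For \emph{necessity}, I would forget the equivariance and apply \eqref{eq rank close} to $|C|$ with the symmetric twist tuple. The total twist sum is $2\sum_{i\in I}a_i+\sum_{j\in B}b_j$, so the closed criterion becomes exactly \eqref{eq rank open}. Equivalently, one reads the condition off the coarse isomorphism $|S|^{\otimes r}\cong\omega_{|C|}\bigl(-\sum a_i[z_i]-\sum a_i[\overline{z_i}]-\sum b_j[x_j]\bigr)$: the right-hand side has degree $-2-2\sum a_i-\sum b_j$ on the genus-zero curve $|C|$, and divisibility by $r$ yields the same congruence.

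For \emph{sufficiency}, assuming \eqref{eq rank open}, the closed criterion \eqref{eq rank close} produces a (not \textit{a priori} equivariant) twisted $r$-spin line bundle $L_0$ on $|C|$ with the symmetric twist tuple. The task is then to upgrade $L_0$ to a $|\phi|$-equivariant pair $(L,\widetilde\phi)$ lifting $\phi$. Because the twist data is $|\phi|$-symmetric, the pullback $|\phi|^*L_0$ is again such a twisted $r$-spin line bundle; in genus zero, the set of twisted $r$-spin line bundles with fixed twists is a torsor under a finite group (the $r$-torsion in the appropriate orbifold Picard group, which has no continuous moduli), so $L_0$ can be chosen so that $|\phi|^*L_0\cong L_0$, yielding an antiholomorphic lift $\widetilde\phi_0:L_0\to L_0$ of $|\phi|$. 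The composition $\widetilde\phi_0^{\,2}$ is a holomorphic automorphism of $L_0$, hence a nonzero scalar on each irreducible component, and a rescaling by elements of $\mathbb C^*$ adjusts $\widetilde\phi_0$ to a genuine involution $\widetilde\phi$.

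The step I expect to be the main obstacle is the $|\phi|$-equivariant gluing at nodes in the nodal case. I would handle this inductively on the number of nodes: normalize a node, use the node balance $a^++a^-\equiv r-2\pmod r$ at internal and boundary nodes to verify that \eqref{eq rank open} distributes correctly over the components of the normalization—pairing $\phi$-conjugate components so that the inductive hypothesis can be applied to the disk piece containing $\text{Fix}(|\phi|)$ and \eqref{eq rank close} to the purely internal pieces—and then glue back equivariantly using the freedom in the gluing isomorphisms. Contracted boundary nodes require a small modification since they are isolated $|\phi|$-fixed internal nodes, but they impose no new obstruction on the global twist condition and can be absorbed into the same inductive scheme.
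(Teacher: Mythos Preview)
Your approach is correct and is precisely what the paper intends: the observation is not given a separate proof in the paper, which simply states that \eqref{eq rank open} is the specialization of the closed genus-zero criterion \eqref{eq rank close} via the doubling $C=\Sigma\sqcup_{\partial\Sigma}\overline{\Sigma}$. You have spelled out this specialization in detail, including the sufficiency direction, which the paper leaves implicit.

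One small remark on your rescaling step: when you adjust $\widetilde\phi_0$ to an involution, you need $\widetilde\phi_0^{\,2}$ to act by a \emph{positive} real scalar, since replacing $\widetilde\phi_0$ by $\lambda\widetilde\phi_0$ changes the square by $|\lambda|^2$. This is automatic whenever $\text{Fix}(|\phi|)\neq\emptyset$: at a fixed point $p$ the map $\widetilde\phi_0|_p$ is an antilinear endomorphism of the complex line $L_p$, hence of the form $v\mapsto a\bar v$, whose square is $|a|^2>0$. Since the connected nodal marked disks under consideration have nonempty fixed locus (or, for purely closed components swapped by $\phi$, one can rescale the two halves independently), the obstruction vanishes and your argument goes through.
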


We can extend the definition of twists to \textit{half-nodes}. Let $n: \widehat{C} \to C$ be the normalization morphism. For a half-node $q\in \widehat{C}$, we denote by $\sigma_0(q)$ the other half-node corresponding to the same node $n(q)$ as $q$. The isotopies of $n^*L$ at $q$ and $\sigma_0(q)$ satisfy $$\operatorname{mult}_{q}(n^*L)+\operatorname{mult}_{\sigma_0(q)}(n^*L)\equiv0 \mod r.$$ It is important to note that $n^*S$ may not be a twisted $r$-spin structure (associated with $n^*L$), because its connected components could potentially contain too many marked points with twist $-1$ (note that marked points with twist $-1$ are anchors).  Nevertheless, there is a canonical way to choose a minimal subset $\mathcal{R}$ of the half-nodes making
\begin{equation}
\label{eq normalize S}
\widehat{S}:= n^* S \otimes {\mathcal{O}}\left(-\sum_{q \in \mathcal{R}} r [q]\right)
\end{equation}
a twisted $r$-spin structure: denoting by $\mathcal T$ the set of half-nodes $q$ of $C$ where $\operatorname{mult}_{q}(n^*L)=0$, we define
\begin{equation*}
\mathcal A:= \left\{q \in \mathcal T\colon 
\begin{array}{ccc}
    \text{$n(q)$ is an internal node; after normalizing the }\\
    \text{node $n(q)$, the half-node $\sigma_0(q)$ belongs to a connected }\\
    \text{component meeting $\text{Fix}(\phi)$ or containing an anchor.}
\end{array} \right\}
\end{equation*}
and set $\mathcal R := \mathcal T \setminus \mathcal A$. See \cite[Section 2.3]{BCT1} for more details. 
We define $c_t$, \textit{the twist of $S$ at a half-node} $h_t$, as $c_t:=\operatorname{mult}_{h_t}(n^*L)-1$ if $h_t \notin \mathcal R$ and as $c_t:=r-1$ if $h_t\in \mathcal R$.

For each irreducible component $C_l$ of $\widehat{C}$ with half-nodes $\{h_t\}_{t \in N_l}$, we have
$$\bigg(|\widehat{S}|\big|_{|C_l|}\bigg)^{\otimes r} \cong \omega_{|C_l|} \otimes {\mathcal{O}}\left(-\sum_{\substack{i \in I\\z_i\in C_l}} a_i[z_i] - \sum_{\substack{i \in I\\ \overline{z_i}\in C_l}} a_i [\overline{z_i}] - \sum_{\substack{j \in B\\ x_j\in C_l}} b_j[x_j] - \sum_{t \in N_l} c_t [h_t]\right);$$
note that in the case where $C_l$ intersects with $\partial \Sigma$, the set $\{h_t\}_{t \in N_l}$ is invariant under $\phi$, and the half-nodes conjugated by $\phi$ have the same twist.

Note that if $h_{t_1}=\sigma_0(h_{t_2})$, then
\begin{equation}\label{eq sum of twist at node}c_{t_1} + c_{t_2} \equiv -2 \mod r.\end{equation}
We say a node is \textit{Ramond} if one (hence both) of its half-nodes $h_t$ satisfy $c_t \equiv -1 \mod r$,  and it is said to be \textit{Neveu--Schwarz (NS)} otherwise. Note that if a node is Ramond, then both of its half-nodes lie in the set $\mathcal T$; moreover, a half-node has twist $-1$ if and only if it lies in $\mathcal A$. The set $\mathcal{R}$ in equation~\eqref{eq normalize S} is chosen in a way that each internal Ramond node has precisely
one half-edge in $\mathcal{R}.$

Associated to each twisted $r$-spin structure $S$, we define a Serre-dual bundle \begin{equation}\label{def of J}
    J:=S^{\vee} \otimes \omega_{C}.
\end{equation}
Note that the involutions on $C$ and $L$ induce involutions on $S$ and $J$; by an abuse of notation, we denote the involutions on $S$ and $J$ also by $\widetilde{\phi}$.

\subsubsection{Gradings}
For a nodal marked disk, the boundary ${\partial}\Sigma$ of $\Sigma$ is endowed with a well-defined orientation, determined by the complex orientation on the preferred half $\Sigma \subseteq |C|$. This orientation induces the notion of positivity for $\phi$-invariant sections of $\omega_{|C|}$ over ${\partial} \Sigma$: let $p$ be a point of $\partial \Sigma$ which is not a node, we say a section $s$ is \textit{positive} at a $p$ if, for any tangent vector $v \in T_p({\partial} \Sigma)$ in the orientation direction, we have $\langle s(p), v \rangle > 0$, where $\langle -, - \rangle$ is the natural pairing between cotangent and tangent vectors.

Let $C$ be an anchored nodal marked disk, and let $A$ be the complement of the special points in $\partial \Sigma$.  We say a twisted $r$-spin structure on such $C$ is \textit{compatible on the boundary components} if there exists a $\widetilde{\phi}$-invariant section $v \in C^0\left(A, |S|^{\widetilde{\phi}}\right)$ (called a \textit{lifting} of $S$  on boundary components) such that the image of $v^{\otimes r}$ under the map on sections induced by the inclusion $|S|^{\otimes r} \to  \omega_{|C|}$ is positive.  We say $w \in C^0\left(A, |J|^{\widetilde{\phi}}\right)$ is a \textit{Serre-dual lifting of $J$ on the boundary components with respect to $v$} if $\langle w, v \rangle \in C^0(A, \omega_{|C|})$ is positive, where $\langle -, - \rangle$ is the natural pairing between $|S|^{\vee}$ and $|S|$.  This $w$ is uniquely determined by $v$ up to positive scaling.

 Equivalence classes of liftings of $J$ (or equivalently, $S$) on the boundary components up to positive scaling are equivalent to continuous sections of $S^0(|J|^{\widetilde\phi}),$ the $S^0$-bundle $\left(|J|^{\widetilde{\phi}}\setminus |J|_0\right)\big/ \mathbb R_+$ over $A$, where $|J|_0$ denotes the zero section of $|J|^{\widetilde{\phi}}$. Given an equivalence class $[w]$  of liftings, we say a boundary marked point or boundary half-node $x_j$ is \textit{legal}, or that $[w]$ \textit{alternates} at $x_j$, if $[w]$, as a section of $S^0(|J|^{\widetilde\phi})$, cannot be continuously extended to $x_j$.
We say an equivalence class $[w]$ of liftings of $J$ on boundaries is a \textit{grading of a twisted $r$-spin structure on boundary components} if, for every Neveu--Schwarz boundary node, one of the two half-nodes is legal and the other is illegal. 
\begin{rmk}
    The requirement that every NS boundary node has one legal and one illegal half-node arises from the behaviour of a grading on boundary components at degenerations. This condition, together with \eqref{eq sum of twist at node} allows smoothing the boundary node. See \cite{BCT1} for more details.
\end{rmk}

Let $q$ be a contracted boundary node of $C$, we say a twisted $r$-spin structure on $C$ is \textit{compatible} at $q$ if $q$ is Ramond and there exists a $\widetilde{\phi}$-invariant element $v \in |S|\big|_q$  (called a \textit{lifting} of $S$ at $q$) such that the image of $v^{\otimes r}$ under the map $|S|^{\otimes r}\big|_q \to \omega_{|C|}\big|_q$ is positive imaginary under the canonical identification of $\omega_{|C|}\big|_q$ with ${\mathbb{C}}$ given by the residue. See \cite[Definition 2.8]{BCT1} for more details.  Such a $v$ also admits a Serre-dual lifting, \textit{i.e.}  a $\widetilde{\phi}$-invariant $w \in |J|\big\vert_q$ such that $\langle v, w \rangle$ is positive imaginary.  We refer to the equivalence classes $[w]$ of such $w$ up to positively scaling as a \textit{grading at contracted boundary node $q$}.

We say a twisted $r$-spin structure is \textit{compatible} if it is compatible on boundary components and at all contracted boundary nodes.  A (total) \textit{grading} is the collection of grading on boundary components together with a grading at each contracted boundary node. We say a grading is \textit{legal} if every boundary marked point is legal.

The grading is crucial in determining a canonical relative orientation for the Witten bundle (in \cite{TZ1}) and defining canonical boundary conditions (in Section \ref{sec sections}), which are key ingredients in defining open $r$-spin intersection numbers.

The relation between the twists and legality, and the obstructions to having a grading, are summarized in the following proposition.
\begin{prop}{\cite[Proposition 2.3]{BCT2}}
\label{prop lifting}
\begin{enumerate}
\item\label{it lift odd exist} When $r$ is odd, any twisted $r$-spin structure is compatible, and there is a unique grading.
\item\label{it lift odd legal} When $r$ is odd, a boundary marked point, or boundary half-node, $x_j$ in a twisted $r$-spin structure with a grading is legal if and only if its twist is odd.
\item\label{it lift even compatible} When $r$ is even, the boundary twists $b_j$ in a compatible twisted $r$-spin structure must be even.  

\item\label{it NS nodes}
In a graded $r$-spin structure, any Neveu-Schwarz boundary node has one legal half-node and one illegal half-node.\footnote{This item is part of the definition of grading, we put it here since it is an important property of the grading.}

\item\label{it Ramond boundary node}
Ramond boundary nodes can appear in a graded $r$-spin structure only when $r$ is odd, and in this case, their half-nodes are illegal with twists $r-1.$

\item\label{it lift compatible parity}
There exists grading that alternates precisely at a subset $D \subset \{x_j\}_{j \in B}$ if and only if
\begin{equation}\label{eq parity}
   \frac{2\sum a_i + \sum b_j+ 2}{r}\equiv |D| \mod 2.
\end{equation}

\end{enumerate}
\end{prop}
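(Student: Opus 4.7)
The plan is to reduce all six statements to local analyses of $\widetilde\phi$-invariant sections of $|S|$ (or, via Serre duality, of $|J|$) near each type of special point on $\partial\Sigma$, and then patch the local data through a global parity count. To set up the local models, near a smooth point of $\partial\Sigma$ I would use a half-plane coordinate $z$ in which $\phi(z)=\overline z$, so that $\omega_{|C|}$ has the canonical positive $\phi$-invariant section $dz$; any $\widetilde\phi$-invariant lifting of $S$ is then a choice of real $r$-th root of a positive real function. Near a boundary marking $x_j$ of twist $b_j$ I would pass to an orbifold chart $w$ with $w^r=z$ and read off the exponent relating the standard generator of $|S|$ to that of $\omega_{C,\mathrm{log}}$. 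Analogous orbifold models are needed at NS boundary nodes, Ramond boundary nodes, and contracted boundary nodes, where in the contracted-boundary case the positivity convention is dictated by the residue pairing.

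Parts (1) and (3) then follow by orientability considerations: when $r$ is odd every positive real number has a unique positive $r$-th root, so a canonical compatible $\widetilde\phi$-invariant section exists on each boundary arc and is unique up to positive scaling, giving both compatibility and uniqueness of the grading; when $r$ is even orientability of $|S|^{\widetilde\phi}$ across $x_j$ is governed by $(-1)^{b_j}$, forcing every boundary twist to be even. Part (2) is a direct computation in the local $w$-chart: for $r$ odd the natural $r$-th root of $dz/z$ extends as a section of $S^0(|J|^{\widetilde\phi})$ across $x_j$ precisely when the $b_j$-shift preserves orientation, that is when $b_j$ is even; otherwise the section does not extend and the marking is legal. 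Part (4) is part of the definition, but can be reconfirmed by analyzing the smoothing deformation of an NS boundary node and checking that the two half-nodes are forced to receive opposite legality statuses. Part (5) follows from (2) and (3): both half-nodes of a Ramond boundary node carry twist $r-1$, which is odd when $r$ is even (ruled out by (3)) and even when $r$ is odd (hence illegal by (2)).

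The principal work lies in Part (6). Two $\widetilde\phi$-invariant liftings on a boundary component differ by a nowhere-vanishing real function, whose sign may jump across each boundary marking; each such sign change flips the legality status of that marking. Therefore the achievable alternation sets on a given boundary circle form an affine coset of the mod-$2$ space generated by these sign flips, and whether a prescribed $D$ is realizable is detected by a single $\mathbb{Z}/2$-invariant: the first Stiefel--Whitney class of $|J|^{\widetilde\phi}$ restricted to that circle, together with the contributions of the contracted boundary nodes (whose positive-imaginary convention contributes a fixed sign). A careful bookkeeping identifies this invariant with $|D|+\frac{2\sum a_i+\sum b_j+2}{r}\pmod 2$. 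The hard part is precisely this accounting: the factor $2\sum a_i$ reflects the interaction of conjugate pairs $(z_i,\overline{z_i})$ with the $\phi$-equivariance of the lifting on either side of $\partial\Sigma$, and one must track how every special point and every contracted boundary node contributes to the global parity in order to match the rank formula exactly.
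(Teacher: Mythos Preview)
This paper does not contain a proof of this proposition: it is quoted verbatim from \cite[Proposition 2.3]{BCT2} and used as a black box, with item (4) even flagged in a footnote as being part of the definition of a grading rather than a consequence. So there is no in-paper argument to compare your proposal against; any comparison would have to be made with the original proof in \cite{BCT2}.

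That said, your outline is broadly in the right spirit. The local analysis in half-plane and orbifold charts is the natural setting, and the reductions you sketch for items (1)--(5) are essentially correct: odd $r$ gives unique positive real $r$-th roots (hence existence and uniqueness of the grading), even $r$ forces even boundary twists by the sign of the monodromy of $|S|^{\widetilde\phi}$, and the legality criterion for odd $r$ is exactly the parity of the twist. Your derivation of (5) from (2) and (3) is clean. For (6), framing the obstruction as a first Stiefel--Whitney class of $|J|^{\widetilde\phi}$ on each boundary circle and then summing is a reasonable strategy, though your sketch leaves the actual parity bookkeeping---especially the contribution of the internal twists $a_i$ and the normalization at contracted boundary nodes---as an unverified claim. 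If you want a self-contained argument you should carry out that computation explicitly; as written, the ``careful bookkeeping identifies this invariant with\ldots'' step is where the content lies and is not yet justified.
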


When a Ramond contracted boundary node is normalized, the grading at this boundary node induces an additional structure at the corresponding half-node. We call such half-nodes by \textit{normalized contracted boundary marked point}. Because such an additional structure is not necessary in this paper, we refer the readers to \cite[Definition 2.5]{TZ1} for the precise definition.

We can now define the primary objects of interest in this paper:

\begin{definition}
\label{def graded rspin disk}
A \textit{stable graded $r$-spin disk} (legal stable graded $r$-spin disk respectively) is a stable anchored nodal marked disk, together with
\begin{enumerate}
\item a compatible twisted $r$-spin structure $S$ in which all contracted boundary nodes are Ramond;
\item a choice of grading (legal gradings respectively);

\item an additional structure of normalized contracted boundary marked point at each anchor with twist $r-1$.

\end{enumerate}
For an integer $0\le\h\le \lfloor \frac{r-2}{2}\rfloor$, we say a stable graded $r$-spin disk is of level-$\h$ if every legal boundary marked point has twist greater than or equal to $r-2-2\h$, and every illegal boundary marked point has twist smaller than or equal to $2\h$. We will omit the term ``level-$\h$" when $\h$ is chosen as a fixed integer or is clear from context.
\end{definition}

\begin{rmk}
    Note that in \cite{BCT2}, the term ``stable graded $r$-spin disk" refers to a legal stable graded level-$0$ $r$-spin disk.
\end{rmk}

We denote by $\Mbarstar_{0,k,l}^{1/r}$ ($\Mbar_{0,k,l}^{1/r}$ respectively) the moduli space of connected stable graded $r$-spin disks (legal connected stable graded $r$-spin disks respectively) with $k$ boundary and $l$ internal marked points. In \cite[Theorem 3.4]{BCT1} and \cite[Theorem 2.8]{TZ1}, $\Mbar_{0,k,l}^{1/r}$ and $\Mbarstar_{0,k,l}^{1/r}$ are shown to be a compact smooth orientable orbifold with corners of real dimension $k+2l-3$.

\begin{ntt}
    Assuming that the internal marked points $\{z_i\}_{i\in I}$ have twists $\{a_i\}_{i\in I}$, by an abuse of notation, we also denote the multiset\footnote{We use the word ``multiset" here because the set $I$ may contain multiple $a_i$ with a same value, but we view them as different elements.} $\{a_i\}_{i\in I}$ by $I$.
    
Similarly, we denote by $B$ the multiset $\{b_j\}_{j\in B}$ equipped with a preselected legality for each of its elements. Furthermore, if $B$ is equipped with a cyclic order on $\sigma\colon B\to B$, we denote it by $\bar B$ and write $\bar{B}=\overline{\{b_1,b_2,\dots,b_{\lvert B \rvert}\}}$ to make the cyclic order manifest, where $\sigma_2(b_i)=b_{i+1}$ for $1\le i \le \lvert B \rvert$ and $\sigma_2(b_{\lvert B \rvert})=b_{1}$.
\end{ntt}

We denote by $\Mbarstar_{0,B,I}^{1/r}$ the moduli space of graded $r$-spin disks with boundary points marked by $B$, and internal points marked by $I$. Note that for a graded $r$-spin disk, the canonical orientation on $\partial\Sigma$ induces a cyclic order on $B$; given a cyclic order $\sigma_2\colon B \to B$, we denote by $\Mbarstar_{0,\bar{B},I}^{1/r}\subseteq \Mbarstar_{0,{B},I}^{1/r}$ the connected component parameterizing the $r$-spin disks such that the induced cyclic order on $B$ coincides with $\sigma_2$.

\begin{rmk}
   In most parts of this article, we primarily consider moduli of legal graded $r$-spin disks. We use the notation with a superscript $*$ only when we want to emphasize that there might be illegal boundary markings. 
\end{rmk}

\subsubsection{Stable graded $r$-spin graphs}
Each connected anchored marked disk $\Sigma$ can be characterized by a decorated dual graph $\Gamma(\Sigma)$ as follows.
\begin{itemize}
\item The set of vertices of $\Gamma(\Sigma)$ is the set of irreducible components of $\Sigma$, which is decomposed into \textit{open} and \textit{closed} vertices $V = V^O \sqcup V^C$ depending on whether the corresponding irreducible component meets $\partial \Sigma$.
\item The set of half-edges $H(v)$ emanating from a vertex $v\in V$ is the set of the special points (\textit{i.e.} half-nodes and marked points) on the irreducible component corresponding to $v$. The set $H(v)$ is decomposed into \textit{boundary} and \textit{internal} half-edges $H(v) = H^B(v) \sqcup H^I(v)$ depending on whether the corresponding special point lies on $\partial \Sigma$. We write $H:=\sqcup_{v\in V}H(v)$ and $H = H^B \sqcup H^I$ the correspond decomposition. Two half-edges correspond to an (internal or boundary) edge $e$ in the set of edges $E=E^I\sqcup E^B$ if their corresponding special points are two half-nodes of the same (internal or boundary) node. 
\item The set of \textit{tails} $T$ is the set of all marked points together with the contracted boundary nodes. We write $T^B:=T\cap H^B$ and $T^I:=T\cap H^I$. The set of \textit{contracted boundary tails} $H^{CB}\subseteq T^I$ corresponds to the contracted boundary node and the set $T^{anc}\subseteq T^I\setminus H^{CB}$ corresponds to the anchor.
\item 
The canonical orientation on $\partial\Sigma$ induced a cyclic order $\sigma_2\colon H^B(v) \to H^B(v)$ for each $v\in V^O$.
\end{itemize}
We say $\Gamma(\Sigma)$ is \textit{smooth} if $E = H^{CB} = \emptyset$, or equivalently, $\Sigma$ is smooth.
If $\Sigma$ is endowed with a graded $r$-spin structure $S$, we have the additional decorations:
\begin{itemize}
    \item a map $\text{tw}: H \rightarrow \{-1,0,1,\ldots, r-1\}$ encoding the twist of $S$ at each special points;
    \item a map
$\text{alt}: H^B \rightarrow \mathbb{Z}/2\mathbb{Z}$
given by $\text{alt}(h) = 1$ if the special point corresponding to $h$ is legal and $\text{alt}(h) = 0$ otherwise.
\end{itemize}

A \textit{genus-zero stable graded $r$-spin graph} is a decorated graph for which each connected component is the dual graph of a connected stable graded $r$-spin disk as above; an intrinsic definition can be found in \cite[Section 3.2]{TZ1}.

In this paper, since we exclusively focus on the genus-zero case, whenever we refer to a stable graded $r$-spin graph, we always mean a genus-zero stable graded $r$-spin graph. 

For an edge $e$ of a stable graded $r$-spin graph $\Gamma=\Gamma(\Sigma)$,  the \textit{smoothing} of $\Gamma$ along $e$ is the stable graded $r$-spin graph $d_e\Gamma$ that is dual to the stable graded $r$-spin disk obtained by smoothing the node in $\Sigma$ corresponding to $e$. The \textit{detaching} of $\Gamma$ along $e$ is the stable graded $r$-spin graph $\text{Detach}_e\Gamma$ that is dual to the stable graded $r$-spin disk obtained by normalizing the node in $\Sigma$ corresponding to $e$. See \cite[Subesection 3.2]{TZ1} for intrinsic definitions.

We say a stable graded $r$-spin graph is \textit{legal} if every boundary tail is legal, \textit{i.e.} ${\text{alt}}(t)=1~\forall t \in T^B$. We say a stable graded $r$-spin graph is \textit{level-$\h$} if every legal boundary tail has twist greater than or equal to $r-2-2\h$, and every illegal boundary tail has twist smaller than or equal to $2\h$.

 If $\Gamma$ is a connected graded $r$-spin graph, we denote by ${\mathcal M^*}_{\Gamma}^{1/r}\subseteq {\Mbarstar}_{0,  T^B, T^I\setminus H^{CB},}^{1/r}$ the (open) submoduli consisting of $r$-spin disks whose dual graph is exactly $\Gamma$, and by $\overline{{\mathcal M^*}}_{\Gamma}^{1/r}$ its closure. If $\Gamma$ is disconnected, we define $\Mbarstar_{\Gamma}^{1/r}$ as the product of the moduli spaces associated to its connected components. When there is no room for confusion (which is always the case, except in Subsection \ref{subsec decomp}), we omit the superscript $1/r$ in $\overline{{\mathcal M^*}}_{\Gamma}^{1/r}$ and $\Mbarstar_{\Gamma}^{1/r}$.
 If all boundary tails are legal, we also omit the superscript $*$ in the notation.

\subsection{The Witten bundle and the relative cotangent line bundles}
\label{subsec Witten bundle}

The \textit{Witten bundle} is the protagonist of the $r$-spin theory. Roughly speaking, let $\pi: \mathcal{C} \to \Mbarstar_{0,k,l}^{1/r}$ be the universal curve and $\mathcal{S} \to \mathcal{C}$ be the twisted universal spin bundle with the universal Serre-dual bundle 
\begin{equation}\label{eq universal serre dual bundle}
\mathcal{J}:= \mathcal{S}^{\vee} \otimes \omega_{\pi},
\end{equation}
then we define
\begin{equation}
\label{eq Witten bundle def}
{\mathcal{W}}:= (R^0\pi_*\mathcal{J})_+ = (R^1\pi_*\mathcal{S})^{\vee}_-,
\end{equation}
where the subscripts $+$ and $-$ denote invariant or anti-invariant sections under the universal involution $\widetilde{\phi}: \mathcal{J} \to \mathcal{J}$ or $\widetilde{\phi}: \mathcal{S} \to \mathcal{S}$.  To be more precise, defining~${\mathcal{W}}$ by \eqref{eq Witten bundle def} would require dealing with derived pushforward in the category of orbifold-with-corners. To avoid this technicality, we define ${\mathcal{W}}$ as the pullback of the analogous bundle from a subset of the closed moduli space $\Mbar_{0,k+2l}^{1/r}$; see \cite[Section 4.1]{BCT1}.

 On a non-empty component of the moduli space which parameterizes $r$-spin disks with internal twists $\{a_i\}$ and boundary twists $\{b_j\}$, the (real) rank of the Witten bundle is
\begin{equation}\label{eq rank of witten bundle}\frac{2 \sum_{i\in I} a_i + \sum_{j\in B} b_j - (r-2)}{r}.\end{equation}

In \cite[Definition 3.5]{TZ1}, a canonical relative orientation $o_\Gamma$ of the Witten bundle $\mathcal W_\Gamma \to \Mbar_\Gamma$ is defined for every connected legal stable graded $r$-spin graph $\Gamma$.

Other important line bundles in open $r$-spin theory are the \emph{relative cotangent line bundles} or \emph{tautological line bundles} at internal marked points. These line bundles have already been defined on the moduli space $\Mbar_{0,k,l}$ of stable marked disks (without spin structure) in \cite{PST14}, as the line bundles with fiber $T^*_{z_i}\Sigma$. Equivalently, these line bundles are the pullback of the usual relative cotangent line bundles ${\mathbb{L}}_i\to\Mbar_{0,k+2l}$ under the doubling map $\Mbar_{0,k,l} \to \Mbar_{0,k+2l}$ that sends $\Sigma$ to $C=\Sigma\sqcup_{\partial\Sigma}\overline{\Sigma}$.  The bundle $\mathbb{L}_i\to\Mbarstar_{0,k,l}^{1/r}$ is the pullback of this relative cotangent line bundle on $\Mbar_{0,k,l}$ under the morphism $\text{For}_{\text{spin}}$ that forgets the spin structure.  Note that $\mathbb{L}_i$ is a complex line bundle, hence it carries a canonical orientation.

\subsubsection{Decomposition properties of the Witten bundle}\label{subsec decomp}
In \cite{BCT1} the Witten bundle is proven to satisfy certain decomposition properties along nodes.  We state these properties here, further details and proofs can be found in \cite[Section 4.2]{BCT1}.

Given a genus-zero stable graded $r$-spin graph $\Gamma$, let $\widehat{\Gamma}$ be obtained by detaching either an edge or a contracted boundary tail of $\Gamma$.  In order to state the decomposition properties of the Witten bundle, we need the morphisms
\begin{equation}
\label{eq Witten decomp sequence}
\Mbarstar_{\widehat{\Gamma}}^{1/r} \xleftarrow{q} \Mbar_{\widehat{\Gamma}} \times_{\Mbar_{\Gamma}} \Mbarstar_{\Gamma}^{1/r} \xrightarrow{\mu} \Mbarstar_{\Gamma}^{1/r} \xrightarrow{i_{\Gamma}} \Mbarstar_{0,k,l}^{1/r},
\end{equation}
where $\Mbar_{\Gamma} \subseteq \Mbar_{0,k,l}$ is the moduli space of marked disks (without $r$-spin structure) corresponding to the dual graph $\Gamma$.  The morphism $q$ is defined by sending the $r$-spin structure $S$ to the $r$-spin structure $\widehat{S}$ defined by \eqref{eq normalize S}; it has degree one but is not an isomorphism because it does not induce an isomorphism between isotropy groups.  The morphism $\mu$ is the projection to the second factor in the fiber product; it is an isomorphism, but we distinguish between its domain and target because they have different universal objects.  The morphism $i_{\Gamma}$ is the inclusion.

We denote by ${\mathcal{W}}$ and $\widehat{{\mathcal{W}}}$ the Witten bundles on $\Mbarstar_{0,k,l}^{1/r}$ and $\Mbarstar_{\widehat{\Gamma}}^{1/r}$, the decomposition properties below show how these bundles are related under pullback via the morphisms \eqref{eq Witten decomp sequence}.

\begin{pr}{\cite[Proposition 4.7]{BCT1}}
\label{prop decomposition}
Let $\Gamma$ be a genus-zero stable graded $r$-spin graph with a single edge $e$, and let $\widehat{\Gamma}$ be the detaching of $\Gamma$ along $e$.  Then the Witten bundle decomposes as follows:
\begin{enumerate}
\item\label{it NS} If $e$ is a Neveu--Schwarz edge, then \begin{equation}\label{eq NSdecompses}
\mu^*i_{\Gamma}^*{\mathcal{W}} = q^*\widehat{{\mathcal{W}}}.
\end{equation}

\item\label{it decompose Ramond boundary edge} If $e$ is a Ramond boundary edge, then there is an exact sequence
\begin{equation}
\label{eq decompose}0 \to \mu^*i_{\Gamma}^*{\mathcal{W}} \to q^*\widehat{{\mathcal{W}}} \to \underline{\mathbb{R}_+} \to 0,
\end{equation}
where $\underline{\mathbb{R}_+}$ is a trivial real line bundle.

\item If $e$ is a Ramond internal edge connecting two closed vertices, write $q^*\widehat{\mathcal{W}} = \widehat{\mathcal{W}}_1 \boxplus \widehat{\mathcal{W}}_2$, where $\widehat{\mathcal{W}}_1$ is the Witten bundle on the component containing a contracted boundary tail or the anchor of $\Gamma$, and $\widehat{\mathcal{W}}_2$ is the Witten bundle on the other component.  Then there is an exact sequence
\begin{equation}
\label{eq decompose2}
0 \to \widehat{\mathcal{W}}_2 \to \mu^*i_{\Gamma}^*{\mathcal{W}} \to \widehat{\mathcal{W}}_1 \to 0.
\end{equation}
Furthermore, if $\widehat\Gamma'$ is defined to agree with $\widehat\Gamma$ except that the twist at each Ramond tail is $r-1$, and $q': \Mbar_{\widehat{\Gamma}} \times_{\Mbar_{\Gamma}} \Mbarstar_{\Gamma}^{1/r} \to \Mbarstar_{\widehat\Gamma'}^{1/r}$ is defined analogously to $q$, then there is an exact sequence
\begin{equation}
\label{eq decompose3}
0 \to \mu^*i_{\Gamma}^*{\mathcal{W}} \to (q')^*\widehat{{\mathcal{W}}}' \to {\underline{\mathbb{C}}}^{1/r} \to 0,
\end{equation}
where $\widehat{{\mathcal{W}}}'$ is the Witten bundle on $\Mbarstar_{\widehat\Gamma'}^{1/r}$ and ${\underline{\mathbb{C}}}^{1/r}$ is a line bundle whose $r$-th power is trivial.

\item If $e$ is a Ramond internal edge connecting an open vertex to a closed vertex, write $q^*\widehat{\mathcal{W}} = \widehat{\mathcal{W}}_1 \boxplus \widehat{\mathcal{W}}_2$, where $\widehat{\mathcal{W}}_1$ is the Witten bundle on the open component (defined via $\widehat{\mathcal{S}}|_{\mathcal{C}_1}$) and $\widehat{\mathcal{W}}_2$ is the Witten bundle on the closed component.  Then the exact sequences \eqref{eq decompose2} and \eqref{eq decompose3} both hold.
\end{enumerate}
Analogously, if $\Gamma$ has a single vertex, no edges, and a contracted boundary tail~$t$, and $\widehat{\Gamma}$ is the detaching of $\Gamma$ along~$t$, then there is a decomposition property:
\begin{enumerate}
\setcounter{enumi}{4}
\item\label{it decompose cb tail} If ${\mathcal{W}}$ and $\widehat{\mathcal{W}}$ denote the Witten bundles on $\Mbarstar_{0,k,l}^{1/r}$ and $\Mbarstar_{\widehat\Gamma}^{1/r}$, respectively, then the sequence \eqref{eq decompose} holds.
\end{enumerate}
\end{pr}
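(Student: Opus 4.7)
My plan is to derive all five decomposition statements from a single source — the normalization exact sequence for the universal Serre-dual bundle $\mathcal{J}$ along the edge $e$ — and then to restrict everywhere to $\widetilde\phi$-invariants. Writing $n:\widehat{\mathcal{C}}\to\mathcal{C}$ for the universal normalization sitting over the map $q$, the starting point is a short exact sequence of sheaves on the universal curve of the form
$$0\to \mathcal{J}\to n_*\widehat{\mathcal{J}}\to \mathcal{F}_e\to 0,$$
where the correction $\mathcal{F}_e$ is supported on the universal node and its precise structure depends on whether the two half-nodes lie in the set $\mathcal{R}$ of \eqref{eq normalize S}. Applying the invariant pushforward $(R^0\pi_*-)_+$ and using the genus-zero dimension count (which forces the relevant $R^1$'s to vanish on each piece, so the long exact sequence truncates) converts this into the claimed sequences of Witten bundles after pullback along $\mu$ and $q$.

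The case distinctions then come from the local analysis of $\mathcal{F}_e$ at the node. For an NS edge (case 1), both half-node twists are nonzero modulo $r$, so sections of $\mathcal{J}$ are automatically forced to vanish at the node by the isotropy action; the invariant part of $\mathcal{F}_e$ is therefore trivial and \eqref{eq NSdecompses} follows at once. For a Ramond boundary edge (case 2) and a Ramond contracted boundary tail (case 5), both half-node twists equal $-1 \bmod r$, so $\mathcal{J}$ has a rank-one fiber at the node which is real (the node lies on $\text{Fix}(\phi)$) and canonically positively oriented by the grading via the residue identification of Proposition \ref{prop lifting} (\ref{it Ramond boundary node}); this produces the trivial line bundle $\underline{\mathbb{R}_+}$ in \eqref{eq decompose}, and cases 2 and 5 reduce to the same local computation since normalizing a Ramond contracted boundary node replaces it by a single $\widetilde\phi$-invariant half-node with identical local structure.

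For a Ramond internal edge (cases 3 and 4), the anchor convention singles out one side of the node as distinguished, and $q^*\widehat{\mathcal{W}}$ factors as $\widehat{\mathcal{W}}_1\boxplus\widehat{\mathcal{W}}_2$ over the two components. Running the normalization sequence for the choice of $\mathcal{R}$ which places the Ramond half-node on the non-distinguished side, sections from $\widehat{\mathcal{W}}_2$ embed into $\mu^*i_\Gamma^*\mathcal{W}$ as those extending across the node, and the quotient records the evaluation on the distinguished side, giving \eqref{eq decompose2}. Sequence \eqref{eq decompose3} is obtained by running the same analysis against the opposite choice $\widehat{\mathcal{S}}'$: the two conventions for $\mathcal{R}$ differ by $\mathcal{O}(-r[q])$ at the half-nodes, which is precisely why the extra quotient line bundle $\underline{\mathbb{C}}^{1/r}$ has trivial $r$-th power.

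The main obstacle I expect is the careful bookkeeping in cases 3 and 4: one must track exactly which half-nodes belong to $\mathcal{R}$ for $\widehat\Gamma$ versus $\widehat\Gamma'$, propagate the $\mathcal{O}(-r[q])$ twist through the Serre dual and through the $\widetilde\phi$-invariance condition, and verify that the resulting cokernel is indeed the line bundle claimed. The identification of $\underline{\mathbb{R}_+}$ in cases 2 and 5 is a secondary delicate point, since its canonical positive orientation uses the grading and the residue-positivity convention from the definition of compatibility, not just the underlying spin bundle.
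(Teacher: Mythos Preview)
The paper does not give its own proof of this proposition: it is quoted verbatim from \cite[Proposition 4.7]{BCT1}, and the surrounding text explicitly says ``further details and proofs can be found in \cite[Section 4.2]{BCT1}.'' So there is no in-paper argument to compare against.

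That said, your outline is the standard route and matches what BCT1 does: start from the normalization sequence for $\mathcal{J}$ (equivalently for $\mathcal{S}$ via Serre duality), push forward, and use the genus-zero concavity to kill the $R^1$ terms so the long exact sequence truncates. A couple of points deserve tightening. In the NS case your phrasing ``sections of $\mathcal{J}$ are automatically forced to vanish at the node by the isotropy action'' is imprecise: what actually happens is that for NS half-nodes the set $\mathcal{R}$ in \eqref{eq normalize S} is empty, so $\widehat{S}=n^*S$ with no twist, and the cokernel of $\mathcal{J}\to n_*\widehat{\mathcal{J}}$ vanishes because the orbifold sections on both sides already match at the node without a gluing condition. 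In cases 2 and 5 your appeal to Proposition~\ref{prop lifting}(\ref{it Ramond boundary node}) is not the right citation for the trivialization of $\underline{\mathbb{R}_+}$; that item only records the twist values, whereas the positive orientation of the cokernel comes from the grading datum itself (the choice of $[w]$) together with the residue identification in the definition of compatibility at a contracted boundary. Finally, in cases 3 and 4 be careful that \eqref{eq decompose2} and \eqref{eq decompose3} are not obtained from ``opposite choices of $\mathcal{R}$'' but from two genuinely different normalization sequences: one for $\widehat{S}$ (giving \eqref{eq decompose2}) and one for the alternative twisted bundle $\widehat{S}'$ with both Ramond half-node twists set to $r-1$ (giving \eqref{eq decompose3}); the quotient $\underline{\mathbb{C}}^{1/r}$ then arises as the fiber of $J'$ at the node, whose $r$-th power is trivialized via $\kappa$.
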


\begin{rmk}\label{rmk decompose NS boundary node}
If the edge $e$ is a Neveu--Schwarz boundary edge, then the map $q$ is an isomorphism, and in this case, the proposition implies that the Witten bundle pulls back under the gluing morphism $\Mbarstar_{\widehat\Gamma}^{1/r} \to \Mbarstar_{0,k,l}^{1/r}$.

\end{rmk}

\subsubsection{Coherent sections and the assembling operator}\label{subsec coherent} 

   With the same notation as in Proposition \ref{prop decomposition}, let $\Gamma$ be a connected graded $r$-spin graph and $e$ is a edge of $\Gamma$, let $\Mbar_{\widehat{\Gamma}_1}$ and $\Mbar_{\widehat{\Gamma}_2}$ be the two components of $\widehat\Gamma:={\text{detach} }_e{\Gamma}$. We write $q^*\widehat{\mathcal{W}} = \widehat{\mathcal{W}}_1 \boxplus \widehat{\mathcal{W}}_2$, where $\widehat{\mathcal{W}}_1$ and $\widehat{\mathcal{W}}_2$ are the (pullback of) Witten bundles on $\Mbar_{\widehat{\Gamma}_1}$ and $\Mbar_{\widehat{\Gamma}_2}$. 
   Given sections $s_1$ and $s_2$ of $\widehat{\mathcal{W}}_1$ and $\widehat{\mathcal{W}}_2$, we want to construct a section of $\mathcal W \to \Mbarstar_\Gamma$. In the case where $e$ is an NS boundary edge, according to Remark \ref{rmk decompose NS boundary node}, we can be glue $s_1,s_2$ 
 to a section of $\mathcal W \to \Mbarstar_\Gamma$. However, when $e$ is an internal edge, we cannot glue $s_1$ and $s_2$ directly. In the case of an internal NS edge, this is because the automorphism groups of ${\mathcal{W}}$ and of the direct sum are not the same. Ramond internal edges introduce a more fundamental problem, since ${\mathcal{W}}$ does not decompose naturally as a direct sum of the $\widehat{\mathcal{W}}_1,\widehat{\mathcal{W}}_2,$ by Proposition \ref{prop decomposition} above. To construct a section of $\mathcal W \to \Mbarstar_\Gamma$, we need the \textit{assembling operator} introduced in \cite{BCT2}, which is based on the following technical notion of \textit{coherent multisections}. 
  
  Let $ \Gamma_c$ be a connected stable $r$-spin dual graph with an anchor $t\in T^I(\Gamma_c)$, \textit{i.e.} $ \Gamma_c$ has no open vertices or contracted boundary tails.
  \begin{itemize}
      \item If $\tw(t)=-1$, let $\mathcal J\to \Mbar_{ \Gamma_c}$ be the universal Serre-dual bundle as in \eqref{eq universal serre dual bundle}, we set $\mathcal{J}' := \mathcal{J} \otimes \mathcal O \left( r\Delta_{z_t}\right)$, where $\Delta_{z_t}$ is the divisor in the universal curve corresponding to the anchor $t$.
    We define an orbifold bundle ${\overline{\mathcal{R}}}_{\Gamma_c}$ on $\Mbar_{ \Gamma_c}$ by
    ${\overline{\mathcal{R}}}_{ \Gamma_c}:= \sigma_{z_t}^*\mathcal{J}'$,
     where $\sigma_{z_t}$ is the section corresponding to the  anchor $t$ in the universal curve. We also abuse notations to denote by ${\overline{\mathcal{R}}}_{ \Gamma_c}$ the total space of this bundle. We write
$\wp: {\overline{\mathcal{R}}}_{ \Gamma_c} \to \Mbar_{ \Gamma_c}$ for the projection.     We denote by ${\mathcal{W}}$ the  bundle $R^0\pi_*\mathcal{J}'$ on ${\overline{\mathcal{R}}}_{\Gamma_c}$.
\item If $\tw(t)\ne 1$, we set ${\overline{\mathcal{R}}}_{ \Gamma_c}:=\oCM_{\Gamma_c}$ and set $\mathcal W\to {\overline{\mathcal{R}}}_{ \Gamma_c}$ the same as $\mathcal W\to \oCM_{\Gamma_c}$.
  \end{itemize}

\begin{definition}{\cite[Definition 4.2]{BCT2}}
\label{def coherent}
 Let $ \Gamma_c$ be a connected stable $r$-spin dual graph with an anchor $t\in T^I(\Gamma_c)$, and let $s$ be a section of $\mathcal{W}$ over a subset $U\subset{\overline{\mathcal{R}}}_{\Gamma_c}$. We say $s$ is \textit{coherent} if either the twist $\tw(t)\ne -1$, or, in the case where $\tw(t)= -1$, for any point $\zeta=(C, u_t) \in U$  corresponding to a graded $r$-spin disk $C$ and an element $u_t$ in the fiber over $z_t\in C$ of the line bundle $J':=J\otimes \mathcal O(r[z_t])\to C$, the element $s(\zeta) \in H^0(J')$ satisfies
$$\ev_{z_t}s(\zeta) = u_t.$$ A coherent multisection $s$ is defined as a multisection  (see \cite[Appendix A]{BCT2}) whose local branches are coherent. We say that a multisection of $\mathcal W^{\oplus}$ is coherent if it can be written as a direct sum of coherent multisections of $\mathcal W$.
\end{definition}

Let $s$ be a coherent multisection of $\mathcal{W} \rightarrow {\overline{\mathcal{R}}}_{\Gamma_c}$. Note that, in the case $\tw(t)=-1$, for any $\zeta\in\Mbar_{\Gamma_c} \hookrightarrow {\overline{\mathcal{R}}}_{\Gamma_c}$, the evaluation $\ev_{z_t}(s(\zeta))$ is equal to zero, thus $s(\zeta)$ is induced by a multisection of $J$.  In other words, the restriction of a coherent multisection $s$ to $\Mbar_{\Gamma_c}$ is canonically identified with a multisection of ${\mathcal{W}} \rightarrow \Mbar_{\Gamma_c}$; we denote this induced multisection by $\overline{s}$.  In case $\tw(t)\ne -1$, we write $\overline{s} = s$.
\begin{lem}
    Let $\Gamma_c$ be a connected graded $r$-spin graph with an anchor $t$, for any multisection $\hat s$ of $\mathcal W\to \Mbar_{\Gamma_c}$,
    there exist a coherent multisection $s$ of $\mathcal W\to {\overline{\mathcal{R}}}_{\Gamma_c}$ satisfying $\overline{s}=\hat s$.
\end{lem}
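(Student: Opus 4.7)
If $\tw(t)\neq -1$, the statement is immediate: by construction ${\overline{\mathcal{R}}}_{\Gamma_c}=\Mbar_{\Gamma_c}$, the bundle $\mathcal{W}\to{\overline{\mathcal{R}}}_{\Gamma_c}$ is literally the Witten bundle on $\Mbar_{\Gamma_c}$, the coherence condition is vacuous, and $\overline s=s$, so taking $s:=\hat s$ suffices. The content of the lemma lies in the case $\tw(t)=-1$, which I address below.

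The starting point is the short exact sequence of sheaves on the universal curve
\begin{equation*}
0 \to \mathcal{J} \to \mathcal{J}' \to \mathcal{J}'/\mathcal{J} \to 0,
\end{equation*}
where $\mathcal{J}'/\mathcal{J}$ is supported along the anchor section and whose pushforward under $\pi$ is canonically identified with $\sigma_{z_t}^*\mathcal{J}'$. Pushing forward and using $R^1\pi_*\mathcal{J}=0$ (the genus-zero concavity that makes $\mathcal{W}=R^0\pi_*\mathcal{J}$ itself a vector bundle on $\Mbar_{\Gamma_c}$), I obtain the short exact sequence of orbifold vector bundles
\begin{equation*}
0 \to \mathcal{W} \to R^0\pi_*\mathcal{J}' \to \sigma_{z_t}^*\mathcal{J}' \to 0.
\end{equation*}
Now ${\overline{\mathcal{R}}}_{\Gamma_c}$ is by definition the total space of $\sigma_{z_t}^*\mathcal{J}'$, so on it we have the tautological section $u$ of $\wp^*\sigma_{z_t}^*\mathcal{J}'$ defined by $u(C,u_t)=u_t$, and the coherence condition of Definition~\ref{def coherent} is equivalent to $\ev_{z_t}(s)=u$.

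The core step is to produce a single multisection $\tau$ of $\mathcal{W}=\wp^*R^0\pi_*\mathcal{J}'$ on ${\overline{\mathcal{R}}}_{\Gamma_c}$ satisfying $\ev_{z_t}(\tau)=u$. Locally the surjectivity in the sequence above supplies smooth splittings; pulling back along $\wp$ and evaluating on $u$ yields local coherent sections, and I would patch them into a global multisection via a partition-of-unity argument in the multisection framework of \cite[Appendix A]{BCT2}. This is the only step that requires real work, and its feasibility rests on the vanishing $R^1\pi_*\mathcal{J}=0$; in the open $r$-spin setup in genus zero this is available, so I expect no genuine obstruction.

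With $\tau$ in hand, the remainder is a linear correction. Its restriction along the zero section $\tau|_{\Mbar_{\Gamma_c}}$ has vanishing $\ev_{z_t}$ and so induces a multisection $\overline{\tau|_{\Mbar_{\Gamma_c}}}$ of $\mathcal{W}\to\Mbar_{\Gamma_c}$. Setting
\begin{equation*}
s \;:=\; \tau \;+\; \wp^{*}\bigl(\hat s-\overline{\tau|_{\Mbar_{\Gamma_c}}}\bigr),
\end{equation*}
the second summand lies in the subbundle $\mathcal{W}\hookrightarrow R^0\pi_*\mathcal{J}'$, so it has vanishing evaluation at $z_t$ and does not disturb coherence, giving $\ev_{z_t}(s)=u$. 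Restricting to the zero section, the $\tau$-pieces cancel and one is left with $\overline s=\hat s$, completing the argument.
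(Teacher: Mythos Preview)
Your proposal is correct and takes essentially the same approach as the paper: both arguments first produce a single coherent multisection $s_0$ (the paper simply cites \cite[Section 4.1.2]{BCT2} for this, whereas you sketch the construction via the short exact sequence and a partition-of-unity patching), and then both apply the identical linear correction $s:=s_0+\wp^*(\hat s-\overline{s_0})$ to obtain a coherent multisection with $\overline{s}=\hat s$.
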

\begin{proof}

Let $\hat s$ be a section of ${\mathcal{W}}\to\Mbar_{\Gamma_c}$, we can identify $\wp^*\hat s$ as a section of ${\mathcal{W}} \rightarrow {\overline{\mathcal{R}}}_{\Gamma_c}$. Note that, under this identification, the  evaluation $\ev_{z_t}\wp^*\hat s\left(\zeta\right)$ always vanishes, hence $\wp^*\hat s$ is not coherent if $\tw(t)=-1$. On the other hand, if $s_0$ is a coherent multisection of ${\mathcal{W}} \rightarrow {\overline{\mathcal{R}}}_{\Gamma_c}$, then $s_0+\wp^*\hat s$ is also a coherent multisection. Moreover we have $\overline{s_0+\wp^*\hat s}=\overline{s_0}+\hat s$.

For any connected graded $r$-spin graph $\Gamma_c$ with an anchor $t$, the bundle ${\mathcal{W}} \rightarrow {\overline{\mathcal{R}}}_{\Gamma_c}$ admits (at least) one coherent multisection $s_{\Gamma_c}$. Actually if $\tw(t)\ne -1$, any multisection is coherent; 
if $\tw(t)=-1$, a coherent  multisection of ${\mathcal{W}} \rightarrow {\overline{\mathcal{R}}}_{\Gamma_c}$ is constructed in \cite[Section 4.1.2]{BCT2}. 
As a consequence, for any multisection $\hat s$ of ${\mathcal{W}}\to\Mbar_{\Gamma_c}$, we can define a coherent multisection
$s:=s_{\Gamma_c}+\wp^*(\hat s-\overline{s_{\Gamma_c}})$ and it satisfies $\overline{s}=\hat s$ as required. 
\end{proof}
Given a connected graded $r$-spin graph $\Gamma$ with an open vertex or a contracted boundary, let $e$ be an internal edge $e\in E^I(\Gamma)$. We denote by $\Gamma_o$ and $\Gamma_c$ the connected components of ${\text{detach} }_e \Gamma$, where $\Gamma_o$ has an open vertex or a contracted boundary (hence $\Gamma_c$ has an anchor). Using the assembling operator $\Ass$ defined in \cite[Section 4.1.3]{BCT2}, we can glue a multisection $s_o$ of $\mathcal W^{\oplus m}\to \Mbar_{\Gamma_o}$ and a multisection $s_c$ of $\mathcal W^{\oplus m}\to {\overline{\mathcal{R}}}_{\Gamma_c}$ to obtain a multisection $\Ass_{\Gamma,e}(s_c\boxplus s_o)$ of $\mathcal W^{\oplus m}\to \Mbar_{\Gamma}$. 
We refer the reader to \cite[Section 4.1.3]{BCT2} for further details and exact definitions.

\subsection{The point insertion technique}\label{subsec PI}
Just like the closed $r$-spin theory considers an intersection theory over the moduli spaces of $r$-spin curves,  the genus-zero open $r$-spin theory considers the intersection theory over the moduli of the $r$-spin disks $\Mbar^{1/r}_{0,B,I}$. However, since $\Mbar^{1/r}_{0,B,I}$ is an orbifold with corners, the intersection theory is not well-defined. The grading structure allows us to deal with certain types of boundaries using a notion of positivity (see Section \ref{sec sections}). The procedure of \textit{point insertion}, which also relies on the grading, was developed in \cite{TZ1} in order to treat the remaining boundaries: we can glue another moduli spaces to $\Mbar^{1/r}_{0,B,I}$ along those strata and by that cancel them. 
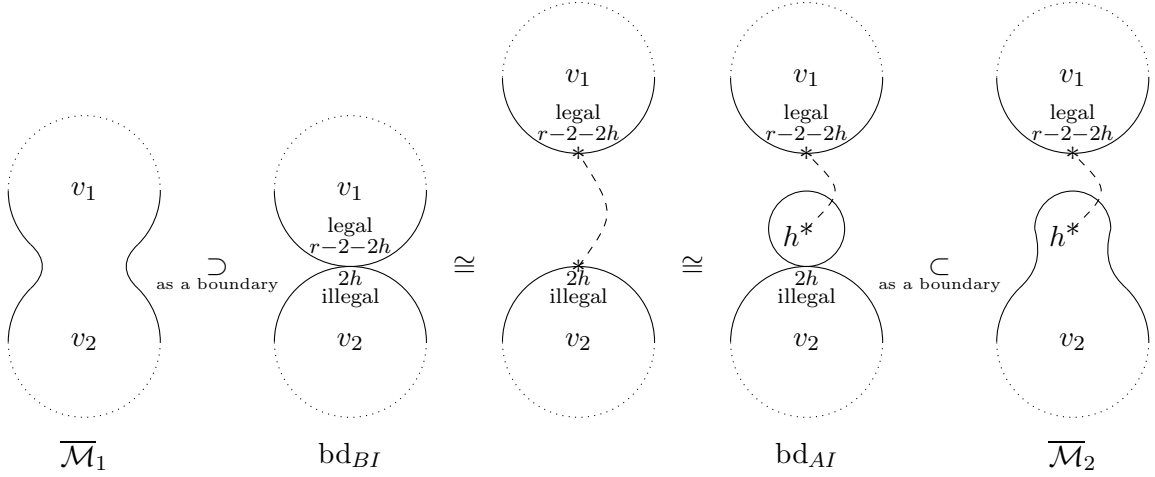
\begin{figure}[h]
    \centering
    \begin{tikzpicture}
        \draw (0,0) arc (0:180:1);
        \draw[dotted] (0,0) arc (0:-180:1);
        \node at (-1,1) {$*$};
        \node at (-1,0.7) {$\substack{2h\\ \text{illegal}}$};
        \node at (-1,0) {$v_2$};
                
        \draw[dotted] (0,3.5) arc (0:180:1);
        \draw (0,3.5) arc (0:-180:1);
        \node at (-1,2.5) {$*$};
        \node at (-1,2.9) {$\substack{\text{legal}\\r-2-2h}$};
        \node at (-1,3.5) {$v_1$};

        \draw[dashed] (-1,1) .. controls (-0.5,1.7) .. (-1,2.5);

        \draw (-3,0) arc (0:180:1);
        \draw[dotted] (-3,0) arc (0:-180:1);
       
        \node at (-4,0.7) {$\substack{2h\\ \text{illegal}}$};
        \node at (-4,0) {$v_2$};
        \node at (-4,-1.5) {$\text{bd}_{BI}$};

        \draw[dotted] (-3,2) arc (0:180:1);
        \draw (-3,2) arc (0:-180:1);
       
        \node at (-4,1.4) {$\substack{\text{legal}\\r-2-2h}$};
        \node at (-4,2) {$v_1$};

        \node at (-2.5,1) {$\cong$};

        \draw (3,0) arc (0:180:1);
        \draw[dotted] (3,0) arc (0:-180:1);
        \node at (2,1.5) {$*$};
        \node at (1.8,1.4) {$h$};
        \node at (2,0.7) {$\substack{2h\\ \text{illegal}}$};
        \node at (2,0) {$v_2$};
        \node at (2,-1.5) {$\text{bd}_{AI}$};
                
        \draw[dotted] (3,3.5) arc (0:180:1);
        \draw (3,3.5) arc (0:-180:1);
        \node at (2,2.5) {$*$};
        \node at (2,2.9) {$\substack{\text{legal}\\r-2-2h}$};
        \node at (2,3.5) {$v_1$};
        
        \draw (2.5,1.5) arc (0:360:0.5);
        \draw[dashed] (2,1.5) .. controls (2.5,2) .. (2,2.5);

        \node at (0.5,1) {$\cong$};

         \draw (-6.5,0) arc (0:45:1);
         \draw (-8.5,0) arc (180:135:1);
        \draw[dotted] (-6.5,0) arc (0:-180:1);
        
        \node at (-7.5,0) {$v_2$};
        \node at (-7.5,-1.5) {$\Mbar_1$};

        \draw[dotted] (-6.5,2) arc (0:180:1);
        \draw (-6.5,2) arc (0:-45:1);
        \draw (-8.5,2) arc (-180:-135:1);
        
        \node at (-7.5,2) {$v_1$};
        \draw (-6.793,1.293) .. controls (-7,1.1) and (-7,0.9).. (-6.793,0.707);
        \draw (-8.207,1.293) .. controls (-8,1.1) and (-8,0.9).. (-8.207,0.707);

        \node at (-5.75,1) {$\supset$};
        \node at (-5.75,0.75) {\tiny{as a boundary}};

        \draw (6.5,0) arc (0:45:1);
        \draw (4.5,0) arc (180:135:1);
        \draw[dotted] (6.5,0) arc (0:-180:1);
        \node at (5.5,1.5) {$*$};
        \node at (5.3,1.4) {$h$};
       
        \node at (5.5,0) {$v_2$};
        \node at (5.5,-1.5) {$\Mbar_2$};
                
        \draw[dotted] (6.5,3.5) arc (0:180:1);
        \draw (6.5,3.5) arc (0:-180:1);
        \node at (5.5,2.5) {$*$};
        \node at (5.5,2.9) {$\substack{\text{legal}\\r-2-2h}$};
        \node at (5.5,3.5) {$v_1$};
        
        \draw (6,1.5) arc (0:180:0.5);
        
        \draw[dashed] (5.5,1.5) .. controls (6,2) .. (5.5,2.5);
        
        \draw (6.207,0.707) .. controls (6,0.9) and (5.9,1.1).. (6,1.5);
        \draw (4.793,0.707) .. controls (5,0.9) and (5.1,1.1).. (5,1.5);

        \node at (3.75,1) {$\subset$};
        \node at (3.75,0.75) {\tiny{as a boundary}};
        
    \end{tikzpicture}
    \caption{In point insertion procedure we glue $\Mbar_1$ and $\Mbar_2$ together along their isomorphic boundaries $\text{bd}_{BI}$ and $\text{bd}_{AI}$. The first isomorphism follows from the decomposition property for boundary NS nodes; the second isomorphism holds because the moduli $\Mbar^{1/r}_{0.\{r-2-2h\},\{h\}}$ (represented by the smallest bubble in the figure) is a single point. The new markings coming from the point insertion procedure are represented by $*$; the dashed line between the new markings indicates that they come from the same node.} 
    \label{fig point insertion demonstration}
\end{figure}

More precisely, as shown in Figure \ref{fig point insertion demonstration}, let $\Mbar_1$ be a moduli of $r$-spin disks, and $\text{bd}_{BI}\subset \Mbar_1$ be a boundary corresponding to an NS boundary node with twist $2h$ at the illegal half-node. We can glue to $\Mbar_1$, along the boundary $\text{bd}_{BI}$, another moduli $\Mbar_2$ which has a boundary $\text{bd}_{AI}$ diffeomorphic to $\text{bd}_{BI}$. Note that $\Mbar_2$ is a moduli of two disconnected $r$-spin disks, obtained by first detaching the boundary node, then ``inserting" the illegal twist-$2h$ boundary marked point to the interior as a twist-$h$ internal marked point. The boundary strata along which we glue the moduli spaces are called \emph{spurious boundaries}.

By applying this procedure repeatedly we get a glued moduli (see \cite[Section 4.6]{TZ1}) with only real (non-spurious) boundaries which can be dealt with positivity. A point in the pre-glued space represents a disjoint union of graded $r$-spin disks, together with the combinatorial data of dashed lines connecting each pair of boundary marking and internal marking which appear together in the point insertion procedure. A point in the glued space represents an equivalence class of such objects under the equivalence relation induced by point insertion procedure.

We will define $\lfloor \frac{r}{2}\rfloor$ different point insertion theories indexed by an integer $\h\in \{0,1,\dots,\lfloor \frac{r-2}{2}\rfloor\}$. For a chosen $\h$, we do point insertion at an NS boundary node $n$ if and only if the twist of the illegal half-node of $n$ is less than or equal to $2\h$.

\subsubsection{$(r,\h)$-disks, $(r,\h)$-graphs and moduli}
We now more formally describe the objects of the pre-glued moduli space.
\begin{dfn}{\cite[Definition 4.3]{TZ1}}\label{dfn rh disk}
    An $(r,\h)$-disk is a collection of legal connected level-$\h$ stable graded $r$-spin disks (the components) with non-empty $\text{Fix}(\phi)$, together with
    \begin{enumerate}
        \item a bijection (denoted by dashed lines) between a subset $I^p$ of the tails and a subset $B^p$ of the tails,  where the twist $a$ and $b$ for paired internal tail and boundary tail satisfies $a+2b=r-2$ and $0\le a\le \h$;
        \item markings on the set of unpaired internal tails $I^{up}$ and boundary tails $B^{up}$, \textit{i.e.} identifications $I^{up}=\{1,2,\dots,\lvert I^{up}\rvert\}$ and $B^{up}=\{1,2,\dots,\lvert B^{up}\rvert\}$.
    \end{enumerate}
    We require that, in the collection of graphs, there exists no genus-zero stable graded $r$-spin disk with only one internal tail in $I^p$, one boundary tail in $B^p$ and no tails in $I^{up}$ or $B^{up}$.
    We also require that the graph $\hat{\mathbf{G}}$, whose vertices are connected disks in the collection and there is an edge between two vertices if they contain a pair of points corresponding to a dashed line, is a connected genus-zero graph.
    
\end{dfn}

The graph $\hat{\mathbf{G}}$ mentioned above characterizes the topological type of an $(r,\h)$-disk. Since each graded $r$-spin disk in the collection of an $(r,\h)$-disk is associated with a graded $r$-spin graph, by assigning each vertex of $\hat{\mathbf{G}}$ the corresponding graded $r$-spin graph, and specifying the corresponding pair of tails for each edge of $\hat{\mathbf{G}}$, we obtain the following combinatorial object, which can be viewed as a refined version of $\hat{\mathbf{G}}$.
\begin{dfn}{\cite[Definition 4.7]{TZ1}}\label{def rh graphs}
For $0\le \h \le \lfloor\frac{r}{2}\rfloor-1$, a \textit{genus-zero $(r,\h)$-graph}  $\mathbf{G}$ consists of 
\begin{itemize} 
    \item a set $V(\mathbf{G})$ of connected genus-zero legal level-$\h$ stable graded $r$-spin graphs with at least one open vertex or contracted boundary tail;
    \item two partitions of sets
    $$
    \bigsqcup_{\Gamma\in V(\mathbf{G})}\left(T^I(\Gamma)\backslash H^{CB}(\Gamma)\right)=I(\mathbf{G})\sqcup I'(\mathbf{G})
    $$
    and
    $$
    \bigsqcup_{\Gamma\in V(\mathbf{G})}T^B(\Gamma)=B(\mathbf{G})\sqcup B'(\mathbf{G});
    $$
    \item a set of edges (the \textit{dashed lines}) $$E(\mathbf{G})\subseteq \{(a,b)\colon a\in I'(\mathbf{G}),b\in B'(\mathbf{G}), 2a+b=r-2\}$$ which induces an one-to-one correspondence $\delta$ between $I'(\mathbf{G})$ and $B'(\mathbf{G})$;
    \item a labelling of the set $I(\mathbf{G})$ by $\{1,2,\dots,l(\mathbf{G}):=\lvert I(\mathbf{G})\rvert\}$ and a labelling of the set $B(\mathbf{G})$ by $\{1,2,\dots,k(\mathbf{G}):=\lvert B(\mathbf{G})\rvert\}$.
\end{itemize}
    We require that 
    \begin{enumerate}
        \item there exists no $\Gamma\in V(\mathbf{G})$ satisfying 
    $
    H^I(\Gamma)\subseteq I'(\mathbf{G}), H^B(\Gamma)\subseteq B'(\mathbf{G})$ 
    and $
         \lvert H^I(\Gamma)\rvert= \lvert H^B(\Gamma)\rvert=1;
    $
    \end{enumerate}
    We define an auxiliary graph (in the normal sense) $\hat{\mathbf{G}}$ in the following way: the set of vertices of $\hat{\mathbf{G}}$ is $V(\mathbf{G})$, the set of edges of $\hat{\mathbf{G}}$ is $E(\mathbf{G})$; an element $(a,b)\in E(\mathbf{G})$ corresponds to an edge between the vertices $\Gamma_a$ and $\Gamma_b$, where  $a\in T^I(\Gamma_a)$ and $b\in T^B(\Gamma_b)$. We also require that
\begin{enumerate}[resume*]
    \item  the graph $\hat{\mathbf{G}}$ is a connected and genus-zero.
\end{enumerate}

\end{dfn}

In this paper, since we exclusively focus on the genus-zero case, when we refer to an $(r,\h)$-graph, we mean a genus-zero $(r,\h)$-graph.

\begin{dfn}
Let $\mathbf{G}$ be an $(r,\h)$-graph. Let $e$ be an edge or a contracted boundary tail of some $\Gamma\in V(\mathbf{G})$. Since $T^I(\Gamma)\backslash H^{CB}(\Gamma)=T^I(d_e \Gamma)\backslash H^{CB}(d_e \Gamma)$ and $T^B(\Gamma)=T^B(d_e \Gamma)$, we define the \textit{smoothing} of $\mathbf{G}$ along $e$ to be the $(r,\h)$-graph $d_e \mathbf{G}$ obtained by replacing $\Gamma$ with $d_e \Gamma$.

We say $\mathbf{G}$ is smooth if all $\Gamma\in V(\mathbf{G})$ are smooth stable graded $r$-spin graphs.
We denote by $\GPI^{r,\h}_0$ the set of all genus-zero $(r,\h)$-graphs, by $\GPI^{r,\h}_{0,B,I}$ the set of all genus-zero $(r,\h)$-graphs $\mathbf{G}$ satisfying $I(\mathbf{G})=I$ and $B(\mathbf{G})=B$, by $\sGPI^{r,\h}_{0,B,I}$ the set 
$$ 
\sGPI^{r,\h}_{0,B,I}:=\{\mathbf{G}\in \GPI^{r,\h}_{0,B,I}\colon \mathbf{G}\text{ smooth}\}.
$$
\end{dfn}

\begin{dfn}
    An isomorphism between two $(r,\h)$-graphs $\mathbf{G_1}$ and $\mathbf{G_2}$ consists of a collection of isomorphism of stable graded $r$-spin graphs between elements of $V(\mathbf{G_1})$ and $V(\mathbf{G_2})$, which induces a bijection between $V(\mathbf{G_1})$ and $V(\mathbf{G_2})$, and preserves the partitions, dashed lines, and labellings.
\end{dfn}

For each $\mathbf{G}\in \GPI^{r,\h}_0$, let $\operatorname{Aut} \mathbf{G}$ be the group of automorphisms of $\mathbf{G}\in \GPI^{r,\h}_0$, then there is a natural action of $\operatorname{Aut} \mathbf{G}$ over the product $\prod_{\Gamma\in V(\mathbf{G})} \Mbar_\Gamma$. We define 
$$
\Mbar_\mathbf{G}:=\left(\prod_{\Gamma\in V(\mathbf{G})} \Mbar_\Gamma\right)\bigg\slash \operatorname{Aut} \mathbf{G}.
$$
Let $\mathcal W_\Gamma$ be the Witten bundle over $\Mbar_\Gamma$, we define the Witten bundle $\mathcal W_\mathbf{G}$ over $\Mbar_\mathbf{G}$ to be 
$$
 \mathcal W_\mathbf{G}:= \left(\bboxplus_{\Gamma\in V(\mathbf{G})}\mathcal W_\Gamma\right)\bigg\slash \operatorname{Aut} \mathbf{G}.
$$
\begin{rmk}\label{rmk aut trivial}
    In this paper, when we only consider genus-zero  $(r,\h)$-graphs $\mathbf{G}$, the automorphism groups $\operatorname{Aut}\mathbf{G}$ are always trivial. In this case we denote by $\pi_{\mathbf{G},\Gamma}$ the projection maps 
$$
\pi_{\mathbf{G},\Gamma}\colon \Mbar_{\mathbf{G}}\to \Mbar_\Gamma. 
$$
\end{rmk}
Let $o_\Gamma$ be the canonical relative orientation of $\mathcal W_\Gamma$ over $\Mbar_\Gamma$, we define the canonical relative orientation $o_\mathbf{G}$ of $\mathcal W_\mathbf{G}$ over $\Mbar_\mathbf{G}$ by
\begin{equation}\label{eq orientation point insertion}
    o_\mathbf{G}:=(-1)^{\lvert E(\mathbf{G})\rvert}\bigwedge_{\Gamma\in V(\mathbf{G})}o_\Gamma.
\end{equation}
Observe that $o_\mathbf{G}$ is independent of the order of the wedge product, since for each $\Gamma\in V(\mathbf{G})$ we have 
$$
\dim \Mbar_\Gamma \equiv \operatorname{rank} \mathcal W_\Gamma \mod 2.
$$
\begin{dfn} Given an integer $\h\in \{0,1,\dots,\lfloor\frac{r-2}{2}\rfloor\}$, a finite set $I$ of internal markings with twist in $\{0,1,\dots,r-1\}$ and a finite set $B$ of boundary markings with twist in $\{r-2-2\h,r-2\h,\dots,r-4,r-2\}$, we define the moduli space $\Mbar^{\frac{1}{r},\h}_{0,B,I}$ of $(r,\h)$-disks with markings $B,I$ to be
\begin{equation}\label{eq def moduli point insertion}
    \Mbar^{\frac{1}{r},\h}_{0,B,I}:=\bigsqcup_{\mathbf{G}\in \sGPI^{r,\h}_{0,B,I}}\Mbar_\mathbf{G}.
\end{equation}

The Witten bundles with relative orientations over the connected components $\Mbar_\mathbf{G}$ of $\Mbar^{\frac{1}{r},\h}_{0,B,I}$ induce the Witten bundle $\mathcal W^{\frac{1}{r},\h}_{0,B,I}$ over $\Mbar^{\frac{1}{r},\h}_{0,B,I}$ with relative orientation.
\end{dfn}

\subsubsection{Boundary strata and point insertion}
For an $(r,\h)$-graph $\bm{G}$, we write
$$
E(\bm{G}):=\bigsqcup_{\Gamma\in V(\bm{G})} E(\Gamma)
$$
and 
$$
H^{CB}(\bm{G}):= \bigsqcup_{\Gamma\in V(\bm{G})}  H^{CB}(\Gamma).
$$
For a set 
$S\subseteq E(\bm{G})\sqcup H^{CB}(\bm{G})$, 
we can perform a sequence of smoothings (in any order, since they lead to the same result) and obtain an $(r,\h)$-graph $d_S \bm{G}$.  We set
\begin{align*}
&\partial^!\bm{G} = \{\bm{H} \; | \; \bm{G} = d_S\bm{H} \text{ for some } S\},\\
&\partial \bm{G} = \partial^!\bm{G} \setminus \{\bm{G}\},\\
&\partial^B \bm{G} = \{\bm{H} \in \partial \bm{G} \; | \; E^B(\bm{H}) \cup H^{CB}(\bm{H}) \neq \emptyset\}.
\end{align*}
For a graded $r$-spin graph $\Gamma$, we can define $\partial^!\Gamma, \partial \Gamma$ and $\partial^B \Gamma$ in a similar way.

For an $(r,\h)$-graph $\mathbf{G}$, a boundary stratum of $\Mbar_\mathbf{G}$ corresponds to a graph in $\partial^!\mathbf{G}$, or more precisely, a choice of $\Delta_i\in \partial^!\Gamma_{i}$ for each $\Gamma_i\in V(\mathbf{G})$. In particular, a codimension-1 boundary of $\Mbar_\mathbf{G}$ for smooth $\mathbf{G}$ is determined by a choice of $\Gamma\in V(\mathbf{G})$ and a graph $\Delta \in \partial \Gamma$, where $\Delta$ has either one contracted boundary tail and no edges, or exactly one edge which is a boundary edge. There are five different types of codimension-1 boundaries of $\Mbar_\mathbf{G}$ depending on the type of the (half-)edge of $\Delta$ (or equivalently, the corresponding node of a curve $C\in \mathcal M_\Delta$): 
\begin{enumerate}
    \item[{CB}] contracted boundary tails;
    \item[{R}] Ramond boundary edges;
    \item[{NS+}] NS boundary edges whose twist on the illegal side is greater than $2\h$;
    \item[AI] NS boundary edges whose twist on the illegal side is less than or equal to $2\h$, and the vertex containing the legal half-node only contains this half-edge and an internal tail $a\in T^I(\Delta)\cap I'(\mathbf{G})$;
    \item[BI] the remaining NS boundary edges whose twist on the illegal side is less than or equal to $2\h$.
\end{enumerate}
Therefore, the codimension-1 boundary of $\Mbar^{\frac{1}{r},\h}_{0,B,I}$ is a union of five different types of boundaries. 
\begin{rmk}
    The abbreviation ``BI" stands for ``before-insertion", while the abbreviation ``AI" stands for ``after-insertion". 
\end{rmk}

We claim that there is a one-to-one correspondence between the type-AI boundaries and the type-BI boundaries.

    \begin{thm}{\cite[Theorem 4.12]{TZ1}}\label{thm  PI boundaries paried}
 For fixed $I$ and $B$, there is a one-to-one correspondence $\PI$ between the type-BI boundaries and the type-AI boundaries of $\Mbar^{\frac{1}{r},\h}_{0,B,I}$. Two boundaries paired by the correspondence $\PI$ are canonically diffeomorphic, and this diffeomorphism can be lifted to the Witten bundles and the relative cotangent line bundles restricted to them. Moreover, the canonical relative orientations of the Witten bundles on the paired (spurious) boundaries induced by the canonical relative orientations are opposite to each other. 
 \end{thm}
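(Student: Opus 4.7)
The plan is to construct $\PI$ combinatorially first, then promote it to a diffeomorphism of moduli strata, and finally reconcile the orientations. A type-BI boundary of $\Mbar^{\frac{1}{r},\h}_{0,B,I}$ is specified by a smooth $(r,\h)$-graph $\mathbf{G}\in\sGPI^{r,\h}_{0,B,I}$ together with a vertex $\Gamma_0\in V(\mathbf{G})$ and a degeneration $\Delta_0\in\partial\Gamma_0$ introducing one new NS boundary edge $e$ with illegal-side twist $2h\le 2\h$ that is not of AI form. I associate to it the smooth $(r,\h)$-graph $\mathbf{G}'=\PI(\mathbf{G})\in\sGPI^{r,\h}_{0,B,I}$ obtained by (i) detaching $e$ in $\Delta_0$ to split $\Gamma_0$ into two smooth graphs $\Gamma_0^L$ (legal side, carrying a new legal boundary tail of twist $r-2-2h$) and $\Gamma_0^I$ (illegal side, temporarily carrying a new illegal boundary tail of twist $2h$); (ii) reinterpreting the twist-$2h$ illegal boundary tail on $\Gamma_0^I$ as an internal tail of twist $h$; (iii) pairing the new boundary tail on $\Gamma_0^L$ with the new internal tail on $\Gamma_0^I$ via a dashed line. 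The corresponding AI boundary of $\Mbar_{\mathbf{G}'}$ is the stratum where $\Gamma_0^I$ appears as the required small legal-side vertex of the degeneration. The numerical identity $2h+(r-2-2h)=r-2$ matches the pairing condition of Definition~\ref{def rh graphs} and confirms $\mathbf{G}'$ is well-defined; the inverse map (contract the dashed line back into a boundary node, converting the twist-$h$ internal tail into a twist-$2h$ illegal boundary half-node) is just as explicit, so $\PI$ is a bijection between type-BI and type-AI boundaries.

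To verify that paired boundaries are canonically diffeomorphic and that the Witten bundle and $\mathbb{L}_i$ lift, I would exploit the fact that the small vertex of the AI stratum, having one twist-$h$ internal tail and one twist-$2h$ boundary tail, satisfies $k+2l-3=1+2-3=0$, so its moduli reduces to a single point by Observation~\ref{obs rank open}. Hence the BI and AI boundary strata both factor as the product of the moduli of the unaffected vertices of $\mathbf{G}$ (equivalently $\mathbf{G}'$) times $\Mbar_{\Gamma_0^L}\times\Mbar_{\Gamma_0^I}$, giving a canonical diffeomorphism. Since $e$ is NS, Proposition~\ref{prop decomposition}\ref{it NS} gives a clean pullback identification of the Witten bundles with no extra $\underline{\mathbb{R}_+}$ or Ramond-induced exact sequence. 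The tautological line bundles $\mathbb{L}_i$, being pullbacks from $\Mbar_{0,k,l}$, lift equally well, since the corresponding decomposition along an NS boundary node in the stable-disk moduli is the classical splitting, and each $\mathbb{L}_i$ is unchanged outside the detached node.

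The most delicate step, and the one I expect to be the main obstacle, is the comparison of canonical orientations. By formula~\eqref{eq orientation point insertion}, $o_{\mathbf{G}'}$ carries an extra factor of $-1$ relative to $o_{\mathbf{G}}$ because $\lvert E(\mathbf{G}')\rvert=\lvert E(\mathbf{G})\rvert+1$, owing to the new dashed line. Under the diffeomorphism above the canonical relative orientation $o_\Gamma$ of each unaffected vertex is transported identically, so the entire orientation comparison reduces to comparing the orientation $o_{\Gamma_0}$ (on the BI side, restricted to the NS boundary $\Delta_0$) with $o_{\Gamma_0^L}\wedge o_{\Gamma_0^I}$ (on the AI side). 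I would unwind the definition of the canonical relative orientation from \cite[Definition 3.5]{TZ1}, express both sides in terms of the local trivializations of the Witten bundle coming from the NS decomposition~\eqref{eq NSdecompses}, and check via the parity computation for NS half-edges (using $\dim\Mbar_\Gamma\equiv\rk\mathcal{W}_\Gamma\bmod 2$) that no further sign is produced when converting a boundary half-node to a tail and reinterpreting it as an internal tail of twist $h$. Combining this cancellation with the extra factor $(-1)^{\lvert E(\mathbf{G}')\rvert-\lvert E(\mathbf{G})\rvert}=-1$ yields the claimed opposite orientation.
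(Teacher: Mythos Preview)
This theorem is not proved in the present paper; it is quoted from the prequel \cite{TZ1}, so there is no in-paper argument to compare against directly. Your overall strategy---construct $\PI$ combinatorially by detaching the BI node and adding a dashed line, invoke the NS decomposition of Proposition~\ref{prop decomposition}\ref{it NS}, exploit that the extra vertex has zero-dimensional moduli and rank-zero Witten bundle, and trace the $(-1)^{|E(\mathbf{G})|}$ sign from \eqref{eq orientation point insertion}---is the natural one and is consistent with how the paper uses this result (see, e.g., the proof of Theorem~\ref{thm: compare primary}).

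There is, however, a concrete error in your identification of the AI boundary. After your step~(ii), the modified $\Gamma_0^I$ carries the new twist-$h$ internal tail together with \emph{all} the other markings from the illegal side of $\Gamma_0$. The AI boundary of $\Mbar_{\mathbf{G}'}$ is \emph{not} the stratum in which this $\Gamma_0^I$ is itself the small legal-side vertex: that would force $\Gamma_0^I$ to have exactly one half-edge and one internal tail, which is precisely what the BI (rather than AI) hypothesis on the original edge rules out. The correct AI degeneration occurs \emph{within} the modified $\Gamma_0^I$: the new twist-$h$ internal tail collides with the boundary, bubbling off a new small vertex carrying that internal tail and a \emph{legal} boundary half-node of twist $r-2-2h$ (not a twist-$2h$ boundary tail, as you wrote), while the remainder is exactly the original illegal-side graph with its illegal twist-$2h$ half-node restored. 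This is why both strata factor as $\{\text{pt}\}\times\Mbarstar_{\Gamma_0^L}\times\Mbarstar_{\Gamma_0^I}\times(\text{unchanged vertices})$; compare the explicit form $\text{bd}_{AI}\cong\{pt\}\times\Mbarstar_{v_0}\times\prod_i\Mbar_{v_i}$ used in the proof of Theorem~\ref{thm: compare primary}. With this correction your diffeomorphism and bundle-lifting arguments go through as outlined. Your orientation reduction is right in spirit but, as you acknowledge, incomplete: it requires two separate NS-degeneration sign comparisons (one for $o_{\Gamma_0}$ along the BI node, one for $o_{\hat\Gamma_0^I}$ along the AI node) and the verification that they cancel, which is where the detailed orientation analysis of \cite{TZ1} enters.
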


 \begin{figure}[h]
         \centering

         \begin{subfigure}{.45\textwidth}
  \centering
\begin{tikzpicture}[scale=0.45]
\draw (0,-0.5) circle (1);
\draw (-1,-0.5) arc (180:360:1 and 0.333);
\draw[dashed](1,-0.5) arc (0:180:1 and 0.333);

\draw (-1.5,-3) arc (180:360:1.5 and 0.5);
\draw[dashed](1.5,-3) arc (0:180:1.5 and 0.5);
\draw(1.5,-3) arc (0:180:1.5);

\draw (1.5,-3) arc (180:360:1 and 0.333);
\draw[dashed](3.5,-3) arc (0:180:1 and 0.333);
\draw(3.5,-3) arc (0:180:1);

\node at (1.5,-3) [circle,fill,inner sep=1pt]{};

\draw (4.5,-3) arc (180:360:2 and 0.667);
\draw[dashed](8.5,-3) arc (0:180:2 and 0.667);
\draw(8.5,-3) arc (0:180:2);

\node at (3.5,-3) [circle,fill,inner sep=1pt]{};
\node at (6.5,-2){*};
\draw[dashed] (3.5,-3) .. controls (5.5,-1.5) ..(6.5,-2);

\node at (1.5,-4.3){$C_1^{BI}$};

\end{tikzpicture} 
\end{subfigure}
\begin{subfigure}{.45\textwidth}
  \centering
\begin{tikzpicture}[scale=0.45]
\draw (0,-0.5) circle (1);
\draw (-1,-0.5) arc (180:360:1 and 0.333);
\draw[dashed](1,-0.5) arc (0:180:1 and 0.333);

\draw (-1.5,-3) arc (180:360:1.5 and 0.5);
\draw[dashed](1.5,-3) arc (0:180:1.5 and 0.5);
\draw(1.5,-3) arc (0:180:1.5);

\draw (1.5,-3) arc (180:360:0.5 and 0.167);
\draw[dashed](2.5,-3) arc (0:180:0.5 and 0.167);
\draw(2.5,-3) arc (0:180:0.5);

\fill[color = gray, opacity = 0.5] (1.5,-3) arc (180:360:0.5 and 0.167) arc (0:180:0.5);

\draw (4,-3) arc (180:360:1 and 0.333);
\draw[dashed](6,-3) arc (0:180:1 and 0.333);
\draw(6,-3) arc (0:180:1);

\draw (7,-3) arc (180:360:2 and 0.667);
\draw[dashed](11,-3) arc (0:180:2 and 0.667);
\draw(11,-3) arc (0:180:2);

\node at (6,-3) [circle,fill,inner sep=1pt]{};
\node at (9,-2){*};
\draw[dashed] (6,-3) .. controls (8,-1.5) ..(9,-2);

\node at (4,-3) [circle,fill,inner sep=1pt]{};

\node at (2,-2.8){*};

\draw[dashed] (4,-3) .. controls (3,-2) ..(2,-2.8);

\node at (1,-4.3){$ D_1^{AI}$};
\node at (5,-4.3){$ D_2$};

\end{tikzpicture} 
\end{subfigure}

        \caption{An example of two $(r,\h)$-disks lying on two boundaries $\text{bd}_{BI}$ and $\text{bd}_{AI}$ paired by $PI$. The component $C_1^{BI}$ of the $(r,\h)$-disk on the left has a type-BI node, while the component $D_1^{AI}$ of the $(r,\h)$-disk on the left has a type-AI node. The shaded irreducible component only contains a legal half-node (corresponding to a type-AI node) and an internal tail in $I^p$. }
        \label{fig rh surface}
    \end{figure}
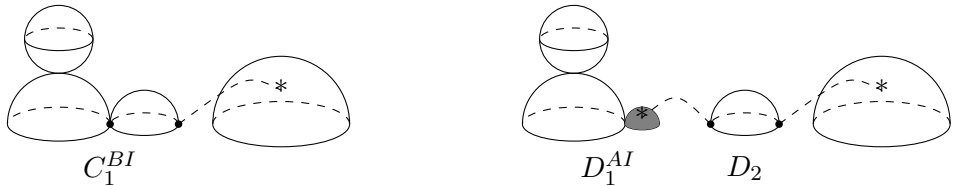

  Let $\sim_{PI}$ be the equivalent relation induced by the correspondence $\PI$ on the boundaries of $\Mbar^{\frac{1}{r},\h}_{0,B,I}$. Theorem \ref{thm  PI boundaries paried} shows that we can glue $\Mbar^{\frac{1}{r},\h}_{0,B,I}$ along the paired boundaries and obtain a piecewise smooth glued moduli space
    $$
    \widetilde{\mathcal M}^{\frac{1}{r},\h}_{0,B,I}:=\Mbar^{\frac{1}{r},\h}_{0,B,I}\big/\sim_{PI}.
    $$
    The objects parameterized by $\widetilde{\mathcal M}^{\frac{1}{r},\h}_{0,B,I}$  are called \textit{reduced $(r,\h)$-disks} in \cite{TZ1}, they are equivalence classes of $(r,\h)$-disks under the relation induced by $\sim_{PI}$. Note that $\widetilde{\mathcal M}^{\frac{1}{r},\h}_{0,B,I}$ has only boundaries of type CB, R, and NS+.
    
    The Witten bundles and the relative cotangent line bundles over the different connected components of $\Mbar^{\frac{1}{r},\h}_{0,B,I}$  can also be glued along the same boundaries into a glued Witten bundle $\widetilde{\mathcal W}\to\widetilde{\mathcal M}^{\frac{1}{r},\h}_{0,B,I}$ and glued relative cotangent line bundles $\widetilde{\mathbb L}_i\to\widetilde{\mathcal M}^{\frac{1}{r},\h}_{0,B,I}$.
\begin{rmk}
In the case $r=2,\h=0$ and only NS insertions the Witten bundle is a trivial zero rank bundle. In this case the idea of gluing different moduli spaces to obtain an orbifold without boundary is due to Jake Solomon and the first named author \cite{ST_unpublished}. \cite{ST_unpublished} defined the relative cotangent lines slightly differently than here (this different definition appears in Remark \ref{rmk:geometric_comparison}) and showed that in $g=0$ the resulting theory produces the same intersection numbers as those defined in \cite{PST14}.
\end{rmk}
     By Theorem \ref{thm  PI boundaries paried}, the fact that the glued relative cotangent line bundles over $\widetilde{\mathcal M}^{\frac{1}{r},\h}_{0,B,I}$ carry canonical complex orientations, and the fact that direct sums of even number of copies of vector bundles also carry canonical orientations, we obtain the following theorem:
    \begin{thm}\cite[Theorem 4.13]{TZ1}
      All bundles of the form
    \[(\widetilde{{\mathcal{W}}})^{2d+1}\oplus\bigoplus_{i=1}^l\widetilde{\mathbb{L}}_i^{\oplus d_i}\to \widetilde{\mathcal M}^{\frac{1}{r},\h}_{0,B,I}\]are canonically relatively oriented.   
    \end{thm}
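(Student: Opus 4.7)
The plan is to first equip each connected component $\Mbar_\mathbf{G}$ of $\Mbar^{\frac{1}{r},\h}_{0,B,I}$ with a relative orientation of the bundle in question, and then to verify that these local orientations are compatible at every PI-paired boundary, so that they descend to a global relative orientation on the quotient $\widetilde{\mathcal M}^{\frac{1}{r},\h}_{0,B,I}$.

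For the first step, on a fixed $\Mbar_\mathbf{G}$ we start from the canonical relative orientation $o_\mathbf{G}$ of $\mathcal{W}_\mathbf{G}$ defined in \eqref{eq orientation point insertion}. Since $\det(\mathcal{W}_\mathbf{G}^{\oplus(2d+1)}) \cong (\det \mathcal{W}_\mathbf{G})^{\otimes(2d+1)}$, the orientation $o_\mathbf{G}$ induces a canonical relative orientation of the odd power $\mathcal{W}_\mathbf{G}^{\oplus(2d+1)}$. Each relative cotangent line bundle $\mathbb{L}_i$ is complex, and hence $\mathbb{L}_i^{\oplus d_i}$ carries a canonical complex orientation. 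Taking the direct sum, we obtain a canonical relative orientation of $\mathcal{W}_\mathbf{G}^{\oplus(2d+1)}\oplus\bigoplus_{i=1}^l\mathbb{L}_i^{\oplus d_i}$ over $\Mbar_\mathbf{G}$.

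For the second step, fix paired boundaries $\mathrm{bd}_{BI}\subset \Mbar_{\mathbf{G}_1}$ and $\mathrm{bd}_{AI}\subset \Mbar_{\mathbf{G}_2}$ under the correspondence $\PI$. By Theorem \ref{thm  PI boundaries paried}, these boundaries are canonically diffeomorphic, and this diffeomorphism lifts to identifications of the Witten bundles $\mathcal{W}_{\mathbf{G}_1}|_{\mathrm{bd}_{BI}} \cong \mathcal{W}_{\mathbf{G}_2}|_{\mathrm{bd}_{AI}}$ and of the relative cotangent line bundles $\mathbb{L}_i$ restricted to the paired boundaries. Moreover, the theorem asserts that the relative orientations of the Witten bundles induced by $o_{\mathbf{G}_1}$ and $o_{\mathbf{G}_2}$ are opposite on the identified boundary. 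Flipping the orientation of $\mathcal{W}$ rescales $(\det \mathcal{W})^{\otimes(2d+1)}$ by $(-1)^{2d+1}=-1$, whereas the complex orientations on $\bigoplus_i \mathbb{L}_i^{\oplus d_i}$ are preserved by any complex-linear identification. Consequently, the induced relative orientations of the whole bundle $\mathcal{W}^{\oplus(2d+1)}\oplus\bigoplus_i\mathbb{L}_i^{\oplus d_i}$ on the two sides of the pairing are also opposite.

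The final step is to observe that this sign flip is exactly the compatibility needed for gluing. When two manifolds with boundary are glued along a common boundary, the outward conormal directions from the two sides are opposite, so a globally defined relative orientation necessarily restricts to \emph{opposite} relative orientations on the shared boundary from each side. The content of Theorem \ref{thm  PI boundaries paried}, combined with the odd-power computation above, gives precisely this matching for every PI-paired boundary simultaneously, so the local relative orientations descend to a canonical relative orientation of $(\widetilde{\mathcal{W}})^{2d+1}\oplus\bigoplus_{i=1}^l\widetilde{\mathbb{L}}_i^{\oplus d_i}$ over $\widetilde{\mathcal M}^{\frac{1}{r},\h}_{0,B,I}$. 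The only subtle point is to check that the sign convention adopted in \eqref{eq orientation point insertion} (with the prefactor $(-1)^{|E(\mathbf{G})|}$) is consistent with the gluing convention used in the construction of $\widetilde{\mathcal M}^{\frac{1}{r},\h}_{0,B,I}$; this amounts to a bookkeeping verification using the detailed orientation analysis of \cite{TZ1}, and is the main technical obstacle.
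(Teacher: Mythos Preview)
Your proof is correct and follows essentially the same approach as the paper. The paper's one-sentence justification decomposes the bundle slightly differently—splitting off a single copy of $\widetilde{\mathcal{W}}$ (canonically relatively oriented by Theorem~\ref{thm  PI boundaries paried}) from $\widetilde{\mathcal{W}}^{\oplus 2d}$ (canonically absolutely oriented as an even direct sum)—whereas you treat $\mathcal{W}^{\oplus(2d+1)}$ directly via $(\det\mathcal{W})^{\otimes(2d+1)}$; these are equivalent, and the gluing verification you spell out is exactly what the paper leaves implicit.
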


\section{Canonical boundary conditions and correlators}\label{sec sections}
In this section we define the canonical multisections, and use them to define open $r$-spin and certain open FJRW correlators.

We refer to a boundary edge $e,$ or the corresponding node, as \textit{positive} if one half-edge $h_1$ of $e$ satisfies ${\text{alt}}(h_1) = 0$ and $\text{tw}(h_1) > 2\h$, or, in other words, if $e$ is of type R or NS+.  For a graded $r$-spin graph $\Gamma$ we write $$H^+(\Gamma):=\left\{h\in H^B(\Gamma)\colon \text{either $h$ or $\sigma_1(h)$ is positive}\right\};$$ 
for $(r,\h)$-graph $\mathbf{G}$ we write
$$
H^+(\mathbf{G}):=\bigsqcup_{\Gamma\in V(\mathbf{G})}H^+(\Gamma)
$$
and
$$\partial^{+} \mathbf{G}:=\{ \mathbf{\Delta} \in \partial^{!} \mathbf{G} \colon H^+(\mathbf{\Delta})\ne \emptyset \}.$$ We define $\partial^+\Gamma$ for a graded $r$-spin graph $\Gamma$ in the same way.

For a graded $r$-spin graph $\Gamma$, let $\oPMb_\Gamma$ and $\partial^+\oCM_\Gamma$ be the orbifolds with corners defined by
\[\oPMb_\Gamma:= \oCM_\Gamma\setminus
\left(\bigsqcup_{\Lambda \in \partial^{+}\Gamma}\CM_\Lambda\right),\qquad \partial^+\oCM_\Gamma:=\bigsqcup_{\Lambda\in\partial^+\Gamma}\CM_\Lambda.\]
For an $(r,\h)$-graph $\mathbf{G}$, we write
$$
\oPMb_\mathbf{G}:=\prod_{\Gamma\in V(\mathbf{G})}\oPMb_\Gamma=\oCM_\mathbf{G}\setminus
\left(\bigsqcup_{\Lambda \in \partial^{+}\mathbf{G}}\CM_\Lambda\right), \quad \partial^+\oCM_\mathbf{G}:=\bigsqcup_{\Lambda\in\partial^+\mathbf{G}}\CM_\Lambda.
$$

We also define
\begin{equation}\label{eq def positive moduli point insertion}
    \oPMb^{\frac{1}{r},\h}_{0,B,I}:=\bigsqcup_{\mathbf{G}\in \sGPI^{r,\h}_{0,B,I}}\oPMb_\mathbf{G},\quad \partial^+\Mbar^{\frac{1}{r},\h}_{0,B,I}:=\bigsqcup_{\mathbf{G}\in \sGPI^{r,\h}_{0,B,I}}\partial^+\oCM_\mathbf{G}.
\end{equation}
Note that the boundary $\partial \oPMb_\mathbf{G}$ of $\oPMb_\mathbf{G}$  contains only strata corresponding to graphs without positive half-edges, \textit{i.e.} they contain only boundaries of type CB, AI, and BI. We have a decomposition $$\partial \oPMb_\mathbf{G}= \partial^{CB} \oPMb_\mathbf{G}\cup \partial^{PI} \oPMb_\mathbf{G},$$ where $\partial^{CB} \oPMb_\mathbf{G}$ consists of type-CB boundaries and 
$\partial^{PI} \oPMb_\mathbf{G}$ consists of type-AI and type-BI boundaries. We define  $\partial^{CB}\oPMh_{0,B,I}$ and $\partial^{PI}\oPMh_{0,B,I}$ in the same way.

\begin{definition}
Let $C$ be a graded $r$-spin disk, $q\in C$ a point, and $v\in{\mathcal{W}}_C$. The \textit{evaluation} of $v$ at $q$ is $\text{ev}_q(v) := v(q) \in J_q$. In particular, if $q$ is a contracted boundary node, or a point on $\Sigma$ which is not a legal special point, we say $v$ \textit{evaluates positively} at $q$ if $\text{ev}_q(v)$ is positive with respect to the grading.
\end{definition}

Roughly speaking, a canonical multisection of $\mathcal W\to \Mbar^{1/r,\h}_{0,B,I}$ is a multisection that can be glued to a multisection over $ \widetilde{\mathcal M}^{1/r,\h}_{0,B,I}$ and satisfies certain positivity constraints at $ \partial\widetilde{\mathcal M}^{1/r,\h}_{0,B,I}$. Note that $ \partial\widetilde{\mathcal M}^{1/r,\h}_{0,B,I}$ consists of boundaries corresponding to contracted boundary edges (type-CB) and boundaries corresponding to positive edges (type-R or type-NS+). The positivity constraint at a type-CB boundary for a multisection $s$ is chosen to be that the evaluation of $s$ at the contracted boundary node is positive. A naive definition of the positivity constraint at a type-R or type-NS+ boundary for a multisection $s$ is that the evaluation of $s$ at the illegal half-node is positive; however, this constraint is too strong and the canonical multisection may not exist (see \cite[Example 3.23]{BCT2}).

As in \cite{BCT2}, instead of imposing 
positivity constraints at type-R or type-NS+ boundaries, we impose positive constraints at their neighbourhoods (therefore we work on $\oPMh_{0,B,I}$) and the positive evaluations will be required on certain ``intervals" in the boundary of the disk, which we now recall; they should be viewed as a smoothly-varying family of intervals that approximate neighbourhoods of boundary nodes in a nodal disk. 

\begin{rmk}
    An alternative, equivalent, way for defining the boundary conditions but working on $\oCM_{0,k,l}^{1/r}$ rather than $\oPM_{0,k,l}$ is to allow vanishing of sections of the Witten bundle on the boundary, but requiring a Neumann-like boundary condition on the derivative, following \cite{BCT2}
 we chose to work on $\oPM_{0,k,l}.$\end{rmk}

\begin{definition}[\cite{BCT2}, Definition 3.4]\label{def positive neighbourhoods}
Let $\Gamma$ be a graded $r$-spin graph and let $\Lambda \in \partial\Gamma$.  
We say an open set $U\subseteq \oCM_\Gamma$ is a \textit{$\Lambda$-set with respect to $\Gamma$} if $U$ does not intersect with the strata $\CM_\Xi$  for any $\Xi \in\partial^! \Gamma $ satisfying $\Lambda\notin \partial^!\Xi$.
We say a neighbourhood $U\subseteq\oCM_\Gamma$ of $u\in\oCM_\Gamma$ is a \textit{$\Lambda$-neighbourhood of $u$ with respect to $\Gamma$} if it is a $\Lambda$-set with respect to $\Gamma$.  Since there is a unique smooth graph $\Gamma$ with $\Lambda\in\partial^!\Gamma$, we refer to a $\Lambda$-neighbourhood with respect to a smooth $\Gamma$ simply as a $\Lambda$-neighbourhood.

Let $\Sigma$ be the preferred half of a graded marked disk. We write $\partial \Sigma = S/\sim$ for a space $S$ homeomorphic to $S^1$ and denote by $q\colon S \to \partial \Sigma$ the quotient map. An \emph{interval} $I$ is the image of a connected open set of $S$ under $q$. Note that the preimage of $I$ under the quotient map $q$ is the union of an open set with a finite number of isolated points.

Suppose $C\in\CM_\Lambda$, and let $n_h$ be a boundary half-node corresponding to a half-edge $h\in H^B(\Lambda)$. We denote by $N: \widehat{C} \to C$ the normalization map and write $\widehat{\Sigma} = N^{-1}(\Sigma)$, We say that $n_h$ \textit{belongs} to the interval $I$ if the following two conditions hold:
\begin{itemize}
    \item the corresponding node $N(n_{h})$ lies in $I$;
    \item  $N^{-1}(I)$ contains a half-open interval with starting point $n_h$ and a half-open interval with endpoint $n_{\sigma_1(h)}$, where the starting and end points are determined by the canonical orientation of $\partial\widehat{\Sigma}$.
\end{itemize}

Let $U$ be a $\Lambda$-set. We denote by $\pi:\lvert\mathcal C\rvert\to U$ the coarse universal curve and by $\Sigma_u \subseteq \pi^{-1}(u)$ the preferred half in the fiber. A \textit{$\Lambda$-family of intervals} $\{I_{h}(u)\}_{h\in{H^+}(\Lambda),u\in U}$ for $U$ is a choice of an interval $I_{h}(u)$ for each $h\in{H^+}(\Lambda)$ and $u\in U$, such that:
\begin{enumerate}
\item
The endpoints of each $I_h(u)$ vary smoothly with respect to the smooth structure of the universal curve restricted to $U$.
\end{enumerate}
We say that $\Xi$ is \textit{a smoothing of $\Lambda$ away from $h\in H^+(\Lambda)$} if $\Lambda\in\partial^!\Xi$ and $h$ lies in the image of the injection $\iota:H^+(\Xi)\to H^+(\Lambda)$. We further require that:
\begin{enumerate}
\setcounter{enumi}{1}
\item
If $h \notin\{ h', \sigma_1(h')\}$, we have $I_h(u')\cap I_{h'}(u')=\emptyset$.  If $h=\sigma_1(h')$, we have $I_h(u')\cap I_{h'}(u')\neq\emptyset$ if and only if $u'\in{\mathcal M}_\Xi$ for some smoothing $\Xi$ of $\Lambda$ away from $h$.
  In this case, $I_h(u')\cap I_{\sigma_1(h)}(u')$ consists exactly of the node $N(n_{h})$.
\item There are no marked points in $I_h(u')$. the interval $I_h(u')$ contains at most one half-node; it contains one half-node if and only if $u'\in{\mathcal M}_\Xi$ for some smoothing $\Xi$ of $\Lambda$ away from $h$. The half-node that belongs to $I_h(u')$ is $n_{h}$ in this case.
\end{enumerate}

If the moduli point $u$ represents the stable graded disk $C$ we write $I_h(C)$ for $I_h(u).$ Figure \ref{fig intervals} shows the local picture of the intervals in the nodal and smooth disks.
\end{definition}

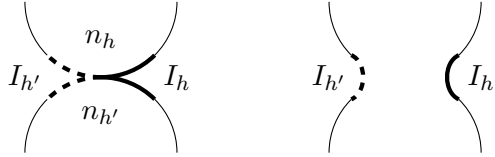
\begin{figure}[h]
\centering

\begin{tikzpicture}[scale=1]

\draw (1,1) arc (0:-45:1);
\draw[ultra thick] (0,0) arc (-90:-45:1);
\draw[ultra thick, dashed] (0,0) arc (-90:-135:1);
\draw (-1,1) arc (-180:-135:1);

\draw (1,-1) arc (0:45:1);
\draw[ultra thick] (0,0) arc (90:45:1);
\draw[ultra thick, dashed] (0,0) arc (90:135:1);
\draw (-1,-1) arc (180:135:1);

\node at (0,-0.5) {$n_{h'}$};
\node at (0,0.5) {$n_{h}$};
\node at (1,0) {$I_h$};
\node at (-1,0) {$I_{h'}$};

\draw (5,1) arc (0:-45:1);
\draw[ultra thick] (4.707,0.293) .. controls (4.5,0.2) and (4.5,-0.2) .. (4.707,-0.293);
\draw (3,1) arc (-180:-135:1);
\draw (5,-1) arc (0:45:1);
\draw[ultra thick,dashed] (3.293,0.293) .. controls (3.5,0.2) and (3.5,-0.2) .. (3.293,-0.293);
\draw (3,-1) arc (180:135:1);

\node at (5,0) {$I_h$};
\node at (3,0) {$I_{h'}$};

\end{tikzpicture}

\caption{The thicker lines representing the intervals $I_h$ and $I_{h'}$ associated to the half-nodes $n_h$ and $n_{h'}$ are drawn over the thinner boundary lines. The image on the right represents a point in the moduli space that is close to the image on the left, where the node is smoothed.}

\label{fig intervals}
\end{figure}

\begin{definition}\label{def positive section}
Let $C$ be a graded $r$-spin disk, and let $A \subseteq \partial\Sigma$ be a subset without legal special points.  Then an element $w\in{\mathcal{W}}_\Sigma$ \emph{evaluates positively at~$A$} if $\ev_x(w)$ is positive for every $x\in A$ with respect to the grading.

Let $\Gamma$ be a graded graph, $U$ a $\Lambda$-set with respect to $\Gamma$, and $\{I_h\}$ a $\Lambda$-family of intervals for $U$.  Given a multisection $s$ of ${\mathcal{W}}$ defined in a subset of $\oPMb_\Gamma$ containing $U\cap\oPMb_\Gamma$, we say $s$ is \emph{$(U,I)$-positive} (with respect to $\Gamma$) if for any $u'\in U\cap \oPMb_\Gamma$, any local branch $s_i(u')$ evaluates positively at each $I_h(u')$.

We say a multisection $s$ defined in $W\cap\oPMb_\Gamma$, where $W$ is a neighbourhood of $u\in\oCM_\Lambda$, is \emph{positive near} $u$ (with respect to $\Gamma$) if there exists a $\Lambda$-neighbourhood $U\subseteq W$ of $u$ and a $\Lambda$-family of intervals $I_*(-)$ for $U$ such that $s$ is $(U,I)$-positive.

If $W$ is a neighbourhood of $\partial^+\Mbar_\Gamma$, then a multisection $s$ defined in a set
\begin{equation}\label{eq U_+}
U_{+,\Gamma}=\left(W\cap\oPMb_\Gamma\right)\cup \bigcup_{H^{CB}(\Lambda)\ne \emptyset}{\oPMb_\Lambda}
\end{equation}
is \emph{positive} (with respect to $\Gamma$) if it is positive near each point of $\partial^+\Mbar_\Gamma$ and evaluates positively at the contracted boundary nodes.  As above, we omit the phrase ``with respect to $\Gamma$" if $\Gamma$ is smooth.
\end{definition}

\begin{definition}\label{def positive for Witten}
Let $\mathbf{G}$ be a smooth $(r,\h)$-graph. For each $\Gamma \in V(\mathbf{G})$, let $U_{+,\Gamma}$ be as in \eqref{eq U_+}, and let $U\subseteq \oPMb_{\mathbf{G}}$ be a set containing $ \bigcup_{\Gamma\in V(\mathbf{G})}\left(U_{+,\Gamma}\times\prod_{\Delta\in V(\mathbf{G})\setminus\{\Gamma\}}\oPMb_\Delta\right)$.  Then a smooth multisection $s$ of ${\mathcal{W}}$ over $U$ is \emph{positive} if, for each $\Gamma \in V(\mathbf{G})$ and $p\in \prod_{\Delta\in V(\mathbf{G})\setminus\{\Gamma\}}\oPMb_\Delta$, the restriction of $s$ to $U\cap \left(\oPMb_\Gamma \times \{p\} \right)$ is positive under the identification $\oPMb_\Gamma \times \{p\}\cong \oPMb_\Gamma$.

\end{definition}

\begin{definition}\label{def canonical for Witten}
Let $U\subseteq \oPMh_{0,B,I}$ be a set containing $$\partial\oPMh_{0,B,I}\cup \bigsqcup_{\mathbf{G}\in \sGPI^{r,\h}_{0,B,I}}\bigcup_{\Gamma\in V(\mathbf{G})}\left(U_{+,\Gamma}\times\prod_{\Delta\in V(\mathbf{G})\setminus\{\Gamma\}}\oPMb_\Delta\right).$$  Then a smooth multisection $s$ of ${\mathcal{W}}$ over $U$ is \emph{canonical} if
\begin{enumerate}
    \item the restriction of $s$ each component  $\oPMb_{\mathbf{G}}\subseteq \oPMh_{0,B,I}$ is positive;
    \item  for every pairs $(\text{bd}_{BI},\text{bd}_{AI})$  of boundaries paired by $PI$ (see Theorem \ref{thm  PI boundaries paried}), we have 
$$s\vert_{\text{bd}_{BI}}=s\vert_{\text{bd}_{AI}}$$
under the isomorphism $PI$.
\end{enumerate}

\end{definition}

\begin{definition}\label{def canonical for L_i}
Let $U\subseteq \oPMh_{0,B,I}$ be a set containing $\partial^{PI}\oPMh_{0,B,I}.$
A smooth multisection $s$ of ${\mathbb L}_i$  over $U$ is called \emph{canonical} if  for every pairs $(\text{bd}_{BI},\text{bd}_{AI})$  of boundaries paired by $PI$, we have 
$$s\vert_{\text{bd}_{BI}}=s\vert_{\text{bd}_{AI}}$$
under the isomorphism $PI$. A multisection $\bm{s}$ of the direct sum of copies of Witten bundle and relative cotangent bundles is canonical if $\bm{s}$ is a direct sum of canonical multisections of the corresponding bundles.
\end{definition}

\begin{rmk}\label{rmk glued section canonical}
    The canonical multisections defined above can be regarded as continuous piecewise-smooth multisections over (a subset of) the glued moduli $\widetilde{\mathcal M}^{\frac{1}{r},\h}_{0,B,I}$.
    The canonicity condition for multisections of $\widetilde{{\mathbb L}}_i,$ and the second condition in the definition of canonicity for multisections of $\widetilde{{\mathcal{W}}},$ are just equivalent to saying one can glue the multisections over different connected components using the isomorphism $PI.$
\end{rmk}

\begin{thm}\label{thm intersection numbers well-defined}
Let $E\to\Mbar^{1/r,\h}_{0,B,I}$ be the bundle
$$E := {\mathcal{W}}^{\oplus m}\oplus \bigoplus_{i=1}^{\lvert I\rvert}{\mathbb L}_i^{\oplus d_i},$$ and assume that $\text{rank}(E)=\dim\Mbar^{1/r,\h}_{0,B,I}.$
Then there exists $U_+=U\cap\oPMh_{0,B,I}$, where $U$ is a neighbourhood of $\partial^+\Mbar^{1/r,\h}_{0,B,I}$ such that $\oPMh_{0,B,I}\setminus U$ is a compact orbifold with corners, and a nowhere-vanishing canonical multisection $\mathbf{s}\in C_m^\infty(U_+\cup\partial\oPMh_{0,B,I},E)$.

Moreover, for odd $m$, one can define, using the canonical relative orientation of ${\mathcal{W}}$, the Euler number (see \cite[Appendix]{BCT2})
\begin{equation}\label{eq  Euler number}
    \int_{\oPMh_{0,B,I}}e(E ; \mathbf{s})\in\mathbb{Q}.
\end{equation}
The result is independent of the choice of $U$ and $\mathbf{s}$.
\end{thm}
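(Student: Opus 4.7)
The plan is to adapt the existence and independence arguments of \cite{BCT2} for open $r$-spin boundary conditions to the point-insertion setting, working on the glued moduli space $\widetilde{\mathcal M}^{\frac{1}{r},\h}_{0,B,I}$ whose only boundaries are of type CB, R and NS+. The advantage of working on $\widetilde{\mathcal M}^{\frac{1}{r},\h}_{0,B,I}$ is that a canonical multisection on $U_+ \cup \partial\oPMh$ (in the sense of Definition \ref{def canonical for Witten} and Definition \ref{def canonical for L_i}) is exactly a multisection on a subset of $\widetilde{\mathcal M}^{\frac{1}{r},\h}_{0,B,I}$, by Remark \ref{rmk glued section canonical}; the canonicity condition is absorbed into continuous gluing along PI-paired strata.

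The existence argument proceeds by induction on the complexity $2|I|+|B|$ (equivalently $\dim \oPMh$). The base cases correspond to moduli of dimension zero, where the rank condition together with compatibility at special points determines the multisection explicitly. For the inductive step, I first build the multisection over the type-CB boundary: at each contracted boundary tail $q$, the detaching decomposition \eqref{eq decompose} and the inductive hypothesis on the smaller moduli $\oPMb_{\widehat\Gamma}$ produce a nowhere-vanishing canonical multisection that evaluates positively at $q$, by choosing any local trivialization of the trivial summand $\underline{\mathbb R_+}$ with a positive vector. Next, for each type-R or type-NS+ boundary stratum $\CM_\Lambda$ I pick, using \eqref{eq NSdecompses}, \eqref{eq decompose}, \eqref{eq decompose2} and the inductive multisections on the $\Mbar_{\widehat\Gamma_i}$, a coherent multisection assembled via the operator $\Ass$ of \cite[Section 4.1.3]{BCT2}, then extend it to a $\Lambda$-neighbourhood using a $\Lambda$-family of intervals $\{I_h\}$, imposing positivity of the evaluation on these intervals using the intrinsic positivity structure. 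After these local models have been constructed on a neighbourhood of $\partial^+\Mbar^{\frac{1}{r},\h}_{0,B,I}$ via a finite partition of unity that preserves both positivity and the PI-matching, the resulting multisection is extended to the remaining compact part of $\oPMh \setminus U$ through a generic smooth multisection of $E$; transversality (which holds for multisections over orbifolds with corners since $\operatorname{rank}(E) = \dim \oPMh$) then deforms it to be nowhere-vanishing on the existing locus.

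The main technical obstacle is the compatibility between positivity over different connected components $\oPMb_{\mathbf{G}}$ and the PI-gluing condition along the spurious (AI/BI) boundaries. Concretely, when one chooses positivity intervals and positive multisections on a component of $\oPMh$ whose boundary meets a spurious boundary, one must ensure that the already-prescribed restriction to the PI-paired boundary (coming from a different component $\oPMb_{\mathbf{G}'}$) is compatible with the positivity constraints and with the assembling operator on both sides. This is handled by fixing, once and for all, the multisections on the universally common smaller-dimensional pieces and then propagating them simultaneously through all $\mathbf{G}\in \sGPI^{r,\h}_{0,B,I}$, relying on Theorem \ref{thm  PI boundaries paried} for the canonical lift of the PI diffeomorphism to the Witten bundles and relative cotangent lines.

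For independence of choices, assume $\mathbf{s}_0,\mathbf{s}_1$ are two canonical nowhere-vanishing multisections defined on suitable neighbourhoods. I form the cylinder $[0,1]\times \oPMh_{0,B,I}$, whose bundle $\operatorname{pr}^*E$ inherits a canonical relative orientation (the extra $[0,1]$-factor being canonically oriented) and contains the boundary data $\{0\}\times \mathbf{s}_0 \cup \{1\}\times \mathbf{s}_1$ on its end faces together with a canonical multisection on $[0,1]\times(U_+\cup \partial\oPMh)$ coming from a convex combination (possible because the set of positive multisections and the PI-matching condition are both convex). Applying the same inductive construction to this cylinder (which has one dimension larger, so the rank no longer equals the dimension, allowing a nowhere-vanishing extension) yields a nowhere-vanishing canonical homotopy. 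By \cite[Appendix]{BCT2}, the Euler numbers computed from $\mathbf{s}_0$ and $\mathbf{s}_1$ agree, because the total Euler integral on the cylinder reduces to their difference and the oriented count on the interior vanishes by virtue of the nowhere-vanishing extension; independence of the neighbourhood $U$ follows similarly by interpolating between two choices of neighbourhoods. The same argument handles independence of the choice of $\Lambda$-families of intervals.
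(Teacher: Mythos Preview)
Your proposal has two genuine gaps.

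\textbf{Existence.} Your induction on $2|I|+|B|$, reducing to smaller moduli by detaching at type-CB/R/NS+ nodes and invoking the inductive hypothesis on the pieces, does not work as stated. Detaching a positive boundary edge produces a component carrying an \emph{illegal} boundary half-node, which is not a moduli of the form $\oPMh_{0,B',I'}$ and so lies outside your inductive class; detaching a CB tail produces a closed anchored moduli, likewise outside it. Moreover the assembling operator $\Ass$ is for \emph{internal} edges, not boundary ones. The paper does something quite different: it proves a direct local positivity statement (Proposition~\ref{prop positivity pointwise}) saying that near any positive boundary stratum there exists a $(U,I)$-positive section. This is the technical heart you skip over with ``imposing positivity \ldots\ using the intrinsic positivity structure''; its proof is a combinatorial zero-placement argument using Lemma~\ref{lem surjection of total evaluation map} together with a rank estimate (Lemma~\ref{lem degv}) bounding the number of required boundary zeros in terms of $\rk({\mathcal{W}}_v)$ at each vertex. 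The global canonical multisection is then built by induction on $\dim\oPMb_\Gamma$ \emph{within} the fixed $(B,I)$ problem (Lemma~\ref{lem construct positive section from boundary}), with PI-compatibility arranged by defining $s_\Gamma$ for graphs with BI edges as $\bboxplus_{\hat\Gamma_i\in\mathfrak S\Gamma}s_{\hat\Gamma_i}$.

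\textbf{Independence.} Your claim that on the cylinder $[0,1]\times\oPMh$ one can find a \emph{nowhere-vanishing} homotopy is false: the end-face data $\mathbf s_0,\mathbf s_1$ already have interior zeros (that is precisely what the Euler number counts), and since $\mathrm{rank}(E)=\dim\oPMh<\dim([0,1]\times\oPMh)$ a generic extension has a one-dimensional zero locus, not an empty one. The paper instead takes a \emph{transverse} homotopy $H$ and applies Lemma~\ref{lem zero difference as homotopy}: the difference of Euler numbers equals $\#Z(H|_{\partial\oPMh\times[0,1]})$. On $(\partial^{CB}\cup U_+)\times[0,1]$ there are no zeros by positivity of the convex combination; on $\partial^{PI}\times[0,1]$ the zeros on $\mathrm{bd}_{AI}$ and $\mathrm{bd}_{BI}$ are identified by PI and contribute with \emph{opposite} signs, because Theorem~\ref{thm PI boundaries paried} says PI reverses the induced relative orientation of $E$. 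This orientation-reversal cancellation is the missing idea in your argument.
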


\begin{dfn}\label{dfn correlator point insertion}
Fix $r\geq 2,~0\leq\h\leq \lfloor r/2\rfloor-1$ and odd $m.$ With the above notations, if $\text{rank}(E)=\dim\Mbar_{0,B,I}^{1/r,\h}$, we refer to the integral 
\[
\left\langle \prod_{i=1}^{\lvert I\rvert}\tau^{a_i}_{d_i}\prod_{i=1}^{\lvert B\rvert}\sigma^{b_i}\right\rangle^{1/r,o,\h,m}_0 := \int_{\oPMh_{0,B,I}}e(E ; \mathbf{s})\in\mathbb{Q}, 
\]
where $\mathbf{s}$ is any canonical multisection $\mathbf{s}$ that does not vanish at $U_+\cup\partial\oPMh_{0,B,I}$, and~$U_+$ is as above,  as an \emph{open FJRW intersection number for the pair $(x_1^r+\ldots x_m^r,\mathbb{Z}/r\mathbb{Z})$}. For shortness we sometimes call the above number an \emph{open FJRW intersection number}, or a \emph{correlator}. When $\text{rank}(E)\neq\dim\Mbar_{0,B,I}^{1/r,\h}$, the integral is defined to be zero. We also write the correlators in the two-row form
$$\left\langle
	\begin{array}{c}
		\hfill  a_1\psi^{d_1} \quad\hfill   a_2\psi^{d_2}\hfill\dots \hfill a_{\lvert I\rvert}\psi^{d_{\lvert I\rvert}}\hfill\null\\
		\hfill b_1\quad \hfill b_2\quad\hfill b_3\hfill \dots \hfill b_{\lvert B\rvert}\hfill\null\\
	\end{array}
	\right\rangle_0^{1/r,\text{o},\h,m}=\left\langle \prod_{i=1}^{\lvert I\rvert}\tau^{a_i}_{d_i}\prod_{i=1}^{\lvert B\rvert}\sigma^{b_i}\right\rangle^{1/r,o,\h,m}_0. $$
 In the case $m=1$ we refer to this number also as an \emph{open $r$-spin intersection number}, or a \textit{$(r,\h)$-spin intersection number} to emphasize the choice of $\h$.
  In this case we omit $m$ from the notation.
\end{dfn}

\begin{rmk}\label{rmk intersection numbers on glued mod}
    The definition of open FJRW intersection numbers can be explained in terms of characteristic classes. We denote by $\widetilde{\mathcal{PM}}^{\frac{1}{r},\h}_{0,B,I} \subseteq\widetilde{\mathcal M}^{\frac{1}{r},\h}_{0,B,I}$ the moduli glued from $\oPMb_\mathbf{G}\subseteq \Mbar_\mathbf{G}$ for all $\Mbar_\mathbf{G}\subseteq {\Mbar}^{\frac{1}{r},\h}_{0,B,I}$. Note that $\partial \widetilde{\mathcal M}^{\frac{1}{r},\h}_{0,B,I}=\partial^{CB} \widetilde{\mathcal M}^{\frac{1}{r},\h}_{0,B,I}\cup \partial^{+} \widetilde{\mathcal M}^{\frac{1}{r},\h}_{0,B,I}$, where $\partial^{CB} \widetilde{\mathcal M}^{\frac{1}{r},\h}_{0,B,I}$ is the union of type-CB boundaries and $\partial^{+} \widetilde{\mathcal M}^{\frac{1}{r},\h}_{0,B,I}$ is the union of type-R and type-NS+ boundaries. Moreover, we have  $\partial^+ \widetilde{\mathcal M}^{\frac{1}{r},\h}_{0,B,I}=\widetilde{\mathcal M}^{\frac{1}{r},\h}_{0,B,I}\setminus \widetilde{\mathcal{PM}}^{\frac{1}{r},\h}_{0,B,I}$. 
    
   Let $\widetilde{\mathcal W}\to \widetilde{\mathcal M}^{\frac{1}{r},\h}_{0,B,I}$ be the glued Witten bundle over the glued moduli space. Let $\bm{s}$ be a canonical multisection of $\mathcal W^{\oplus m}\to\overline{\mathcal {PM}}^{\frac{1}{r},\h}_{0,B,I}$, we denote by $\widetilde{\bm{s}}$ the corresponding glued multisection of $\widetilde{\mathcal W}^{\oplus m}\to \widetilde{\mathcal {PM}}^{\frac{1}{r},\h}_{0,B,I}$. Let $\widetilde{U}\subset \widetilde{\mathcal M}^{\frac{1}{r},\h}_{0,B,I}$ be a tubular neighbourhood of $\partial^+ \widetilde{\mathcal M}^{\frac{1}{r},\h}_{0,B,I}$ such that each one of the $m$ components of $\widetilde{\bm{s}}$ is positive over $\widetilde{U}_+:=U\cap \widetilde{\mathcal{PM}}^{\frac{1}{r},\h}_{0,B,I}$.

    Since $\widetilde{\bm{s}}$ vanishes nowhere on $\widetilde{U}_+\cup\partial^{CB} \widetilde{\mathcal M}^{\frac{1}{r},\h}_{0,B,I}$, a relative Euler class $$e(\widetilde{\mathcal W}^{\oplus m},\widetilde{\bm{s}})\in H^{\rk \widetilde{\mathcal W}}\left(\widetilde{\mathcal{PM}}^{\frac{1}{r},\h}_{0,B,I},\widetilde{U}_+\cup\partial^{CB} \widetilde{\mathcal M}^{\frac{1}{r},\h}_{0,B,I};\mathbb Q\right)$$ is defined in \cite{RelativeEulerClass} for odd $m$. 
    Note that the pair $\left(\widetilde{\mathcal{PM}}^{\frac{1}{r},\h}_{0,B,I},\widetilde{U}_+\cup\partial^{CB} \widetilde{\mathcal M}^{\frac{1}{r},\h}_{0,B,I}\right)$ is homotopy equivalent to $\left(\widetilde{\mathcal{M}}^{\frac{1}{r},\h}_{0,B,I},\partial \widetilde{\mathcal{M}}^{\frac{1}{r},\h}_{0,B,I}\right)$, we regard $e(\widetilde{\mathcal W}^{\oplus m},\widetilde{\bm{s}})$ as an element in $H^{\rk \widetilde{\mathcal W}}\left(\widetilde{\mathcal{M}}^{\frac{1}{r},\h}_{0,B,I},\partial \widetilde{\mathcal{M}}^{\frac{1}{r},\h}_{0,B,I};\mathbb Q\right)$ and denote by $$[\widetilde{\mathcal W}^{\oplus m}]\in H_{\dim \widetilde{\mathcal M} - \rk \widetilde{\mathcal W}} \left(\widetilde{\mathcal{M}}^{\frac{1}{r},\h}_{0,B,I},\mathbb Q\right)$$ its Lefschetz dual.

    On the other hand, we have the Euler class of the glued relative cotangent line bundles $$e(\widetilde{\mathbb L}_i)\in H^2\left(\widetilde{\mathcal{M}}^{\frac{1}{r},\h}_{0,B,I};\mathbb Q\right).$$ 
    We write $\widetilde{E}:=\widetilde{\mathcal{W}}^{\oplus m}\oplus \bigoplus_{i=1}^{\lvert I\rvert}\widetilde{\mathbb L}_i^{\oplus d_i}$ the bundle glued from $E$ in Theorem \ref{thm intersection numbers well-defined} and $\widetilde{\bm{s}}_{total}$ be a glued canonical section of $\widetilde{E}$, then the class
    $$
        [\widetilde{\mathcal W}^{\oplus m}]\cap \bigcup e(\widetilde{\mathbb L}_i)^{\cup d_i}\in H_{\dim \widetilde{\mathcal M} - \rk \widetilde{l E}}\left(\widetilde{\mathcal{M}}^{\frac{1}{r},\h}_{0,B,I};\mathbb Q\right)
    $$
    is the Lefschetz dual of the relative Euler class $e(\widetilde E,\widetilde{\bm{s}}_{total})$. In particular, in the case where
    $  \rk \widetilde{E}=\dim \widetilde{\mathcal{M}}^{\frac{1}{r},\h}_{0,B,I}$,
     the open $r$-spin correlator is the  product
    $$
        [\widetilde{\mathcal W}^{\oplus m}]\cap \bigcup e(\widetilde{\mathbb L}_i)^{\cup d_i}\in H_0\left(\widetilde{\mathcal{M}}^{\frac{1}{r},\h}_{0,B,I};\mathbb Q\right)\simeq\mathbb Q.
    $$
\end{rmk}

\subsection{Proof of Theorem \ref{thm intersection numbers well-defined}}
The following proposition guarantees the local existence of positive sections of $\mathcal W$. It is a modification of \cite[Proposition 3.20]{BCT2}.
\begin{prop}\label{prop positivity pointwise}
The following positivity claims hold:
\begin{enumerate}
\item
Let $C$ be a stable $r$-spin disk with a contracted boundary node. Then there exists $w\in{\mathcal{W}}_{C}$ that evaluates positively at the contracted boundary node.
\item
Let $U$ be a contractible $\Lambda$-neighbourhood of $\Sigma\in\partial^+\mathcal M_\Gamma$, where $\Gamma$ is a connected smooth genus-zero legal level-$\h$ stable graded $r$-spin graph,  all edges of the graph $\Lambda\in\partial^!\Gamma$ are boundary positive edges, \textit{i.e.} $\Lambda$ has no internal edges nor illegal boundary half-edges of twist less than or equal to $2\h$.  Let $(I_h)_{h\in{H^+}(\Lambda)}$ be a $\Lambda$-family of intervals for $U.$ Then there exists a $(U,I)$-positive section $s$.
\end{enumerate}

\end{prop}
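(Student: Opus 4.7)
\emph{Proof proposal.} The plan is to mimic the strategy of \cite[Proposition 3.20]{BCT2}, exploiting the decomposition properties of the Witten bundle along positive nodes together with the grading.

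For part (1), the key input is the decomposition \eqref{eq decompose} of Proposition~2.7(5) applied to the contracted boundary tail. Letting $\widehat{\Gamma}$ be the detaching of $\Gamma$ at the contracted boundary node, the exact sequence
\[
0 \to \mu^*i_{\Gamma}^*{\mathcal{W}} \to q^*\widehat{{\mathcal{W}}} \to \underline{\mathbb{R}_+} \to 0
\]
allows us to translate sections of the closed Witten bundle $\widehat{\mathcal{W}}$ on the normalization (a closed $r$-spin sphere with an additional Ramond marking) into sections of $\mathcal{W}_C$, with the projection to $\underline{\mathbb{R}_+}$ recording precisely the residue that measures positivity at the contracted boundary node. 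Choosing any section of $\widehat{\mathcal{W}}$ whose image in $\underline{\mathbb{R}_+}$ is non-zero of the correct sign (which exists because the target is one-dimensional and $\widehat{\mathcal{W}}$ has positive rank in the relevant range by \eqref{eq rank of witten bundle}) produces the required $w \in \mathcal{W}_C$. When this rank vanishes one appeals directly to the residue interpretation of the grading at the Ramond contracted boundary node as described before Proposition 2.3.

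For part (2), I would proceed by induction on $|E(\Lambda)|$. If $\Lambda$ has no edges then it is a smooth graded disk; the existence of a $\Sigma$-section $w$ of $\mathcal{W}$ whose evaluation on $\partial\Sigma$ (which is compact and contains no legal special points by assumption) is everywhere positive with respect to the grading is the content of the smooth case, and follows from the standard construction of positive Serre-dual sections on graded disks in \cite[Section 3.2]{BCT2}, then extended smoothly over the contractible $\Lambda$-neighbourhood $U$ using local trivializations of $\mathcal{W}$. For the inductive step, pick a positive edge $e \in E^B(\Lambda)$ (which exists because every edge of $\Lambda$ is a boundary positive edge by hypothesis) and let $\widehat{\Lambda} = \text{Detach}_e\Lambda$. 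If $e$ is NS$+$, Remark~2.8 says $\mathcal{W}$ pulls back under the gluing morphism, so any positive section of $\widehat{\mathcal{W}}$ (produced by the inductive hypothesis applied to each component of $\widehat{\Lambda}$) lifts to a positive section near $\Sigma$. If $e$ is Ramond, the exact sequence \eqref{eq decompose} again exhibits $\mathcal{W}$ as a kernel inside $q^*\widehat{\mathcal{W}}$ with a one-dimensional $\underline{\mathbb{R}_+}$-cokernel, and positive sections of $\widehat{\mathcal{W}}$ at the two half-nodes (built inductively, using part (1) on any component containing a contracted boundary tail as its anchor) restrict to sections of $\mathcal{W}$ which remain positive at all half-nodes in $H^+(\Lambda)$.

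The final step is to upgrade pointwise positivity at $\Sigma$ to $(U,I)$-positivity. Since $U$ is contractible, one fixes a smooth trivialization of $\mathcal{W}|_U$, transports the section constructed at $\Sigma$ to a smooth section $s$ over $U$, and shrinks $U$ (within its given $\Lambda$-neighbourhood status) if necessary so that $s$ remains positive on each $I_h(u')$ for $u' \in U$. This is possible by continuity: evaluation of $s$ at the endpoints of $I_h(u')$ and along the interval varies continuously with $u'$, the axioms of a $\Lambda$-family of intervals ensure that $I_h(u')$ approaches a small arc around the half-node $n_h$ as $u'\to\Sigma$, and positivity of $s|_\Sigma$ at $n_h$ combined with the compactness of $\partial\Sigma$ away from $n_h$ force positivity on $I_h(u')$ in a whole neighbourhood of $\Sigma$. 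The anticipated main obstacle is the careful bookkeeping of the Ramond case for $r$ odd and the compatibility with the level-$\h$ hypothesis on $\Lambda$, because the definition of $H^+(\Lambda)$ mixes both half-edges of an edge and one must check that the positive evaluation propagates to the correct side after gluing — this is where the structural content of Proposition 2.7 and the grading axioms of Proposition 2.3 are genuinely used.
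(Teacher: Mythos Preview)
Your approach diverges from the paper's and has genuine gaps in both parts.

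For part (1), your use of the exact sequence \eqref{eq decompose} is in the wrong direction. In that sequence $\mathcal{W}$ is the \emph{kernel} of the map $q^*\widehat{\mathcal{W}}\to\underline{\mathbb{R}_+}$, so an element of $\widehat{\mathcal{W}}$ with non-zero image in $\underline{\mathbb{R}_+}$ is precisely \emph{not} in $\mathcal{W}_C$. Moreover the quotient $\underline{\mathbb{R}_+}$ does not record the evaluation needed here. The paper's argument is direct and much simpler: on the irreducible component $C'$ containing the contracted boundary node, the node has twist $r-1$, forcing $\deg(|J||_{C'})\ge 0$, so $H^0(|J||_{C'})$ contains a section non-vanishing at the node; scale to make it positive.

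For part (2), the induction on $|E(\Lambda)|$ does not go through as written. First, your base case is misstated: when $\Lambda$ is smooth the boundary $\partial\Sigma$ \emph{does} contain legal special points (all boundary tails of a legal graph are legal), so you cannot ask for a section positive on the whole boundary. Second, detaching an NS edge produces a component carrying an \emph{illegal} boundary half-edge, so that component is no longer a legal graph and your inductive hypothesis does not apply; the paper handles exactly this issue separately (in the subsequent corollary) by replacing the illegal boundary half-edge with an internal marking. Third, for Ramond edges the sequence \eqref{eq decompose} again has $\mathcal{W}$ as a subbundle, so sections of $\widehat{\mathcal{W}}$ do not ``restrict'' to sections of $\mathcal{W}$.

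The paper's proof of part (2) is not structural but combinatorial. One works on a single smooth disk $C$ in $U$ and constructs $w\in\mathcal{W}_C$ by \emph{prescribing its zeros}: via Lemma~\ref{lem surjection of total evaluation map} a non-zero section is determined (up to real scalar) by $\mathrm{rk}(\mathcal{W})-1$ zeros, and by a parity observation the sign of $w$ alternates across each boundary zero or legal marking. One then places a zero set $Z\subset\partial\Sigma$ so that on each connected arc of $\partial\Sigma\setminus\bigcup_h I_h(C)$ the number of legal markings plus $|Z|$ on that arc is even, forcing $\pm w$ to be positive on every $I_h(C)$. The entire content is verifying that such a $Z$ exists with $|Z|\le\mathrm{rk}(\mathcal{W})-1$ and correct parity; this is a vertex-by-vertex rank estimate (Lemma~\ref{lem degv}) where the level-$\h$ hypothesis enters through the bounds $\mathrm{tw}\ge 2\h+2$ on illegal half-edges and $\mathrm{tw}\ge r-2-2\h$ on legal tails. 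Your proposal identifies this as ``the anticipated main obstacle'' but offers no mechanism to address it; it is in fact the heart of the argument.
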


Our basic tool for constructing positive sections in the proof of Proposition~\ref{prop positivity pointwise} is the following lemma:

\begin{lemma}{\cite[Lemma 3.21]{BCT2} }\label{lem surjection of total evaluation map}
Suppose $C$ is a smooth connected graded $r$-spin disk.  Let $h=\deg(|J|)$, and let $\alpha$ and $\beta$ be non-negative integers with $\alpha+ 2\beta \leq h+1$.  Then for any distinct boundary points $p_1,\ldots, p_\alpha,$ and internal points $p_{\alpha+1},\ldots, p_{\alpha+\beta}$, the total evaluation map
\[\bigoplus_i \ev_{p_i}:{\mathcal{W}}_{C}\to\bigoplus_{i=1}^\alpha |J|_{p_i}^{\tilde\phi}\oplus\bigoplus_{i=\alpha+1}^{\alpha+\beta} |J|_{p_i}\] is surjective; here we use the canonical identification $H^0(J) = H^0(|J|)$ to identify the fiber of ${\mathcal{W}}$ with $H^0(|J|)$. In particular, for any $\alpha$ distinct boundary points and $\beta$ distinct internal points, with $\alpha+2\beta=h$, there exists a unique (up to a real scalar) non-zero section of ${\mathcal{W}}$ vanishing at all of them. 

If $C$ has a Ramond marking $p$ and no point with $-1$ twist, in particular if it has a contracted boundary, then the evaluation map at $p$ is surjective.
\end{lemma}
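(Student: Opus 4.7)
The strategy is a dimension count on the underlying coarse sphere $|C|\simeq \mathbb{P}^1$ using Riemann--Roch, reducing the orbifold problem to a classical one via the identification $H^0(C,J)=H^0(|C|,|J|)$ stated in the lemma. First, since $\tilde\phi$-invariant sections form a real form of the complex vector space $H^0(|J|)$, we have $\dim_{\mathbb{R}}{\mathcal{W}}_C=h^0(|C|,|J|)=h+1$, using $h\ge -1$ on the genus-zero coarse sphere.

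Next, I would identify the kernel of the total evaluation map. Consider the $\tilde\phi$-invariant effective divisor
$$D=\sum_{i=1}^{\alpha}[p_i]+\sum_{i=\alpha+1}^{\alpha+\beta}\bigl([p_i]+[\overline{p_i}]\bigr)$$
on $|C|$. Since a $\tilde\phi$-invariant section that vanishes at $p_i$ automatically vanishes at $\overline{p_i}$, the kernel equals $H^0(|J|(-D))^{\tilde\phi}$. The hypothesis $\alpha+2\beta\le h+1$ gives $\deg(|J|(-D))=h-\alpha-2\beta\ge -1$, hence $H^1(|J|(-D))=0$ and Riemann--Roch on $\mathbb{P}^1$ yields $\dim_{\mathbb{R}}\ker = h-\alpha-2\beta+1$.

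The real dimension of the target is exactly $\alpha+2\beta$: each boundary summand $|J|_{p_i}^{\tilde\phi}$ is $1$-real-dimensional, and each internal summand $|J|_{p_i}$ is a complex line. Since
$$\dim_{\mathbb{R}}\ker + \dim_{\mathbb{R}}(\text{target}) = (h+1-\alpha-2\beta)+(\alpha+2\beta) = h+1 = \dim_{\mathbb{R}}{\mathcal{W}}_C,$$
the evaluation map must be surjective. The uniqueness claim for $\alpha+2\beta=h$ is then immediate: the kernel is one-dimensional over $\mathbb{R}$, giving a nonzero section (unique up to a real scalar) vanishing at all the prescribed points. For the final Ramond assertion, the hypothesis that $p$ is Ramond with no $-1$-twist point forces $\deg|J|\ge 1$ at the coarse level (computed from Observation~\ref{obs rank open} and the definition of $|J|$ via twists), so $h^0(|J|(-[p]-[\overline{p}]))=h^0(|J|)-2$ by Riemann--Roch, giving real codimension $1$ for $\tilde\phi$-invariant sections vanishing at $p$; this is precisely surjectivity of $\operatorname{ev}_p$. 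The contracted-boundary subcase is analogous with $p=\overline{p}$.

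The main obstacle I anticipate is the coarse-versus-orbifold bookkeeping: verifying that $\deg|J|=h$ agrees with the intrinsic twist combinatorics of Section~\ref{sec mod and bundle}, that the identification $H^0(J)=H^0(|J|)$ is globally valid in the presence of orbifold isotropy at the markings, and that the ``no $-1$-twist'' hypothesis in the Ramond step really translates into the degree inequality $\deg|J|\ge 1$ used above. Once these translations are in hand, the argument is just Riemann--Roch on $\mathbb{P}^1$ plus matching real dimensions.
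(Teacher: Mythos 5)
Your dimension-count via Riemann--Roch on the coarse $\mathbb{P}^1$ is correct for the main claim and matches the approach in \cite[Lemma~3.21]{BCT2}: $\mathcal{W}_C = H^0(|J|)^{\tilde\phi}$ has real dimension $h+1$, the kernel of the total evaluation map is $H^0(|J|(-D))^{\tilde\phi}$ with $\deg(|J|(-D)) = h-\alpha-2\beta \geq -1$ so $H^1$ vanishes and the kernel has real dimension $h+1-\alpha-2\beta$, and rank--nullity forces the image to fill the $(\alpha+2\beta)$-dimensional target. The uniqueness statement for $\alpha+2\beta=h$ is a direct consequence.

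The Ramond addendum of your proof is not correct as written, however, for two reasons. First, the arithmetic is off: if $p$ is an \emph{internal} Ramond marking, then dropping from $H^0(|J|)$ to $H^0(|J|(-[p]-[\overline p]))$ lowers the complex dimension by $2$, hence lowers the real dimension of the $\tilde\phi$-invariant part by $2$, not $1$; this matches the $2$-real-dimensional target $|J|_p$, not a $1$-dimensional one. Your phrase ``real codimension $1$'' is only appropriate for the contracted-boundary subcase, where the target is the $1$-real-dimensional $|J|_p^{\tilde\phi}$ and one needs $\deg|J|\ge 0$. Second, and more substantively, the inequality $\deg|J|\ge 1$ that you assert ``is forced'' by the hypotheses is not justified by Observation~\ref{obs rank open}. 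From the rank formula $\deg|J|+1 = \frac{2\sum a_i + \sum b_j - (r-2)}{r}$ with $a_p = r-1$ and all remaining twists non-negative one only gets $\deg|J|\ge 0$ automatically; integrality then gives $\deg|J|=0$ or $\deg|J|\ge 1$, and the case $\deg|J|=0$ (all twists other than $a_p$ equal to $0$, e.g.\ $r=2$ with one internal marking of twist $1$ and one boundary marking of twist $0$) is not ruled out by what you have written. You need to either cite the precise hypotheses under which the Ramond evaluation is invoked in \cite{BCT2} (which rule out the $\deg|J|=0$ configurations), or supply a separate argument ruling them out; as stated, that step is a genuine gap.
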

\begin{proof}[Proof of Proposition \ref{prop positivity pointwise}]
To prove the first item, let $C'\subseteq \Sigma \subset C$ be the irreducible component (in the preferred half $\Sigma$) of $C$ containing the contracted boundary node. Since the contracted boundary node has twist $r-1$, the restriction of $\lvert J\rvert$ to $C'$ (which is a line bundle over a rational curve) has a non-negative degree, therefore it has a section which evaluates positive at the contracted boundary node.

To prove the second item, we assume  that $\rk({\mathcal{W}})>0$, since if 
$\rk({\mathcal{W}})=0$ then the statement is trivially true. 

Let $[C]$ be a moduli point in $U$ representing an open, smooth $C$. Let $w\in \mathcal W_C$ be a non-zero vector (which can be regarded as a section of $\lvert J\rvert\to \lvert C\rvert $) whose zeros are simple, we denote by $Z$ the set of zeros on boundary $\partial \Sigma$. We use the following observation to construct sections with the desired positivity constraints. 
\begin{obs}\label{obs zero on boundary}
    If $w\in{\mathcal{W}}_C$ is a non-zero vector, which is positive at one boundary point $p$ and negative at another boundary point $q$, then the number of zeros for $w$ in the boundary arc from $p$ to $q$ (or from $q$ to $p$), plus the number of legal boundary markings in this arc, is an odd number.
\end{obs}

Therefore, if the set of boundary zeros $Z$ of $w$ satisfies the following \textit{local parity conditions}:
\begin{itemize}
    \item[--] $Z\subset A:=\partial\Sigma\setminus\left(\sqcup_{h\in{H^+}(\Lambda)} I_{h}(C)\right)$;
    \item[--] for each connected component $K$ of $A$ (which is an interval), the total number of boundary markings in $K$ plus $|Z\cap K|$ is an even number;
\end{itemize}
then the vector $w$ or $-w$ evaluates positively at $I(C)_h$ for all $h\in{H^+}(\Lambda)$. Our strategy is to construct $w$ with such boundary zero set $Z$; in fact, this can be done by Lemma \ref{lem surjection of total evaluation map} if \begin{equation}\label{eq bound of Z}
\lvert Z \rvert\le \rk \mathcal{W}-1
\end{equation}
and (note that the number of internal zeros is even since they are invariant under $\phi$) \begin{equation}\label{eq parity of Z}\lvert Z \rvert\equiv\rk({\mathcal{W}})-1\mod 2.
\end{equation}

Moreover, because $U$ is contractible, we can construct a diffeomorphism $\phi:\mathcal C(U')\to U'\times C$ between the universal curve on $U'=U\cap\oPMb_{\Gamma}$ and $U'\times C$. This diffeomorphism maps the fiber over $[C']$ to $([C'],C)$ and satisfies $\phi(I(C')_h)=([C'],I(C)_h)$. Then we can define a section $s$ of $\mathcal W \to U'$, such that $s([C'])$ is the unique (up to 
 real scaling) section of $\lvert J\rvert\to \lvert C'\rvert$  having (under the identification between $C'$ and $C$ induced by $\phi$) the same zeros as $w$ constructed above by Lemma \ref{lem surjection of total evaluation map}. The real scaling factor is chosen to ensure that the vector $s([C])$ evaluates positively at $I(C)_h$ for all $h\in{H^+}(\Lambda)$. By construction, the section $s$ is $(U,I)$-positive.

It remains to construct a set $Z$ satisfying the local parity conditions, \eqref{eq bound of Z} and \eqref{eq parity of Z}. We introduce the following combinatorial lemma and postpone its proof to the end of the proposition

\begin{lem}
\label{lem degv}
For a vertex $v$ of $\Lambda$, write $\deg(v) = \rk({\mathcal{W}}_v)+m_v-1$, where $m_v$ is the number of legal half-edges of $v$ with twist less than $r-2-2\h.$ We also denote by $k_v$ the number of legal half-edges of $v$ with twist greater than or equal to $r-2-2h$, which by the assumptions on $\Lambda$ must be tails.
\begin{enumerate}
\item\label{it lem lem1 } For every vertex $v \in V(\Lambda)$, we have $\deg(v) \equiv k_v  \mod 2$.
\item\label{it lem lem2} We have $
\rk({\mathcal{W}})-1=\sum_{v\in V(\Lambda)}\deg(v).
$
\end{enumerate}
\end{lem}

We first construct a set $Z$ that satisfies the local parity conditions in a minimal way: for each connected component $K$ of $A$, we choose (arbitrary) $k_K$ points in $K$ as elements of $Z$, where $k_K=1$ if the number of boundary markings on $K$ is odd, and $k_K=0$ if the number of boundary markings on $K$ is even. Now we prove that \eqref{eq bound of Z} and \eqref{eq parity of Z} hold for the set $Z$ constructed in this way. Note that, by definition, we have 
\begin{equation}\label{eq Z as sum k}
\lvert Z \rvert=\sum_{\substack{\text{$K$ a connected}\\\text{component of $A$}}}k_K.
\end{equation}

Any connected component $K$ of $A$ is associated with a vertex of $v\in V(\Lambda)$: assuming that the endpoints of $K$ are endpoints of $I_{h_1}(C)$ and $I_{h_2}(C)$, for a nodal curve $C'$ such that $[C']\in U \cap \mathcal M_{\Lambda}$, the corresponding endpoints of $I_{h_1}(C')$ and $I_{h_2}(C')$ determine an interval $K'\subset \partial \Sigma'\subset \lvert C'\rvert$, then $K$ is associated with the vertex $v$ that corresponds to the irreducible component $C'_v$ of $C'$ containing $K'$; we denote this relation by $K\to v$.  

Note that \eqref{eq Z as sum k} and the second item of Lemma \ref{lem degv} indicate that \eqref{eq bound of Z} will follow from
\begin{equation}\label{eq main for positivity}
\sum_{K\to v}k_K \leq \deg(v),\quad\forall v\in V(\Lambda)
\end{equation}
and \eqref{eq parity of Z} will follow from
\begin{equation}\label{eq main for positivity2}
\sum_{K\to v}k_K = \deg(v) \mod 2,\quad\forall v\in V(\Lambda).
\end{equation}
Actually, by the definition of $k_K$ and the first item of Lemma~\ref{lem degv}, we have
\begin{equation}
\label{eq mod 2 for positivity}
\sum_{K\to v}k_K  \equiv k_v \equiv \deg(v) \mod 2,
\end{equation}
which implies \eqref{eq main for positivity2}.  

We now prove \eqref{eq main for positivity}. In fact, due to \eqref{eq main for positivity2} and integrality, we only need to prove the weaker inequation
\begin{equation}\label{eq main for positivity weaker}
    \sum_{K\to v}k_K \leq \deg(v)+1,\quad \forall v\in V(\Lambda).
\end{equation}

We check \eqref{eq main for positivity weaker} in two cases. We denote by $n_v$ the number of illegal half-edges of $v$, and as above, by $k_v$ and $m_v$ the numbers of legal half-edges of $v$ with twist greater than or equal to $r-2-2\h$, and legal half-edges of $v$ with twist less than $r-2-2h$ respectively. Note that, by the assumption on $\Lambda$, the twists of the illegal half-edges are at least $2\h+2$.
\begin{itemize}
    \item The case $n_v\geq k_v$. In this case we have 
    \begin{equation}
\label{eq main est for positivity}
\begin{split}
\deg(v) &= \rk({\mathcal{W}}_{v})+m_v-1\\
&\ge \left\lceil\frac{(2\h+2)n_v+(r-2-2\h)k_v-(r-2)}{r}\right\rceil+m_v-1\\
&\ge \left\lceil\frac{(2\h+2)k_v+(r-2-2\h)k_v-(r-2)}{r}\right\rceil+m_v-1\\
&= \left\lceil\frac{rk_v-(r-2)}{r}\right\rceil+m_v-1\\
&= k_v+m_v-1\geq k_v-1\geq \sum_{K\to v}k_K-1,
\end{split}
\end{equation}
where we have used the integrality of $\deg(v)$, and the fact that the number of intervals that may contain a boundary marked point of twist greater than or equal to $r-2-2\h$ is no more than $k_v$.
   
\item The case $k_v\geq n_v+1.$ In this case, similarly to \eqref{eq main est for positivity}, we have
\begin{equation*}
    \begin{split}
        \deg(v) &\ge \left\lceil\frac{(2\h+2)n_v+(r-2-2\h)(n_v+1)-(r-2)}{r}\right\rceil+m_v-1\\
    &= n_v+m_v-1,
    \end{split}
\end{equation*}
where we have used the integrality of $\deg(v)$ again.  Then \eqref{eq main for positivity weaker} holds in this case because $n_v+m_v=|\{K \; |\; K\to v\}|\ge \sum_{K\to v}k_K$.
\end{itemize}
Therefore \eqref{eq main for positivity weaker} is proven in both cases, and the proposition follows.
\end{proof}

\begin{proof}[Proof of Lemma~\ref{lem degv}]
To prove the first item, 
note that  the number of legal boundary half-edges at vertex $v$ is $k_v+m_v$, according to \eqref{eq parity} we have 
$$ \rk({\mathcal{W}}_v)+1=\frac{2 \sum a_i + \sum b_j - (r-2)}{r}+1 \equiv k_v+m_v\mod 2.$$

To prove the second item, according to Proposition \ref{prop decomposition} we have
$$\rk({\mathcal{W}})-1 = \sum_{v\in V(\Lambda)}(\rk({\mathcal{W}}_v)-1) + | E^{\text{NS}}|=\sum_{v\in V(\Lambda)}\deg(v),$$
where $E^{\text{NS}}$ is the set of NS boundary edges. The right equality is a consequence of the fact that any NS boundary edge has exactly one legal half-edge of twist smaller than $r-2-2\h$, by the assumption on $\Lambda$.   
\end{proof}
 
\begin{cor}[Corollary of Proposition \ref{prop positivity pointwise}]\label{cor local positivity more general}
    Let $U$ be a contractible $\Lambda$-neighbourhood (with respect to $\Gamma$) of $\Sigma\in\partial^+\mathcal M_\Gamma$, where $\Gamma$ is a connected graded $r$-spin graph without positive edges, all the boundary tails of  $\Gamma$ have twist greater or equal to $r-2-2\h$,  all edges in $E(\Lambda)\setminus E(\Gamma)$ for the graph $\Lambda\in\partial^!\Gamma$ are positive boundary edges.  Let $(I_h)_{h\in{H^+}(\Lambda)}$ be a $\Lambda$-family of intervals for $U.$ Then there exists a $(U,I)$-positive (with respect to $\Gamma$) section $s$.
\end{cor}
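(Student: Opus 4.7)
The plan is to prove the corollary by induction on $\lvert E(\Gamma)\rvert+\lvert H^{CB}(\Gamma)\rvert$, reducing each step to a graph with fewer edges or contracted boundary tails. The base case, when $\Gamma$ is smooth, is exactly Proposition \ref{prop positivity pointwise}\eqref{it lem lem2}: the hypothesis that every boundary tail has twist at least $r-2-2\h$ (with the implicit convention that such tails are legal) forces $\Gamma$ to be legal and level-$\h$, and $\Lambda$ already satisfies the hypothesis that its new edges are positive boundary edges.

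For the inductive step, pick any element $e\in E(\Gamma)\sqcup H^{CB}(\Gamma)$ and set $\widehat{\Gamma}:=\operatorname{Detach}_e(\Gamma)$, which has one fewer edge or contracted boundary tail. If $e$ is a boundary edge then, since $\Gamma$ has no positive edges, $e$ must be an NS boundary edge whose illegal half-edge has twist $\leq 2\h$. By Remark \ref{rmk decompose NS boundary node}, the morphism $q$ is an isomorphism and $\mu^* i_\Gamma^*{\mathcal{W}}=q^*\widehat{{\mathcal{W}}}$. The hypotheses transfer verbatim to each connected component of $\widehat{\Gamma}$ (with its induced $\widehat{\Lambda}$), so the inductive hypothesis applied componentwise produces a $(\widehat{U},\widehat{I})$-positive section of $\widehat{{\mathcal{W}}}$; pulling back via $q$ gives the required section.

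If instead $e$ is an internal edge (NS or Ramond) or a contracted boundary tail, we use the relevant part of Proposition \ref{prop decomposition} together with the assembling operator $\Ass_{\Gamma,e}$ of Subsection \ref{subsec coherent}. The normalization $\widehat{\Gamma}$ splits into an open-vertex component $\widehat{\Gamma}_o$ (inheriting the positivity data of $\Lambda$) and a closed-vertex component $\widehat{\Gamma}_c$ carrying an anchor. By the inductive hypothesis, on a contractible $\widehat{\Lambda}$-neighborhood of the corresponding degeneration in $\oCM_{\widehat{\Gamma}_o}$ there is a positive section $s_o$; on $\oCM_{\widehat{\Gamma}_c}$ (or on ${\overline{\mathcal{R}}}_{\widehat{\Gamma}_c}$ when the anchor has twist $-1$, after passing to a coherent lift as in the lemma preceding the construction of $\Ass$) we may pick any section $s_c$. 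Setting $s:=\Ass_{\Gamma,e}(s_c\boxplus s_o)$ and iterating over all internal edges and contracted boundary tails yields a section of ${\mathcal{W}}$ over an appropriate $\Lambda$-neighborhood $U\subseteq\oCM_\Gamma$ of $[\Sigma]$.

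The main obstacle, and the step requiring the most care, is checking that the assembling operation preserves positivity at the boundary intervals $I_h$, $h\in H^+(\Lambda)$. This will follow from the fact that $H^+(\Lambda)\subseteq E(\Lambda)\setminus E(\Gamma)$ lies entirely in the open components inherited from $\Gamma$'s open vertices, while $\Ass_{\Gamma,e}$ modifies sections only in a neighborhood of the internal node or contracted boundary tail being assembled. Consequently, the restriction of $s$ to each boundary interval $I_h$ coincides with the restriction of $s_o$, which is positive by construction. Extending $U$ and the family of intervals by products across closed components and by smoothing parameters of the remaining nodes completes the proof.
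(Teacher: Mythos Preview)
Your inductive strategy is close in spirit to the paper's argument, but there is a real gap in the boundary-edge step. You write that after detaching a (non-positive) NS boundary edge $e$, ``the hypotheses transfer verbatim to each connected component of $\widehat{\Gamma}$.'' They do not. One of the two components---the one containing the illegal half-edge $h_1$---now carries a new boundary tail with twist $\tw(h_1)\le 2\h$. This tail is illegal, so the component is not a legal graph, and its twist is strictly smaller than $r-2-2\h$; hence neither the hypothesis of the corollary (for your induction) nor the hypothesis of Proposition~\ref{prop positivity pointwise} (for the base case) is satisfied on that component. Concretely, the rank estimates in the proof of Proposition~\ref{prop positivity pointwise} (Lemma~\ref{lem degv} and the inequalities following it) rely on every illegal boundary half-edge having twist at least $2\h+2$; an illegal tail of twist $\le 2\h$ would spoil those bounds.

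The paper handles exactly this obstruction with an extra trick: on the component containing the illegal half-edge $h_1$ of twist $2t\le 2\h$, one replaces $h_1$ by an \emph{internal} tail $\hat h_1$ of twist $t$. The modified graph is legal and level-$\h$, so Proposition~\ref{prop positivity pointwise} applies to it and yields a positive vector $\hat w_1$. One then transfers the zero set of $\hat w_1$ back to the original component via the common underlying marked disk (after forgetting $h_1$ and $\hat h_1$), and invokes Lemma~\ref{lem surjection of total evaluation map} to produce $w_1$ with the same zeros; Observation~\ref{obs zero on boundary} guarantees the required positivity on the intervals up to a global sign. Your proposal omits this replacement step entirely. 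The treatment of internal edges via the assembling operator is fine, and matches the paper's approach.
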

\begin{proof}
      The case where $\Gamma$ is smooth is treated in the second item of Proposition \ref{prop positivity pointwise}. 
      
      We first assume that $\Gamma$ consists of two open vertices $v_1$ and $v_2$ connected by a boundary edge $e$, where the half-edge $h_1$ on the $v_1$ side is illegal and has twist $2t\le 2\h$, the half-edge $h_2$ on the $v_2$ side is legal. Note that $e$ is an NS boundary edge, a section $s$ of $\mathcal W\to \Mbar_{\Gamma}$ is given by a section $s_1$ of $\mathcal W\to \Mbarstar_{v_1}$ and a section $s_2$ of $\mathcal W\to \Mbar_{v_2}$. Since $U$ is contractible, as in the proof of Proposition \ref{prop positivity pointwise}, it is enough to find vectors $w_1\in \mathcal W_{C_1}$ and $w_2\in \mathcal W_{C_2}$ for moduli points $C_1\in {\mathcal M^*}_{v_1}$ and  $C_2\in \mathcal M_{v_2}$ which evaluate positively on the corresponding intervals.

      We denote by $\Lambda_1$ and  $\Lambda_2$ the graphs obtained by detaching the edge $e\in E(\Lambda)$ (note that $\Lambda_1$ has an illegal half-edge $h_1$). The vector $w_2$ can be constructed by applying Proposition \ref{prop positivity pointwise} to $\Lambda_2\in \partial^! v_2$. We denote by $\hat v_1$ and $\hat\Lambda_1$ the graphs obtained by replacing the illegal boundary half-edge $h_1$ (in the corresponding graph) with an internal tail $\hat h_1$ with twist $t$. By applying Proposition \ref{prop positivity pointwise} to $\hat\Lambda_1\in \partial^! \hat v_1$ we obtain a vector $\hat w_1\in \mathcal W_{\hat C_1}$ for a moduli point $\hat C_1\in \mathcal M_{\hat v_1}$ which evaluates positively on the corresponding intervals. We can choose $\hat C_1$ to be a moduli point matching $C_1$ in the sense that, after forgetting the $r$-spin structures and the markings corresponding to $h_1$ or $\hat h_1$, we get the same moduli point $\Sigma_1$ in the moduli space of marked disks from $C_1$ and $\hat C_1$ (we can always assume $\Sigma_1$ is stable, since otherwise there will be no intervals on $v_1$ side and hence no positivity constraints needed to be satisfied). The set of zeros of $\hat w_1\in H^0\left(\hat C_1,\lvert J_{\hat C_1}\rvert\right)$ induces via $\Sigma_1$ a set $Z_{C_1}\subset C_1$. We take the vector $ w_1\in H^0\left( C_1,\lvert J_{ C_1}\rvert\right)$ to be the one that vanishes exactly at the set $Z_{C_1}$, then up to a sign flip,  $w_1$ evaluates positively on all the intervals on $v_1$ side as required.

      By using the above argument repeatedly, we can prove the corollary for all $\Gamma$ without internal edges. In the case where $\Gamma$ has internal edges, denoting by $\Gamma^o$ the open part of $\Gamma$ after detaching all the internal edges, we first find a vector $w_{C^o}\in \mathcal W_{C^o}$ for a moduli point $C^o\in \mathcal M_{\Gamma^o}$ which evaluates positively at all intervals, then we extend it to $w_{C}\in \mathcal W_{C}$ for a moduli point $C\in \mathcal M_{\Gamma}$ using the assembling operator.
\end{proof}

The following homotopy claims will be used in the proof of Theorem \ref{thm intersection numbers well-defined}.
\begin{thm}{\cite[Theorem 6.8]{BCT2}}\label{thm hirsch}
Let $E\to M$ be an orbifold vector bundle over a smooth orbifold with corners, and let $W = {\mathbb R}_{+}^n.$ Fix smooth multisections $s_0,\ldots,s_n\in C_m^\infty(M,E)$, and let $F: W \to C_m^\infty(M,E)$ be the map
\[
(\lambda_{i})_{i\in [n]}\to F_\Lambda = s_0 + \sum \lambda_i s_i.
\]
Denote by $p_M:W\times M\to M$ the projection. If the multisection
\[
F^{ev}\in C^\infty_m( W \times M, p_M^*E), \qquad
F^{ev}\left(\lambda,x\right)= F_\lambda\left(x\right),
\]
is transverse to zero, then the set $\{w\in W \colon F_w \pitchfork 0\}$ is residual.
\end{thm}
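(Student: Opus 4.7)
The plan is to apply a classical parametric transversality argument (the projection from the universal zero set to the parameter space) adapted to multisections of orbifold bundles over orbifolds with corners. Because $W = \mathbb{R}_+^n$ is finite-dimensional, ordinary Sard's theorem for manifolds with corners is enough; no Sard--Smale is needed. The argument will proceed in three stages: reduce to ordinary sections via local branches, compute the derivative of the projection on the universal zero set, and then apply Sard stratum by stratum.

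First I would reduce to single branches. Cover $M$ by a countable locally finite family of orbifold uniformising charts $(U_\alpha, G_\alpha)$ on which each multisection $s_i$ is represented by a finite weighted collection of smooth $G_\alpha$-invariant local branches $s_i^{\alpha,j}$. On $W \times U_\alpha$, the parametric family $F^{\mathrm{ev}}$ is then represented by finitely many smooth local branches of the form
\[
F^{\alpha,\mathbf j}(\lambda, x) = s_0^{\alpha, j_0}(x) + \sum_{i=1}^n \lambda_i\, s_i^{\alpha, j_i}(x),
\]
and the definition of multisection transversality says that $F^{\mathrm{ev}} \pitchfork 0$ iff each $F^{\alpha,\mathbf j}$ is transverse to zero in the classical sense; similarly $F_w \pitchfork 0$ iff every branch of $F_w$ is. Since a countable intersection of residual sets in $W$ is residual (Baire), it is enough to prove the statement for one branch $F^{\alpha,\mathbf j}$ in one chart.

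Second, I would run the parametric transversality argument for that branch. Write $\tilde F : W \times U \to \tilde E$ for the smooth $G$-equivariant section obtained, with $\tilde F \pitchfork 0$ by hypothesis. Then $Z := \tilde F^{-1}(0)$ is a smooth $G$-invariant submanifold with corners of $W \times U$, stratified by the corner stratification inherited from $W \times U$. The key linear algebra step is that, for $(w,x) \in Z$, the derivative $d\pi_{(w,x)} : T_{(w,x)} Z \to T_w W$ of the projection $\pi : Z \to W$ is surjective if and only if $\tilde F_w := \tilde F(w,\cdot)$ is transverse to zero at $x$. This follows by splitting $d\tilde F_{(w,x)}$ into its $W$- and $U$-components and using the surjectivity of $d\tilde F_{(w,x)}$: the cokernel of $d_x \tilde F_{w,x}$ is identified with the cokernel of $d\pi_{(w,x)}$. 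Consequently, $w$ is a regular value of $\pi$ (on every stratum of $Z$) exactly when $\tilde F_w \pitchfork 0$.

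Third, I would invoke Sard's theorem stratum by stratum for $\pi : Z \to W$. Each stratum is a finite-dimensional smooth $G$-manifold, $\pi$ is smooth and $G$ acts trivially on $W$, so the set of critical values of $\pi$ restricted to that stratum has measure zero and is meagre in $W$. Taking the union over the countably many strata keeps meagreness, and then intersecting the complements over all local branches and all charts preserves residuality. This intersection is exactly $\{w \in W : F_w \pitchfork 0\}$. The main obstacle is purely bookkeeping: verifying that transversality of a multisection, defined branchwise, feeds correctly into the parametric transversality of each branch; that the $G_\alpha$-action does not interfere with Sard (it does not, since $G_\alpha$ acts trivially on $W$ and preserves the stratification); and that the corner strata of $Z$ are handled by applying Sard separately on each open stratum. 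Once this is set up, the conclusion is immediate.
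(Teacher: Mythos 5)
The paper does not prove Theorem~\ref{thm hirsch}: it is imported verbatim as a citation from \cite[Theorem 6.8]{BCT2}, so there is no ``paper's own proof'' to compare against beyond the reference to Hirsch-style parametric transversality. Judged on its own, your proposal is correct and follows exactly the argument one expects (and, as far as I can tell, the argument given in the cited source): reduce to local branches via uniformizing charts, identify the tangent space of the universal zero locus $Z=\tilde F^{-1}(0)$ with $\ker d\tilde F$, show $d\pi\colon T_{(w,x)}Z\to T_wW$ is surjective iff $d_x\tilde F_w$ is surjective, and apply Sard stratum by stratum. The linear-algebra equivalence is stated and is the correct one, and the reduction to residuality via countable intersection over branches and charts is handled properly.

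Two small points worth tightening, neither of which is a real gap. First, Sard's theorem gives that the critical values on each stratum have measure zero; to upgrade ``measure zero'' to ``meagre'' you need the standard compact-exhaustion argument (the critical values of $\pi$ restricted to each member of a countable compact exhaustion of a stratum form a closed null set, hence nowhere dense, and their union is meagre). You assert meagreness without spelling this out; it is routine but it is the one place where residuality, rather than mere full measure, actually enters. Second, since both $W=\mathbb{R}_+^n$ and $M$ carry corner stratifications, the strata of $Z$ are indexed by products of corner strata of $W$ and $M$, and $\pi$ need not carry a stratum of $Z$ into a single stratum of $W$; this does not affect the argument, since Sard applied to $\pi|_{Z_\alpha}\colon Z_\alpha\to W$ (landing in all of $W$, not a stratum) already gives what you need, but it is worth saying explicitly that transversality of $F_w$ as a multisection on an orbifold with corners is by definition branchwise transversality on every corner stratum, so the equivalence you set up really does characterize $\{w : F_w\pitchfork 0\}$.
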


\begin{lemma}{\cite[Lemma 4.12]{BCT2}}\label{lem zero difference as homotopy}
Let $E \to M$ be an orbifold vector bundle over an orbifold with corners, with $\text{rank}(E) = \dim(M)$, let  $U \subseteq M$ be an open set with $M \setminus U$ compact, and let $s_0$ and $s_1$ be nowhere-vanishing smooth multisections of $E$ over $U\cup \partial M$. Denote by $p :   M\times[0,1]\to M$ the projection, consider the homotopy
\[
H \in C_m^\infty((U\cup \partial M)\times[0,1],p^*E),~~H(x,i) = s_i(x),~i=1,2.
\]  
Suppose that $H \pitchfork 0$ and that $H(u,t) \neq 0$ for all $t \in [0,1]$ and $u \in U$.  Then
\[
\int_M e(E ; s_1) - \int_M e(E ; s_0) = \# Z(H)=\# Z(H|_{\partial M\times[0,1]}).
\]
\end{lemma}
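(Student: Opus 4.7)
The plan is to realize both integrals as signed rational zero counts of transverse extensions and to connect them via a one-dimensional cobordism argument with boundary on $\partial(M\times[0,1])$.

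First, I would choose transverse extensions $\tilde s_0,\tilde s_1$ of $s_0,s_1$ to smooth multisections of $E$ defined on all of $M$. Since each $s_i$ is already nowhere-vanishing on $U\cup\partial M$ and $M\setminus U$ is compact, a small perturbation supported in a relatively compact subset of $M\setminus(U\cup\partial M)$, combined with Theorem \ref{thm hirsch}, arranges $\tilde s_i\pitchfork 0$. By the definition of the relative Euler number from \cite{RelativeEulerClass} (see also \cite[Appendix]{BCT2}),
\[
\int_M e(E;s_i)=\#Z(\tilde s_i),
\]
the signed rational count of zeros, all of which lie in the compact set $M\setminus U$.

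Second, I would extend $H$ to a smooth multisection $\tilde H$ of $p^*E$ on all of $M\times[0,1]$ with $\tilde H|_{M\times\{i\}}=\tilde s_i$ and $\tilde H|_{(U\cup\partial M)\times[0,1]}=H$. Applying Theorem \ref{thm hirsch} to families of perturbations supported away from $(U\cup\partial M)\times[0,1]\cup M\times\{0,1\}$, I may further arrange $\tilde H\pitchfork 0$. The hypothesis $H\neq 0$ on $U\times[0,1]$, together with $\tilde s_i\neq 0$ on $U$, then forces $Z(\tilde H)\subseteq (M\setminus U)\times[0,1]$, so $Z(\tilde H)$ is compact. Since $\mathrm{rank}(p^*E)=\dim(M\times[0,1])-1$, transversality makes $Z(\tilde H)$ a compact $1$-dimensional suborbifold of $M\times[0,1]$ with boundary on $\partial(M\times[0,1])$ and carrying canonical rational weights from the local branches of the multisection.

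Third, I would compute the signed boundary of $Z(\tilde H)$. The orbifold-with-corners boundary $\partial(M\times[0,1])$ decomposes as $M\times\{0\}\sqcup M\times\{1\}\sqcup\partial M\times[0,1]$, glued along their corners. Because $\tilde H=\tilde s_i$ on $M\times\{i\}$ and $\tilde H=H$ on $\partial M\times[0,1]$, and because the oriented boundary of a compact oriented $1$-manifold has zero total signed count (applied branch by branch with rational weights), I obtain
\[
\#Z(\tilde s_1)-\#Z(\tilde s_0)-\#Z(H|_{\partial M\times[0,1]})=0,
\]
with signs fixed by the product orientation conventions. Substituting the first-step identifications yields the first claimed equality. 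For the second, note that the hypothesis $H(u,t)\neq 0$ for all $u\in U,~t\in[0,1]$ forces $Z(H)\subseteq\partial M\times[0,1]$, so $Z(H)=Z(H|_{\partial M\times[0,1]})$ tautologically.

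The main obstacle will be executing the transversality and cobordism arguments cleanly in the orbifold-with-corners, multisection setting. Concretely, one must check that Theorem \ref{thm hirsch} can be applied with perturbations vanishing on the prescribed closed subset $(U\cup\partial M)\times[0,1]\cup M\times\{0,1\}$, and that along the codimension-$1$ face $\partial M\times[0,1]$ the pre-existing transversality $H\pitchfork 0$ implies $\tilde H$ meets this face transversally with the correct induced orientation on $Z(\tilde H)\cap(\partial M\times[0,1])$. Once transversality is established stratum by stratum, and once the rational weights of multisection zeros are tracked through the cobordism, the identity is the standard signed-boundary-of-a-$1$-manifold computation.
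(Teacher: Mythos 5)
The paper does not reprove this lemma — it cites \cite[Lemma 4.12]{BCT2} — so there is no in-paper proof to compare against directly. That said, your proposal is the standard and correct cobordism argument: extend $s_0,s_1$ to transverse multisections $\tilde s_0,\tilde s_1$ on $M$, extend all the prescribed data to a transverse multisection $\tilde H$ of $p^*E$ on $M\times[0,1]$, observe that $Z(\tilde H)$ is a compact weighted $1$-dimensional orbifold contained in $(M\setminus U)\times[0,1]$, and equate the signed boundary count to zero. This is precisely the mechanism underlying \cite[Lemma 4.12]{BCT2}. Your tautological explanation of the second equality, and the identification $\int_M e(E;s_i)=\#Z(\tilde s_i)$ via \cite[Appendix A]{BCT2}, are both correct. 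The points you flag as potential obstacles are indeed the ones that need care, but they all go through: since the rank of $E$ equals $\dim(\partial M\times[0,1])$, stratum-wise transversality of $H$ along $\partial M\times[0,1]$ already makes the full differential $d\tilde H$ surjective at those zeros (the restricted differential is already an isomorphism onto the fiber), so near the prescribed locus $\tilde H$ is automatically transverse, and Theorem \ref{thm hirsch} can then be applied with finitely many perturbations supported in the compact complement. In short, the proposal is correct and takes what is surely the same route as the cited source; the only thing left implicit is the verification of the product orientation conventions, which you acknowledge.
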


The following lemma is a tool to construct positive multisections of 
$\mathcal W$ inductively.
\begin{lem}\label{lem construct positive section from boundary}
    Let $\Gamma$ be a graded $r$-spin graph without positive edges. Let $\{s_\Delta\}_{\Delta \in \partial^B \Gamma\setminus \partial^+ \Gamma}$  be a collection of multisections, where each $s_\Delta$ is a positive multisection (with respect to $\Delta$) of $\mathcal W$ over $\oPMb_{\Delta}$, and such that $s_{\Delta_1}\big\vert_{\oPMb_\Theta}=s_{\Delta_2}\big\vert_{\oPMb_\Theta}$ for $\Theta \in \partial \Delta_1\cap \partial \Delta_2$. Then there exist a multisection $s_\Gamma$ of $\mathcal W$ over $\oPMb_{\Gamma}$ which is positive (with respect to $\Gamma$), and satisfies $s_\Gamma\big\vert_{\oPMb_{\Delta}}=s_\Delta$ for all $\Delta \in \partial^B \Gamma\setminus \partial^+ \Gamma$.
\end{lem}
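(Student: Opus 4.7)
The strategy is to combine the given boundary data with locally constructed positive sections near $\partial^+\Gamma$ via a partition of unity, and then extend arbitrarily to the remainder of $\oPMb_\Gamma$.

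First, I use the compatibility hypothesis $s_{\Delta_1}|_{\oPMb_\Theta}=s_{\Delta_2}|_{\oPMb_\Theta}$ on common sub-strata to glue the family $\{s_\Delta\}$ into a single multisection $s_\partial$ on $\partial\oPMb_\Gamma=\bigcup_{\Delta\in\partial^B\Gamma\setminus\partial^+\Gamma}\oPMb_\Delta$. A collar neighbourhood argument for orbifolds with corners lets me extend $s_\partial$ to a smooth multisection $\tilde s_\partial$ on an open neighbourhood $N_\partial$ of $\partial\oPMb_\Gamma$ in $\oPMb_\Gamma$. Positivity of each $s_\Delta$ on its intervals is an open condition, so after shrinking $N_\partial$ the multisection $\tilde s_\partial$ remains positive near every sub-stratum $\Xi\in\partial^!\Delta$ at which positivity was required.

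Next, I apply Corollary \ref{cor local positivity more general} to each stratum $\Lambda\in\partial^+\Gamma$: for every $u\in\overline{\CM_\Lambda}$ the corollary provides a contractible $\Lambda$-neighbourhood $U_u$, a $\Lambda$-family of intervals, and a $(U_u,I)$-positive multisection $s_u$ of ${\mathcal{W}}$ on $U_u\cap\oPMb_\Gamma$. By compactness of the closure of $\partial^+\Gamma$ in $\oCM_\Gamma$ I reduce to a finite cover $\{U_\alpha\}$ with corresponding sections $\{s_\alpha\}$. A partition of unity subordinate to $\{U_\alpha\}$ produces a smooth multisection $s_+$ on $U_+:=\bigcup U_\alpha\cap\oPMb_\Gamma$ which is positive with respect to a common refinement of the $\Lambda$-families of intervals; this step uses that positivity on each interval is a convex open condition, so a convex combination of sections positive on a common interval is again positive there.

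The final step is to interpolate: take a partition of unity $\{\rho_\partial,\rho_+\}$ subordinate to the open cover $\{N_\partial,U_+\cup\mathrm{int}(\oPMb_\Gamma)\}$, where $\rho_+$ is supported in a neighbourhood of $\partial^+\Gamma$ and $\rho_\partial$ is supported in $N_\partial$. On the overlap $N_\partial\cap U_+$ both $\tilde s_\partial$ and $s_+$ are positive, and convexity of the positivity condition ensures that $\rho_\partial\tilde s_\partial+\rho_+s_+$ is still positive. Away from $U_+$, the combination equals $\tilde s_\partial$, hence restricts to each $s_\Delta$ on $\oPMb_\Delta$ as required. Extending smoothly (and arbitrarily) over the remaining interior of $\oPMb_\Gamma$ yields the desired $s_\Gamma$.

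The main obstacle is reconciling the different $\Lambda$-families of intervals used by the various locally constructed sections, both among the $s_\alpha$ and between $s_+$ and $\tilde s_\partial$, so that positivity is stable under the partition of unity. I expect this to be handled by a common refinement: at any point $u$ lying in several $U_\alpha$, the intersection of the corresponding intervals $I_h^\alpha(u)$ is still an interval satisfying the axioms of Definition \ref{def positive neighbourhoods} (possibly after further shrinking of $N_\partial$ and $U_+$), and positivity on the refined intervals is preserved by convex combinations.
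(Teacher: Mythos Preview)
Your argument has a genuine gap in the final interpolation step: the restriction condition $s_\Gamma|_{\oPMb_\Delta}=s_\Delta$ is violated on $\oPMb_\Delta\cap U_+$. The set $U_+$ is a neighbourhood of $\partial^+\Mbar_\Gamma$, and every non-positive boundary stratum $\oPMb_\Delta$ contains points arbitrarily close to $\partial^+\Mbar_\Gamma$ (namely points approaching strata in $\partial^+\Delta\subset\partial^+\Gamma$), so $\oPMb_\Delta\cap U_+\neq\emptyset$. On that intersection your section equals $\rho_\partial\,\tilde s_\partial+\rho_+\,s_+$, and since $s_+$ was constructed from Corollary~\ref{cor local positivity more general} without reference to the prescribed $s_\Delta$, there is no reason for $s_+|_{\oPMb_\Delta}$ to equal $s_\Delta$. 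Your sentence ``Away from $U_+$, the combination equals $\tilde s_\partial$, hence restricts to each $s_\Delta$'' therefore does not cover the whole of $\oPMb_\Delta$.

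The paper avoids this by never constructing the local positive sections independently of the boundary data. At a point $u\in\mathcal M_\Lambda\subset\partial^+\Mbar_\Gamma$ whose graph $\Lambda$ also carries non-positive boundary edges, one smooths the positive edges to obtain $\mathcal S\Lambda\in\partial^B\Gamma\setminus\partial^+\Gamma$ and takes $s_u$ to be a smooth extension of the \emph{given} $s_{\mathcal S\Lambda}$; positivity of $s_u$ near $u$ is inherited (for sufficiently small $U_u$) from the assumed positivity of $s_{\mathcal S\Lambda}$ with respect to $\mathcal S\Lambda$, and the restriction condition holds by construction. Corollary~\ref{cor local positivity more general} is invoked only at points where $\mathcal S\Lambda=\Gamma$, i.e.\ where no non-positive boundary stratum passes through. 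The induction on the corner depth $\partial_n\Mbar_\Gamma$ then ensures that at each stage all the local pieces being averaged already coincide with the prescribed $s_\Delta$ on the relevant boundary faces, so the convex combination does too. Your approach could be repaired along these lines, but as written the independently-constructed $s_+$ destroys the boundary compatibility.
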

\begin{proof}
    If $H^{CB}(\Gamma)\ne\emptyset$, then $\partial^B\Gamma=\emptyset$. In this case we only need to show that there exists a multisection $s_\Gamma$ of $\mathcal W$ over $\oPMb_\Gamma=\Mbar_{\Gamma}$ which evaluates positively at the contracted boundary node at every moduli point. The local existence of such multisection is guaranteed by the first item of Proposition \ref{prop positivity pointwise}, and we can glue them to a global multisection by a partition of unity (note that $\Mbar_{\Gamma}$ is compact in this case).

    Now we assume $H^{CB}(\Gamma)=\emptyset$. For each $\Lambda\in  \partial^B \Gamma$, we denote by $\mathcal S\Lambda$ the graph obtained by smoothing all the boundary positive of $\Lambda$. We first observe that, for each $u\in \mathcal M_\Lambda\subseteq \partial \Mbar_{\Gamma}$, there exists a $\Lambda$-neighbourhood  $U_u\subseteq \Mbar_{\Gamma}$ of $u$ and a section $s_u$ of $\mathcal W\to U_u\cap \oPMb_{\Gamma}$ satisfying:
    \begin{itemize}
        \item[--] there exist a $\Lambda$-family of intervals $\{I_{u,h}\}$ such that $s_u$ is $(U_u,I_u)$-positive (with respect to $\Gamma$) near $u$ (note that in the case $\Lambda\notin \partial^+\Gamma$  there is no interval);
        \item[--] the restriction of $s_u$ to $\oPMb_{\mathcal S \Lambda}\cap U_u$ coincides with $s_{\mathcal S \Lambda}$ (unless $\mathcal S \Lambda=\Gamma$).
    \end{itemize}
    Actually, we can take $s_u$ to be an extension of $s_{\mathcal S \Lambda}$ in a sufficiently small neighbourhood $U_u$ in the case where $\mathcal S \Lambda\ne\Gamma$; in the case where $\mathcal S \Lambda=\Gamma$, we can apply Corollary \ref{cor local positivity more general}.

    We define 
    $$
    \partial_n  \Mbar_{\Gamma}:=\bigcup_{\substack{\Lambda \in \partial \Gamma\\\lvert E^B(\Lambda)\setminus E^B(\Gamma)\rvert+\lvert H^{CB}(\Lambda)\rvert\ge \dim \Mbar_\Gamma-n}}\Mbar_\Lambda.
    $$
    Using the above sections $\{s_u\}$ and partitions of unity, we can inductively construct sections $s_n$ of $\mathcal W \to V_n$ for $0\le n \le \dim \Mbar_\Gamma-1$, where $V_n$ is a neighbourhood of $\partial_n  \Mbar_{\Gamma}$ in $\Mbar_{\Gamma}$, such that $s_n$ is positive (with respect to $\Gamma$) near $u$ for all $u\in V_n\cap \partial \Mbar_{\Gamma}$, and $s_n$ coincides with $s_\Delta$ for all $\Delta \in \partial^B \Gamma\setminus \partial^+ \Gamma$ on their common domain. 

    For $n=0$, since $\partial_0 \Mbar_\Gamma$ is a finite set of points, we take $V_0:=\bigsqcup_{u\in \partial_0 \Mbar_{\Gamma}} U_u$ (we choose $U_u$ small enough to avoid intersection), and set $s_0\vert_{U_u}:=s_u$ for each $u\in \partial_0 \Mbar_{\Gamma}$.

    Assuming we have constructed $V_{n-1}$ and $s_{n-1}$, we now construct $V_n$ and $s_n$. Note that $ \partial_{n} \Mbar_{\Gamma}\setminus V_{n-1}$ is compact, there exists $u_1^n,u_2^n,\dots,u^n_{k_n}\in \partial_{n} \Mbar_{\Gamma}\setminus V_{n-1}$ such that $$V_{n-1}\cap \partial_{n} \Mbar_{\Gamma},~ U_{u_1^n}\cap \partial_{n} \Mbar_{\Gamma},~\dots,~ U_{u_{k_n}^n}\cap \partial_{n} \Mbar_{\Gamma}$$ is an open cover of $\partial_{n} \Mbar_{\Gamma}$. We take $\rho_0,~\rho_1,~\dots,~\rho_{n_k}$ to be a partition of unity subordinated to this open cover, and extend them smoothly to non-negative functions supported in $V_{n-1},~U_{u_1^n},~\dots,~U_{u_{k_n}}^n$. Then we can take 
    $$
    V_n:=V_{n-1}\cup U_{u_1^n}\cup \dots \cup U_{u_{k_n}}^n
    $$
    and 
    $$
    s_n:=\rho_0 s_{n-1}+\rho_1 s_{u^n_1}+\dots+\rho_{{k_n}} s_{u^n_{k_n}}.
    $$
    The section $s_n$ satisfies the desired properties because it is glued from the sections which satisfy the properties locally.

    Because $V_{\dim \Mbar_\Gamma-1}$ is a neighbourhood of $\partial \Mbar_\Gamma$,  we can take $s_\Gamma$ to be a smooth extension of $s_{\dim \Mbar_\Gamma-1}$ to the entire $\oPMb_\Gamma$. 
\end{proof}

\begin{proof}[Proof of Theorem \ref{thm intersection numbers well-defined}]
We split the proof into three parts.
\proofpart{1}{Construction of a canonical multisection $\mathbf{s}_0$}
We consider the set
$$
\mathfrak P:=\{\Gamma\colon  \Gamma\in V(\mathbf{G}), E^I(\Gamma)=\emptyset \text{ for an $(r,\h)$-graph $\mathbf{G}$ such that }\emptyset\ne \oPMb_\mathbf{G}\subseteq \oPMh_{0,B,I} \},
$$
note that we identify $\Gamma_1\in V(\mathbf{G_1})$ and $\Gamma_2\in V(\mathbf{G_2})$ in the following two situations:
\begin{enumerate}
    \item  $\Gamma_1$ is naturally identified with $\Gamma_2$ as unchanged vertices when $\mathbf{G_1}$ is a smoothing of $\mathbf{G_2}$;
    \item 
    $\Gamma_1$ is naturally identified with $\Gamma_2$ as unchanged vertices when $\mathbf{G_1}$ is paired with $\mathbf{G_2}$ by $PI$.
\end{enumerate} 

All graphs $\Gamma\in V(\mathbf{G})$ in $\mathfrak P$ have no positive boundary edge since $\oPMb_\mathbf{G}\ne \emptyset$. Thus, all boundary edges of $\Gamma$ are of type AI or BI.

Let $\Gamma\in \mathfrak P$ be a graded $r$-spin graph with $s-1$ type-BI boundary edges, we denote by $\Gamma_1,\dots,\Gamma_s$ the graphs obtained by detaching all these type-BI boundary edges, and by $\{(h_j^{legal},h_j^{ill})\}_{1\le j \le s-1}$ the corresponding pairs of legal and illegal half-edges. Note that we have $\tw(h_j^{legal})\ge r-2-2\h$ and $\tw(h_j^{ill})\le 2\h$ because they correspond to type-BI edges. We modify the graphs $\Gamma_i$ to legal level-$\h$ graded $r$-spin graphs $\hat \Gamma_i$ in the following way: 
to each illegal half-edge $h_j^{ill}$ we attach a new vertex $v_{\hat{h}_j}$, where $v_{\hat{h}_j}$ has only one boundary half-edge with twist $\tw(h_j^{legal})$ and only one internal tail $\hat{h}_j$ with twist $\frac{\tw (h_j^{ill})}{2}$. We define the collection of graded $r$-spin graphs
$$
\mathfrak S \Gamma:=\{\hat \Gamma_1,\hat \Gamma_2,\dots,\hat \Gamma_s\}.
$$
Note that if $\Gamma\in V(\mathbf{G}),$ for an $(r,\h)$-graph $\mathbf{G}$, then $\Mbar_\mathbf{G}$ is contained in the intersection of $s-1$ type-BI codimension-1 boundaries $\text{bd}_{BI}^1,\text{bd}_{BI}^2,\dots,\text{bd}_{BI}^{s-1}$ of $\oPMh_{0,B,I}$. 
If we denote by $\mathbf{\hat G}$ the $(r,\h)$-graph obtained by replacing $\Gamma\in V(\mathbf{G})$ with graded $r$-spin graphs in $\mathfrak S \Gamma$ and adding a dashed line between $h_j^{legal}$ and $\hat{h}_j$ for each $1\le j \le s-1$, then $\Mbar_{\mathbf{\hat G}}$ is contained in the intersection of $s-1$ type-AI codimension-1 boundaries $\text{bd}_{AI}^1,\text{bd}_{AI}^2,\dots,\text{bd}_{AI}^{s-1}$  of $\oPMh_{0,B,I}$, where $\text{bd}_{AI}^i$ is paired with $\text{bd}_{BI}^i$ by $PI$. Moreover, $\Mbar_{\mathbf{\hat G}}$ is identified with $\Mbar_{\mathbf{ G}}$ under a sequence of $PI$s. 

We claim that there exists a multisection $s_\Gamma$ of $\mathcal W\to \oPMb_\Gamma$ for each $\Gamma \in \mathfrak P$, satisfying:
\begin{itemize}
    \item[--] $s_\Gamma$ is positive (with respect to $\Gamma$);
    \item[--] by identifying $\Mbar_\Gamma$ with $\prod_{\hat\Gamma_i\in \mathfrak S\Gamma}\Mbar_{\hat \Gamma_i}$ using Remark \ref{rmk decompose NS boundary node}, we have $s_\Gamma=\bboxplus_{\hat\Gamma_i\in \mathfrak S\Gamma} s_{\hat \Gamma_i}$;
    \item[--] for $\Delta\in (\partial^B \Gamma\setminus\partial^+ \Gamma)\cap \mathfrak P$, we have $s_\Gamma\big\vert_{\oPMb_\Delta}=s_\Delta$.
\end{itemize}

We construct $s_\Gamma$ by an induction on $\dim \oPMb_\Gamma$. In the case where $\dim \oPMb_\Gamma=0$, the moduli space $\oPMb_\Gamma=\Mbar_{\Gamma}$ is a single point. If $\Gamma$ has a contracted boundary edge,  we take $s_\Gamma$ to be a vector in $\mathcal W_{\Mbar_{\Gamma}}$ evaluating positively at the contracted boundary node; if $\Gamma$ has no contracted boundary edge, we take $s_\Gamma$ to be an arbitrary vector in $\mathcal W_{\Mbar_{\Gamma}}$ (note that there is no positivity constraint in this case).  

Assuming we have constructed $s_\Lambda$ for all $\Lambda\in \mathfrak P$ where $\dim \oPMb_\Lambda \le n-1$, We now construct  $s_\Gamma$ for $\Gamma$ where $\dim \oPMb_\Gamma = n$. 

We first construct $s_\Gamma$  for  $\Gamma$ with no type-BI boundary edges. For each $\Delta\in \partial^B \Gamma\setminus \partial^+ \Gamma$, if $E^I(\Delta)=\emptyset$, then a multisection $s_\Delta$  of $\mathcal W\to \oPMb_\Delta$ is constructed by the previous step; if $E^I(\Delta)\ne\emptyset$, let $\check{\Delta}\in  \partial^B \Gamma\setminus \partial^+ \Gamma$ be the graph obtained by smoothing all internal edges of $\Delta$, we define $s_\Delta:=s_{\check{\Delta}}\big\vert_{\oPMb_\Delta}$. We define $s_\Gamma$ to be the multisection constructed from $\{s_\Delta\}_{\Delta\in \partial^B \Gamma\setminus \partial^+ \Gamma}$ as in Lemma \ref{lem construct positive section from boundary}.

For $\Gamma$ with at least one type-BI boundary edge, we define $s_\Gamma:=\bboxplus_{\hat\Gamma_i\in \mathfrak S\Gamma} s_{\hat \Gamma_i}$ under the identification between $\Mbar_\Gamma$ and $\prod_{\hat\Gamma_i\in \mathfrak S\Gamma}\Mbar_{\hat \Gamma_i}$.

Now we check that the multisection $s_\Gamma$ we constructed in this way satisfies the three requirements. The first requirement is satisfied by definition for $\Gamma$ without type-BI boundary edges; for $\Gamma$ with type-BI boundary edges, it follows from the fact that $PI$ preserves the positivity of multisections. The second and third requirements are satisfied by definition. 

For each smooth $(r,\h)$-graph $\mathbf{G}$ such that $\oPMb_\mathbf{G}\subseteq \oPMh_{0,B,I}$, we define a multisection $s_\mathbf{G}$ of $\mathcal W\to \oPMb_\mathbf{G}$ by 
$$
s_\mathbf{G}:=\bboxplus_{\Gamma\in V(G)}s_\Gamma.
$$
All these $s_G$ together give a multisection $s_{\mathcal W}$ of $\mathcal W\to \oPMh_{0,B,I}$. Actually, $s_{\mathcal W}$ is a canonical multisection, this can be seen from the three requirements satisfied by $s_\Gamma$.

We define a canonical multisection $\mathbf{s}_0$ of $E = {\mathcal{W}}\oplus \bigoplus_{i=1}^{\lvert I\rvert}{\mathbb L}_i^{\oplus d_i}$ by  $$\mathbf{s}_0:=s_{\mathcal W}^{\oplus m}\oplus0\oplus\dots\oplus 0.$$

\proofpart{2}{Perturbation of $\mathbf{s}_0$}
 By the positivity and continuity of $s_{\mathcal W}$, the multisection $s_0$ does not vanish on a neighbourhood of $\partial^+\Mbar^{1/r,\h}_{0,B,I}$ and a neighbourhood of $\partial^{CB}\oPMb^{1/r,\h}_{0,B,I}$.  We denote by $U_+$ the union of all these neighbourhoods. We also choose a smaller neighbourhood $V_+\subset U_+$ such that the closure of $V_+$ is contained in $U_+$.
 
To find a multisection $\mathbf{s}$ satisfying the requirements of Theorem \ref{thm intersection numbers well-defined}, we need to show that we can perturb $\mathbf{s}_0$ away from $V_+$ to make it also does not vanish on  $\partial^{PI}\oPMb^{1/r,\h}_{0,B,I}$, \textit{i.e.} boundaries of type BI or AI. In fact, by using Theorem \ref{thm hirsch}, we can perturb $\mathbf{s}_0$ to a canonical multisection $\mathbf{s}$ which is transverse to zero when restricted to $\partial^{PI}\oPMb^{1/r,\h}_{0,B,I}$, then such a perturbation $\mathbf{s}$ will not vanish on $\partial^{PI}\oPMb^{1/r,\h}_{0,B,I}$ for dimensional reasons. We will construct such a perturbation $\mathbf{s}$ which is transverse to zero also on the entire $\oPMb^{1/r,\h}_{0,B,I}$.

We choose open sets $U_u,V_u\subset \oPMb^{1/r,\h}_{0,B,I}\setminus \overline{V_+}$ and a set of sections $\{\bm{s}^i_{u}\}_{1\le i \le \rk E}$ of $E\to \oPMb^{1/r,\h}_{0,B,I}$ for each $u\in  \oPMb^{1/r,\h}_{0,B,I}\setminus U_+$, such that
\begin{itemize}
    \item[--] $u\in V_u\subset \overline{V_u}\subset U_u\setminus V_+$;
    \item[--] for each $u'\in U_u$, if $u\in \mathcal M_G$ and $u'\in \mathcal M_{G'}$ for  $(r,\h)$-graphs $G$ and $G'$, we have $G\in \partial^! G'$;
    \item[--] for each $u'\in V_u$, the vector space $E_{u'}$ is generated by the set of vectors $\{\bm{s}^i_{u}(u')\}_{1\le i \le \rk E}$;
    \item[--] the sections $\{\bm{s}^i_{u}\}_{1\le i \le \rk E}$ are supported in $U_u$;
    \item[--]  each section $\bm{s}^i_{u}$ can be written as a direct sum of sections of the Witten bundles and the relative cotangent line bundles;
    \item[--] if $u\in \text{bd}_{AI}$ and $u'\in \text{bd}_{BI}$ are identified via the identification $PI$ between the codimension-1  boundaries $\text{bd}_{AI}$ and $\text{bd}_{BI}$, we have $\bm{s}^i_{u}\vert_{\text{bd}_{AI}}=\bm{s}^i_{u'}\vert_{\text{bd}_{BI}}$ for all $1\le i \le \rk E$.
\end{itemize}
Such a choice is possible because we can extend basis vectors of $E_{u}$ smoothly to sections of $E$ supported in a neighbourhood of $u$. The sixth requirement can be satisfied if we construct $\bm{s}^i_u$ inductively as in the construction of $s_{\mathcal W}$ above, \textit{i.e.} we construct the sections $\bm{s}^i_u$ on the lower dimensional boundaries in a $PI$-compatible way first, then extend it to the higher dimensional boundaries.

For each $u\in  \oPMb^{1/r,\h}_{0,B,I}\setminus U_+$ , we denote by $U_{[u]}$ (respectively $V_{[u]}$) the disjoint union of all $U_{u'}$ (respectively $V_{u'}$) such that $u'$ is identified with $u$ via a sequence of $PI$s; moreover, for all $1\le i \le \rk E$, we denote by $\bm{s}^i_{[u]}$ the section of $E$ supported in $U_{[u]}$ given by the sections $s^i_{u'}$ on each connected components. Note that if $\hat{\bm{s}}$ is a canonical multisection of $E\to \oPMh_{0,B,I}$, then for any real number $\epsilon$, the multisection $\hat{\bm{s}}+\epsilon \bm{s}^i_{[u]}$ is also canonical.

Since $\oPMb^{1/r,\h}_{0,B,I}\setminus U_+$ is compact, there exists a finite set of moduli points $\{u_1,\dots,u_N\}\subset \oPMb^{1/r,\h}_{0,B,I}\setminus U_+$ such that  $\oPMb^{1/r,\h}_{0,B,I}\setminus U_+\subset \bigcup_{i=1}^N V_{[u_i]}$. We denote by $W:=\mathbb R_+^{N \times \rk E}$ the space with coordinators $\lambda=\left(\lambda_i^j\right)_{\substack{1\le i \le N\\1\le j \le \rk E}}$. We pull back the vector bundle $E$ from $\oPMb^{1/r,\h}_{0,B,I}$ to $W\times \oPMb^{1/r,\h}_{0,B,I}$ via the projection and consider the multisection
$$
\bm{t}(\lambda,u):=\bm{s}_0(u) + \sum_{\substack{1\le i \le N\\1\le j \le \rk E}} \lambda_i^j \bm{s}^j_{[u_i]}(u)
$$
of $E\to W\times \oPMb^{1/r,\h}_{0,B,I}$. We claim that $\bm{t}$ is transverse to zero on $W\times \oPMb^{1/r,\h}_{0,B,I}$. In fact, the multisection $\bm{t}$ does not vanish on $p=(\lambda,u)\in W\times V_+$ since $\bm{s}_0(u)$ does not vanish and all $\bm{s}^j_{[u_i]}(u)$ vanishes; if $\bm{t}(p)=0$ for some $p\in W\times V_{[u_i]}$, the vectors $\left\{\frac{\partial \bm{t}}{\partial \lambda_i^j}(p)=\bm{s}^j_{[u_i]}(u)\right\}_{ 1\le j \le \rk E}$ generate the fiber $E_p$. 

Note that we can write $\partial^{PI}\oPMb^{1/r,\h}_{0,B,I}=\bigcup_{\bm{H_i}\in X} \oPMb_{\bm{H_i}}$, where $X$ is a finite set of $(r,\h)$-graphs with a single boundary edge. The restriction of $\bm{t}$ to each $\oPMb_{\bm{H_i}}$ is also transverse to zero by the same argument as above.

According to Theorem \ref{thm hirsch}, the sets
$W_0:=\{\lambda\in W\colon \bm{s}(u):=\bm{t}( \lambda,u) \pitchfork 0\}$ and $W_i:=\{\lambda\in W\colon \bm{s}(u):=\bm{t}( \lambda,u)\big\vert_{\oPMb_{\mathbf{H}_i}} \pitchfork 0\}$ are residual. Thus we can find $\hat \lambda \in \bigcap W_i$ such that the canonical multisection of $E\to \oPMb^{1/r,\h}_{0,B,I}$ defined by $\bm{s}(u):=\bm{t}(\hat \lambda,u)$ is transverse to zero on the entire $\oPMb^{1/r,\h}_{0,B,I}$, and its restriction to $\partial^{PI}\oPMb^{1/r,\h}_{0,B,I}$ is also transverse to zero (and hence vanishes nowhere on $\partial^{PI}\oPMb^{1/r,\h}_{0,B,I}$).

\proofpart{3}{Independence of choices}
The last step is to prove, when $m$ is odd, the Euler number \eqref{eq  Euler number} is independent of the choice of the canonical multisection $\mathbf{s}$.

Let $\mathbf{s_1}$ and $\mathbf{s_2}$ be two nowhere-vanishing canonical multisections of $E$ over  $U_+\cup\partial\oPMh_{0,B,I}$, we need to show that
\begin{equation}\label{eq independence}
\int_{\oPMh_{0,B,I}}e(E ; \mathbf{s_1})=\int_{\oPMh_{0,B,I}}e(E ; \mathbf{s_2}).\end{equation}

The strategy is to use Lemma \ref{lem zero difference as homotopy}. We denote by $p :  \oPMh_{0,B,I}\times[0,1] \to \oPMh_{0,B,I}$ the projection. Assuming we can find a homotopy 
$$H \in C_m^\infty((U_+\cup\partial\oPMh_{0,B,I})\times[0,1],p^*E),~~H(x,t) = \mathbf{s}_t(x),~t=1,2$$ 
between $\mathbf{s_1}$ and $\mathbf{s_2}$ such that 
\begin{itemize}
    \item[--] $H$ is transverse to zero;
    \item[--] $H$ vanishes nowhere on $(U_+\cup\partial^{CB}\oPMh_{0,B,I})\times  [0,1]$;
    \item[--] $H\vert_{\text{bd}_{AI}\times [0,1]}=H\vert_{\text{bd}_{BI} \times [0,1]}$ for each pair of boundaries $\text{bd}_{AI}$ and $\text{bd}_{BI}$ identified by $PI$.
\end{itemize} 
Then by Lemma \ref{lem zero difference as homotopy} we have 
\begin{equation*}
    \begin{split}
    &\int_{\oPMh_{0,B,I}}e(E ; \mathbf{s_1})-\int_{\oPMh_{0,B,I}}e(E ; \mathbf{s_2})\\=&\# Z\left(H\big\vert_{\partial^{PI} \oPMh_{0,B,I}\times[0,1]}\right)\\
    =&\sum_{\substack{\text{$\text{bd}_{AI}$ and $\text{bd}_{BI}$ are}\\ \text{identified by $PI$}}} \left(\# Z\left(H|_{\text{bd}_{AI}\times[0,1]}\right)+\# Z\left(H|_{\text{bd}_{BI}\times[0,1]}\right)\right).
    \end{split}
\end{equation*}
The zeros of $H|_{\text{bd}_{AI}\times[0,1]}$ and $H|_{\text{bd}_{BI}\times[0,1]}$ appear in pairs, they contribute opposite numbers in the weighted count since, according to Theorem \ref{thm  PI boundaries paried}, the induced relative orientation of $E$ over $\text{bd}_{AI}$ and $\text{bd}_{BI}$ are opposite. 
Thus \eqref{eq independence} is proven. 

It remains to construct such a homotopy $H$. We shall imitate the analogous argument from \cite{PST14}. Start from the linear homotopy $H_0$ between $\mathbf{s_1}$ and $\mathbf{s_2}$, which satisfies the second and the third requirements. As in the previous part, we use Theorem \ref{thm hirsch} to perturb $H_0$ to transverse to zero. Actually, let $\bm{s}^j_{[u_i]}(u)$ be the multisections of $E\to \oPMh_{0,B,I}$ which we used to perturb $\mathbf{s}_0$ in the previous part, we perturb $H_0$ by the multisections
$$
\tilde{\bm{s}}^j_{[u_i]}(u,t):=t(1-t)\bm{s}^j_{[u_i]}(u)
$$
of $p^*E\to \oPMh_{0,B,I}\times [0,1]$ to get a multisection $H$ which is transverse to zero. The properties satisfied by $\bm{s}^j_{[u_i]}(u)$ guarantee the second and the third requirements are satisfied by $H$, while the factor $t(1-t)$ guarantees that $H$ is still a homotopy between $\mathbf{s_1}$ and $\mathbf{s_2}$.

\end{proof}

\begin{rmk}\label{rmk compare construction of section}
    When every boundary marking has twist $r-2$ (which happens in $\h=0$ theory we constructed), a different intersection theory on $\Mbar^{1/r}_{0,B,I}$ is defined in \cite{BCT2}, which we refer to as BCT theory.
    
    Instead of gluing other spaces to the type-BI boundaries of $\Mbar^{1/r}_{0,B,I}$ as in $\h=0$ theory, the canonical section $s^{\text{BCT}}$ in BCT theory is required to be the pullback of a canonical multisection over the moduli obtained by detaching the node and \emph{forgetting} the illegal half-node with twist $0$. The construction of $s^{\text{BCT}}$ in \cite{BCT2} focuses on the compatibility with the forgetful morphisms, while the construction of the canonical section in the above proof of Theorem \ref{thm intersection numbers well-defined} focuses on the compatibility with the identification of boundaries via $PI$.

    We will compare the BCT theory and the $\h=0$ theory in Section \ref{section:comparison}.
\end{rmk}

\subsection{Closed extended FJRW theories}\label{subsec:closed_ext_r_m}
In \cite{BCT_Closed_Extended,BCT2} it was observed that a certain extension of Witten's $r$-spin theory plays an important role in the open $r$-spin theory. This \emph{closed extended $r$-spin theory} is defined just like the usual genus $0$ closed $r$-spin theory, only that we allow one marking to have a negative twist of $-1.$ It was first observed in \cite{JKV2} that, as in the usual closed setting with only non-negative twists, also here $R^1\pi_*\mathcal{S}$ is an orbifold vector bundle in genus zero, so Witten's class $c_W$ can be defined by the formula~\eqref{eq Witten's class}. The \textit{closed extended $r$-spin correlators} are
\begin{equation}\label{eq:define closed ext}
\left\langle\tau^{a_1}_{d_1}\cdots\tau^{a_n}_{d_n}\right\rangle^{\frac{1}{r},\text{ext}}_0:=r\int_{\Mbar^{1/r}_{0,\{a_1, \ldots, a_n\}}} \hspace{-1cm} c_W \cap \psi_1^{d_1} \cdots \psi_n^{d_n}.
\end{equation}
 All these numbers are calculated in \cite{BCT_Closed_Extended}.

This definition easily extends to \emph{Fermat FJRW theories}, see \cite{FJR2,GKT2} and also Section \ref{sec:open_fjrw}.
In the context of this paper we will be interested in the following \emph{closed-extended FJRW correlators} for pairs
\[(x_1^r+\cdots x_m^r,\mathbb{Z}/r\mathbb{Z}).\]
We shall refer to them as \emph{closed extended $(r,m)$ theories.}
The \emph{closed extended $(r,m)$-correlators} are defined by
\begin{equation}\label{eq:closed ext_r_m}
\left\langle\tau^{a_1}_{d_1}\cdots\tau^{a_n}_{d_n}\right\rangle^{\frac{1}{r},\text{ext},m}_0:=r\int_{\Mbar^{1/r}_{0,\{a_1, \ldots, a_n\}}} \hspace{-1cm} c_W^m \cap \psi_1^{d_1} \cdots \psi_n^{d_n}. 
\end{equation}

\begin{rmk}
In fact, there is no difficulty in defining closed extended FJRW theories for Fermat polynomials 
\[x_1^{r_1}+\cdots x_m^{r_m},\] and an associated symmetry group, by considering the moduli spaces of $g=0$ curves with $r_i$-spin bundles for $i=1,\ldots, m$ allowing, for each $i=1,\ldots,m,$ at most one marking with a negative $i$th twist, and that twist must be $-1.$
Then the closed-extended FJRW class will just be the product of Witten classes for the $r_i$-spin bundles.
One can easily generalize, with the same arguments, the topological recursion relations found in \cite{BCT_Closed_Extended} to this more general setting.
\end{rmk}

\section{Open topological recursion relations}\label{sec:trr}
The open correlators satisfy topological recursion relations (TRRs).

The general form of these recursion relations allows writing certain open correlators as polynomial combinations of simpler open correlators, as well as the closed-extended correlators of Section \ref{subsec:closed_ext_r_m}.

For open $r$-spin theories, as we shall see in Section \ref{sec:computations}, these relations and a few additional vanishing results, allow all correlators to be calculated.  

\begin{thm}\label{thm TRR} \begin{itemize} The open correlators defined in Definition \ref{dfn correlator point insertion} satisfy:
\item[(a)] (TRR with respect to the boundary marking $b_1$) Suppose $l,k\ge 1$, then
\begin{equation}\label{eqtrr1}
		\begin{split}
		&\left\langle
		\tau^{a_1}_{d_1+1}\tau^{a_2}_{d_2}\dots\tau^{a_l}_{d_l}\sigma^{b_1}\sigma^{b_2}\dots\sigma^{b_k}
		\right\rangle_0^{\frac{1}{r},\text{o},\h,m}
		\\=&\sum_{\substack{s\ge 0\\-1\le a \le r-2}}\sum_{\substack{0 \le t_i \le \h\\\coprod_{j=-1}^{s}R_j=\{2,3,\dots,l\}\\\coprod_{j=0}^{s}T_j=\{2,3,\dots,k\}\\ \{(R_j,T_j,t_j)\}_{1\le j \le s}\text{ unordered}}}(-1)^s\left\langle
		\tau^{a}_{0}\tau^{a_1}_{d_1}\prod_{i\in R_{-1}}\tau^{a_i}_{d_i}\prod_{j=1}^{s}\tau^{t_j}_{0}
		\right\rangle_0^{\frac{1}{r},\text{ext},m}\\
		&\cdot\left\langle
		\tau^{r-2-a}_{0}\sigma^{b_1}\prod_{i\in R_0}\tau^{a_i}_{d_i}\prod_{i\in T_0}\sigma^{b_i}
		\right\rangle_0^{\frac{1}{r},\text{o},\h,m}
		\prod_{j=1}^{s}\left\langle
		\sigma^{r-2-2t_j}\prod_{i\in R_j}\tau^{a_i}_{d_i}\prod_{i\in T_j}\sigma^{b_i}
		\right\rangle_0^{\frac{1}{r},\text{o},\h,m}.
		\end{split}
		\end{equation}

\item[(b)] (TRR with respect to the internal marking $a_2$) Suppose $l\ge 2$, then

\begin{equation}\label{eqtrr2}
		\begin{split}
		&\left\langle
		\tau^{a_1}_{d_1+1}\tau^{a_2}_{d_2}\dots\tau^{a_l}_{d_l}\sigma^{b_1}\sigma^{b_2}\dots\sigma^{b_k}
		\right\rangle_0^{\frac{1}{r},\text{o},\h,m}
		\\=&\sum_{\substack{s\ge 0\\-1\le a \le r-2}}\sum_{\substack{0 \le t_i \le \h\\ \coprod_{j=-1}^{s}R_j=\{3,4,\dots,l\}\\\coprod_{j=0}^{s}T_j=\{1,2,\dots,k\}\\ \{(R_j,T_j,t_j)\}_{1\le j \le s}\text{ unordered}}}(-1)^s\left\langle
		\tau^{a}_{0}\tau^{a_1}_{d_1}\prod_{i\in R_{-1}}\tau^{a_i}_{d_i}\prod_{j=1}^{s}\tau^{t_j}_{0}
		\right\rangle_0^{\frac{1}{r},\text{ext},m}\\
		&\cdot\left\langle
		\tau^{r-2-a}_{0}\tau^{a_2}_{d_2}\prod_{i\in R_0}\tau^{a_i}_{d_i}\prod_{i\in T_0}\sigma^{b_i}
		\right\rangle_0^{\frac{1}{r},\text{o},\h,m}
		\prod_{j=1}^{s}\left\langle
		\sigma^{r-2-2t_j}\prod_{i\in R_j}\tau^{a_i}_{d_i}\prod_{i\in T_j}\sigma^{b_i}
		\right\rangle_0^{\frac{1}{r},\text{o},\h,m}.
		\end{split}
		\end{equation}

\end{itemize}
\end{thm}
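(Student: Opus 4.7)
The plan is to extend the topological recursion argument of \cite[Section 4]{BCT2} to the point insertion setting. We sketch the proof of part (a); part (b) follows by the same strategy, replacing the role of the boundary marking $\sigma^{b_1}$ with the internal marking $\tau^{a_2}_{d_2}$. The first step is to construct a distinguished canonical multisection $\lambda$ of $\widetilde{{\mathbb L}}_1 \to \widetilde{\mathcal M}^{\frac{1}{r},\h}_{0,B,I}$. Following the construction in \cite{BCT2}, on a smooth disk one identifies $\mathbb{L}_1 = T^*_{z_1}\Sigma$ and takes $\lambda$ to be the pullback of $df|_{z_1}$, where $f$ is a meromorphic function on the universal curve with a simple pole at $z_1$ and zeros at three fixed special points (e.g.\ $x_{b_1}$ and two conjugate internal points). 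Such an $f$ is well-defined up to scalar on each smooth fiber and extends compatibly across the point insertion gluings. Writing $E = \widetilde{{\mathbb L}}_1 \oplus E'$ and choosing a canonical multisection of the form $\lambda \oplus \mathbf{s}'$ with $\mathbf{s}'$ nowhere-vanishing as in Theorem~\ref{thm intersection numbers well-defined}, Lemma~\ref{lem zero difference as homotopy} reduces the left-hand side of \eqref{eqtrr1} to a signed count of zeros of $\lambda$ weighted by the local Euler contributions of $E'$.

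Next, the zero locus of $\lambda$ in $\widetilde{\mathcal M}^{\frac{1}{r},\h}_{0,B,I}$ is identified as the union of boundary strata on which $z_1$ is separated from $x_{b_1}$ by a chain of edges and dashed lines. Each stratum is indexed by the data appearing in \eqref{eqtrr1}: a central closed vertex carrying $z_1$ (with TRR half-edge of twist $a$), the internal markings in $R_{-1}$, and $s$ dashed-line endpoints of twists $t_j$; a main open vertex carrying $x_{b_1}$ (with TRR half-edge of twist $r-2-a$) together with the markings in $R_0, T_0$; and $s$ further open vertices each connected by a dashed line to the central vertex and carrying a boundary marking of twist $r-2-2t_j$ along with the markings in $R_j, T_j$. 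The constraint $0 \leq t_j \leq \h$ is forced by Definition~\ref{dfn rh disk}. On each such stratum, Proposition~\ref{prop decomposition} yields a product decomposition of $\widetilde{\mathcal W}^{\oplus m}$ and the cotangent line bundles, under which the Euler contribution factorizes into a closed-extended correlator for the central vertex, a main open correlator for the open vertex carrying $b_1$, and $s$ further open correlator factors for the point-insertion disks. The Ramond/extended cases $a\in\{-1,r-2\}$ are handled via the exact sequences of Proposition~\ref{prop decomposition}, the trivial quotients contributing the correct dimension shift so that the remaining closed integral is precisely the $(r,m)$ closed-extended correlator defined in~\eqref{eq:closed ext_r_m}.

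The sign $(-1)^s$ follows directly from the orientation convention \eqref{eq orientation point insertion}, which contributes a factor $-1$ for each dashed-line edge in the $(r,\h)$-graph, while the ``unordered'' qualifier on $\{(R_j, T_j, t_j)\}_{1\le j\le s}$ encodes the automorphism action permuting the $s$ indistinguishable point-insertion components. Assembling all stratum contributions then reproduces the right-hand side of \eqref{eqtrr1}. The principal technical obstacle is ensuring that no unexpected contributions arise from the spurious (AI/BI) boundaries of $\Mbar^{\frac{1}{r},\h}_{0,B,I}$: this is controlled by the sign-reversal part of Theorem~\ref{thm  PI boundaries paried}, namely that paired boundaries carry opposite relative orientations, so that boundary contributions cancel pairwise under the $PI$ identification and only the stratum contributions listed above survive. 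Tracking orientations through the product decomposition of Proposition~\ref{prop decomposition}, particularly in the Ramond case, and verifying compatibility with the canonical relative orientations of \cite[Section 3]{TZ1}, is the most delicate part of the argument.
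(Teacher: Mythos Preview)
Your high-level strategy matches the paper's: construct a distinguished section of $\widetilde{\mathbb L}_1$, identify its zero locus as the strata indexed by the right-hand side of \eqref{eqtrr1}, and factorize the Euler contribution there. However, your section construction is not correct as stated. A meromorphic function on a genus-zero curve with a single simple pole has exactly one zero, so you cannot prescribe three. The paper instead takes, on each smooth disk $C$ (identified with the upper half-plane), the meromorphic \emph{differential}
\[
t(C)=dz\left(\frac{1}{z-z_{\zeta,C}}-\frac{1}{z-\bar z_1}\right)\Big|_{z=z_1},
\]
i.e.\ the evaluation at $z_1$ of the unique differential with simple poles at $\bar z_1$ and at a single second point $z_{\zeta,C}$, residues $\pm 1$. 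The point $z_{\zeta,C}$ is not literally $x_{b_1}$: on a component of the $(r,\h)$-graph that does not contain $b_1$, one must replace $b_1$ by the tail that ``sees'' $b_1$ through the dashed-line tree. This is the content of the paper's tracking map $\zeta_\Gamma$ (Definition~\ref{dfn track point}), and it is essential both for defining $t$ on every component of $\Mbar^{1/r,\h}_{0,B,I}$ and for verifying that $t|_{\text{bd}_{AI}}=t|_{\text{bd}_{BI}}$ under the $PI$ identification. Without this ingredient you cannot make $\lambda$ canonical, and the zero-locus description you give (``separated by a chain of edges and dashed lines'') remains heuristic rather than a statement you can prove.

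Two further points you would need to address. First, the zero locus of $t$ on each component $\oPMb_\mathbf{G}$ has multiplicity $r$ (Lemma~\ref{lem zero locus trr section local}); this factor of $r$ is what cancels the $r^{-1}$ built into the definition \eqref{eq:closed ext_r_m} of the closed extended correlator, and without it the normalization of the closed factor is wrong. Second, the factorization of the Euler contribution over an internal edge is not a direct consequence of Proposition~\ref{prop decomposition}: in the NS case the automorphism groups differ, and in the Ramond case ${\mathcal{W}}$ is only an extension, not a direct sum. The paper handles this via the assembling operator $\Ass$ and coherent multisections (Section~\ref{subsec coherent}), constructing a specific multisection $\bm\sigma=\Ass_e(\sigma_c\boxplus\sigma_o)\boxplus\bigboxplus\sigma_i$ on each zero-stratum whose zero count visibly factorizes, and then showing by the homotopy argument of Theorem~\ref{thm intersection numbers well-defined} that any canonical multisection gives the same count. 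Your appeal to ``the exact sequences of Proposition~\ref{prop decomposition}'' and ``trivial quotients contributing the correct dimension shift'' does not substitute for this step.
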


\begin{rmk}\label{rmk central side contribution}
	    Each term in the TRRs is a product of a closed-extended contribution and several (at least one) open contributions, as shown in Figure \ref{fig trr}.  The second bracket in each term is special among the open contributions: it comes from the moduli containing the chosen marking, \textit{i.e.} $a_2$ in \eqref{eqtrr1} and $b_1$ in \eqref{eqtrr2}, and it corresponds to the central irreducible component in Figure \ref{fig trr}. In what follows, we refer to this special open contribution by \textit{central contribution}, and refer to the other open contributions by \textit{side contributions}.
	\end{rmk}

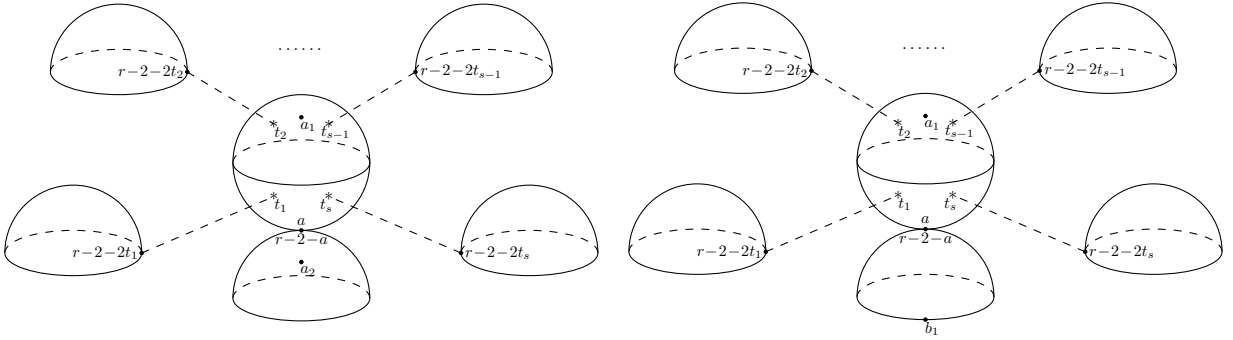
\begin{figure}[h]
    \centering

\begin{tikzpicture}[scale=0.6, every node/.style={scale=0.6}]
\draw (0,0) circle (1.5);
\draw (-1.5,0) arc (180:360:1.5 and 0.5);
\draw[dashed](1.5,0) arc (0:180:1.5 and 0.5);

\draw (-1.5,-3) arc (180:360:1.5 and 0.5);
\draw[dashed](1.5,-3) arc (0:180:1.5 and 0.5);
\draw(1.5,-3) arc (0:180:1.5);

\node at (0,-1.5) [circle,fill,inner sep=1pt]{};
\node at (0,-1.3) {\small$a$};
\node at (0,-1.7) {\small${r\!-\!2\!-\!a}$};

\draw (-6.5,-2) arc (180:360:1.5 and 0.5);
\draw[dashed](-3.5,-2) arc (0:180:1.5 and 0.5);
\draw(-3.5,-2) arc (0:180:1.5);

\node at (-3.5,-2) [circle,fill,inner sep=1pt]{};
\node at (-4.3,-2) {\small $r\!-\!2\!-\!2t_{1}$};

\draw (-5.5,2) arc (180:360:1.5 and 0.5);
\draw[dashed](-2.5,2) arc (0:180:1.5 and 0.5);
\draw(-2.5,2) arc (0:180:1.5);

\node at (-2.5,2) [circle,fill,inner sep=1pt]{};
\node at (-3.3,2) {\small $r\!-\!2\!-\!2t_2$};

\draw (2.5,2) arc (180:360:1.5 and 0.5);
\draw[dashed](5.5,2) arc (0:180:1.5 and 0.5);
\draw(5.5,2) arc (0:180:1.5);

\node at (2.5,2) [circle,fill,inner sep=1pt]{};
\node at (3.5,2) {\small $r\!-\!2\!-\!2t_{s-1}$};

\draw (3.5,-2) arc (180:360:1.5 and 0.5);
\draw[dashed](6.5,-2) arc (0:180:1.5 and 0.5);
\draw(6.5,-2) arc (0:180:1.5);

\node at (3.5,-2) [circle,fill,inner sep=1pt]{};
\node at (4.3,-2) {\small $r\!-\!2\!-\!2t_{s}$};

\node at (-0.6,0.8){*};
\node at (-0.45,0.65) {\small $t_{2}$};
\draw[dashed] (-2.5,2) -- (-0.6,0.85);

\node at (0.6,0.8){*};
\node at (0.75,0.65) {\small $t_{s-1}$};
\draw[dashed] (2.5,2) -- (0.6,0.85);

\node at (-0.6,-0.8){*};
\node at (-0.45,-0.95) {\small $t_{1}$};
\draw[dashed] (-3.5,-2) -- (-0.6,-0.75);

\node at (0.6,-0.8){*};
\node at (0.55,-0.95) {\small $t_{s}$};
\draw[dashed] (3.5,-2) -- (0.6,-0.75);

\node at (0,1)[circle,fill,inner sep=1pt]{};
\node at (0.15,0.8) {\small $a_1$};

\node at (0,-2.2)[circle,fill,inner sep=1pt]{};
\node at (0.15,-2.4) {\small $a_2$};

\node at (0,2.5) {$\dots\dots$};

\node at (0.15,-3.82) {};
\end{tikzpicture} 
\hfill
\begin{tikzpicture}[scale=0.6, every node/.style={scale=0.6}]
\draw (0,0) circle (1.5);
\draw (-1.5,0) arc (180:360:1.5 and 0.5);
\draw[dashed](1.5,0) arc (0:180:1.5 and 0.5);

\draw (-1.5,-3) arc (180:360:1.5 and 0.5);
\draw[dashed](1.5,-3) arc (0:180:1.5 and 0.5);
\draw(1.5,-3) arc (0:180:1.5);

\node at (0,-1.5) [circle,fill,inner sep=1pt]{};
\node at (0,-1.3) {\small$a$};
\node at (0,-1.7) {\small${r\!-\!2\!-\!a}$};

\draw (-6.5,-2) arc (180:360:1.5 and 0.5);
\draw[dashed](-3.5,-2) arc (0:180:1.5 and 0.5);
\draw(-3.5,-2) arc (0:180:1.5);

\node at (-3.5,-2) [circle,fill,inner sep=1pt]{};
\node at (-4.3,-2) {\small $r\!-\!2\!-\!2t_1$};

\draw (-5.5,2) arc (180:360:1.5 and 0.5);
\draw[dashed](-2.5,2) arc (0:180:1.5 and 0.5);
\draw(-2.5,2) arc (0:180:1.5);

\node at (-2.5,2) [circle,fill,inner sep=1pt]{};
\node at (-3.3,2) {\small $r\!-\!2\!-\!2t_2$};

\draw (2.5,2) arc (180:360:1.5 and 0.5);
\draw[dashed](5.5,2) arc (0:180:1.5 and 0.5);
\draw(5.5,2) arc (0:180:1.5);

\node at (2.5,2) [circle,fill,inner sep=1pt]{};
\node at (3.5,2) {\small $r\!-\!2\!-\!2t_{s-1}$};

\draw (3.5,-2) arc (180:360:1.5 and 0.5);
\draw[dashed](6.5,-2) arc (0:180:1.5 and 0.5);
\draw(6.5,-2) arc (0:180:1.5);

\node at (3.5,-2) [circle,fill,inner sep=1pt]{};
\node at (4.3,-2) {\small $r\!-\!2\!-\!2t_{s}$};

\node at (-0.6,0.8){*};
\node at (-0.45,0.65) {\small $t_{2}$};
\draw[dashed] (-2.5,2) -- (-0.6,0.85);

\node at (0.6,0.8){*};
\node at (0.75,0.65) {\small $t_{s-1}$};
\draw[dashed] (2.5,2) -- (0.6,0.85);

\node at (-0.6,-0.8){*};
\node at (-0.45,-0.95) {\small $t_{1}$};
\draw[dashed] (-3.5,-2) -- (-0.6,-0.75);

\node at (0.6,-0.8){*};
\node at (0.55,-0.95) {\small $t_{s}$};
\draw[dashed] (3.5,-2) -- (0.6,-0.75);

\node at (0,1)[circle,fill,inner sep=1pt]{};
\node at (0.15,0.8) {\small $a_1$};

\node at (0,-3.5)[circle,fill,inner sep=1pt]{};
\node at (0.15,-3.7) {\small $b_1$};

\node at (0,2.5) {$\dots\dots$};
\end{tikzpicture} 

    \caption{The figure on the left corresponds to a term in TRR \eqref{eqtrr1} with respect to a boundary marking $b_1$; The figure on the right corresponds to a term in TRR  \eqref{eqtrr2} with respect to an internal marking $a_2$.}
    \label{fig trr}
\end{figure}

The correlators also satisfy the following vanishing relation:
\begin{prop}\label{prop:vanish small internal}
If $l\ge 1$, $2l+k\ge 4$ and $a_1\le \h$ , we have
    $$
\left\langle
\begin{array}{c}
\hfill  a_1 \quad\hfill   a_2 \psi^{d_2}\hfill \quad a_3 \psi^{d_3}\hfill\quad\dots\quad\hfill a_l \psi^{d_l}\hfill\null\\
\hfill b_1\quad \hfill b_2\quad\hfill b_3\hfill\quad\dots\quad \hfill b_k\hfill\null\\
\end{array}
\right\rangle_0^{\frac{1}{r},\text{o},\h,m}=0.
$$
 
In particular, if $l\ge 1$, $2l+k\ge 4$ and $\min \{a_i\}_{i=1,\dots,l}\le \h$ , we have
    $$
\left\langle
\begin{array}{c}
\hfill  a_1 \quad\hfill   a_2\hfill\quad\dots\quad\hfill a_l\hfill\null\\
\hfill b_1\quad \hfill b_2\quad\hfill b_3\hfill\quad\dots\quad \hfill b_k\hfill\null\\
\end{array}
\right\rangle_0^{\frac{1}{r},\text{o},\h,m}=0.
$$
\end{prop}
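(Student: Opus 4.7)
We argue by induction on $N:=\sum_{i=2}^l d_i$, and the ``in particular'' version is deduced from the main statement by relabeling the internal markings so that $a_1=\min_i a_i$.

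\textbf{Inductive step ($N\ge 1$).} After permuting the indices $\{2,\ldots,l\}$ we may assume $d_2\ge 1$. Because the correlator is symmetric in its internal insertions, we apply Theorem \ref{thm TRR}(b) with the pair $(\tau^{a_2}_{d_2},\tau^{a_1}_0)$ playing the role of $(\tau^{a_1}_{d_1+1},\tau^{a_2}_{d_2})$ in that theorem, i.e.\ with $d_2=(d_2-1)+1$ and the preserved internal marking being $\tau^{a_1}_0$. Each term in the resulting sum factors as (closed-extended)$\cdot$(central open)$\cdot\prod$(side open), where the central open factor has the form
\[
\left\langle \tau^{r-2-a}_0\,\tau^{a_1}_0 \prod_{i\in R_0}\tau^{a_i}_{d_i}\prod_{i\in T_0}\sigma^{b_i}\right\rangle_0^{\frac{1}{r},\text{o},\h,m}.
\]
This factor still contains $\tau^{a_1}_0$ with $a_1\le\h$, has number of markings $2(|R_0|+2)+|T_0|\ge 4$, and total $\psi$-degree $\sum_{i\in R_0}d_i\le N-d_2\le N-1$. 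By the induction hypothesis it vanishes, so each term, and hence the entire correlator, vanishes.

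\textbf{Base case ($N=0$).} It remains to show that
\[
\left\langle \tau^{a_1}_0\tau^{a_2}_0\cdots\tau^{a_l}_0\sigma^{b_1}\cdots\sigma^{b_k}\right\rangle_0^{\frac{1}{r},\text{o},\h,m}=0
\]
whenever $a_1\le\h$ and $2l+k\ge 4$. The plan is to construct a canonical multisection of $\mathcal W^{\oplus m}\to\oPMh_{0,B,I}$ which is nowhere vanishing on the glued moduli; by the definition of the Euler number in Theorem \ref{thm intersection numbers well-defined} this forces the correlator to vanish. Because $a_1\le\h$, Lemma \ref{lem surjection of total evaluation map} allows us, at each smooth graded $r$-spin disk in the moduli, to choose a section of $\mathcal W$ evaluating to any prescribed nonzero vector at the marking $z_1$. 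Using the inductive gluing scheme and partitions of unity from the proof of Theorem \ref{thm intersection numbers well-defined}, we intend to assemble these local sections into a global canonical multisection whose evaluation at $z_1$ is everywhere nonzero, and then take its $m$-fold copy to obtain the sought section of $\mathcal W^{\oplus m}$.

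\textbf{Principal obstacle.} The base case is the substantive step. The difficulty is to exhibit such a nowhere-vanishing canonical multisection consistently across all strata of the glued moduli $\widetilde{\mathcal M}^{\frac{1}{r},\h}_{0,B,I}$, so that the positivity conditions at the $CB$, $R$ and $NS^+$ boundaries as well as the $PI$-identification across the paired type-AI/type-BI boundaries are simultaneously maintained. The hypothesis $a_1\le\h$ is essential here: it allows $z_1$ to act as the would-be partner of a boundary half-node of twist $r-2-2a_1\ge r-2-2\h$, and the decomposition property of the Witten bundle at NS edges (Proposition \ref{prop decomposition}) then yields the compatibility required for the local constructions to patch through the $PI$ gluing.
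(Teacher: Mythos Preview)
Your inductive step reducing to the primary case via Theorem~\ref{thm TRR}(b) is correct: the central open factor always contains $\tau^{a_1}_0$ with $a_1\le\h$, has at least four markings, and strictly smaller total $\psi$-degree, so the induction goes through. This reduction is however unnecessary---the paper's argument handles all the descendants $d_2,\dots,d_l$ directly.

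The real gap is in your base case. Your plan is to build a canonical multisection whose evaluation at $z_1$ is nowhere zero, by gluing local choices with a partition of unity. But the property ``$\ev_{z_1}(s)\neq 0$'' is not convex: a convex combination of sections, each with nonzero evaluation at $z_1$, can easily evaluate to zero at $z_1$. So the partition-of-unity step destroys exactly the property you are trying to propagate. You acknowledge this as the ``principal obstacle'' but do not resolve it, and the decomposition property you invoke does not address it.

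The paper's proof is entirely different and avoids constructing any special section. Because $a_1\le\h$, the pair $(a_1,r-2-2a_1)$ is a legal internal/boundary pair for a dashed line. For each $(r,\h)$-graph $\mathbf G$ one looks at the vertex $\Gamma$ carrying $a_1$: either $\Gamma$ is the minimal two-point disk (one internal tail $a_1$, one paired boundary tail), or it is not. There is a bijection $\nu$ between the two types, obtained by inserting/removing that minimal component (its moduli is a point), inducing an involution $\operatorname{ref}$ of $\Mbar^{1/r,\h}_{0,B,I}$. This involution lifts to the Witten and cotangent bundles and, since it changes $|E(\mathbf G)|$ by one, reverses the canonical relative orientation of $\mathcal W^{\oplus m}$ (for odd $m$) while preserving that of the $\mathbb L_i$. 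Pulling back a canonical multisection of $E=\mathcal W^{\oplus m}\oplus\bigoplus_{i\ge2}\mathbb L_i^{\oplus d_i}$ by $\operatorname{ref}$ yields another canonical multisection, so the correlator equals its own negative and vanishes. Your last paragraph brushes up against this idea (``$z_1$ acts as the would-be partner of a boundary half-node''), but the involution is never articulated; that, not the construction of a nowhere-vanishing section, is the missing idea.
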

The case $a_1=0$ can be thought of as the \emph{string equation}, and it takes a much simpler form than the string equation of \cite{PST14,BCT2} for example. 

The rest of this section is devoted to proving the above claims.
\subsection{Proof of Proposition \ref{prop:vanish small internal}}

\begin{proof}[Proof of Proposition \ref{prop:vanish small internal}]
   Let $\mathbf{G}\in \sGPI^{r,\h}_{0,B,I}$ be an $(r,\h)$-graph (note that $\Mbar_\mathbf{G}\subseteq \Mbar^{1/r,\h}_{0,B,I}$ as a connected component), we assume $a_1\in T^I(\Gamma)$ for $\Gamma\in V(\mathbf{G})$. There are two possibilities:
    \begin{enumerate}
        \item The graded $r$-spin graph $\Gamma$ consists of a single open vertex with a single internal tail $a_1$ and a single boundary tail $\hat b \in B'(\mathbf{G})$. In this case we say $\mathbf{G}$ is \emph{reflected}.
        \item The graded $r$-spin graph $\Gamma$ is not as in the previous item. In this case we say $\mathbf{G}$ is \emph{original}.
    \end{enumerate}
    We will construct a bijection 
    $$\nu\colon \{\mathbf{G}\in \sGPI^{r,\h}_{0,B,I}\colon \mathbf{G}\text{ original}\}\to\{\mathbf{G}\in \sGPI^{r,\h}_{0,B,I}\colon\mathbf{G}\text{ reflected}\}.$$ 
    For an original graph $\mathbf{G}_1$, we can construct the reflected graph $\mathbf{G}_2=\nu(\mathbf{G}_1)$ in the following way. We set $$V(\mathbf{G}_2):=\left(V(\mathbf{G}_1)\setminus\{\Gamma\}\right)\sqcup\{\hat \Gamma,\Gamma_a\},$$
    where 
    \begin{itemize}
        \item $\Gamma_a$ consists of a single open vertex with a single internal tail $a_1$ and a single boundary tail $\hat b \in B'(\mathbf{G}_2)$;
        \item $\hat \Gamma$ is the graph obtained by replacing the internal tail $a_1$ of $\Gamma$ with a new internal tail $\hat a \in I'(\mathbf{G}_2)$ with the same twist as $a_1$;
        \item we add a dashed line $( \hat a, \hat b )\in E(\mathbf{G}_2)$ (note that $a_1\le \h$ by assumption).
    \end{itemize}
    By inverting this construction we can associate, for a reflected graph $\mathbf{G}_2$, the original graph $\mathbf{G}_1=\nu^{-1}(\mathbf{G}_2).$ Since $\Mbar_{\Gamma_a}$ is a point, we have a natural isomorphism  \[\operatorname{ref}_{\mathbf{G}_1}:\Mbar_{\mathbf{G}_1}\to\Mbar_{\nu(\mathbf{G}_1)}\]
    which can be lifted to isomorphisms between the corresponding vector bundles $\mathcal W$ and $\mathbb L_i$.
    This isomorphism reverses the canonical relative orientation \eqref{eq orientation point insertion} of $\mathcal W$ since $E(\mathbf{G}_2)=E(\mathbf{G}_2)+1$, and it preserves the complex orientation of $\mathbb L_i$, so it reverses (recall $m$ is always odd) the canonical relative orientation of
    $$
    E:=\mathcal W^{\oplus m} \oplus \bigoplus_{i=2}^{l} \mathbb L_i^{\oplus  d_i}.
    $$
    The collection of these isomorphisms and their inverses (note that each of them is defined on a connected component of $\Mbar^{1/r,\h}_{0,B,I}$) give an orientation reversing involution
    $$
        \operatorname{ref}\colon \Mbar^{1/r,\h}_{0,B,I}\to \Mbar^{1/r,\h}_{0,B,I}.
    $$
     Let $\mathbf{s}$ be a transverse canonical multisection of $E$, then we can write the correlator as $e(E;\mathbf{s}).$
     We have
     \begin{align*}&\left\langle
\begin{array}{c}
\hfill  a_1 \quad\hfill   a_2 \psi^{d_2}\hfill \quad a_3 \psi^{d_3}\hfill\quad\dots\quad\hfill a_l \psi^{d_.}\hfill\null\\
\hfill b_1\quad \hfill b_2\quad\hfill b_3\hfill\quad\dots\quad \hfill b_k\hfill\null\\
\end{array}
\right\rangle_0^{\frac{1}{r},\text{o},\h,m}=
e(E;\mathbf{s})\\&\qquad\quad=-e(E;\operatorname{mir}^*\mathbf{s})=-
\left\langle
\begin{array}{c}
\hfill  a_1 \quad\hfill   a_2 \psi^{d_2}\hfill \quad a_3 \psi^{d_3}\hfill\quad\dots\quad\hfill a_l \psi^{d_.}\hfill\null\\
\hfill b_1\quad \hfill b_2\quad\hfill b_3\hfill\quad\dots\quad \hfill b_k\hfill\null\\
\end{array}
\right\rangle_0^{\frac{1}{r},\text{o},\h,m}.
\end{align*}
The second equality follows from the fact that $\operatorname{ref}$ reverses orientation, and the last follows from the observation that the pullback of a canonical multisection by $\operatorname{ref}$ is still canonical.
\end{proof}

\subsection{Proof of Theorem \ref{thm TRR}}\label{subsec proof trr}
We shall construct a canonical section $t$ of ${\mathbb L}_1$ and show that its zero locus consists of internal strata ${\oPMb}^{1/r,\h}_\Gamma$ which parameterizes surfaces with at least one internal node. We will then show that the number of zeros of the intersection problem we consider is the sum of products of open and closed contributions. 

To construct the section $t$, we need the following tracking map.
\begin{dfn}\label{dfn track point} 
For genus-zero $(r,\h)$-graph $\mathbf{G}$, we define maps which track markings in the point insertion procedure.  We first define a map
$$
\theta\colon \bigsqcup_{\Gamma\in V(\mathbf{G})}\left(T^I(\Gamma)\backslash H^{CB}(\Gamma)\right)\sqcup \bigsqcup_{\Gamma\in V(\mathbf{G})}T^B(\Gamma)\to 2^{I(\mathbf{G})\sqcup B(\mathbf{G})}
$$
in the following way.
\begin{itemize}
    \item For $x\in I(\mathbf{G})\sqcup B(\mathbf{G})$ we define $$\theta(x):=\{x\}.$$
    \item For $x\in I'(\mathbf{G})\sqcup B'(\mathbf{G})$, let $\delta(x)$ be the element paired with $x$ by a dashed line, and $\Gamma_{\delta(x)}$ be the stable graded $r$-spin graph such that
    $$
    \delta(x)\in T^I(\Gamma_{\delta(x)})\sqcup T^B(\Gamma_{\delta(x)}).
    $$
    Let $\mathbf{\hat{G}}_x$ and $\mathbf{\hat{G}}_{\delta(x)}$ be the two connected subgraphs obtained by removing the edge connecting $x$ and $\delta(x)$ in the connected genus-zero graph $\mathbf{\hat{G}}$ (see Definition \ref{def rh graphs}), where $\mathbf{\hat{G}}_{\delta(x)}$ is the subgraph containing the vertex $\Gamma_{\delta(x)}$. We define $$\theta(x):=\bigsqcup_{\Gamma\in V(\mathbf{\hat{G}}_{\delta(x)})}\left(T^I(\Gamma)\cap I(\mathbf{G})\right)\sqcup \left(T^B(\Gamma)\cap B(\mathbf{G})\right).$$
\end{itemize}
Since $\mathbf{\hat{G}}$ is connected, for any $\Gamma\in V(\mathbf{G})$ we have
$$
 \bigsqcup_{x\in \left(T^I(\Gamma)\backslash H^{CB}(\Gamma)\right)\sqcup T^B(\Gamma)}\theta(x)=I(\mathbf{G})\sqcup B(\mathbf{G}).
$$
We define 
$$
\zeta_\Gamma\colon I(\mathbf{G})\sqcup B(\mathbf{G}) \to \left(T^I(\Gamma)\backslash H^{CB}(\Gamma)\right)\sqcup T^B(\Gamma)
$$
to be the unique map satisfying
$$
y\in \theta\left(\zeta_\Gamma(y)\right),\quad \forall y\in I(\mathbf{G})\sqcup B(\mathbf{G}).
$$
\end{dfn}

\begin{proof}[Proof of Theorem \ref{thm TRR}]
We assume $\frac{2\sum_{i=1}^l a_l +\sum_{i=1}^k b_k-r+2}{r}=2l-k+3$, otherwise both sides of the equation vanish. The prove \eqref{eqtrr1} and \eqref{eqtrr2} by the same argument: we denote by $c$ the marking that the TRR respects to, \textit{i.e.} $c$ is either $a_2$ in \eqref{eqtrr1} or $b_1$ in \eqref{eqtrr2}.

We define a canonical section $t \in C^\infty(\Mbar^{\frac{1}{r},\h}_{0,B,I},{\mathbb L}_1)$ as follows.  For $\mathbf{G}\in \sGPI^{r,\h}_{0,B,I}$, we assume $a_1\in T^I(\Gamma)$ for some $\Gamma \in V(\mathbf{G})$.
For a smooth $r$-spin disk $C\in \mathcal M^{}_\Gamma$, we denote by $z_1$ the marking on $C$ corresponding to $a_1$, and by $z_{\zeta,C}$ the marking on $C$ corresponding to $\zeta_{\Gamma}(c)$ (see Definition \ref{dfn track point}). We identify the preferred half $\Sigma$ with the upper half-plane and set
\begin{equation}\label{eq trr section}
t\left(C\right) = dz\left.\left(\frac{1}{z - z_{\zeta,C}}-\frac{1}{z-\bar{z}_1}\right)\right|_{z = z_1} \in T_{z_1}^*\Sigma = T_{z_1}^*C.
\end{equation}
This fiber-by-fiber definition glues to a section of $\mathbb{L}_1\to\mathcal M^{}_\Gamma$, which is the pullback of a (constant) non-zero section from the zero-dimensional moduli space $\oCM_{0,1,1}$. This section is easily seen to extend to a smooth global section $t$ over $\Mbar^{}_\Gamma,$ which is again pulled back from $\oCM_{0,1,1}$: for a graded $r$-spin disk $C$ that is not necessarily smooth,  
let $\varphi_C$ be the unique meromorphic differential on $C$ with simple poles at $\bar{z}_1$ and $z_{\zeta,C}$ and at no other marked points or smooth points, such that the residue equals $1$ at $z_{\zeta,C}$ and equals $-1$ at $\bar{z}_1$, and the residues at every pair of half-nodes sum to $0$.
Then $t(C)$ is the evaluation of $\varphi_C$ at $z_1\in C.$ 

Since ${\mathbb L}_1\to\Mbar^{}_\mathbf{G}$ is canonically identified with the pullback of ${\mathbb L}_1\to\Mbar^{}_\Gamma$ by the projection map $\pi_{\mathbf{G},\Gamma}$ defined in Remark \ref{rmk aut trivial} we get a section $t \in C^\infty(\Mbar^{}_\mathbf{G},{\mathbb L}_1)$. We can define such sections for all $\mathbf{G}\in \sGPI^{r,\h}_{0,B,I}$, thus we have defined a global section $t\in C^\infty(\Mbar^{1/r,\h}_{0,B,I},{\mathbb L}_1)$.

\begin{lem}
    The restriction of $t$ to $\oPMh_{0,B,I}$ is a canonical section of ${\mathbb L}_1\to \oPMh_{0,B,I}$.
\end{lem}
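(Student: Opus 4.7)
The plan is to verify Definition \ref{def canonical for L_i}: smoothness of $t|_{\Mbar_\mathbf{G}}$ for each smooth $(r,\h)$-graph $\mathbf{G}$ is immediate from the construction (the meromorphic differential $\varphi_C$ depends smoothly on the moduli point, and in fact $t$ is pulled back from a non-zero vector on the zero-dimensional moduli $\oCM_{0,1,1}$ via the forgetful morphism $\Mbar_\Gamma\to\oCM_{0,1,1}$ remembering only $z_1$ and $z_{\zeta_\Gamma(c),C}$ and stabilising). The only non-trivial point is the equality $t|_{\text{bd}_{BI}}=t|_{\text{bd}_{AI}}$ under every $PI$-identification of paired codimension-one boundary strata.

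Fix such a pair. Let $\mathbf{G}^{BI}$ contain a BI edge $e$ inside a vertex $\Gamma$ with open sub-vertices $v_1$ (legal half-edge, twist $r-2-2a$) and $v_2$ (illegal half-edge, twist $2a\le 2\h$), and let $\mathbf{G}^{AI}$ be obtained by detaching $e$, relabelling the illegal half-edge as an internal tail $\hat a$ of twist $a$ on $\hat\Gamma_I$, and adjoining a small new vertex $v_{\text{new}}$ connected to $v_1$ by an NS boundary edge (the AI edge) and carrying an internal tail $\hat a'$ paired by a new dashed line with $\hat a$. Geometrically, a BI moduli point is a nodal disk $C^{BI}=C_1\cup_e C_2$ corresponding, under the isomorphism of Theorem \ref{thm  PI boundaries paried}, to an AI disk $\hat C_L\sqcup\hat C_I$ in which $\hat C_I=C_2$ as marked curves (with $\hat a$ placed at the former illegal half-node) and $\hat C_L$ is $C_1$ with a small bubble $v_{\text{new}}$ attached at the former legal half-node.

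Via the forgetful reinterpretation, the comparison reduces to a case analysis of where the tail $\zeta_\Gamma(c)$ lies on $\Gamma$. If $\zeta_\Gamma(c)$ belongs to the same sub-vertex as $a_1$ (say both in $v_i$), then the component $C_i$ is preserved verbatim by $PI$ and the corresponding AI graph satisfies $\zeta_{\hat\Gamma_i}(c)$ equal to the same tail with the same position on $C_i$; both forgetful images coincide, so $t$ agrees. The interesting case is when $\zeta_\Gamma(c)$ lies on the opposite side of $e$ from $a_1$; say $a_1\in v_2$ and $z_{\zeta(c),C}\in v_1$ (the symmetric subcase is identical in spirit). Then on the BI side, stabilisation of the forgotten disk collapses everything past the node on $C_1$'s side, effectively placing the pole of $\varphi_{C^{BI}}|_{C_2}$ at the node position on $C_2$ (equivalently, the residue theorem forces a simple pole of residue $+1$ at that node). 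On the AI side, since the auxiliary graph $\hat{\mathbf{G}}^{AI}$ is a tree, the unique path from $\hat\Gamma_I$ to the vertex containing $c$ exits through the new dashed line, forcing $\zeta_{\hat\Gamma_I}(c)=\hat a$; and $\hat a$ is positioned on $\hat C_I$ at exactly the site of the former illegal half-node. Thus the two forgetful images agree as points of $\oCM_{0,1,1}$, and $t$ matches under $PI$.

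The symmetric subcase $a_1\in v_1,\; z_{\zeta(c)}\in v_2$ is handled similarly: one uses residue bookkeeping at the AI node between $\Gamma_L$ and $v_{\text{new}}$ inside $\hat C_L$ to shift the effective pole onto the AI-node position on $C_1$, which coincides with the BI-node position on $C_1$. The main bookkeeping obstacle is that the vertex containing $c$ can be several dashed-line hops from $\Gamma$, but because $\hat{\mathbf{G}}$ is a tree the map $\zeta$ only records the first step of the unique path, so the argument reduces to the two-component analysis above after those further hops are collapsed by stabilisation.
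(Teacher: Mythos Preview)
Your approach is the same as the paper's: a case analysis on whether $z_1$ and $z_{\zeta(c)}$ lie on the same or opposite sides of the BI node, followed by the observation that on the component containing $z_1$ the effective pole of the canonical differential sits at the former node position, which is invariant under $PI$.

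There is, however, a genuine error in your description of the $PI$ identification. You place the small bubble $v_{\text{new}}$ on the \emph{legal} side $v_1$ and make $v_2$ into a separate component carrying an internal tail $\hat a$; you also pair two \emph{internal} tails $\hat a,\hat a'$ by a dashed line. Both are wrong. By the definition of a type-AI edge (the vertex with the \emph{legal} half-node of the AI edge is the small bubble carrying the $I'$-tail), the correct picture is: the \emph{illegal} side $v_2$ acquires the bubble $v_{\text{new}}$ (so the AI edge sits inside the nodal component $D_1$ with $D'_1=v_2$), while the \emph{legal} side $v_1$ becomes the separate component $D_2$ carrying a \emph{boundary} tail in $B'$; the dashed line goes from the internal tail on $v_{\text{new}}$ to that boundary tail on $D_2$. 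In particular your $(r,\h)$-graph $\mathbf{G}^{AI}$ as written is not even valid, since dashed lines must join an $I'$-tail to a $B'$-tail.

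This matters because your case analysis is phrased in terms of the incorrect picture. Concretely, in your ``interesting case'' $a_1\in v_2$, the component containing $a_1$ on the AI side is the nodal disk $D_1$ (not a smooth separate component), so $\zeta_{D_1}(c)$ is the internal $I'$-tail on the bubble $D''_1$ and one needs the residue transfer across the AI node to place the effective pole at the half-node position on $D'_1=v_2$. In your ``symmetric subcase'' $a_1\in v_1$, the component containing $a_1$ on the AI side is the smooth separate disk $D_2$, and $\zeta_{D_2}(c)$ is the $B'$-boundary tail sitting exactly at the former legal half-node; no residue bookkeeping is needed at all. You have these two subcases swapped. The final conclusion survives because in both subcases the effective pole ends up at the former node position, but you should rewrite the setup to match the actual $PI$ map before the argument can be considered correct.
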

\begin{proof}
    According to Definition \ref{def canonical for L_i}, we need to check that, for every pair type-AI and type-BI boundaries $\text{bd}_{AI}$ and $\text{bd}_{BI}$ identified by $PI$, we have $t\vert_{\text{bd}_{AI}}=t\vert_{\text{bd}_{BI}}$. 
    
    For a point $p$ on a type-BI boundary $\text{bd}_{BI}$, we can represent it by an $(r,\h)$-disk (see Definition \ref{dfn rh disk}) collecting the graded $r$-spin disks $C_0,C_1,\dots,C_i$, where $C_0$ has a type-BI node $n$. By normalizing $n$ we get two components $N_1$ and $N_2$. We assume the half-node $n_2$ on the $N_2$ is legal. Let $\text{bd}_{AI}$ be the type-AI boundary that paired with $\text{bd}_{BI}$, and let $p'\in\text{bd}_{AI}$ be the point that is identified with $p$ under $\sim_{PI}$, we show that $t(p)=t(p')$. By the construction of $PI$ (see \cite[Theorem 4.12]{TZ1} for details), the point $p'$ can be represented by the $(r,\h)$-disk collection graded $r$-spin disks $D_1,D_2,C'_1,C'_2,\dots,C'_i$, where $D_2=N_2$, $C'_i=C_1$ for $j=1,2,\dots,i$, and $D_1$ has a type-AI node $\hat n$ such the normalization of $\hat n$ gives two components $D'_1=N_1$ and $D''_1$, the later one has only one internal marking $\hat n_1$ and one boundary half-node $\hat n_2$. 
    
    If $z_1 \in C_j$ for some $1\le j \le i$, since $z_{\zeta,C_j}$ and $z_{\zeta,C'_j}$ represent the same point on $C_j$ and $C'_j$, we have $t(C_i)=t(C'_i)$, hence $t(p)=t(p')$.

    If $z_1\in C_0$, $z_1$ and $z_{\zeta,C_0}$ are on the same component $N_1$ (respectively, $N_2$), then $z_{\zeta,C_0}$ and $z_{\zeta,D_1}$ (respectively, $z_{\zeta,D_2}$) represent the same point on $N_1$ (respectively, $N_2$) and $D'_1$ (respectively, $D_2$). Then $t(p)=t(p')$ since the unique meromorphic differentials coincide.

    In the case $z_1\in C_0$ but $z_1$ and $z_{\zeta,C_0}$ are on different components, we assume $z_1\in N_2$ and $z_{\zeta,C_0}\in N_1$ (the other case is similar). Then by Definition \ref{dfn track point} $z_{\zeta,D_2}$ represents the half-node $n_2$ on $D_2=N_2$. Note that when restricted to $N_2$, the unique meromorphic differential $\varphi_{C_0}$ on $C_0$ has a pole at $n_2$ with residue $1$, thus $\varphi_{C_0}\vert_{N_2}$ coincides with $\varphi_{D_2}$, and hence $t(p)=t(p')$.
\end{proof}
\begin{rmk}As in Remark \ref{rmk glued section canonical} the section $t$ descends to a global section of $\widetilde{\mathbb{L}}_i\to\widetilde{\mathcal M}^{\frac{1}{r},\h}_{0,B,I},$ and it is useful to think of it as a section over the glued moduli.
\end{rmk}

For each $(r,\h)$-graph $\mathbf{G}\in \sGPI^{r,\h}_{0,B,I}$, let $U_\mathbf{G} \subseteq \partial \mathbf{G}$ be the collection of $(r,\h)$-graphs $\mathbf{F}$ satisfying:
\begin{itemize}
    \item only one graded $r$-spin graph $\Delta\in V(\mathbf{F})$ is non-smooth;
    \item $\Delta$ consists of one open vertex $v_\Delta^o$ and one closed vertex $v_\Delta^c$ joined by a unique edge $e$;
    \item $a_1$ is a internal tail of $v_\Delta^c$ and $\zeta_{\Delta}(c)$ is a tail of $v_\Delta^o$.
\end{itemize}   We write 
\begin{equation}\label{eq:all open vertex}
    \oPMb_{\mathbf{F}^o}:=\oPMb_{v^o_\Delta}\times \prod_{\Sigma\in V(F)\backslash \{\Delta\}}\oPMb_\Sigma.
\end{equation}

The morphism 
$\text{Detach}_e:\oPMb_\Delta \to \oPMb_{v^o_\Delta} \times \oCM_{v^c_\Delta}$ induces a
morphism 
$
\text{Detach}_e:\oPMb_\mathbf{F} \to \oPMb_{\mathbf{F}^o} \times \oCM_{v^c_\Delta}
$, which is generically one-to-one.

\begin{lemma}\label{lem zero locus trr section local}
The zero locus of $t$ in $\oPMb_\mathbf{G}$ is $\bigcup_{\mathbf{F} \in {U_{\mathbf{G}}}} \oPMb_{\mathbf{F}}.$ The multiplicity of $\oPMb_\mathbf{F}$ in the zero locus is $r$. Moreover, we denote by $o_\mathbf{G}$ and $o_{v^c_\Delta}$ the canonical relative orientation of the Witten bundles ${\mathcal{W}}$ over $\Mbar_\mathbf{G}$ and $\Mbar_{v^c_\Delta}$. Let  $o_{\mathbf{F}^o}$ be the relative orientation  of  ${\mathcal{W}}$ over $\oCM_{\mathbf{F}^o}$ defined by  
$$o_{\mathbf{F}^o}:=(-1)^{\lvert V(\mathbf{G})\rvert-1}o_{v_{\Delta}^o}\wedge\bigwedge_{\Sigma\in V(\mathbf{F})\backslash \{\Delta\}}o_\Sigma.$$ 
We denote by $o_\mathbf{F}$ the relative orientation  for ${\mathcal{W}}\to\oCM_\mathbf{F}$ induced by the section $t$, then we have $${o}_\mathbf{F}=\text{Detach}_e^*({o}_{\mathbf{F}^o}\boxtimes {o}_{v^c_\Delta}).$$
\end{lemma}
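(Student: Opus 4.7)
\bigskip
\noindent\emph{Proof proposal for Lemma \ref{lem zero locus trr section local}.}

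The plan is to analyze the section $t$ geometrically via the meromorphic differential $\varphi_C$ used in its definition. At a (possibly nodal) graded $r$-spin disk $C$, the value $t(C)\in{\mathbb L}_{1,C}$ is the evaluation of $\varphi_C$ at $z_1$, where $\varphi_C$ is the unique meromorphic differential on the doubled curve with simple poles of residues $+1$ at $z_{\zeta,C}$ and $-1$ at $\bar z_1$ and residues summing to zero at every pair of half-nodes. On a tree of spheres, the residue of $\varphi_C$ at any half-node $n$ equals the signed count of $\{z_{\zeta,C},\bar z_1\}$ lying on the side of $n$ opposite to its component. Hence $\varphi_C$ vanishes identically on the irreducible component $C_{z_1}$ containing $z_1$ if and only if $\{z_{\zeta,C},\bar z_1\}\cap C_{z_1}=\emptyset$ and, at every node of $C_{z_1}$, the opposite subtree contains both of $\{z_{\zeta,C},\bar z_1\}$ or neither. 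On a tree this means $C_{z_1}$ has a single edge separating it from a subtree containing both of these points, and no other edges. Translating this back to the combinatorics of $(r,\h)$-graphs, where $\bar z_1$ corresponds to the conjugate of the marking $a_1$ and $z_{\zeta,C}$ corresponds to the marking $\zeta_\Delta(c)$ in the same component as $c$, this condition picks out exactly the strata $\oPMb_\mathbf{F}$ with $\mathbf{F}\in U_\mathbf{G}$: namely, those strata where the component $v_\Delta^c$ containing $a_1$ is a closed sphere joined by a single internal edge to an open component $v_\Delta^o$ that contains (the representative of) $\zeta_\Delta(c)$, with every other vertex of $\mathbf{F}$ smooth. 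The first statement follows.

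For the multiplicity claim, I would do a local computation around a generic point of $\oPMb_\mathbf{F}$. Choose an \'etale chart of $\oPMb_\mathbf{G}$ near such a point in which the smoothing of the internal node $e$ is parametrized by a coordinate $\tau$; using the universal property of the $r$-spin moduli, $\tau$ is an $r$-th root of the corresponding smoothing parameter $\tau^r$ of the underlying coarse curve. In the coarse moduli the section analogous to $t$ vanishes to first order in $\tau^r$ along the divisor of nodal curves separating $z_1$ from $z_{\zeta,C}$ (this is the classical fact, used already in \cite{PST14,BCT2}, that $t$ pulled back from $\oCM_{0,1,1}$ represents $\psi_1$). Pulling back along $\tau\mapsto\tau^r$ multiplies the order of vanishing by $r$, producing the stated multiplicity. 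This is also the reason for the normalization factor $r$ appearing in the closed extended correlators \eqref{eq:closed ext_r_m}, and it matches the computation in \cite[Section 5]{BCT2}.

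For the orientation claim, I would use Proposition \ref{prop decomposition} and its analogue for the point-insertion glued moduli. The edge $e$ is an internal edge connecting a closed vertex $v_\Delta^c$ (whose associated moduli is $\oCM_{v_\Delta^c}$, carrying the complex orientation $o_{v_\Delta^c}$) to an open vertex $v_\Delta^o$; the decomposition sequence \eqref{eq decompose2} identifies the restriction of $\mathcal W$ to the normalized stratum with the direct sum $\widehat{\mathcal W}_1\oplus\widehat{\mathcal W}_2$, and the sign produced by the oriented basis of the normal line to the nodal divisor (oriented by the direction of positive $\tau$) compared to the orientation coming from \eqref{eq orientation point insertion} yields precisely the sign $(-1)^{|V(\mathbf{G})|-1}$ absorbed in the definition of $o_{\mathbf{F}^o}$. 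The hard part is keeping track of all these signs uniformly across all the unchanged vertices in $V(\mathbf{F})\setminus\{\Delta\}$, and checking that the convention in \eqref{eq orientation point insertion} combines correctly with the splitting $\oPMb_\Delta\to\oPMb_{v_\Delta^o}\times\oCM_{v_\Delta^c}$; I would reduce to the single-vertex case first and then reintroduce the product with the other (smooth) vertices, since their orientations and ranks are inherited tautologically under $\mathrm{Detach}_e$.
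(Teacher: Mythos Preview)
Your approach is correct in outline but takes a substantially longer route than the paper. The paper's proof is two sentences: since $t$ on $\oPMb_{\mathbf G}$ is, by construction, the pullback via the projection $\pi_{\mathbf G,\Gamma}\colon\Mbar_{\mathbf G}\to\Mbar_\Gamma$ of the analogous section on $\oPMb_\Gamma$ (for the unique $\Gamma\in V(\mathbf G)$ with $a_1\in T^I(\Gamma)$), the zero locus, multiplicity and induced orientation all reduce immediately to the single-vertex case $|V(\mathbf G)|=1$, which is precisely \cite[Lemma~4.13]{BCT2}. You instead reprove the content of that cited lemma from scratch --- the tree-residue description of where $\varphi_C|_{z_1}$ vanishes, the $r$-th-root-of-smoothing-parameter computation for the multiplicity, and the decomposition property of $\mathcal W$ for the orientation. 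That is essentially how \cite[Lemma~4.13]{BCT2} itself is proven, so your argument is not wrong, just redundant given the citation. You do invoke the reduction to a single vertex at the very end (for orientations); the paper's point is to use it from the outset for all three claims, which collapses the proof to a pullback observation plus a reference.

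One minor imprecision in your zero-locus paragraph: the residue analysis only shows that $\bar z_1$ and $z_{\zeta,C}$ lie on the same side of one node of $C_{z_1}$, not that $C_{z_1}$ has ``no other edges''. The component $C_{z_1}$ may carry further nodes, all with residue zero. This does not affect the stated conclusion, since such deeper strata lie in the closures $\oPMb_{\mathbf F}$ for $\mathbf F\in U_{\mathbf G}$, but the intermediate sentence as written is slightly too strong.
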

\begin{proof}
    The case where $\lvert V(\mathbf{G})\rvert=1$ is \cite[Lemma 4.13]{BCT2}. 
    Suppose $\lvert V(\mathbf{G})\rvert>1$, and assume $a_1\in T^I(\Gamma)$ for some $\Gamma \in V(\mathbf{G})$.   Recall that the section $t$ is pulled back from a section of $\mathbb L_1\to\oPMb_\Gamma$ via the projection map $\pi_{\mathbf{G},\Gamma}\colon \Mbar_\mathbf{G}\to \Mbar_\Gamma$, hence the zero locus of $t$ in $\oPMb_\mathbf{G}$  is the product of $\prod_{\Sigma\in V(\mathbf{F})\backslash \{\Gamma\}}\oPMb_\Sigma$ and the zero locus of $t$ in $\oPMb_\Gamma$. The lemma now follows again from \cite[Lemma 4.13]{BCT2}, applied to $\oPMb_\Gamma.$
\end{proof}
 For two multisets of internal twists $I^o,I^c$ and a multiset of boundary twists $B^o$, we define 
 $
 U_{B^o,I^o}^{I^c}
 $ to be the collection of genus-zero $(r,\h)$-graphs $\mathbf{H}$ such that 
 \begin{itemize}
     \item only one graded $r$-spin graph $\Delta\in V(\mathbf{H})$ is non-smooth, $\Delta$ consists of one open vertex $v^o_{\Delta,\mathbf{H}}$ and one closed vertex $v^c_{\Delta,\mathbf{H}}$ joined by a unique edge $e$;
     \item $T^I(v^c_{\Delta,\mathbf{H}})=I^c$, $I(\mathbf{H})=I^o\sqcup I^c$, $B(\mathbf{H})=B^o$.
 \end{itemize}
 Write 
\begin{equation}\label{eq:closed plus centre}
    \hat{\mathcal M}_{B^o,I^o}^{I^c}:=\bigsqcup_{\mathbf{H}\in U_{B^o,I^o}^{I^c}} \oPMb_\mathbf{H}.
\end{equation}
The closed vertices $v^c_{\Delta,\mathbf{H}}$ are the same for all $\mathbf{H}\in U_{B^o,I^o}^{I^c}$. Denote by $a$ the twist of $e$ on the $v^c_{\Delta,\mathbf{H}}$ side. Write $$ \oPMb_{\mathbf{H}^o}:=\oPMb_{v^o_{\Delta,\mathbf{H}}}\times \prod_{\Sigma\in V(\mathbf{H})\backslash \{\Delta\}}\oPMb_\Sigma.$$
 Note that $\oPMb_{\mathbf{H}^o}$ is a connected component in $\oPMb_{0,B^o,I^o\sqcup \{r-2-a\}}$ for each $\mathbf{H}\in U_{B^o,I^o}^{I^c}$; moreover, every connected component in $\oPMb_{0,B^o,I^o\sqcup \{r-2-a\}}$ can be realized as $\oPMb_{\mathbf{H}^o}$ for an unique $\mathbf{H}\in U_{B^o,I^o}^{I^c}$.
 Therefore the detaching morphisms on  $\oPMb_\mathbf{H}\subseteq \hat{\mathcal M}_{B^o,I^o}^{I^c}$ induce  generically one-to-one morphisms $$\text{Detach}_e:\hat{\mathcal M}_{B^o,I^o}^{I^c} \to \oPMb_{0,B^o,I^o\sqcup \{r-2-a\}} \times \oCM_{v^c_{\Delta,\mathbf{H}}}.$$
\begin{lemma}\label{lem zero locus trr section global}
    The zero locus of $t$ in $\oPMh_{0,B,I}$ can be written as
    \begin{equation}\label{eq:total zero locus}
        \bigcup_{\substack{s\ge 0,~ 0\le t_i\le \h,\\ \bigsqcup_{j=-1}^s I_j=I,~\bigsqcup_{j=0}^s B_j=B,\\ a_1\in I_{-1},~c\in I_0\sqcup B_0,\\ \{(t_j,I_j,B_j)\}_{1\le j \le s}\text{ unordered}}}\left( \hat{\mathcal M}_{B_0,I_0}^{I_{-1}\cup\{t_1,\dots,t_s\}}\times\prod_{i=1}^s \oPMb_{0,B_i\sqcup \{r-2-2t_i\},I_i}\right).
    \end{equation}
    The multiplicity of each $\hat{\mathcal M}_{B_0,I_0}^{I_{-1}\cup\{t_1,\dots,t_s\}}\times\prod_{i=1}^s \oPMb_{0,B_i\sqcup \{r-2-2t_i\},I_i}$ in the zero locus is $r$. 
    
    Moreover,  we have an induced detach morphism  \begin{equation*}
        \begin{split}
            \text{Detach}_e:&\hat{\mathcal M}_{B_0,I_0}^{I_{-1}\cup\{t_1,\dots,t_s\}}\times\prod_{i=1}^s \oPMb_{0,B_i\sqcup \{r-2-2t_i\},I_i} \\ \to& \oPMb_{0,B_0,I_0\sqcup \{r-2-a\}} \times\prod_{i=1}^s \oPMb_{0,B_i\sqcup \{r-2-2t_i\},I_i}\times \oCM_{v^c_{}}.
        \end{split}
\end{equation*}
     We denote by $o_{0,B,I}$, $o_{v^c}$ the canonical relative orientation of the Witten bundles ${\mathcal{W}}$ over $\oCM_{0,B,I}$, $\oCM_{v^c}$. Let  $o_{open}$ be the relative orientation  of  ${\mathcal{W}}$ over $$\oPMb_{0,B_0,I_0\sqcup \{r-2-a\}}\times \prod_{i=1}^s \oPMb_{0,B_i\sqcup \{r-2-2t_i\},I_i}$$ defined by  
$$o_{open}:=(-1)^{s}o_{0,B_0,I_0\sqcup \{r-2-a\}}\wedge\bigwedge_{i=1}^s o_{0,B_i\sqcup \{r-2-2t_i\},I_i}.$$ 
We denote by $o_{ind}$ the relative orientation  for ${\mathcal{W}}\to \hat{\mathcal M}_{B_0,I_0}^{I_{-1}\cup\{t_1,\dots,t_s\}}\times\prod_{i=1}^s \oPMb_{0,B_i\sqcup \{r-2-2t_i\},I_i}^{1/r,\h}$ induced by the section $t$, then we have $${o}_{ind}=\text{Detach}_e^*({o}_{open}\boxtimes {o}_{v^c}).$$

\end{lemma}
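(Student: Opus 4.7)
The plan is to reduce Lemma \ref{lem zero locus trr section global} to Lemma \ref{lem zero locus trr section local} applied componentwise. Since $t$ is pulled back along $\pi_{\mathbf{G},\Gamma}$ from the section \eqref{eq trr section} on the unique $\Gamma\in V(\mathbf{G})$ containing $a_1$, its zero locus in each $\oPMb_\mathbf{G}\subseteq\oPMh_{0,B,I}$ equals $\bigcup_{\mathbf{F}\in U_\mathbf{G}}\oPMb_\mathbf{F}$ with multiplicity $r$. The global zero locus is therefore the union over all pairs $(\mathbf{G},\mathbf{F}\in U_\mathbf{G})$, and the multiplicity $r$ is inherited. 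It remains to recognize this union as the disjoint union in \eqref{eq:total zero locus} and to verify the orientation comparison.

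For the indexing, I would observe that $\mathbf{F}\mapsto\mathbf{G}$ (smoothing the unique internal edge $e$ of $\Delta\in V(\mathbf{F})$) bijects pairs $(\mathbf{G},\mathbf{F}\in U_\mathbf{G})$ with $(r,\h)$-graphs $\mathbf{F}$ in which a single component $\Delta=v^o_\Delta\sqcup v^c_\Delta$ is non-smooth, $a_1\in T^I(v^c_\Delta)$, and $\zeta_\Delta(c)\in v^o_\Delta$. Given such $\mathbf{F}$, removing $v^c_\Delta$ together with its $s$ incident dashed lines disconnects the tree $\hat{\mathbf{F}}$ into a central open subgraph $\mathbf{G}^o_0$ (containing $v^o_\Delta$, which acquires a new internal tail of twist $r-2-a$) and $s$ side subgraphs $\mathbf{F}_1,\ldots,\mathbf{F}_s$ (each acquiring a boundary tail of twist $r-2-2t_i$ at the cut), producing the parameters $(s,(t_i),(I_j),(B_j))$ that index \eqref{eq:total zero locus}. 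The detach morphism of Lemma \ref{lem zero locus trr section local} then factors as
\[\oPMb_\mathbf{F}\cong\oPMb_{\mathbf{G}^o_0}\times\prod_{i=1}^s\oPMb_{\mathbf{F}_i}\times\oCM_{v^c_\Delta},\]
and summing over all smooth $\mathbf{G}^o_0$ (respectively $\mathbf{F}_i$) assembles the factor $\oPMb_{0,B_0,I_0\sqcup\{r-2-a\}}$ (respectively $\oPMb_{0,B_i\sqcup\{r-2-2t_i\},I_i}$); coupled with $\oCM_{v^c_\Delta}$, the first factor yields exactly $\hat{\mathcal M}^{I_{-1}\cup\{t_1,\ldots,t_s\}}_{B_0,I_0}$.

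The main obstacle will be matching the orientations. From Lemma \ref{lem zero locus trr section local}, $o_\mathbf{F}=\text{Detach}_e^*(o_{\mathbf{F}^o}\boxtimes o_{v^c_\Delta})$ with $o_{\mathbf{F}^o}=(-1)^{|V(\mathbf{G})|-1}o_{v^o_\Delta}\wedge\bigwedge_{\Sigma\in V(\mathbf{F})\setminus\{\Delta\}}o_\Sigma$. On the right-hand side of \eqref{eq:total zero locus}, the canonical orientations of $\oPMb_{\mathbf{G}^o_0}$ and $\oPMb_{\mathbf{F}_i}$, defined by \eqref{eq orientation point insertion}, contribute signs $(-1)^{|E(\mathbf{G}^o_0)|}=(-1)^{|V(\mathbf{G}^o_0)|-1}$ and $(-1)^{|E(\mathbf{F}_i)|}=(-1)^{|V(\mathbf{F}_i)|-1}$, since both auxiliary graphs are trees. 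Combined with the explicit prefactor $(-1)^s$ in $o_{open}$, the total sign on the right-hand side is
\[(-1)^{s+(|V(\mathbf{G}^o_0)|-1)+\sum_{i=1}^s(|V(\mathbf{F}_i)|-1)},\]
which I expect to simplify to $(-1)^{|V(\mathbf{G})|-1}$ via the partition $V(\mathbf{F})\setminus\{\Delta\}=(V(\mathbf{G}^o_0)\setminus\{v^o_\Delta\})\sqcup\bigsqcup_{i=1}^sV(\mathbf{F}_i)$ together with the identity $|V(\mathbf{G})|=|V(\mathbf{F})|$. This sign bookkeeping is the one nontrivial step; everything else is a direct combinatorial repackaging of the local lemma.
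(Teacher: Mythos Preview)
Your proposal is correct and follows essentially the same approach as the paper: both arguments reduce the global statement to Lemma~\ref{lem zero locus trr section local} by matching the connected components $\oPMb_{\mathbf{F}}$, $\mathbf{F}\in U_{\mathbf{G}}$, against the terms obtained by expanding \eqref{eq:total zero locus} via \eqref{eq:closed plus centre} and \eqref{eq def moduli point insertion}. Your version is more explicit in spelling out the bijection and, in particular, in carrying out the orientation sign bookkeeping (which the paper leaves implicit in the phrase ``all claims follow from Lemma~\ref{lem zero locus trr section local}''); your sign computation using $|V(\mathbf{G})|=|V(\mathbf{G}^o_0)|+\sum_i|V(\mathbf{F}_i)|$ is correct.
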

\begin{proof}
    We expand \eqref{eq:total zero locus} by \eqref{eq:closed plus centre} and \eqref{eq def moduli point insertion}. Each term in this expansion is of the form $\oPMb_\mathbf{F}$ for some $\mathbf{F}\in U_\mathbf{G}$, where $\mathbf{G}\in \sGPI^{r,\h}_{0,B,I}$. On the other hand, for each $\mathbf{G}\in \sGPI^{r,\h}_{0,B,I}$ and $\mathbf{F}\in U_\mathbf{G}$,  the term $\oPMb_\mathbf{F}$ appears in this expansion exactly once. Thus, all claims of the lemma follow from Lemma \ref{lem zero locus trr section local}.
\end{proof}
To simplify the notation, we rewrite \eqref{eq:total zero locus} as $\bigcup_{\alpha\in S_{\text{ind}}}\mathcal Z_\alpha$, 
where $S_{\text{ind}}$ is the index set over which the union in \eqref{eq:total zero locus} is taken, \textit{i.e.} $$
S_{\text{ind}}:=\left\{(s,\{t_i\},\{I_i\},\{B_i\})\left\vert \substack{s\ge 0,~ 0\le t_i\le \h,\\ \bigsqcup_{j=-1}^s I_j=I,~\bigsqcup_{j=0}^s B_j=B,\\ a_1\in I_{-1},~c\in I_0\sqcup B_0,\\ \{(t_j,I_j,B_j)\}_{1\le j \le s}\text{ unordered}} \right.\right\}.
$$ Note that for each $\alpha \in S_{\text{ind}}$, the real codimension of $\mathcal Z_\alpha\subset \oPMh_{0,B,I}$ is two. Moreover, for $\alpha\ne \beta$, the real codimension of $\mathcal Z_\alpha \cap \mathcal Z_\beta \subset \oPMh_{0,B,I}$ is at least four.

We write $E_1=\mathbb L_1$ and $E_2=\mathcal W^{\oplus m}\oplus \bigoplus_{i=1}^{\lvert I\rvert}{\mathbb L}_i^{\oplus d_i}$. Let $\bm{\rho}$ be a canonical multisection of $E_2\to \oPMh_{0,B,I}$ such that
\begin{enumerate}
    \item $\bm{\rho}\vert_{\mathcal Z_\alpha}\pitchfork 0$ for all $\alpha\in S_{\text{ind}}$;
    \item $\bm{\rho}$ does not vanish on $\mathcal Z_\alpha\cap \mathcal Z_\beta$ for any $\alpha\ne \beta,~\alpha,\beta \in S_{\text{ind}}$.
\end{enumerate}
The construction of such a multisection $\bm{\rho}$ is similar to the construction of the transverse canonical multisection $\bm{s}$ in the second part of the proof of Theorem \ref{thm intersection numbers well-defined}: starting from an arbitrary canonical multisection $\bm{\rho}_0$ of $E_2\to \oPMh_{0,B,I}$, we perturb it to a multisection which is transverse to zero on each $\mathcal Z_\alpha$ and $\mathcal Z_\alpha\cap \mathcal Z_\beta$. In details, denote by $\bm{\rho}^j_{[u_i]}$ the sections of $E_2\to \oPMh_{0,B,I}$ constructed like the section $\bm{s}^j_{[u_i]}$ in the second part of the proof of Theorem \ref{thm intersection numbers well-defined}. For each $\lambda=\left(\lambda_i^j\right)_{\substack{1\le i \le N\\1\le j \le \rk E_2}}\in W= \mathbb R_+^{N\times \rk E_2}$, $\bm{\rho}_\lambda$ is the multisection of $E_2\to \oPMh_{0,B,I}$ defined by
$$
\bm{\rho}_\lambda=\bm{\rho}_0 + \sum_{\substack{1\le i \le N\\1\le j \le \rk E_2}} \lambda_i^j \bm{\rho}^j_{[u_i]}.
$$
By Theorem \ref{thm hirsch}, for each $\mathcal Z=\mathcal Z_\alpha$ or $\mathcal Z_\alpha\cap \mathcal Z_\beta$, the set  $W_{\mathcal Z}:=\{\lambda \in W \colon \bm{\rho}_\lambda\vert_{\mathcal Z} \pitchfork 0\}$ is residual. Since $S_{\text{ind}}$ is a finite set, setting $\bm{\rho}=\bm{\rho}_{\lambda}$ for an arbitrary $\lambda$ in the non-empty set $\bigcap_{\mathcal Z=\mathcal Z_\alpha} W_{\mathcal Z} \cap \bigcap_{\mathcal Z=\mathcal Z_\alpha\cap \mathcal Z_\beta} W_{\mathcal Z}$, has the required properties.

Note that $\hat {\bm{\rho}}:=t\oplus \bm{\rho}$ is a canonical multisection of $E_1\oplus E_2=\mathcal W^{\oplus m}\oplus \mathbb L_1^{\oplus d_1+1}\oplus\bigoplus_{i=2}^{\lvert I\rvert}{\mathbb L}_i^{\oplus d_i}$. Moreover, $\hat {\bm{\rho}}$ is transverse to zero since, by construction, the restriction of $\bm{\rho}$ to the zero locus of $t$ is transverse to zero. By Definition \ref{dfn correlator point insertion}, and according to the interpretation of relative Euler classes as weighted count of zeros in \cite[Appendix A]{BCT2}, we can write 
$$\left\langle
		\tau^{a_1}_{d_1+1}\tau^{a_2}_{d_2}\dots\tau^{a_l}_{d_l}\sigma^{b_1}\sigma^{b_2}\dots\sigma^{b_k}
		\right\rangle_0^{\frac{1}{r},\text{o},\h,m}=\# Z(\hat {\bm{\rho}}),$$
  where by $\# Z(-)$ we mean the weighted count of zeros of a multisection. Moreover, by Lemma \ref{lem zero locus trr section global}, we have
$$
\# Z(\hat {\bm{\rho}})=\sum_{\alpha\in S_{\text{ind}}} r\# Z(\bm{\rho}\vert_{\mathcal Z_\alpha}),
$$
To prove \eqref{eqtrr1} and \eqref{eqtrr2}, we need to show that, for each $\alpha\in S_{\text{ind}}$ with non-empty $$\mathcal Z_\alpha= \hat{\mathcal M}_{B_0,I_0}^{I_{-1}\cup\{t_1,\dots,t_s\}}\times\prod_{j=1}^s \oPMb_{0,B_j\sqcup \{r-2-2t_j\},I_j},$$ we have 
\begin{equation} \label{eq: equiv to trrs}
    \begin{split}
        r\# Z(\bm{\rho}\vert_{\mathcal Z_\alpha})=&(-1)^s\left\langle
		\tau^{a}_{0}\prod_{a_i\in I_{-1}}\tau^{a_i}_{d_i}\prod_{j=1}^{s}\tau^{t_j}_{0}
		\right\rangle_0^{\frac{1}{r},\text{ext},m}\cdot \left\langle
		\tau^{r-2-a}_{0}\prod_{a_i\in I_0}\tau^{a_i}_{d_i}\prod_{b_i\in B_0}\sigma^{b_i}
		\right\rangle_0^{\frac{1}{r},\text{o},\h,m}\\
		&\qquad\quad\qquad\quad\qquad\quad\qquad\quad\qquad\cdot\prod_{j=1}^{s}\left\langle
		\sigma^{r-2-2t_j}\prod_{a_i\in I_j}\tau^{a_i}_{d_i}\prod_{b_i\in B_j}\sigma^{b_i}
		\right\rangle_0^{\frac{1}{r},\text{o},\h,m},
    \end{split}
\end{equation}
where $a\in\{-1,0,\dots,r-2\}$ and $a+\sum_{a_i\in I_{-1}}a_i+\sum_{k=1}^s t_k+2\equiv 0 \mod r$.

We consider the detaching morphism
$$\text{Detach}_e:\mathcal Z_\alpha \to \oCM_{v^c_{}} \times \oPMb_{0,B_0,I_0\sqcup \{r-2-a\}} \times\prod_{i=1}^s \oPMb_{0,B_i\sqcup \{r-2-2t_i\},I_i}.$$
For $i=1,2,\dots,s$, we choose a transverse canonical multisection $\sigma_i$ of the vector bundle 
$$F_i:=\mathcal W^{\oplus m}_{0,B_i\sqcup \{r-2-2t_i\},I_i}\oplus \bigoplus_{a_j\in I_i} \mathbb L_j^{\oplus d_j}\to \oPMb_{0,B_i\sqcup \{r-2-2t_i\},I_i}.$$
We choose a canonical multisection $\sigma_o^{\mathcal W}$ of 
$
\mathcal W^{\oplus m}_{0,B_0,I_0\sqcup \{r-2-a\}}\to \oPMb_{0,B_0,I_0\sqcup \{r-2-a\}},
$
and a canonical multisection $\sigma_o^{\mathbb L}$  of 
$
\bigoplus_{a_j\in I_{0}} \mathbb L_j^{\oplus d_j}\to \oPMb_{0,B_0,I_0\sqcup \{r-2-a\}}$, 
such that \[{\sigma_o}:={\sigma_o^{\mathcal W}}\oplus \sigma_o^{\mathbb L}\pitchfork 0\] as a multisection of the vector bundle 
$$
F_o:=\mathcal W^{\oplus m}_{0,B_0,I_0\sqcup \{r-2-a\}}\oplus \bigoplus_{a_j\in I_0} \mathbb L_j^{\oplus d_j}\to \oPMb_{0,B_0,I_0\sqcup \{r-2-a\}}.
$$
We also choose a coherent multisection $\sigma_c^{\mathcal W}$  (see Section \ref{subsec coherent}) of 
$
\mathcal W^{\oplus m}_{v^c}\to {\overline{\mathcal{R}}}_{v^c}
$
and a multisection $\sigma_c^{\mathbb L}$  of 
$
\bigoplus_{a_j\in I_{-1}} \mathbb L_j^{\oplus d_j}\to \Mbar_{v^c}$, 
such that \[\overline{\sigma_c}:=\overline{\sigma_c^{\mathcal W}}\oplus \sigma_c^{\mathbb L}\pitchfork0\]as a multisection of the vector bundle 
$$
F_c:=\mathcal W^{\oplus m}_{v^c}\oplus \bigoplus_{a_j\in I_{-1}} \mathbb L_j^{\oplus d_j}\to \Mbar_{v^c}.
$$

For each $\mathbf{H}\in U_{B^o,I^o}^{I^c}$, let the graded 
$r$-spin graph $\Delta\in V(\mathbf{H})$ be the unique non-smooth vertex with the unique edge $e$. The assembling operator (see Section \ref{subsec coherent}  and \cite[Section 4.1]{BCT2} for more details) $\Ass_{\Delta,e}$ induces an assembling operator $\Ass_{\mathbf{H},e}$, which glues $\sigma^{\mathcal W}_c$ and $\sigma^{\mathcal W}_o\vert_{\oPMb_{\mathbf{H}^o}}$ into a positive multisection $\sigma^{\mathcal W}_\mathbf{H}:=\Ass_{\mathbf{H},e}\left(\sigma^{\mathcal W}_c\boxplus \sigma^{\mathcal W}_o\vert_{\oPMb_{\mathbf{H}^o}}\right)$ of the vector bundle 
$
\mathcal W^{\oplus m}_\mathbf{H}\to\oPMb_\mathbf{H}\subseteq \hat{\mathcal M}_{B^o,I^o}^{I^c}.
$
We denote by $\Ass_{e}\left(\sigma_c\boxplus \sigma_o\right)$ the canonical multisection of 
$$
\mathcal W^{\oplus m}\oplus  \bigoplus_{a_j\in I_0\cup I_{-1}} \mathbb L_j^{\oplus d_j}\to\hat{\mathcal M}_{B^o,I^o}^{I^c}.
$$
given by the collection of $\sigma^{\mathcal W}_\mathbf{H}\oplus\left(\sigma_o^{\mathbb L}\vert_{\oPMb_{\mathbf{H}^o}}\boxplus \sigma_c^{\mathbb L}\right)$ for all $\mathbf{H}\in U_{B^o,I^o}^{I^c}$. We define a canonical multisection $\mathbf{\sigma}$ of $E_2\vert_{\mathcal Z_\alpha}$ by
$$
\bm{\sigma}:=\Ass_{e}\left(\sigma_c\boxplus \sigma_o\right)\boxplus \bboxplus_{i=1}^s \sigma_i.
$$
The number $\# Z(\bm{\sigma})$ is zero unless $\rk F_i=\dim \oPMb_{0,B_i\sqcup \{r-2-2t_i\},I_i}$ for each $i=1,2,\dots,s$; when these equations hold we have 
\begin{equation}\label{eq:split side contribution}
    \begin{split}
        \# Z(\bm{\sigma})&=(-1)^s \cdot\# Z\left(\Ass_{e}\left(\sigma_c\boxplus \sigma_o\right)\right)\cdot \prod_{j=1}^{s} \# Z(\sigma_j)\\
        &=(-1)^s \cdot\# Z\left(\Ass_{e}\left(\sigma_c\boxplus \sigma_o\right)\right)\cdot \prod_{j=1}^{s}\left\langle
		\sigma^{r-2-2t_j}\prod_{a_i\in I_j}\tau^{a_i}_{d_i}\prod_{b_i\in B_j}\sigma^{b_i}
		\right\rangle_0^{\frac{1}{r},\text{o},\h,m}.
    \end{split}
\end{equation}
Moreover, by the construction of the assembling operator, the number $\# Z\left(\Ass_{e}\left(\sigma_c\boxplus \sigma_o\right)\right)$ is zero unless $\rk F_c=\dim \Mbar_{v^c}$ and $\rk F_o= \dim \oPMb_{0,B_0,I_0\sqcup \{r-2-a\}}$, when these equations hold we have (see the proof of \cite[Lemma 4.14]{BCT2} for more details)
\begin{equation}\label{eq:assembling split}
    \begin{split}
        \# Z\left(\Ass_{e}\left(\sigma_c\boxplus \sigma_o\right)\right)&=Z(\overline{\sigma_c})\cdot Z(\sigma_o)\\
        &=\frac{1}{r}\left\langle
		\tau^{a}_{0}\prod_{a_i\in I_{-1}}\tau^{a_i}_{d_i}\prod_{j=1}^{s}\tau^{t_j}_{0}
		\right\rangle_0^{\frac{1}{r},\text{ext}}\cdot \left\langle
		\tau^{r-2-a}_{0}\prod_{a_i\in I_0}\tau^{a_i}_{d_i}\prod_{b_i\in B_0}\sigma^{b_i}
		\right\rangle_0^{\frac{1}{r},\text{o},\h,m},
    \end{split}
\end{equation}
the factor $\frac{1}{r}$ comes from the defining equation \eqref{eq:define closed ext} of the closed extended $r$-spin correlators. Combining \eqref{eq:split side contribution} and \eqref{eq:assembling split} we have
\begin{equation}
    \begin{split}
        r\# Z(\bm{\sigma
        })=&(-1)^s\left\langle
		\tau^{a}_{0}\prod_{a_i\in I_{-1}}\tau^{a_i}_{d_i}\prod_{j=1}^{s}\tau^{t_j}_{0}
		\right\rangle_0^{\frac{1}{r},\text{ext}}\cdot \left\langle
		\tau^{r-2-a}_{0}\prod_{a_i\in I_0}\tau^{a_i}_{d_i}\prod_{b_i\in B_0}\sigma^{b_i}
		\right\rangle_0^{\frac{1}{r},\text{o},\h,m}\\
		&\qquad\quad\qquad\quad\qquad\quad\qquad\quad\qquad\quad\cdot\prod_{j=1}^{s}\left\langle
		\sigma^{r-2-2t_j}\prod_{a_i\in I_j}\tau^{a_i}_{d_i}\prod_{b_i\in B_j}\sigma^{b_i}
		\right\rangle_0^{\frac{1}{r},\text{o},\h,m}.
    \end{split}
\end{equation}
Thus, we can prove \eqref{eq: equiv to trrs}, and consequently validate \eqref{eqtrr1} and \eqref{eqtrr2}, by showing 
\begin{equation}\label{eq: need homotopy in proof of trr}
     \# Z(\bm{\sigma})=\# Z(\bm{\rho}\vert_{\mathcal Z_\alpha}).
\end{equation} 
In fact, \eqref{eq: need homotopy in proof of trr} follows from the exact same homotopy argument that was used in the proof of Theorem \ref{thm intersection numbers well-defined}, the part dealing with independence of choices. We can find a homotopy between $\bm{\sigma}$ and $\bm{\rho}$ which is a perturbation of the linear homotopy and transverse to zero. This homotopy cannot have zeros on boundaries of type CB, R, and NS+ due to positivity; when a zero moves in (out resp.) through a type-BI boundary $\text{bd}_{BI}$, another zero moves out (in resp.) through the type-AI boundary $\text{bd}_{AI}$ which is paired with $\text{bd}_{BI}$ via $\text{PI}$.
   
\end{proof}

\begin{rmk}
In \cite[Theorem 1.5]{PST14} and \cite[Theorem 4.1]{BCT2} different looking topological recursion relations were written. Their proof also involved studying the zero locus of a closely related section. In these works the section was not canonical, and a new type of contributions appeared from homotoping it to a canonical one. Here the section can be made globally defined and is automatically canonical, but since the moduli space is more complicated than the ones of \cite{PST14,BCT2},
 the final expression has a very different form.
 \end{rmk}

\section{Comparison between BCT $r$-spin and the $(r,\h=0)$ point insertion theories}\label{section:comparison}

In \cite{BCT1,BCT2}, an open $r$-spin theory, to which we refer as the BCT theory, was constructed. In that theory boundary markings may only have twist $r-2$. In the $(r,\h=0)$ point insertion theory, we also allow only boundary markings with twist $r-2$. In this section we compare these two theories.  It turns out that the two theories are equivalent, in the sense that there is a transformation relating the correlators of one theory to those of the other, but the equivalence is non-trivial, as Theorems \ref{thm: compare primary},~\ref{thm:compare_descendants} below show.

    Let $I=\{a_1,\dots,a_l\}$ and $B=\{\underbrace{r-2,r-2,\dots,r-2}_k\}$. The BCT moduli space is $\Mbar^{1/r}_{0,B,I}$, which is a subset of $\Mbar^{1/r,\h=0}_{0,B,I}$. The BCT correlators $\left\langle\tau^{a_1}_{d_1}\cdots\tau^{a_n}_{d_n}\smash{\underbrace{\sigma^{r-2}\sigma^{r-2}\dots\sigma^{r-2}}_k}\right\rangle^{\frac{1}{r},o,\text{BCT}}_0\vphantom{\left\langle\tau^{a_1}_{d_1}\cdots\tau^{a_n}_{d_n}{\underbrace{\sigma^{r-2}\sigma^{r-2}\dots\sigma^{r-2}}_k}\right\rangle^{\frac{1}{r},o,\text{BCT}}_0}$ are defined as the Euler number of $\mathcal W\oplus \bigoplus_{i=1}^l \mathbb L_i^{\oplus d_i}$ with respect to a BCT-canonical multisection $s^{\text{BCT}}$. 
    
 $\Mbar^{1/r}_{0,B,I}$ is different from $\Mbar^{1/r,\h=0}_{0,B,I}$ since $\Mbar^{1/r}_{0,B,I}$ has no type-AI boundaries, because no point insertion procedure is applied. Also the BCT-canonical multisection $s^{\text{BCT}}$ is defined differently: near boundaries of type CB, R and NS+, the BCT-canonical multisection $s^{\text{BCT}}$ satisfies the same positivity boundary condition as in the $\h=0$ point insertion theory. A type-BI boundary corresponds to an NS node, whose illegal half-node has twist 0. Instead of gluing another space to this type-BI boundary, $s^{\text{BCT}}$ is required to be the pullback of a canonical multisection over the moduli obtained by detaching the node and forgetting the illegal half-node with twist $0$. 
    See \cite[Section 3]{BCT2} for more details.
        
    \begin{rmk}\label{rmk: orientation issue}
        In this paper we define the BCT correlators with our orientation conventions, but still using the BCT boundary conditions.  
        Thus, every BCT correlator we consider here equals the corresponding correlator defined in \cite{BCT2}, up to sign. With this choice of orientation the TRRs for BCT correlators  \textit{i.e. \cite[Theorem 4.1]{BCT2}} still holds. 
    \end{rmk}

    \begin{thm}\label{thm: compare primary}
        The primary correlators in the BCT theory and the $(r,\h=0)$ point insertion theory coincide, \textit{i.e.} when $$\frac{2\sum_{i=1}^l a_i+ (k-1)(r-2)}{r}=2l+k-3,$$ we have
        $$
            \left\langle\tau^{a_1}_{0}\dots\tau^{a_l}_{0}\smash{\underbrace{\sigma^{r-2}\sigma^{r-2}\dots\sigma^{r-2}}_k}\right\rangle^{\frac{1}{r},o,\text{BCT}}_0\vphantom{\left\langle\tau^{a_1}_{0}\dots\tau^{a_n}_{0}{\underbrace{\sigma^{r-2}\sigma^{r-2}\dots\sigma^{r-2}}_k}\right\rangle^{\frac{1}{r},o,\text{BCT}}_0}=\left\langle\tau^{a_1}_{0}\dots\tau^{a_l}_{0}\smash{\underbrace{\sigma^{r-2}\sigma^{r-2}\dots\sigma^{r-2}}_k}\right\rangle^{\frac{1}{r},o,\h=0}_0\vphantom{\left\langle\tau^{a_1}_{0}\dots\tau^{a_n}_{0}{\underbrace{\sigma^{r-2}\sigma^{r-2}\dots\sigma^{r-2}}_k}\right\rangle^{\frac{1}{r},o,\h=0}_0}.
        $$
    \end{thm}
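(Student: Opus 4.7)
The strategy is to construct a PI-canonical multisection $\tilde s$ of $\mathcal W \to \oPMb^{1/r,\h=0}_{0,B,I}$ that restricts to a BCT-canonical multisection on the main component $\oPMb^{1/r}_{0,B,I}$, and then to show that every non-main component contributes zero to the Euler count.

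I start from a BCT-canonical multisection $s^{\mathrm{BCT}}$ of $\mathcal W \to \oPMb^{1/r}_{0,B,I}$, whose Euler count computes the BCT primary correlator. The key compatibility observation is that for $\h = 0$ the paired tails satisfy $(a,b) = (0, r-2)$, so the type-AI bubble appearing in the point insertion construction is a disk with exactly one twist-$0$ internal tail and one twist-$(r-2)$ boundary half-node; its moduli is a single point and its Witten bundle has rank zero. Consequently, the BCT forgetful boundary condition at a type-BI node of the main component is compatible, under the PI identification, with a trivial extension of $s^{\mathrm{BCT}}$ across the paired AI boundary in the corresponding non-main component. Using this compatibility and the inductive section-building procedure from the proof of Theorem \ref{thm intersection numbers well-defined}, I extend $s^{\mathrm{BCT}}$ to a PI-canonical multisection $\tilde s$ of $\mathcal W \to \oPMb^{1/r,\h=0}_{0,B,I}$ coinciding with $s^{\mathrm{BCT}}$ on the main component.

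The Euler count of $\tilde s$ decomposes as a sum over smooth $(r,\h=0)$-graphs: the main graph (with no dashed lines) contributes $\int_{\oPMb^{1/r}_{0,B,I}} e(\mathcal W; s^{\mathrm{BCT}})$, which is the BCT primary correlator; any other smooth $\mathbf{G}$ contributes $\int_{\oPMb_\mathbf{G}} e(\mathcal W; \tilde s|_{\oPMb_\mathbf{G}})$. I arrange the extension so that on each non-main component $\oPMb_\mathbf{G} = \prod_{v \in V(\mathbf{G})} \oPMb_v$ the restriction $\tilde s|_{\oPMb_\mathbf{G}}$ decomposes as a direct sum $\bigoplus_v s_v$ of vertex-level PI-canonical multisections $s_v$ on $\oPMb_v$. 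The K\"unneth decomposition $\mathcal W_\mathbf{G} = \bigoplus_v \mathcal W_v$ then factorizes the Euler count on $\oPMb_\mathbf{G}$ as a product of vertex-level Euler counts, each of which is a primary $(r,\h=0)$-correlator for the marking pattern at the corresponding vertex. Since $\mathbf{G}$ is non-main, at least one vertex $v_0$ carries a paired twist-$0$ internal tail, so the vertex-level correlator at $v_0$ contains a primary $\tau^0_0$ insertion. Stability of $v_0$ forces $l_{v_0} + k_{v_0} \ge 3$ with $l_{v_0} \ge 1$, whence $2 l_{v_0} + k_{v_0} \ge 4$, and Proposition \ref{prop:vanish small internal} forces this vertex correlator to vanish. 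The product is then zero, and summing over non-main $\mathbf{G}$ shows that the total Euler count reduces to the main-component contribution, which is the BCT primary correlator; this proves the equality.

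The main obstacle will be the simultaneous compatibility of three boundary structures on $\tilde s$: the global PI identifications between different smooth $(r,\h=0)$-graphs, the BCT forgetful structure on the main component's type-BI boundaries, and the vertex-level PI canonicity required on each non-main component for the K\"unneth argument to produce genuine $(r,\h=0)$-correlators. The inductive construction must be performed so as to satisfy all three at once, and the essential ingredient that allows this consistency is the rank-zero, point-moduli triviality of the $\h = 0$ AI bubbles, which is what makes the comparison between the forgetful BCT structure and the PI identification at the main component's BI boundaries entirely local and trivial.
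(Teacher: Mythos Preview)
Your strategy has a genuine gap in the K\"unneth step. You claim that the vertex-level Euler count $\#Z(s_{v_0})$ on the single smooth component $\oPMb_{v_0}$ equals the primary $(r,\h=0)$-correlator $\langle\tau^0_0\cdots\rangle^{1/r,\text{o},\h=0}_0$, so that Proposition~\ref{prop:vanish small internal} applies. But that correlator is by definition an integral over the \emph{entire} point-insertion moduli $\oPMb^{1/r,\h=0}_{0,T^B(v_0),T^I(v_0)}$, which is a disjoint union of many components indexed by $\sGPI^{r,\h=0}_{0,T^B(v_0),T^I(v_0)}$; the piece $\oPMb_{v_0}$ is only the single-vertex component. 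The count $\#Z(s_{v_0})$ on that one component is not an invariant --- it depends on the boundary values of $s_{v_0}$ along the type-AI/BI strata of $\oPMb_{v_0}$ --- and there is no reason it should vanish on its own. Proposition~\ref{prop:vanish small internal} is proved via an involution $\nu$ that pairs components of the full PI moduli; applied to your situation, the ``reflected'' graph for the paired tail $\hat a\in I'(\mathbf{G})$ would put $\hat a$ on a bubble with a single $I'$-tail and a single $B'$-tail, which is explicitly forbidden by condition~(1) of Definition~\ref{def rh graphs}. So neither the statement nor the proof of Proposition~\ref{prop:vanish small internal} transfers to a single component.

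The paper's proof avoids this entirely and is in fact closer in spirit to the compatibility you already identified. Rather than factorising and invoking Proposition~\ref{prop:vanish small internal}, it \emph{defines} the section on every component $\oPMb_{\mathbf{G}}$ as a pullback $F_{\mathbf{G}}^*\hat s^{\text{BCT}}_{\mathcal B\mathbf{G}}$, where $\mathcal B\mathbf{G}$ forgets all twist-$0$ tails in $I'(\mathbf{G})$ and $\hat s^{\text{BCT}}_\bullet$ is the family of special BCT-canonical multisections from \cite[Proposition~6.2]{BCT2}. The BCT forgetful compatibility then directly yields PI-canonicity, and on any non-main $\mathbf{G}$ one has $I'(\mathbf{G})\neq\emptyset$, so $\text{rank}\,\mathcal W_{\mathcal B\mathbf{G}}=\text{rank}\,\mathcal W_{\mathbf{G}}>\dim\oPMb_{\mathcal B\mathbf{G}}$; transversality forces $\hat s^{\text{BCT}}_{\mathcal B\mathbf{G}}$ to be nowhere vanishing, hence so is its pullback, and the non-main contributions are zero without any factorisation. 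Your observation about the rank-zero AI bubble is exactly what makes this forgetful pullback PI-canonical, but the vanishing mechanism is the dimension drop under forgetting, not Proposition~\ref{prop:vanish small internal}.
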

    \begin{proof}
        For a graded $r$-spin graph $\Gamma,$ we denote by $\mathcal B\Gamma$ the graph obtained by detaching all the NS edges of $\Gamma$ whose twist at the illegal half-edge is $0$, then forgetting all the twist-$0$ illegal half-edges. Note that forgetting tails may produce unstable components $\mathcal B \Gamma$; we view the moduli space corresponding to an unstable component, which is a direct summand of $\Mbar_{\mathcal B \Gamma}$, as a single point.  Let $F_\Gamma\colon \Mbar_\Gamma \to \Mbar_{\mathcal B\Gamma}$ be the induced map in the level of moduli spaces.
     \cite[Proposition 6.2]{BCT2} constructed the \emph{special BCT-canonical multisections}, which are collections of transverse multisections $\hat s^{\text{BCT}}_\Gamma$ of $\mathcal W \to \oPMb_\Gamma$  for every graded $r$-spin graph $\Gamma$, which satisfy, in addition to the same positivity as in $(r,\h=0)$ theory near the boundaries of type CB, R and NS+, that for every $\Gamma\in \partial^! \Delta$ 
        $$
         \hat s^{\text{BCT}}_\Delta\big\vert_{ \Mbar_{\Gamma_{}}}=F_\Gamma^* \hat s^{\text{BCT}}_{\mathcal B \Gamma}.
        $$ 
        \cite[Theorem 3.17, Definition 3.18]{BCT2} showed that 
 the BCT correlator can be written as
        \[\int_{\oPM_{0,B,I}}
        e\left(\mathcal W ; {\hat s^{\text{BCT}}}\right)
        =\#Z\left({\hat s^{\text{BCT}}}\right),\]
        where the equality is explained in \cite[Appendix A]{BCT2}.

        We can construct a canonical multisection $s^{\h=0}$ for $(r,\h=0)$ point insertion theory from the transverse special BCT-canonical multisections $\hat s^{\text{BCT}}_\bullet$. For each $(r,\h)$-graph $\mathbf{G}\in \GPI^{r,\h=0}_{0,B,I}$, we define $\mathcal B \mathbf{G}$ to be the graph obtained by forgetting all the tails in $I'(\mathbf{G})$. Note that $\mathcal B \mathbf{G}$ is no longer an $(r,\h)$-graph because we forgot tails in $I'(\mathbf{G})$ but kept tails in $B'(\mathbf{G})$. Since $\h=0$, all the tails in $I'(\mathbf{G})$ have twist 0, therefore  $\mathcal B \mathbf{G}$ is still a graded $r$-spin graph. 
        We denote by $F_\mathbf{G}\colon \Mbar_\mathbf{G} \to \Mbar_{\mathcal B\mathbf{G}}$ be the corresponding map between moduli spaces. We define a multisection $s^{\h=0}$ of $\mathcal W\to \oPMb^{1/r,\h=0}_{0,B,I}$ by
        $$
            s^{\h=0}\big\vert_{\Mbar_\mathbf{G}}:=F_\mathbf{G}^*\hat s^{\text{BCT}}_{\mathcal B\mathbf{G}}.
        $$
        The multisection $s^{\h=0}$ defined in this way is a canonical multisection in the sense of point insertion (Definition \ref{def canonical for Witten}).
        The positivity boundary conditions (the first item in Definition \ref{def canonical for Witten}) are satisfied since the BCT-canonical multisections $\hat s^{\text{BCT}}_{\mathcal BG}$  satisfy the same positivity boundary conditions. 
        The second item in Definition \ref{def canonical for Witten} is also satisfied: for a pair of type-AI boundary $\text{bd}_{AI}\subset\Mbar_{\mathbf{G_1}}$ and type-BI boundary $\text{bd}_{BI}\subset\Mbar_{\mathbf{G_2}}$ paired by $PI$, we have $\text{bd}_{BI}=\Mbar_{\mathbf{G}_{BI}}\cong\Mbarstar_{v_0}\times \prod_i \Mbar_{v_i}$ and $\text{bd}_{AI}=\Mbar_{\mathbf{G}_{AI}}\cong\{pt\}\times\Mbarstar_{v_0}\times \prod_i \Mbar_{v_i}$, where $v_0$ has an illegal boundary tail with twist $0$ and $\{pt\}$ is a zero-dimensional moduli with rank-zero Witten bundle on it (see the proof of \cite[Theorem 4.12]{TZ1} for more details); denoting by $\hat v_0$ the vertex obtained by forgetting the twist-0 illegal tail of $v_0$, we have
        \[
        \Mbar_{\mathcal B\mathbf{G}_{BI}}= \Mbar_{\mathcal B\hat v_0}\times \prod_i \Mbar_{\mathcal B v_i}\cong\{pt\}\times\Mbar_{\mathcal B \hat v_0}\times \prod_i \Mbar_{\mathcal B v_i}=\Mbar_{\mathcal B\mathbf{G}_{AI}}
        \]
        and $\hat s^{\text{BCT}}_{\mathcal B\mathbf{G}_{BI}}=\hat s^{\text{BCT}}_{\mathcal B\mathbf{G}_{AI}}$ under this identification; on the other hand, we have        
        \begin{equation*}
        \begin{split}
        s^{\h=0}\vert_{\oPMb_{\mathbf{G}_{BI}}}&= F_\mathbf{G_1}^*\left(\hat s^{\text{BCT}}_{\mathcal B\mathbf{G_1}}\big\vert_{\oPMb_{\mathcal B\mathbf{G}_{BI}}}\right)=F_\mathbf{G_1}^*F_{\mathbf{G}_{BI}}^*\hat s^{\text{BCT}}_{\mathcal B\mathbf{G}_{BI}}
            \end{split}
        \end{equation*}
        and 
        \begin{equation*}
        \begin{split}
        s^{\h=0}\vert_{\oPMb_{\mathbf{G}_{AI}}}&= F_\mathbf{G_2}^*\left(\hat s^{\text{BCT}}_{\mathcal B\mathbf{G_2}}\big\vert_{\oPMb_{\mathcal B\mathbf{G}_{AI}}}\right)=F_\mathbf{G_2}^*F_{\mathbf{G}_{AI}}^*\hat s^{\text{BCT}}_{\mathcal B\mathbf{G}_{AI}},
            \end{split}
        \end{equation*}
        they coincide under the identification between $\text{bd}_{BI}$ and $\text{bd}_{AI}$.

        Note that $\hat s^{\text{BCT}}_{\mathcal B\Gamma_{BI}}$ (hence also $\hat s^{\text{BCT}}_{\mathcal B\Gamma_{AI}}$) is a nowhere vanishing section of $\mathcal W_{\mathcal B\Gamma_{BI}}\to \oPMb_{\mathcal B\Gamma_{BI}}$ since \[\hat s^{\text{BCT}}_{\mathcal B\Gamma_{BI}}\pitchfork 0\] and
        $$\rk \mathcal W_{\mathcal B\Gamma_{BI}}=\rk \mathcal W_{\mathbf{G_1}}=\dim  \oPMb_{\mathbf{G_1}} >\dim \oPMb_{\Gamma_{BI}}\ge \dim \oPMb_{\mathcal B\Gamma_{BI}}.$$
        Thus $s^{\h=0}$ vanishes nowhere on boundaries of type AI or BI, and we can define $(r,\h=0)$ point insertion correlators as $\int_{\overline{\mathcal{PM}}^{1/r,\h=0}_{0,B,I}}e\left(\mathcal W ; {s^{\h=0}\big\vert_{U_+\cup\partial\oPMh_{0,B,I}}}\right)$. 

        Actually $s^{\h=0}$ is a transverse multisection. On a connected component $\Mbar_\mathbf{G}\subseteq \Mbar^{1/r}_{0,B,I}\subseteq \Mbar^{1/r,\h=0}_{0,B,I}$, we have $s^{\h=0}\big\vert_{\oPMb_\mathbf{G}}=\hat s^{\text{BCT}}_\mathbf{G}$ since $I'(\mathbf{G})=\emptyset$ and $\mathbf{G}=\mathcal B\mathbf{G}$; so $s^{\h=0}\big\vert_{\oPMb_\mathbf{G}}$ is transverse since $\hat s^{\text{BCT}}_\mathbf{G}$ is. 
        On a connected component $\Mbar_\mathbf{G}\subseteq  \Mbar^{1/r,\h=0}_{0,B,I}\setminus\Mbar^{1/r}_{0,B,I}$,  we claim that the section $s^{\h=0}\big\vert_{\oPMb_\mathbf{G}}$ is transverse because it vanishes nowhere: indeed, the section $s^{\h=0}\big\vert_{\oPMb_\mathbf{G}}$ is pulled back from a section $\hat s^{\text{BCT}}_{\mathcal B\mathbf{G}}$ of $\mathcal W_{\mathcal B\mathbf{G}}\to \oPMb_{\mathcal B\mathbf{G}}$; since $I'(\mathbf{G})\ne \emptyset$ in this case, we have $\rk \mathcal W_{\mathcal B\mathbf{G}}>\dim \oPMb_{\mathcal B\mathbf{G}}$, then $\hat s^{\text{BCT}}_{\mathcal B\mathbf{G}}$ vanishes nowhere since it is transversal to zero.

        Therefore, by \cite[Appendix A]{BCT2} again, we can write        $$\int_{\overline{\mathcal{PM}}^{1/r,\h=0}_{0,B,I}}e\left(\mathcal W ; {s^{\h=0}\big\vert_{U_+\cup\partial\oPMh_{0,B,I}}}\right)=\#Z\left({ s^{\h=0}}\right).$$
        The theorem is proven since we have
        \begin{equation*}
        \begin{split}
        \#Z\left({ s^{\h=0}}\right)&=\sum_{\Mbar_\mathbf{G}\subseteq \Mbar^{1/r}_{0,B,I}} \#Z\left({ s^{\h=0}\big\vert_{\oPMb_\mathbf{G}}}\right)+\sum_{\Mbar_\mathbf{G}\subseteq  \Mbar^{1/r,\h=0}_{0,B,I}\setminus\Mbar^{1/r}_{0,B,I}} \#Z\left({ s^{\h=0}\big\vert_{\oPMb_\mathbf{G}}}\right)\\
        &=\sum_{\Mbar_\mathbf{G}\subseteq \Mbar^{1/r}_{0,B,I}}\#Z\left({\hat s^{\text{BCT}}_{\mathbf{G}}}\right)+0=\#Z\left({\hat s^{\text{BCT}}}\right).
        \end{split}
        \end{equation*}

    \end{proof}
    To obtain the relation beyond primary correlators, we need the topological recursion relations.
     We recall the topological recursion relations for the BCT theory, proven in \cite{BCT2}.

    \begin{thm}[\cite{BCT2}, Theorem 4.1]\label{thm TRR BCT}
    The BCT correlators satisfy the following two types of topological recursion relations (note that all boundary twists $b_i$ are $r-2$):
\begin{itemize}
\item[(a)] (TRR with respect to boundary marking $b_1$) Suppose $l,k\ge 1$.  Then
\begin{equation}\label{eq: bct trr 1}
\begin{split}
&\left\langle \tau_{d_1+1}^{a_1}\prod_{i=2}^l\tau^{a_i}_{d_i}\prod_{i=1}^k\sigma^{b_i}\right\rangle^{\frac{1}{r},o,\text{BCT}}_0\hspace{-0.2cm}\\
=&\sum_{\substack{R_1 \sqcup R_2 = \{2,3,\dots,l\}\\ -1\le a \le r-2}}\hspace{-0.1cm}\left\langle \tau_0^{a}\tau_{d_1}^{a_1}\prod_{i \in R_1}\tau_{d_i}^{a_i}\right\rangle^{\frac{1}{r},\text{ext}}_0
\hspace{-0.1cm}\left\langle \tau_0^{r-2-a}\prod_{i\in R_2}\tau^{a_i}_{d_i}\prod_{i=1}^k\sigma^{b_i}\right\rangle^{\frac{1}{r},o,\text{BCT}}_0\\
&+\hspace{-0.1cm}\sum_{\substack{R_1 \sqcup R_2 =  \{2,3,\dots,l\} \\ T_1 \sqcup T_2 =  \{2,3,\dots,k\}}} \hspace{-0.1cm} \left\langle \tau^{a_1}_{d_1}\prod_{i \in R_1} \tau^{a_i}_{d_i}\prod_{i\in T_1}\sigma^{b_i}\right\rangle^{\frac{1}{r},o,\text{BCT}}_0 \hspace{-0.1cm}\left\langle \prod_{i \in R_2} \tau^{a_i}_{d_i} \sigma^{r-2}\sigma^{b_1}\prod_{i\in T_2}\sigma^{b_i}\right\rangle^{\frac{1}{r}, o,\text{BCT}}_0.
    \end{split}
\end{equation}
\item[(b)] (TRR with respect to internal marking $a_2$) Suppose $l\ge 2$.  Then
\begin{equation}\label{eq: bct trr 2}
\begin{split}
&\left\langle \tau_{d_1+1}^{a_1}\prod_{i=2}^l\tau^{a_i}_{d_i}\prod_{i=1}^k\sigma^{b_i}\right\rangle^{\frac{1}{r},o,\text{BCT}}_0\hspace{-0.2cm}\\=&
\sum_{\substack{R_1 \sqcup R_2 =  \{3,4,\dots,l\}\\ -1\le a \le r-2}}\hspace{-0.1cm}\left\langle \tau_0^{a}\tau_{d_1}^{a_1}\prod_{i \in R_1}\tau_{d_i}^{a_i}\right\rangle^{\frac{1}{r},\text{ext}}_0
\hspace{-0.1cm}\left\langle \tau_0^{r-2-a}\tau^{a_2}_{d_2}\prod_{i\in R_2}\tau^{a_i}_{d_i}\prod_{i=1}^k\sigma^{b_i}\right\rangle^{\frac{1}{r},o,\text{BCT}}_0\\
&+\hspace{-0.1cm}\sum_{\substack{R_1 \sqcup R_2 = \{3,4,\dots,l\} \\ T_1 \sqcup T_2 = \{1,2,\dots,k\}}} \hspace{-0.1cm} \left\langle \tau^{a_1}_{d_1}\prod_{i \in R_1} \tau^{a_i}_{d_i}\prod_{i\in T_1}\sigma^{b_i}\right\rangle^{\frac{1}{r},o,\text{BCT}}_0 \hspace{-0.1cm}\left\langle\tau^{a_2}_{d_2} \prod_{i \in R_2} \tau^{a_i}_{d_i} \sigma^{r-2}\prod_{i\in T_2}\sigma^{b_i}\right\rangle^{\frac{1}{r}, o,\text{BCT}}_0.
\end{split}
\end{equation}

\end{itemize}
\end{thm}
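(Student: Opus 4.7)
The plan is to adapt the zero-counting argument from the proof of Theorem \ref{thm TRR} to the BCT moduli space $\Mbar^{1/r}_{0,B,I}$ with its forgetful boundary conditions. First I will construct a section $t \in C^\infty(\Mbar^{1/r}_{0,B,I}, \mathbb{L}_1)$ using formula \eqref{eq trr section}, with the tracking target being $b_1$ in part (a) or $a_2$ in part (b). Verifying that $t$ is BCT-canonical reduces to the observation that the meromorphic differential $\varphi_C$ underlying the formula is unchanged under the forgetful morphism that removes twist-$0$ illegal half-nodes, since such half-nodes contribute no pole to $\varphi_C$.

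Next I will identify the zero locus of $t$ in $\oPM_{0,B,I}$ by an analogue of Lemmas \ref{lem zero locus trr section local}--\ref{lem zero locus trr section global}. The vanishing loci split into two families: \emph{internal-node strata}, where an internal node of twist $a$ on one side and $r-2-a$ on the other separates $z_1$ from the tracking target, which contribute with multiplicity $r$ and produce the first (closed-extended times open BCT) sum in \eqref{eq: bct trr 1}; and \emph{boundary-node strata}, where a type-BI boundary node separates $z_1$ from $b_1$ (or $a_2$), which contribute with multiplicity one and produce the second (open BCT times open BCT with an extra $\sigma^{r-2}$) sum. The count will be evaluated by choosing a transverse BCT-canonical multisection $\boldsymbol{\rho}$ of $\mathcal W \oplus \bigoplus_i \mathbb L_i^{\oplus d_i}$, forming $t\oplus \boldsymbol{\rho}$, and distributing its zero count by strata. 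On internal-node strata the assembling-operator and coherent-multisection machinery from \cite[Section 4]{BCT2} produces the factor $(1/r)\cdot r$ that cancels the vanishing multiplicity and yields the closed-extended correlator; on boundary-node strata no internal-node assembling is needed, so the factorization into a product of two BCT correlators is direct.

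The main obstacle, and the chief departure from the proof of Theorem \ref{thm TRR}, will be the treatment of type-BI boundaries. In the point insertion setting such boundaries are glued to type-AI boundaries and yield no new zero locus contribution; in the BCT setting, the forgetful condition effectively collapses the illegal half-node, so the vanishing behavior of $t$ indeed picks up these boundary-node strata as genuine zeros, and I will have to verify that the resulting count reproduces exactly the second sum in \eqref{eq: bct trr 1}, with the new $\sigma^{r-2}$ corresponding to the legal half of the node. This asymmetry between internal-node and boundary-node contributions, absent in the unified sum of Theorem \ref{thm TRR}, is the essential reason the BCT TRR takes a different form. Finally, an independence-of-choices homotopy argument analogous to the one in the third part of the proof of Theorem \ref{thm intersection numbers well-defined} will ensure that the zero count equals the BCT correlator on the left-hand side, completing the proof.
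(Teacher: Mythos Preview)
First, note that the paper does not prove Theorem~\ref{thm TRR BCT} at all: it is simply recalled from \cite[Theorem 4.1]{BCT2} as an input to the comparison in Section~\ref{section:comparison}. So there is no ``paper's own proof'' to compare against, only the remark following the proof of Theorem~\ref{thm TRR}, which explicitly describes how the argument in \cite{BCT2} differs.

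Your proposal contains a genuine gap at its central step: the section $t$ is \emph{not} BCT-canonical, and your justification for canonicity is incorrect. The BCT condition on a type-BI boundary requires the section to be pulled back from the moduli obtained by detaching the node and forgetting the twist-$0$ illegal half-node. Consider such a boundary where the illegal half-node lies on the component containing $z_1$, while the tracking target (say $b_1$) lies on the other component. The differential $\varphi_C$ restricted to the $z_1$-component then has its pole at the half-node, so $t(C)$ depends on the position of that half-node. After forgetting it, there is no longer a marking there, and $t$ cannot be pulled back from the forgotten moduli. Your argument that ``$\varphi_C$ is unchanged under the forgetful morphism since such half-nodes contribute no pole'' misses that the half-node \emph{does} carry a pole whenever $z_1$ and the tracking target are separated by the node.

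Consequently, the open--open terms in \eqref{eq: bct trr 1}--\eqref{eq: bct trr 2} do \emph{not} arise as zeros of $t$ on boundary-node strata (indeed $t$ does not vanish there). As the remark after the proof of Theorem~\ref{thm TRR} indicates, in \cite{BCT2} these terms appear instead as the correction picked up when one homotopes the non-canonical section $t\oplus\boldsymbol{\rho}$ to a BCT-canonical one: zeros cross the type-BI boundaries during the homotopy, and Lemma~\ref{lem zero difference as homotopy} records them as the second sum. Your plan would need to replace the claim ``$t$ is BCT-canonical'' by this homotopy-correction argument to become a correct proof.
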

Observe that by Theorem \ref{thm: compare primary}, and the fact that the two correlators which involve descendants can be calculated from the primary ones and the closed extended correlators using the TRRs Theorem \ref{thm TRR BCT}, for the BCT theory, and Theorem \ref{thm TRR} for the $(r,\h=0)$ theory, we see that the correlators of each theory determine the correlators of the other. Theorem \ref{thm:compare_descendants} writes an explicit transformation realizing this observation.

To describe the relation between all correlators of the BCT theory and the $\h=0$ point insertion theory, we need the following combinatorial definition.
\begin{dfn}\label{def: I B D}
Let $\mathcal I=\{a_1,a_2,\dots,a_l\}$, $\mathcal B=\{b_1,b_2,\dots,b_k\}=\{r-2,\dots,r-2\}$ and $\mathcal D=\{d_1,d_2,\dots,d_l\}$ be three multisets. Note that both $\mathcal I$ and $\mathcal D$ are labelled by $\{1,2,\dots,l\}$, $\mathcal B$ is labelled by $\{1,2,\dots,k\}$.
    
    We denote by $\mathfrak G(\mathcal I, \mathcal B, \mathcal D)$ the collection of graphs ${G}=(V,E),$ where $V=V(G)$ is the vertex set and $E=E(G)$ is the edge set, together with 
    \begin{itemize}
    \item
    multisets $I(v),~B(v)$ associated to each vertex $v\in V(G)$, 
    \item
    a multiset $B'(a)$ associated to each $a\in \mathcal I$, 
    \end{itemize}
    such that
    \begin{enumerate}
        \item $\bigsqcup_{v\in V(G)}I(v)=\mathcal I$,  $\bigsqcup_{v\in V(G)}B(v)=\mathcal B\sqcup \bigsqcup_{{a\in \mathcal I}}B'(a)$;
        \item each edge $e\in E(G)$ connecting the vertices $v_1$ and $v_2,$ corresponds to a pair $(a,b)$ where $a\in I(v_1)$, $b\in B(v_2)\cap B'(a)$;
        \item $0\le \vert B'(a_i)\vert \le d_i$ for each $a_i\in I$, and $b=r-2$ for all $b\in B'(a_i)$;
        \item $G$ is connected, and of genus $0$, that is, $|V|-|E|+1=0.$ 
    \end{enumerate}
\end{dfn}
\begin{rmk}
    A graph $G\in \mathfrak G(\mathcal I, \mathcal B, \mathcal D)$ characterizes a disjoint union of smooth $r$-spin disks $C_v$ (with internal and boundary tails indexed by $I(v)$ and $B(v)$) indexed by $v\in V(G)$, together with dashed lines (corresponding to $E(G)$) connecting internal and boundary tails. Although they look very similar, we point out the differences between these objects and the $(r,\h)$-graphs:
    \begin{itemize}
        \item[--] The pairs induced by the dashed lines are not one-to-one. Each internal tail $a_i$ could be paired to multiple (at most $d_i$) boundary tails.
        \item[--] The twists $a$ and $b~(=r-2)$ of paired internal and boundary tail may not satisfy $2a+b=r-2$.
        \item[--] All internal tails are labelled by $\{1,2,\dots,\lvert \mathcal I\rvert\}$, whether they have been paired or not. (Still only unpaired boundary tails are labelled by $\{1,2,\dots,\lvert\mathcal B\rvert\}$ as in an $(r,\h)$-graph.)
    \end{itemize}
\end{rmk}
    \begin{thm}\label{thm:compare_descendants}
        The following explicit transformation relates the $(r,\h=0)$ point insertion correlators and the BCT correlators:
        \begin{equation}  \label{eq: compare all}       F^{\h=0}(\mathcal I, \mathcal B, \mathcal D)=\sum_{G\in \mathfrak G(\mathcal I, \mathcal B, {\mathcal D})}T(G),
        \end{equation}
        where 
        \begin{equation}\label{eq F h=0}F^{\h=0}(\mathcal I, \mathcal B, \mathcal D):=\left\langle\tau^{a_1}_{d_1}\dots\tau^{a_l}_{d_l}\sigma^{b_1}\dots\sigma^{b_k}\right\rangle^{\frac{1}{r},o,\h=0}_0
        \end{equation}
        and
        $$
        T(G):=(-1)^{\lvert E(G)\rvert}\prod_{v\in V(G)}\left\langle\prod_{a_i\in I(v)}\tau^{a_i}_{d_i-\lvert B'(a_i) \rvert}\prod_{b_i\in B(v)}\sigma^{b_i}\right\rangle^{\frac{1}{r},o,\text{BCT}}_0.
        $$

    \end{thm}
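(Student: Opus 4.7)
The plan is to proceed by induction on the total descendant degree $N:=\sum_{i=1}^l d_i$. The base case $N=0$ is immediate: the constraint $0\le |B'(a_i)|\le d_i=0$ forces $E(G)=\emptyset$, and connectedness with genus zero then forces $|V(G)|=1$, so $\mathfrak G(\mathcal I,\mathcal B,\mathcal D)$ consists of a single graph whose unique vertex carries all markings. The associated $T(G)$ is the primary BCT correlator, which equals the primary $(r,\h=0)$ point insertion correlator by Theorem \ref{thm: compare primary}.

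For the inductive step, fix a correlator of degree $N\ge 1$ and (after relabeling) assume $d_1\ge 1$; write $d_1=\tilde d+1$. Apply the point insertion TRR (Theorem \ref{thm TRR}(a) with respect to $b_1$, or Theorem \ref{thm TRR}(b) with respect to $a_2$ if $k=0$) to the LHS. Since $\h=0$, every $t_j$ is forced to equal $0$, so each side bubble acquires a distinguished insertion $\sigma^{r-2}$ and the closed-extended bracket takes the form $\langle\tau^a_0\tau^{a_1}_{\tilde d}\prod_{R_{-1}}\tau^{a_i}_{d_i}(\tau^0_0)^s\rangle^{\frac{1}{r},\text{ext}}_0$, with an overall sign $(-1)^s$. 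Every open $(r,\h=0)$ bubble now has strictly smaller total descendant, so the inductive hypothesis expresses each as a sum of products of BCT correlators indexed by graphs in the appropriate $\mathfrak G$.

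Independently, rewrite $\sum_{G\in\mathfrak G(\mathcal I,\mathcal B,\mathcal D)} T(G)$ by distinguishing, for each $G$, the vertex $v_*\in V(G)$ containing $a_1$ and the integer $m:=|B'(a_1)|\in\{0,\ldots,\tilde d+1\}$. When $m\le\tilde d$, the BCT correlator at $v_*$ has $a_1$-descendant $\tilde d+1-m\ge 1$, and the BCT TRR (Theorem \ref{thm TRR BCT}(a)) applies, producing an internal-node contribution (a closed-extended factor times a single BCT correlator) and a boundary-node contribution (two BCT correlators linked by $\sigma^{r-2}$). The match with the point insertion TRR expansion is then realized by a combinatorial bijection: the boundary-node BCT TRR contribution at $v_*$ together with the side of $G$ on the $b_1$-containing branch corresponds to the central open bubble $F^{\h=0}(\tau^{r-2-a}_0\sigma^{b_1}\prod_{R_0}\tau\prod_{T_0}\sigma)$; the remaining connected components of $G\setminus v_*$ correspond to the $s$ side bubbles; and the $m$ dashed edges at $a_1$ in $v_*$, together with the internal-node BCT TRR contribution, assemble into the closed-extended factor, each of the $s$ copies of $\tau^0_0$ being paired with the distinguished $\sigma^{r-2}$ of the associated side bubble. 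The sign $(-1)^{|E(G)|}$ splits correctly as $(-1)^s$ from the point insertion TRR times the signs $(-1)^{|E(G')|}$ emerging from the inductive expansions of each bubble.

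The main obstacle is the careful combinatorial matching: one must (i) pin down how the boundary-node BCT TRR contribution at $v_*$ interacts with the dashed lines terminating at $v_*$ via $B'(a_i)$ for $i\ne 1$; (ii) specify which $\tau^0_0$ factor in the closed-extended bracket is paired with which side bubble, respecting that $\{(R_j,T_j,t_j)\}_{1\le j\le s}$ is unordered in the point insertion TRR (which should contribute symmetry factors that cancel against overcounting in $\mathfrak G$); and (iii) verify the termwise cancellations (already visible in the simplest nontrivial case $d_1=1$, all other $d_i=0$, where the boundary-node term of the BCT TRR cancels against part of the $s=1$ contribution on the graph side), which may invoke identities in the closed-extended theory involving $\tau^0_0$ insertions. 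Once these combinatorial and sign issues are handled, the induction closes and Theorem \ref{thm:compare_descendants} follows.
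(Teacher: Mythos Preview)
Your overall strategy---induction on $\sum d_i$, applying the point insertion TRR to the left side and the BCT TRR at the vertex $v_*$ containing $a_1$ on the right side, then matching---is exactly the route the paper takes. But the specific bijection you describe in the third paragraph is wrong, and this is not a detail you can fix by being more careful: it is a structural misidentification.

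The boundary-node (open-open) BCT TRR contribution at $v_*$ does \emph{not} correspond to the central open bubble $\langle\tau^{r-2-a}_0\sigma^{b_1}\cdots\rangle^{\h=0}$ on the point insertion side. The open-open term inserts a new $\sigma^{r-2}$, not a $\tau^{r-2-a}_0$; there is no closed-extended factor attached to it, so it cannot pair with a term on the LHS that carries one. What actually happens is an \emph{internal cancellation on the graph side}: applying the open-open BCT TRR to a graph $G$ at $v_*$ splits $v_*$ into two vertices joined by a new $\sigma^{r-2}$, and this is precisely a graph $G'\in\mathfrak G(\mathcal I,\mathcal B,\mathcal D)$ with $|B'_{G'}(a_1)|=|B'_G(a_1)|+1$ (one more dashed line at $a_1$). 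The paper tracks this by introducing a function $\mu^G_1(b_1)$ recording through which tail of $v_*$ the marking $b_1$ is reached, and calls a graph \emph{unsplittable} when $\mu^G_1(b_1)=a_1$. The open-open contributions from the splittable graphs then cancel exactly against the $T(G')$ for unsplittable $G'$, because $|E(G')|=|E(G)|+1$ flips the sign.

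After this internal cancellation, what survives on the RHS is (a) the closed-open BCT TRR contributions from splittable graphs, plus (b) the untouched $T(G)$ for the \emph{exceptional} graphs with $|B'(a_1)|=d_1+1$ (your case $m=\tilde d+1$, which you do not treat). On the LHS, the paper expands each open bubble by induction and then applies the closed-extended string equation \cite[Lemma 3.7]{BCT_Closed_Extended} to absorb the $\tau^0_0$ insertions into descendant shifts; this is the ``identity involving $\tau^0_0$'' you allude to, and it must be invoked explicitly. The resulting LHS splits into \emph{non-boundary} terms (matching (a) via a bijection that packages the $s$ side bubbles together with the string-equation partition of $\{1,\ldots,s\}$ among the $a_i$ in the closed component) and \emph{boundary} terms, namely those with $I_{-1}=\emptyset$ and $s=d_1+1$, whose closed-extended factor collapses to $1$ and which match (b). Your items (i)--(iii) gesture at the right issues, but without first isolating the internal open-open/unsplittable cancellation you will not be able to set up a coherent bijection.
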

        \begin{proof}
        We prove \eqref{eq: compare all} by an induction on $S(\mathcal D):=\sum_{d\in \mathcal D}d$. Theorem \ref{thm: compare primary} shows that the primary correlators of the two theories coincide, therefore \eqref{eq: compare all} holds for $(\mathcal I,\mathcal B,\mathcal D) $ with $S(\mathcal D)=0$. Now we fix $\mathcal I_0=\{a_1,a_2,\dots,a_l\}$, $\mathcal B_0=\{b_1,b_2,\dots,b_k\}=\{r-2,\dots,r-2\}$, $\mathcal D_0=\{d_1,d_2,\dots,d_l\}$ and $\hat {\mathcal D}_0:=\{d_1+1,d_2,\dots,d_l\}$. Assuming that 
        \begin{equation}\label{eq inductive hypothesis comparision}
            \text{we have proven  that \eqref{eq: compare all} holds for all }(\mathcal I,\mathcal B,\mathcal D) \text{ with }(\mathcal D)\le S(\hat {\mathcal D}_0)-1,
        \end{equation}
         we will show that  \eqref{eq: compare all} also holds for $(\mathcal I_0,\mathcal B_0,\hat {\mathcal D}_0)$. In this way we can prove \eqref{eq: compare all} for all $(\mathcal I,\mathcal B,\mathcal D)$ by induction.

        We write \eqref{eq: compare all} for $(\mathcal I_0,\mathcal B_0,\hat {\mathcal D}_0)$ as
         \begin{equation}  \label{eq: compare all induction case}       \left\langle\tau^{a_1}_{d_1+1}\dots\tau^{a_l}_{d_l}\sigma^{b_1}\dots\sigma^{b_k}\right\rangle^{\frac{1}{r},o,\h=0}_0=\sum_{G\in \mathfrak G(\mathcal I_0, \mathcal B_0, \hat{\mathcal D}_0)}T(G).
        \end{equation}
        Note that the left-hand side of \eqref{eq: compare all induction case} is the same as the left-hand side of \eqref{eqtrr1}, we prove \eqref{eq: compare all induction case} by showing that the right-hand side $P_1$ of \eqref{eq: compare all induction case} coincides with the right-hand side $P_2$ of \eqref{eqtrr1}.

    Each term in the right-hand side $P_1$ of \eqref{eq: compare all induction case} is of the form $T(G)$ for a graph $G\in \mathfrak G(\mathcal I_0,\mathcal B_0,\hat {\mathcal D}_0)$. Assuming $a_1\in I(v_a)$ for $v_a\in V(G)$, we define the element $\mu^G_1(b_1)\in I(v_a)\sqcup B(v_a)$ as follows:
    \begin{itemize}
        \item[--] if $b_1\in B(v_a)$ we set $\mu^G_1(b_1)=b_1\in B(v_a)$;
        \item[--] if $b_1\in B(v_b)$ for some $v_b\ne v_a$,  we choose $\mu^G_1(b_1)$ to be the unique element (note that $G$ is genus-zero) in $I(v_a)\sqcup B(v_a)$ such that, the vertices $v_a$ containing $a_1$ and $v_b$ containing $b_1$ are disconnected if we remove all edges of $G$ corresponding to pairs of the form   $(\mu^G_1(b_1),c)$ or $(c,\mu^G_1(b_1))$ with  $c\in\bigsqcup_{v\in V(G)\setminus \{v_a\}}I(v)\sqcup B(v)$.
    \end{itemize}
    We divide the graphs $G\in \mathfrak G(\mathcal I_0,\mathcal B_0,\hat {\mathcal D}_0)$ and the corresponding terms $T(G)$ in $P_1$ into three types:
    \begin{itemize}
        \item $G$ satisfies $\mu^G_1(b_1)\ne a_1$ and $d_1\ge\lvert B'(a_1) \rvert$. We say such a graph $G$ and the corresponding term $T(G)$ is \textit{splittable}. 
        \item $G$ satisfies $\mu^G_1(b_1)=a_1$. We say such a graph $G$ and the corresponding term $T(G)$ is \textit{unsplittable}.
        \item $G$ satisfies $\mu^G_1(b_1)\ne a_1$ and $d_1+1=\lvert B'(a_1) \rvert$. We say such a graph $G$ and the corresponding term $T(G)$ is \textit{exceptional}. 
    \end{itemize}

    For a splittable term $T(G)$ of $P_1$, we can apply the BCT TRR \eqref{eq: bct trr 1} (if $\mu^G_1(b_1)\in B(v_a)$) or \eqref{eq: bct trr 2} (if $\mu^G_1(b_1)\in I(v_a)$) to the factor in $T(G)$ (corresponding to the vertex $v_1\in V(G)$) $$
        \left\langle\tau^{a_1}_{d_1-\lvert B'(a_1) \rvert+1}\prod_{a_i\in I(v_a)\setminus\{a_1\}}\tau^{a_i}_{d_i-\lvert B'(a_i) \rvert}\prod_{b_i\in B(v_a)}\sigma^{b_i}\right\rangle^{\frac{1}{r},o,\text{BCT}}_0
    $$
     with respect to $\mu^G_1(b_1)$, then this factor splits into two types of terms: open-open terms and closed-open terms. By multiplying other factors of $T(G)$, we can split the term $T(G)$ as 
    \begin{equation}\label{eq split splittable term}
        T(G)=\sum_{\text{partition }\mathfrak{p}} T^{oo}_\mathfrak{p}(G)+\sum_{\text{partition }\mathfrak{q} } T^{co}_{\mathfrak{q}}(G),
    \end{equation}
       where each $T^{oo}_{\mathfrak p}(G)$ has an open-open term as a factor, it corresponds to a partition $\mathfrak p$ 
       \begin{equation}\label{eq partition open open}
           I(v_a)\setminus\{a_1\}=I_1\sqcup I_2,~B(v_a)=B_1\sqcup B_2 \text{ such that } \mu^G_1(b_1)\in I_2\sqcup B_2
       \end{equation}
       and is of the form 
       \begin{equation}\label{eq open open explicit}
       \begin{split}
       T^{oo}_{\mathfrak p}(G)=&(-1)^{\lvert E(G)\rvert}\prod_{v\in V(G)\setminus\{v_a\}}\left\langle\prod_{a_i\in I(v)}\tau^{a_i}_{d_i-\lvert B'(a_i) \rvert}\prod_{b_i\in B(v)}\sigma^{b_i}\right\rangle^{\frac{1}{r},o,\text{BCT}}_0\\
           &\cdot \left\langle \tau^{a_1}_{d_1-\lvert B'(a_1)\rvert}\prod_{a_i \in I_1} \tau^{a_i}_{d_i-\lvert B'(a_i)\rvert}\prod_{b_i\in B_1}\sigma^{b_i}\right\rangle^{\frac{1}{r},o,\text{BCT}}_0 \cdot\left\langle \prod_{a_i \in I_2} \tau^{a_i}_{d_i-\lvert B'(a_i)\rvert} \sigma^{r-2}\prod_{b_i\in B_2}\sigma^{b_i}\right\rangle^{\frac{1}{r}, o,\text{BCT}}_0;
           \end{split}
       \end{equation}
       each $T^{co}_{\mathfrak q}(G)$ has a closed-open term as a factor, it corresponds to a partition $\mathfrak q$ (and an integer $-1\le a \le r-2$ which we omit, since at most one out of the $r$ choices of $a$ will give non-zero value due to dimensional reason)
        \begin{equation}\label{eq partition closed open}
           I(v_a)\setminus\{a_1\}=I_1\sqcup I_2 \text{ such that } \mu^G_1(b_1)\in I_2\sqcup B(v_a)
       \end{equation}
       and is of the form
       \begin{equation}\label{eq closed open explicit}
       \begin{split}
       T^{co}_{\mathfrak q}(G)=&(-1)^{\lvert E(G)\rvert}\prod_{v\in V(G)\setminus\{v_a\}}\left\langle\prod_{a_i\in I(v)}\tau^{a_i}_{d_i-\lvert B'(a_i) \rvert}\prod_{b_i\in B(v)}\sigma^{b_i}\right\rangle^{\frac{1}{r},o,\text{BCT}}_0\\
           &\cdot \left\langle \tau_0^{a}\tau_{d_1-\lvert B'(a_1)\rvert}^{a_1}\prod_{a_i \in I_1}\tau_{d_i-\lvert B'(a_i)\rvert}^{a_i}\right\rangle^{\frac{1}{r},\text{ext}}_0
\cdot\left\langle \tau_0^{r-2-a}\prod_{a_i\in I_2}\tau^{a_i}_{d_i-\lvert B'(a_i)\rvert}\prod_{b_i\in B(v_a)}\sigma^{b_i}\right\rangle^{\frac{1}{r},o,\text{BCT}}_0.
        \end{split}
        \end{equation}
      Thus we can write 
    \begin{equation}\label{eq splitted lhs}
        P_1=\sum_{\substack{G~\text{is}\\\text{ splittable}}}\sum_{\mathfrak p} T^{oo}_{\mathfrak p}(G)+\sum_{\substack{G~\text{is}\\\text{ splittable}}}\sum_{\mathfrak q} T^{co}_{\mathfrak q}(G)+\sum_{\substack{G~\text{is}\\\text{ unsplittable}}}T(G)+\sum_{\substack{G~\text{is}\\\text{ exceptional}}}T(G).
    \end{equation}
    We now describe a bijection $\Phi$ between the two sets
    $$
    \left\{(G,\mathfrak p)\left \vert \begin{aligned}G\in& \mathfrak G(\mathcal I_0,\mathcal B_0,\hat {\mathcal D}_0),~G \text{ splittable}\\ &\mathfrak p \text{ a partition as in \eqref{eq partition open open}}\end{aligned}\right.\right\}~\text{and}~\left\{G\left \vert \begin{aligned}G&\in \mathfrak G(\mathcal I_0,\mathcal B_0,\hat {\mathcal D}_0)\\&G \text{ unsplittable}\end{aligned}\right.\right\}.
    $$
    For a splittable $G\in \mathfrak G(\mathcal I_0,\mathcal B_0,\hat {\mathcal D}_0)$ and a partition $\mathfrak p$ as in \eqref{eq partition open open}, we define $\Phi(G,\mathfrak p)$ to be the graph $G'$ obtained by replacing the vertex $v_a\in V(G)$ by two new vertices $u_1,u_2$, and taking $I(u_1):=I_1\sqcup\{a_1\},~B(u_1):=B_1,~I(u_2):=I_2,~B(u_2):=B_2\sqcup\{b'=r-2\}$ and $B'_{G'}(a_1):=B'_G(a_1)\sqcup\{b'\}$. By construction we have $G'\in \mathfrak G(\mathcal I_0,\mathcal B_0,\hat {\mathcal D}_0)$. To show that $G'$ is unsplittable we argue as follows: by the definition of $\mu^G_1(b_1)$, it lies on the same connected component of $G$ as $b_1$ if no edges of $G$ corresponding to pairs of the form   $(\mu^G_1(b_1),c)$ or $(c,\mu^G_1(b_1))$  are removed, therefore $b_1$ and $\mu^G_1(b_1)$ lie on the same connected component of $G'$ after removing the edge of $G'$ corresponding to the pair $(a_1,b')$; on the other hand, $a_1$ is contained in $u_1$, and from \eqref{eq partition open open} we know that $\mu^G_1(b_1)$ is contained in $u_2$, which means that $a_1$ and $b_1$ lie on different connected components after removing the edge of $G'$ corresponding to the pair $(a_1,b')$, thus by definition we have $\mu^{G'}(b_1)=a_1$ and $G'$ is unsplittable.

    The above construction is easily seen to be reversible. Moreover, we have $$T_{\mathfrak p}^{oo}(G)=-T(\Phi(G,\mathfrak p))=-T(G')$$
    by definition, where the difference in sign comes from the fact that $G'$ has one more edge than $G$: the edge corresponding to the pair $(a_1,b')$. Therefore we have 
    $$
    \sum_{G\text{ splittable}}\sum_{\mathfrak p} T^{oo}_{\mathfrak p}(G)+\sum_{G\text{ unsplittable}}T(G)=0
    $$
    and \eqref{eq splitted lhs} can be simplified to
    \begin{equation}\label{eq decompose PL}
    P_1=\sum_{G\text{ splittable}}\sum_{\mathfrak q} T^{co}_{\mathfrak q}(G)+\sum_{G\text{ exceptional}}T(G).
    \end{equation}
    We now consider the right-hand side $P_2$ of \eqref{eqtrr1}. Each term of $P_2$ is of the form
    \begin{equation}\label{eq term in h=0 trr}
    (-1)^s F^{ext}\cdot F^{\h=0}(I_0\sqcup\{r-2-a\},B_0,D_0\sqcup\{0\})\cdot \prod_{j=1}^s F^{\h=0}(I_s,B_s\sqcup\{r-2-2t_j\},D_s),
    \end{equation}
   where    \begin{equation}\label{eq closed extend factor}
    F^{ext}:=\left\langle    \tau^{a}_{0}\tau^{a_1}_{d_1}\prod_{a_i\in I_{-1}}\tau^{a_i}_{d_i}\prod_{j=1}^{s}\tau^{t_j}_{0}\right\rangle_0^{\frac{1}{r},\text{ext}}\end{equation}
    is the closed extended factor, the factors of the form $F^{\h=0}$ (see \eqref{eq F h=0}) are the open factors, and we have  partitions
    \begin{equation}\label{eq partition p2}
    \bigsqcup_{j=-1}^s I_j=\mathcal I_0,~\bigsqcup_{j=-1}^s D_j=\mathcal D_0\text{ and }\bigsqcup_{j=0}^s B_j=\mathcal B_0\text{ such that }b_1\in I_0\sqcup B_0.
    \end{equation}
    Since $\h=0$, all the inserted internal markings have twist zero, so $t_j=0$ for all the $t_j$ in \eqref{eq closed extend factor}. We divide the terms of $P_2$ into two types according to their closed extended factor $F^{ext}$:
    \begin{itemize}
        \item terms with a closed extended factor \eqref{eq closed extend factor} such that $I_{-1}=\emptyset$ and $s=d_1+1$; we say such terms of $P_2$ are \textit{boundary terms};
        \item the remaining terms,  we say such terms of $P_2$ are \textit{non-boundary terms}.
    \end{itemize}
    Then we can decompose $P_2$ as 
    \begin{equation}\label{eq decompose PR}    P_2=\sum_{T^{nbd}_\alpha\text{ non-boundary}}T^{nbd}_\alpha+\sum_{T^{bd}_\beta\text{ boundary}}T^{bd}_\beta.
    \end{equation}

    For a non-boundary term $T^{nbd}_\alpha$ of $P_2$ of form \eqref{eq term in h=0 trr} corresponding to the partitions \eqref{eq partition p2}, we can rewrite its closed extended factor \eqref{eq closed extend factor} using the string equation \cite[Lemma 3.7]{BCT_Closed_Extended} as
    $$
    \left\langle    \tau^{a}_{0}\tau^{a_1}_{d_1}\prod_{a_i\in I_{-1}}\tau^{a_i}_{d_i}\prod_{j=1}^{s}\tau^{t_j}_{0}
		\right\rangle_0^{\frac{1}{r},\text{ext}}=\sum_{\bigsqcup_{a_i\in I_{-1}\sqcup\{a_1\}}S_i=\{1,2,\dots,s\}}\left\langle    \tau^{a}_{0}\tau^{a_1}_{d_1-\vert S_1\vert}\prod_{a_i\in I_{-1}}\tau^{a_i}_{d_i-\vert S_i\vert}
		\right\rangle_0^{\frac{1}{r},\text{ext}}.
    $$
    On the other hand, all the open factors of \eqref{eq term in h=0 trr} are of the form $F^{\h=0}(\mathcal I',\mathcal B',\mathcal D')$ for some $(\mathcal I',\mathcal B',\mathcal D')$ such that $S(\mathcal D')\le S(\mathcal D_0)=S(\hat{\mathcal D}_0-1)$. By our inductive hypothesis \eqref{eq inductive hypothesis comparision}, we can rewrite them as $\sum_{G\in \mathfrak G(\mathcal I',\mathcal B',\mathcal D')}T(G)$ using \eqref{eq: compare all}.

    Therefore, after rewrite every factors of terms in $\sum_{T^{nbd}_\alpha\text{ non-boundary}}T^{nbd}_\alpha$ 
 as above, we rewrite $\sum_{T^{nbd}_\alpha\text{ non-boundary}}T^{nbd}_\alpha$ as the sum of following terms
    \begin{equation}\label{eq:a closed open type summand on rhs}
    \begin{split}
          (-1)^s\left\langle    \tau^{a}_{0}\tau^{a_1}_{d_1-\vert S_1\vert}\prod_{a_i\in I_{-1}}\tau^{a_i}_{d_i-\vert S_i\vert}
		\right\rangle_0^{\frac{1}{r},\text{ext}}&\cdot T(G_0)
        \cdot \prod_{i=1}^s T(G_i),
    \end{split}   
    \end{equation}
    where the sum is taken over the set $NBD$ of all possible data \begin{equation}\label{eq data nbd}(s,a, \{I_j\},\{B_j\},\{S_j\},\{G_j\})\end{equation} satisfying:
     \begin{itemize}
        \item[--] $s\ge 0$, $-1\le a \le r-2$;
        \item[--]  $G_0\in \mathfrak G\left( I_0\sqcup\{\hat a=r-2-a\}, B_0, \{d_j\}_{a_j\in I_0}\sqcup\{0\}\right)$, $b_1\in  B_0$, $B'(\hat a)=\emptyset$; 
        \item[--]  
        $G_i\in \mathfrak G\left( I_i, B_i\sqcup \{b'_i=r-2\}, \{d_j\}_{a_j\in I_i}\right)$ for $1\le i \le s$;
        \item[--]  $I_{-1}\sqcup\bigsqcup_{i=0}^s  I_i=\mathcal I_0 \setminus\{a_1\}$, $\bigsqcup_{i=0}^s  B_i=\mathcal B_0$;
        \item[--]  $\bigsqcup_{a_i\in I_{-1}\sqcup\{a_1\}}S_i=\{1,2,\dots,s\}$;
        \item[--]$s\le d_1$ if $I_{-1}=\emptyset$. 
    \end{itemize}
    We now describe a bijection between the set $NBD$ of the above data and the set  $$\left\{(G,\mathfrak p)\left \vert \begin{aligned}G\in& \mathfrak G(\mathcal I_0,\mathcal B_0,\hat {\mathcal D}_0),~G \text{ splittable}\\ &\mathfrak q \text{ a partition as in \eqref{eq partition closed open}}\end{aligned}\right.\right\};$$
    moreover, the term \eqref{eq:a closed open type summand on rhs} coincides with the corresponding closed-open term $T^{co}_{\mathfrak{q}}(G)$ in $P_1$.

     Given an element in $NBD$ as above, we can construct $G$ as follows: 
     \begin{enumerate}
     \item 
     start with the disjoint union of $G_0,G_1,\dots,G_s$; 
     \item  let $\hat v\in V(G_0)$ be the vertex such that $\hat a \in I(\hat v)$, and modify $I_G(\hat v):= I_{-1}\sqcup\{a_1\}\sqcup I_{G_0}(\hat v)\setminus \{\hat a\}$; 
     \item 
     set $B'(a_i):=\{b'_j\}_{j\in S_i}$ for all $a_i\in I_{-1}\sqcup\{a_1\},$ and add the corresponding edges.
     \end{enumerate}
     By definition, $b_1$ is contained in a vertex $v_b$ of $G_0$, and $a_1$ is contained in the vertex $\hat v$ of $G_0$. Since the edges of $G$ corresponding to the pairs $\{(a_1,b'_j)\}_{j\in S_1}$ are new edges added to $G$, vertices of $G_0$ lie on the same connected component after removing them. Therefore $\mu^G_1(b_1)\ne a_1$ (note that $\mu^G_1(b_1)\notin I_{-1}$ for the same reason) and the graph $G$ constructed in this way is splittable.
    We take $\mathfrak q$ to be the partition
    $$I_G(\hat v)\setminus \{a_1\}=I_{-1}\sqcup \left(I_{G_0}(\hat v)\setminus \{\hat a\}\right);$$
    note that by construction $a_1\in \hat v$, and we have $\mu^G_1(b_1)\notin I_{-1}$ as above, so $\mathfrak q$ is actually a partition as in \eqref{eq partition closed open}. By construction the corresponding term $T^{co}_{\mathfrak{q}}(G)$ coincides with \eqref{eq:a closed open type summand on rhs}.
    
    The above construction is reversible, and thus we have an equality  \begin{equation}\label{eq closed open equal non-boundary} 
        \sum_{G\text{ splittable}}\sum_{\mathfrak q} T^{co}_{\mathfrak q}(G)=\sum_{T^{nbd}_\alpha\text{ non-boundary}}T^{nbd}_\alpha.
    \end{equation}

     Similarly, each boundary term of $P_2$ is of the form \begin{equation}\label{eq:bd term}
    \begin{split}
         T^{bd}_\beta=(-1)^{d_1+1}\left\langle    \tau^{a}_{0}\tau^{a_1}_{d_1}\prod_{k=1}^{d_1+1}\tau^{t_k}_{0}\right\rangle_0^{\frac{1}{r},\text{ext}}\cdot T(G_0)
        \cdot \prod_{i=1}^{d_1+1} T(G_i).
    \end{split}   
    \end{equation} 
    By using the string equation \cite[Lemma 3.7]{BCT_Closed_Extended}, the closed extended factor 
    \begin{equation}\label{eq closed extend factor boundary}
    F^{ext}=\left\langle    \tau^{a}_{0}\tau^{a_1}_{d_1}\prod_{j=1}^{d_1+1}\tau^{t_j}_{0}\right\rangle_0^{\frac{1}{r},\text{ext}}\end{equation}
    of $T^{bd}_\beta$ equals $1$  when $a=r-2-a_1$  and vanishes otherwise.

    Again there is a bijection between the set $BD$ of data (similar to \eqref{eq data nbd} in NBD) $$(s=d_1+1,a=r-2-a_1, \{I_j\},\{B_j\},\{S_j\},\{G_j\})$$ determining boundary terms $$T^{bd}_\beta=(-1)^{d_1+1}T(G_0)
        \cdot \prod_{i=1}^{d_1+1} T(G_i)$$ with non-zero closed extended factor, and the set
    $$\left\{G\left \vert \begin{aligned}G\in \mathfrak G(\mathcal I_0,\mathcal B_0,\hat {\mathcal D}_0),~G \text{ exceptional}\end{aligned}\right.\right\};$$
    and again, the boundary term $T^{bd}_\beta$ coincides with the corresponding exceptional term $T(G)$.
    
    $G$ can be constructed by taking the disjoint union of $G_0,G_1,\dots,G_s$,  setting  $B'_G(\hat a):=\{ b'_j\}_{1\le j \le d_1+1}$ and adding the corresponding $d_1+1$ edges.
        Note that $\hat a=a_1$ in this case, the two terms $T^{bd}_\beta$ and $T(G)$ coincide after we identifying $\hat a$ with $a_1$.
         Again this construction is reversible, and we therefore  have an equality\begin{equation}\label{eq exceptional boundary}
            \sum_{G\text{ exceptional}}T(G)=\sum_{T^{bd}_\beta\text{ boundary}}T^{bd}_\beta.
        \end{equation}

    By combining \eqref{eq decompose PL},\eqref{eq decompose PR},\eqref{eq closed open equal non-boundary} and \eqref{eq exceptional boundary}, we obtain $P_1=P_2$, therefore \eqref{eq: compare all induction case} is proven, and the theorem is proven by induction.

    \end{proof}

    \begin{rmk}\label{rmk:geometric_comparison}
        Although Theorem \ref{thm:compare_descendants} was proven in a completely algebraic way, we can also deduce it in a geometrical way.  
        The argument in the proof of Theorem \ref{thm: compare primary} fails for the correlators with $\psi$-class because the relative cotangent line bundles $\mathbb L_i\to \Mbar_\mathbf{G}$ are different from the pull-back of $\mathbb L_i\to \Mbar_{\mathcal B\mathbf{G}}$ via the forgetful morphism $F_\mathbf{G}$. We denote by $\check{\mathbb L}_i:=F^*_\mathbf{G} \mathbb L_i$ the pulled-back line bundle. Note that $\check{\mathbb L}_i$ on each $\Mbar_\mathbf{G}\subseteq \Mbar^{1/r,\h=0}_{0,B,I}$ also glue to a line bundle $\check{\mathbb L}_i\to \Mbar^{1/r,\h=0}_{0,B,I}$. If we replace $\mathbb L_i$ by $\check{\mathbb L}_i$ as direct summands of $E$ in Definition \ref{dfn correlator point insertion}, then the integral coincides with the
        BCT correlator, as an argument identical to that of Theorem \ref{thm: compare primary} shows. 
        
        One can therefore deduce Theorem \ref{thm:compare_descendants} from comparing $\mathbb L_1$ and $\check{\mathbb L}_i$. We will explain this comparison argument in the simplest non-trivial case, when $d_1=1,$ and all other $d_i=0.$ The difference of zero locus of generic sections of $\mathbb L_i$ and $\check{\mathbb L}_i$ can be represented (in the relative homology group $H_2\left(\Mbar^{1/r,\h=0}_{0,B,I},\partial \Mbar^{1/r,\h=0}_{0,B,I}\right)$) by the cycle $\bigcup_{\mathbf{G}\in U_i} \Mbar_\mathbf{G}\subset \Mbar^{1/r,\h=0}_{0,B,I}$, where $U_i$ is the collection of $(r,\h)$-graph $\mathbf{G}$ satisfying:
        \begin{itemize}
            \item[--] only one vertex $\Gamma\in V(\mathbf{G})$ is non-smooth;
            \item[--] the vertex $\Gamma$ is a graded $r$-spin graph consists of an open vertex $v^o$ and a closed vertex $v^c$ connected by an edge $e$;
            \item[--] $a_i\in H(v^c)$ and $H(v^c)\setminus\{a_i, e^c\}\subseteq I'(\mathbf{G})$, where $e^c$ is the half-edge of $e$ on $v^c$ side, and $H(v^c)$ is the set of all half-edges attached to $v^c$.
        \end{itemize}
        When $d_1=1$ and $d_i=0$ for all $2\le i\le l$,  \eqref{eq: compare all} can be written as 
        \begin{equation}\label{eq compare simple case}
        \begin{split}                 &\left\langle\tau^{a_1}_{1}\tau^{a_2}_{0}\dots\tau^{a_l}_{0}\sigma^{b_1}\dots\sigma^{b_k}\right\rangle^{\frac{1}{r},o,\text{BCT}}_0-\left\langle\tau^{a_1}_{1}\tau^{a_2}_{0}\dots\tau^{a_l}_{0}\sigma^{b_1}\dots\sigma^{b_k}\right\rangle^{\frac{1}{r},o,\h=0}_0\\
        =&\sum_{\substack{I_1\sqcup I_2=I\setminus\{a_1\}\\B_1\sqcup B_2=B}}\left\langle\tau^{a_1}_{0}\prod_{a_i\in I_1}\tau^{a_i}_{0}\prod_{b_i\in B_1}\sigma^{b_i}\right\rangle^{\frac{1}{r},o,\h=0}_0\cdot \left\langle\prod_{a_i\in I_2}\tau^{a_i}_{0}\sigma^{r-2}\prod_{b_i\in B_2}\sigma^{b_i}\right\rangle^{\frac{1}{r},o,\h=0}_0,
        \end{split}
        \end{equation}
        where we have also used Theorem \ref{thm: compare primary}. Now we justify \eqref{eq compare simple case} geometrically. The right-hand side of \eqref{eq compare simple case} equals the number of zeros of a canonical section of the Witten bundle on the locus $\bigcup_{\mathbf{G}\in U_1} \oPMb_\mathbf{G}\subset \oPMb^{1/r,\h=0}_{0,B,I}$. As in Lemma \ref{lem zero locus trr section global} (and with the same notation), the locus $\bigcup_{\mathbf{G}\in U_1} \oPMb_\mathbf{G}\subset \oPMb^{1/r,\h=0}_{0,B,I}$ can be written as        
        $$\bigcup_{\substack{s\ge 1,~ t_i=0,\\ \bigsqcup_{j=0}^s I_j=I\setminus\{a_1\},~\bigsqcup_{j=0}^s B_j=B,\\ \{(t_j,I_j,B_j)\}_{1\le j \le s}\text{ unordered}}}\left( \hat{\mathcal M}_{B_0,I_0}^{\{a_1,t_1,\dots,t_s\}}\times\prod_{i=1}^s \oPMb_{0,B_i\sqcup \{r-2-2t_i\},I_i}\right).$$
       As in the proof of Theorem \ref{thm TRR}, one can show that the zero count of a canonical section of the Witten bundle on this locus is 
        \begin{equation}\label{eq zero difference when change L}
        \begin{split}
            \sum_{\substack{s\ge 1,~ t_i=0,~-1\le a \le r-2\\ \bigsqcup_{j=0}^s I_j=I\setminus\{a_1\},~\bigsqcup_{j=0}^s B_j=B,\\ \{(t_j,I_j,B_j)\}_{1\le j \le s}\text{ unordered}}}&\left\langle    \tau^{a}_{0}\tau^{a_1}_{0}\prod_{i=1}^{s}\tau^{t_i}_{0}\right\rangle_0^{\frac{1}{r},\text{ext}}\cdot\left\langle\tau^{r-2-a}_{0}\prod_{a_i\in I_1}\tau^{a_i}_{0}\prod_{b_i\in B_1}\sigma^{b_i}\right\rangle^{\frac{1}{r},o,\h=0}_0\\ & \cdot\prod_{i=1}^s \left\langle\prod_{a_i\in I_2}\tau^{a_i}_{0}\sigma^{r-2-2t_i}\prod_{b_i\in B_2}\sigma^{b_i}\right\rangle^{\frac{1}{r},o,\h=0}_0 .  
         \end{split}
        \end{equation}
        By using the string equation \cite[Lemma 3.7]{BCT_Closed_Extended}, the factor $\left\langle    \tau^{a}_{0}\tau^{a_1}_{0}\prod_{i=1}^{s}\tau^{t_i}_{0}\right\rangle_0^{\frac{1}{r},\text{ext}}$ equals to $1$ when $s=1$ and $a_1=r-2-a$, and vanishes otherwise. Therefore \eqref{eq zero difference when change L} coincides with the right-hand side of \eqref{eq compare simple case}, this justifies Theorem \ref{thm:compare_descendants} in the simplest case. The general case is similar, and involves iterating this argument.

    \end{rmk}

\section{Computations in open $(r,\h)$-spin theories}\label{sec:computations}
In this section we calculate all open $(r,\h)$-spin correlators.

All $(r,\h)$-spin correlators can be calculated from the primary correlators (correlators without $\psi$-classes) by \eqref{eqtrr1} and \eqref{eqtrr2}. Proposition \ref{prop:vanish small internal} shows that we only need to calculate the primary correlators with internal twists greater than $\h$.

\begin{dfn}
Write $\m=r-2-2\h$. Let $\mathcal S$ be the collection of pairs $(B,I)$ of multisets of twists where
$$\forall a\in I,\quad a\in\{\h+1,\h+2,\dots,r-2,r-1\}$$
and 
$$\forall b\in B,\quad b\in\{\m,\m+2,\dots,r-4,r-2\}$$
such that
\begin{equation}\label{eq dim equals rank}
2\sum_{a\in I}a+\sum_{b\in B}b-r+2=r(2\lvert I\rvert+\lvert B\rvert-3),
\end{equation}
\textit{i.e.} the rank of the Witten bundle over $\overline{\mathcal M}_{0,B,I}^{\frac{1}{r},\h}$ equals the dimension of $\overline{\mathcal M}_{0,B,I}^{\frac{1}{r},\h}$. For $(B,I)\in \mathcal S$, we define 
$$
D(B,I):=2\lvert I \rvert+\lvert B\rvert-3,
$$
$$
M(B,I):=\max_{b\in B} b,
$$
$$
\Sigma(B,I):=\sum_{a\in I}a
$$
and
$$
F(B,I):=\left\langle
\prod_{a\in I} \tau^{a}_0\prod_{b\in B }\sigma^{b}
\right\rangle_0^{\frac{1}{r},\text{o},\h}.
$$
We define the \emph{exceptional subset} $\mathcal E\subseteq\mathcal S$ by setting
$$
\mathcal E:=\{(B,I)\in \mathcal S\colon \lvert\{b\in B\colon b>\m\}\rvert \le 1\}.
$$
We also define subsets $\mathcal E_1,\mathcal E_2$ of $\mathcal E$ by
$$
\mathcal E_1:=\{(B,I)\in \mathcal E\colon I=\emptyset\}
$$
and
$$
\mathcal E_2:=\{(B,I)\in \mathcal S\colon \lvert\{b\in B\colon b>\m\}\rvert =0\},
$$
\end{dfn}
In this section we calculate $F(B,I)$ for all $(B,I)\in \mathcal S$, then all open $r$-spin correlators can be calculated via TRRs.

\subsection{General strategy}\label{subsec general strategy}
To calculate $F(B,I)$ for all $(B,I)\in \mathcal S$, we perform several inductions. Most of lemmas and propositions in this section have the following form: based on the calculation of $F(B,I)$ for all $(B,I)\in \mathcal P\subset \mathcal S$ (which we sometimes refer to as \textit{given data}) for some subset $\mathcal P$ of $\mathcal{S}$, then we can calculate $F(B,I)$ for all $(B,I)\in \mathcal Q\subset \mathcal S$ for a larger subset $\mathcal Q$. 

The following partial order is required for stating Proposition \ref{prop general computation}.

\begin{dfn}
Define a partial order $\prec$ on $\mathcal S$ in the following way: let $(I_1,B_1),(I_2,B_2)\in S$, we write $$(I_1,B_1)\prec (I_2,B_2)$$ whenever one of the following holds:
\begin{itemize}
    \item $D(B_1,I_1)<D(B_2,I_2)$;
    \item $D(B_1,I_1)=D(B_2,I_2)$ and $\lvert I_1\rvert >\lvert I_2\rvert$;
    \item $D(B_1,I_1)=D(B_2,I_2)$, $\lvert I_1\rvert =\lvert I_2\rvert$ and $\Sigma(B_1,I_1)>\Sigma(B_2,I_2)$;
    \item $D(B_1,I_1)=D(B_2,I_2)$, $\lvert I_1\rvert =\lvert I_2\rvert$, $\Sigma(B_1,I_1)=\Sigma(B_2,I_2)$ and $M(B_1,I_1)>M(B_2,I_2)$.
\end{itemize}
\end{dfn}

\begin{prop}\label{prop general computation}
For all $s=(B,I)\in \mathcal S\backslash (\mathcal E_1 \cup \mathcal E_2)$, the open correlator $F(s)$ can be calculated from the following data: closed extended correlators and open correlators of the form $F(s')$ with $s'\prec s$.
\end{prop}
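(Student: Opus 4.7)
The plan is to apply one of the topological recursion relations (Theorem \ref{thm TRR}) to a carefully selected auxiliary correlator carrying a single $\psi$-class insertion, and then isolate $F(B,I)$ among the resulting right-hand side terms. Since $F(B,I)$ is itself primary, it cannot be fed directly into TRR; but by shifting one marking to descendant degree one and compensating the rank-equals-dimension constraint \eqref{eq dim equals rank} with a small adjustment of the remaining data, we obtain an auxiliary correlator $G$ whose TRR expansion contains $F(B,I)$ as an identifiable summand.

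The argument splits according to the hypothesis $(B,I) \notin \mathcal E_1 \cup \mathcal E_2$. In the \emph{internal case} $I \neq \emptyset$, fix some $a_1 \in I$ and some $\hat b \in B$ with $\hat b > \m$ (which exists because $(B,I) \notin \mathcal E_2$); I then apply TRR (b) from Theorem \ref{thm TRR} to an auxiliary $G$ built from $(B,I)$ by placing $\psi^1$ on $a_1$ and internalizing $\hat b$ into an additional twist $a^* = (\hat b + r)/2$ (or a similar adjustment, depending on the parity of $r$, to restore rank $=$ dimension). In the \emph{purely boundary case} $I = \emptyset$ with $|\{b > \m\}| \geq 2$, fix two such boundaries $\hat b_1, \hat b_2$ and apply TRR (a) to the correlator obtained by internalizing $\hat b_2$ to an internal $\tau^{a^*}_1$, thereby reducing to the setup handled in the internal case.

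Once $G$ is in place, I expand its TRR and single out the term with $s = 0$ and with partition $R_{-1} = \emptyset$, $R_0 = \{2,\dots,l\}$, $T_0 = \{2,\dots,k\}$ in which the central open factor coincides with $F(B,I)$, while the extended closed factor is a two-point correlator (one of the known boundary values from \cite{BCT_Closed_Extended}) that evaluates to a nonzero rational number. Solving the TRR identity algebraically for this term expresses $F(B,I)$ as the LHS $G$ minus the remaining RHS contributions, divided by a nonzero constant. The LHS $G$ itself has strictly smaller $D$ than $F(B,I)$ (one fewer boundary and one more internal marking, with one $\psi$-class that can be eliminated via the same TRR applied recursively, or already satisfies $s' \prec s$ by the $|I|$-comparison of the partial order).

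The final task, and the main combinatorial obstacle, is to verify that every remaining open factor on the RHS is of the form $F(s')$ with $s' \prec s$: side contributions in any $s \geq 1$ summand split markings into at least two nonempty pieces and therefore each have strictly smaller $D$; central contributions associated with non-empty $R_{-1}$ carry strictly more internal markings at equal or smaller $D$ and are thus $\prec$-smaller via the $|I|$ criterion; the remaining central contributions, with the same $(D,|I|)$ as $(B,I)$, correspond to a different choice of $a$ and are $\prec$-smaller via the $\Sigma$ or $M$ refinement. The careful part will be organizing this bookkeeping uniformly and distinguishing subcases by the parity of $r$ (which governs the integrality of $a^*$), so that the entire expansion fits inside the allowed data of closed extended correlators together with $\{F(s') : s' \prec s\}$.
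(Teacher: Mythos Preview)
Your plan has a genuine gap in several places.

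\textbf{The auxiliary correlator is not valid.} Starting from $(B,I)$ with $\operatorname{rank}\mathcal W = D(B,I)$, removing the boundary $\hat b$ and adding an internal $a^*$ changes the dimension by $+1$ and the Witten rank by $(2a^* - \hat b)/r$. Since you also insert one $\psi$, the nonvanishing condition becomes $(2a^* - \hat b)/r = -1$, forcing $a^* = (\hat b - r)/2 < 0$, which is inadmissible. Your formula $a^* = (\hat b+r)/2$ does not restore the rank--dimension balance, and no parity tweak fixes the sign.

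\textbf{$F(B,I)$ does not appear in the expansion you describe.} In TRR~(b), the $s=0$, $R_{-1}=\emptyset$ term has closed extended factor $\langle\tau^a_0\tau^{a_1}_{d_1}\rangle^{\text{ext}}$, a two-pointed sphere, which is unstable and vanishes. More seriously, your auxiliary has boundary set $B\setminus\{\hat b\}$, so neither central nor side contributions can reproduce the full set $B$: central factors have $B^{ctr}\subseteq B\setminus\{\hat b\}$, and side factors carry $B^{sd}\cup\{b'\}$ with the marking carrying the $\psi$ (namely $a_1$) always in the closed piece, so $a_1\notin I^{sd}$.

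\textbf{The isolation step is circular.} You propose to solve for $F(B,I)$ by computing the left-hand side $G$ directly, claiming it has smaller $D$. But replacing one boundary by one internal marking gives $D'=D+1$, not smaller, and $G$ is a descendant correlator, not an $F(s')$ in the allowed data.

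The paper's approach avoids all of this. It takes as auxiliary
\[
\Big\langle\tau^{(b_1-\m)/2}_1\,\tau^{a_1}_0\cdots\tau^{a_l}_0\;\sigma^{\m}\sigma^{b_2}\cdots\sigma^{b_k}\Big\rangle,
\]
replacing $b_1>\m$ by $\m$ and adding a \emph{new} internal marking (the one carrying the $\psi$); this keeps the Witten rank unchanged while raising the dimension by $2$. The correlator $F(B,I)$ then appears as a \emph{side} contribution in the main term, paired with the zero-dimensional central factor $\langle\tau^{(r-2-\m)/2}_0\mid\sigma^{\m}\rangle=1$ and a three-point closed extended correlator $=1$. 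Crucially, the paper never evaluates the auxiliary directly: it applies TRR with respect to two \emph{different} markings (e.g.\ an internal $a_i$ versus the new boundary $\m$), subtracts the two expansions, and uses that the main term has different coefficients ($0$ versus $-1$) while all other terms are built from $F(s')$ with $s'\prec s$. This difference-of-two-TRRs trick is the essential idea you are missing.
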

The proof of Proposition \ref{prop general computation} and all other calculations in this section (except the one in Proposition \ref{propfemptyset}) are based on the following methods.

A correlator with a single $\psi$-class can be calculated by applying TRR \eqref{eqtrr1} or \eqref{eqtrr2}, and the result is a polynomial combination of primary correlators (open and closed extended). Since applying TRRs with respect to different markings yields different ways to express the same number, comparing these expressions gives rise to relations between primary correlators.
In particular, there are two types of methods to calculate $\left\langle
\begin{array}{c}
\hfill  a_1 \quad\hfill   a_2 \hfill\quad\dots\quad\hfill a_l \hfill\null\\
\hfill b_1\quad \hfill b_2\hfill\quad\dots\quad \hfill b_k\hfill\null\\
\end{array}
\right\rangle_0^{\frac{1}{r},\text{o},\h}$.
\begin{itemize}
    \item\textbf{Type-I method.}
    \newline
     When $b_1\ge \m$, we can apply TRRs \eqref{eqtrr1} or \eqref{eqtrr2} to
\begin{equation}\label{eq method 1}
\left\langle
\begin{array}{c}
\hfill \frac{b_1-\m}{2}\psi \quad\hfill a_1 \quad\hfill   a_2\hfill\quad\dots\quad\hfill a_l\hfill\null\\
\hfill \m \quad \hfill b_2\quad\hfill b_3\hfill\quad\dots \quad\hfill b_{k}\hfill\null\\
\end{array}
\right\rangle_0^{\frac{1}{r},\text{o},\h};
\end{equation}
one term in the TRRs is
$$
\alpha:=\left\langle
		\begin{array}{c}
		\hfill \frac{b_1-\m}{2} \quad\hfill \frac{r-2-b_1}{2}\quad\hfill\frac{r-2+\m}{2} \hfill\null\\
		\end{array}
		\right\rangle_0^{\text{ext}}\cdot\left\langle
	\begin{array}{c}
	\hfill   \frac{r-2-\m}{2}\hfill\null\\
	\hfill \m\hfill\null\\
	\end{array}
	\right\rangle_0^{\frac{1}{r},\text{o},\h}\cdot\left\langle
\begin{array}{c}
\hfill  a_1 \quad\hfill   a_2 \hfill\quad\dots\quad\hfill a_l \hfill\null\\
\hfill b_1\quad \hfill b_2\hfill\quad\dots\quad \hfill b_k\hfill\null\\
\end{array}
\right\rangle_0^{\frac{1}{r},\text{o},\h},
$$
where $\left\langle
	\begin{array}{c}
	\hfill   \frac{r-2-\m}{2}\hfill\null\\
	\hfill \m\hfill\null\\
	\end{array}
	\right\rangle_0^{\frac{1}{r},\text{o},\h}$ is a central contribution and $\left\langle
\begin{array}{c}
\hfill  a_1 \quad\hfill   a_2 \hfill\quad\dots\quad\hfill a_l \hfill\null\\
\hfill b_1\quad \hfill b_2\hfill\quad\dots\quad \hfill b_k\hfill\null\\
\end{array}
\right\rangle_0^{\frac{1}{r},\text{o},\h}$ is a side contribution (see Remark \ref{rmk central side contribution}); we refer to this term as the \textit{main term}.

Let $c_1,c_2\in \{a_1,\dots,a_l,\}\sqcup\{\m,b_1,\dots,b_k\}$ be two different (internal or boundary) markings. Assume that the result of applying TRR to \eqref{eq method 1} with respect to $c_i$ has the form $$
A_i\alpha+\sum_j B_{j,i}\beta_j 
$$
for $i=1,2$, where $\{\beta_j\}_j\neq \alpha$ are other terms appearing in the TRRs and $A_i,B_{j,i}$ are coefficients (note that different partitions of sets in \eqref{eqtrr1} and \eqref{eqtrr2} may give the same term). If we can show that
\begin{itemize}
    \item the coefficients of $\alpha$ are different in two different TRRs, \textit{i.e.} $A_1\ne A_2$,
    \item all the factors in the terms $\beta_j$ are already calculated, \textit{i.e.} they are in the data given in the statement of the proposition (note that all the closed extended $r$-spin correlators are calculated in \cite[Proposition 5.1]{BCT2}),
\end{itemize}
then we can calculate $\alpha$ from the given data since
$$
\alpha=\frac{\sum_j(B_{j,2}-B_{j,1})\beta_j}{A_1-A_2},
$$
and we can calculate $\left\langle
\begin{array}{c}
\hfill  a_1 \quad\hfill   a_2 \hfill\quad\dots\quad\hfill a_l \hfill\null\\
\hfill b_1\quad \hfill b_2\hfill\quad\dots\quad \hfill b_k\hfill\null\\
\end{array}
\right\rangle_0^{\frac{1}{r},\text{o},\h}$ since both $\left\langle
		\begin{array}{c}
		\hfill \frac{b_1-\m}{2} \quad\hfill \frac{r-2-b_1}{2}\quad\hfill\frac{r-2+\m}{2} \hfill\null\\
		\end{array}
		\right\rangle_0^{\text{ext}}$ and $\left\langle
	\begin{array}{c}
	\hfill   \frac{r-2-\m}{2}\hfill\null\\
	\hfill \m\hfill\null\\
	\end{array}
	\right\rangle_0^{\frac{1}{r},\text{o},\h}$ are zero-dimensional correlators and equal $1$.

    \item\textbf{Type-II method.}
    \newline
    Let $\N$ and $\b$ be the unique pair integers satisfying 
    $$\left(\{\underbrace{ \m, \hfill \m,\hfill\dots\hfill, \m}_{\N},\b\},\emptyset\right)\in \mathcal S\text{ and } \m\le \b \le r-2,$$
    \textit{i.e.} $\N=\left\lfloor\frac{2r}{r-\m}\right\rfloor$ and $\b=(r-\m)\N-r-2$. We write $\c:=\frac{r-2-\b}{2}$; note that $\c$ is also an integer since $r-\m=2+2\h$ is even, so is $r-2-\b$.

    When $a_1\ge \c$, we can apply \eqref{eqtrr1} or \eqref{eqtrr2} to    
    $$
	\left\langle
	\begin{array}{c}
	\hfill  (a_1-\c)\psi\quad\hfill a_2\quad\hfill\dots\quad \hfill a_l\hfill\null\\
	\hfill\smash{\underbrace{ \m\quad \hfill \m\quad\hfill\dots\hfill\quad \m}_{\N}\quad\hfill b_1\quad\hfill b_2\hfill\quad\dots \quad\hfill b_{k}\hfill\null}\hfill\null\\
	\end{array}
	\right\rangle_0^{\frac{1}{r},\text{o},\h}\vphantom{\left\langle
		\begin{array}{c}
		\hfill  a_1\quad \hfill\dots\quad \hfill a_l\hfill\null\\
		\hfill\underbrace{ m\quad \hfill m\quad\hfill\dots\hfill\quad m}_{K(I)}\quad\hfill x(I)\hfill\null\\
		\end{array}
		\right\rangle_0^{\frac{1}{r},\text{o},\h}};
	$$
 this time the main term is
 $$
    \alpha:=\left\langle
		\begin{array}{c}
		\hfill \c \quad\hfill a_1-\c\quad\hfill r-2-a_1 \hfill\null
  \end{array}\right\rangle_0^{\text{ext}}\cdot\left\langle
\begin{array}{c}
\hfill  a_1 \quad\hfill   a_2 \hfill\quad\dots\quad\hfill a_l \hfill\null\\
\hfill b_1\quad \hfill b_2\hfill\quad\dots\quad \hfill b_k\hfill\null\\
\end{array}
\right\rangle_0^{\frac{1}{r},\text{o},\h}\cdot \left\langle
\begin{array}{c}
\hfill  \emptyset\hfill\null\\
\hfill \smash{\underbrace{\m\quad\hfill\dots\hfill\quad \m}_{\N}}\quad\hfill \b\hfill\null\\
\end{array}
\right\rangle_0^{\frac{1}{r},\text{o},\h}
\vphantom{\left\langle
		\begin{array}{c}
		\hfill  a_1\quad \hfill\dots\quad \hfill a_l\hfill\null\\
		\hfill\underbrace{ m\quad \hfill m\quad\hfill\dots\hfill\quad m}_{K(I)}\quad\hfill x(I)\hfill\null\\
		\end{array}
		\right\rangle_0^{\frac{1}{r},\text{o},\h}},
 $$
 where $\left\langle
\begin{array}{c}
\hfill  a_1 \quad\hfill   a_2 \hfill\quad\dots\quad\hfill a_l \hfill\null\\
\hfill b_1\quad \hfill b_2\hfill\quad\dots\quad \hfill b_k\hfill\null\\
\end{array}
\right\rangle_0^{\frac{1}{r},\text{o},\h}$ is a central contribution and  $\left\langle
\begin{array}{c}
\hfill  \emptyset\hfill\null\\
\hfill \smash{\underbrace{\m\quad\hfill\dots\hfill\quad \m}_{\N}}\quad\hfill \b\hfill\null\\
\end{array}
\right\rangle_0^{\frac{1}{r},\text{o},\h}
\vphantom{\left\langle
		\begin{array}{c}
		\hfill  a_1\quad \hfill\dots\quad \hfill a_l\hfill\null\\
		\hfill\underbrace{ m\quad \hfill m\quad\hfill\dots\hfill\quad m}_{K(I)}\quad\hfill x(I)\hfill\null\\
		\end{array}
		\right\rangle_0^{\frac{1}{r},\text{o},\h}}$ is a side contribution.

     Similarly to the type-I method, if the coefficients of $\alpha$ are different in two different TRRs, and all the factors in the terms $\beta_j$ are already calculated, then we can calculate $\alpha$ from the given data. Moreover, we will show in Section \ref{subsec computation minimal} that the factor $\left\langle
\begin{array}{c}
\hfill  \emptyset\hfill\null\\
\hfill \smash{\underbrace{\m\quad\hfill\dots\hfill\quad \m}_{\N}}\quad\hfill \b\hfill\null\\
\end{array}
\right\rangle_0^{\frac{1}{r},\text{o},\h}
\vphantom{\left\langle
		\begin{array}{c}
		\hfill  a_1\quad \hfill\dots\quad \hfill a_l\hfill\null\\
		\hfill\underbrace{ m\quad \hfill m\quad\hfill\dots\hfill\quad m}_{K(I)}\quad\hfill x(I)\hfill\null\\
		\end{array}
		\right\rangle_0^{\frac{1}{r},\text{o},\h}}$ of $\alpha$ is computable and non-zero, then we can calculate $\left\langle
\begin{array}{c}
\hfill  a_1 \quad\hfill   a_2 \hfill\quad\dots\quad\hfill a_l \hfill\null\\
\hfill b_1\quad \hfill b_2\hfill\quad\dots\quad \hfill b_k\hfill\null\\
\end{array}
\right\rangle_0^{\frac{1}{r},\text{o},\h}$ since the factor $\left\langle
		\begin{array}{c}
		\hfill \c \quad\hfill a_1-\c\quad\hfill r-2-a_1 \hfill\null
  \end{array}\right\rangle_0^{\text{ext}}$ is a zero-dimensional correlator and equals $1$.
\end{itemize}

\begin{rmk}\label{rmk determine central side contribution}
    We describe all the open correlators that appear as a factor in some terms after we apply TRRs \eqref{eqtrr1} or \eqref{eqtrr2} to
    $$\left\langle
\begin{array}{c}
\hfill a_0\psi \quad\hfill  a_1 \quad\hfill   a_2 \hfill\quad\dots\quad\hfill a_l \hfill\null\\
\hfill b_1\quad \hfill b_2\hfill\quad\dots\quad \hfill b_k\hfill\null\\
\end{array}
\right\rangle_0^{\frac{1}{r},\text{o},\h}.$$
\begin{itemize}
    \item All side contributions are of the form $F(B^{sd}\cup\{b'\},I^{sd})$, where $B^{sd}\subseteq \{b_1,b_2,\dots,b_k\}$, the boundary marking $b'$ comes from point insertion procedure, and $I^{sd}\subseteq\{a_1,a_2,\dots,a_l\}.$ 
    \item All central contributions are of the form $F(B^{ctr},I^{ctr}\cup\{a'\})$, where $B^{ctr}\subseteq \{b_1,b_2,\dots,b_k\}$,  $I^{ctr}\subseteq\{a_1,a_2,\dots,a_l\}$, and $a'$ corresponds to the internal half-node that connects the central open component to the close component in Figure \ref{fig trr}.
\end{itemize}
For a non-zero term of the form 
$$F^{ext}\cdot F(B^{ctr},I^{ctr}\cup\{a'\})\cdot \prod_{i=1}^s F(B_i^{sd}\cup\{b'_i\},I_i^{sd}),$$
where $F^{ext}$ denotes the closed extended correlator, we have 
$$
B^{ctr}\sqcup\bigsqcup_{i=1}^s B^{sd}_i=\{b_1,b_2,\dots,b_k\}~\text{and}~I^{ctr}\sqcup\bigsqcup_{i=1}^s I^{sd}_i\subseteq\{a_1,a_2,\dots,a_l\};
$$
denoting by $I^{ext}:=\{a_1,a_2,\dots,a_l\}\setminus \left(I^{ctr}\sqcup\bigsqcup_{i=1}^s I^{sd}_i\right)$, then
\begin{equation}\label{eq central side dimension}
    D(B^{ctr},I^{ctr}\cup\{a'\})+ \sum_{i=1}^s D(B_i^{sd}\cup\{b'_i\},I_i^{sd})+2(\lvert I^{ext}\rvert+s-1)=\frac{2\sum_{i=0}^l a_i+\sum_{j=1}^k b_j-r+2}{r}
\end{equation}
is a constant for all non-zero terms appearing in TRRs.

\end{rmk}

\begin{proof}[Proof of Proposition \ref{prop general computation}]
We denote $I=\{a_1,\dots,a_l\}$ and $B=\{b_1,\dots,b_k\}$. Without loss of generality, we assume $b_1\ge b_2\ge \dots \ge b_k$. Since $s\notin \mathcal E_2$ we have  $k\ge 1$ and $b_1>\m$. 

\par
To calculate 	$$
F(s)=\left\langle
\begin{array}{c}
\hfill  a_1 \quad\hfill   a_2\hfill\quad\dots\quad\hfill a_l\hfill\null\\
\hfill b_1\quad \hfill b_2\quad\hfill b_3\hfill\quad\dots\quad \hfill b_k\hfill\null\\
\end{array}
\right\rangle_0^{\frac{1}{r},\text{o},\h},
$$
we use the type-I method and apply TRRs \eqref{eqtrr1} or \eqref{eqtrr2} to
 $$
\left\langle
\begin{array}{c}
\hfill \frac{b_1-\m}{2}\psi \quad\hfill a_1 \quad\hfill   a_2\hfill\quad\dots\quad\hfill a_l\hfill\null\\
\hfill \m \quad \hfill b_2\quad\hfill b_3\hfill\quad\dots \quad\hfill b_{k}\hfill\null\\
\end{array}
\right\rangle_0^{\frac{1}{r},\text{o},\h}.
$$
Let $F(B',I')$ be an open contribution which appears in a possibly non-zero term satisfying $D(B',I')\ge D(B,I)$, there are several possibilities according to Remark \ref{rmk determine central side contribution}:
\begin{itemize}
    \item $F(B',I')$ is a side contribution with $(B',I')=(B,I)$. Then the term it contributed to is the main term $\alpha$ in the type-I method. 
    \item $F(B',I')$ is a side contribution with $I'=I$ and $B'=(B\backslash\{b_1,b_i\})\cup\{\m,b_1+b_i-\m\}$ for some $b_i > \m$. In this case we have $D(B',I')=D(B,I)$, $\lvert I'\rvert=\lvert I\rvert$ and $\Sigma(B',I')=\Sigma(B,I)$. Since 
    $$
     M(B',I')\ge b_1+b_i-\m>b_1=M(B,I),
    $$
    we have $(B',I')\prec (B,I)$.
    \item $F(B',I')$ is a central contribution, and the term it contributed to has no side contribution. In this case  $I'=(I\backslash\{a_i\})\cup\{a_i+\frac{b_1-\m}{2}\}$ and $B'=(B\backslash\{b_1\})\cup\{\m\}$. So we have $$D(B',I')=D(B,I), \lvert I'\rvert=\lvert I\rvert$$ and
    $$\Sigma(B',I')=\Sigma(B,I)+\frac{b_1-\m}{2}>\Sigma(B,I),$$
    which means $(B',I')\prec (B,I)$.
    \item $F(B',I')$ is a central contribution, and the term it contributed to has one side contribution $F(B'',I'')$ with $D(B'',I'')=0$.
    \begin{itemize}
        \item If $\lvert I''\rvert=\lvert B''\rvert=1$, we have $I''=\{a_i\}$ and $B''=\{r-2-2a_i\}$ for some $i\in\{1,2,\dots,l\}$. This is impossible since
        $$
        r-2-2a_i\le r-2-2(\h+1)=\m-2,
        $$
        which means $(B'',I'')\notin \mathcal S$.
        \item If $\lvert I''\rvert=0$ and $\lvert B''\rvert=3$, in this case we have $D(B',I')=D(B,I)$ and $\lvert I'\rvert=\lvert I\rvert+1$, which means $(B',I')\prec (B,I)$.
    \end{itemize}
\end{itemize}
Therefore all open contributions in the non-zero terms other than the main term $\alpha$ are of the form $F(B',I')$ for $(B',I')\prec (B,I)$.

To calculate $F(B,I)$ by the type-I method and prove the proposition, it remains to show that there exist two markings such that the coefficients of the main term $\alpha$ are different in the TRRs with respect to them. Actually, since $s\notin \mathcal E_1 \cup \mathcal E_2$, at least one of the following two cases holds:
\begin{enumerate}
    \item $l\ge 1, k\ge 1$.
    \par
    In this case, the coefficient of $\alpha$ in the TRR with respect to the internal marking $a_1$ is $0$, while the coefficient of $\alpha$ in the TRR with respect to the boundary marking $\m$ is $-1$.
    \item $l=0,k\ge 2, b_2\ge \m$.
    \par
    In this case, the coefficient of $\alpha$ in the TRR with respect to the boundary marking $b_2$ is $0$, while the coefficient of $\alpha$ in the TRR with respect to the boundary marking $\m$ is $-1$.
\end{enumerate}

\end{proof}

We calculate $F(s)$ for $s\in \mathcal{E}_1\cup \mathcal E_2$ in the following subsections. Note that $\mathcal S$ is a finite set, Proposition \ref{prop general computation} indicates that we can calculate $F(s)$ for all $s\in \mathcal S$ if we know $F(s)$ for all $s\in \mathcal{E}_1\cup \mathcal E_2$.

\subsection{Computation of $\left\langle
	\sigma^{\m}\sigma^{\m}\dots\sigma^{\m}\sigma^\b
	\right\rangle_0^{\frac{1}{r},\text{o},\h}$}\label{subsec computation minimal}
We begin by considering correlators of the form $\left\langle
\begin{array}{c}
\hfill  \emptyset\hfill\null\\
\hfill \m\quad \hfill \m\quad\hfill\dots\hfill\quad \m\quad\hfill \b\hfill\null\\
\end{array}
\right\rangle_0^{\frac{1}{r},\text{o},\h}$ which appear in the type-II method. Let $\N+1$
 be the number of boundary markings. For fixed  $r$ and $\m$, the correlators are zero unless  $$\N\m+\b-r+2=(\N-2)r\text{ and }\m\le \b\le r-2;$$ these equations uniquely determine the integers $\N=\left\lfloor\frac{2r}{r-\m}\right\rfloor$ and $\b=(r-\m)\N-r-2$. 
\begin{lem}\label{lemnopointinsertion}
	Let $B=\{\underbrace{ \m, \hfill \m,\hfill\dots\hfill, \m}_{\N},\b\}$. For any Neveu–-Schwarz boundary node in a boundary strata of $\overline{\mathcal M}_{0,B,\emptyset}^{\frac{1}{r}}$, the twist of the illegal side is strictly greater than $2\h$;
\end{lem}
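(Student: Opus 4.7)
I would argue by contradiction. Suppose the statement fails, so some NS boundary node has illegal half-node of twist $c_1\le 2\h$. Normalizing at that node produces two sub-disks $\Sigma_1,\Sigma_2$, with the illegal half-node of twist $c_1$ lying on $\Sigma_1$ and the legal half-node of twist $c_2=r-2-c_1\ge\m$ on $\Sigma_2$. Let $p_i$ be the number of twist-$\m$ boundary markings on $\Sigma_i$ and $\epsilon_i\in\{0,1\}$ the number of twist-$\b$ markings, so $p_1+p_2=\N$, $\epsilon_1+\epsilon_2=1$, and stability requires $p_i+\epsilon_i\ge 2$.

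The two key constraints on $\Sigma_1$ are the rank condition \eqref{eq rank open},
\[
q_1 r \;=\; p_1\m+\epsilon_1\b+c_1+2,\qquad q_1\in\mathbb Z_{\ge 1},
\]
and the parity condition \eqref{eq parity} with $c_1$ illegal, $q_1\equiv p_1+\epsilon_1\pmod 2$. Writing $M=1+\h$ (so $\m=r-2M$) and $t:=r-M\N\in[0,M)$ (so $\b=2M\N-r-2$), the rank equation rearranges to
\[
(p_1+\epsilon_1-q_1)\,r \;=\; 2Mp_1+2\epsilon_1(t+1)-c_1-2.
\]
The constraint $c_1\in[0,2\h]=[0,2M-2]$ thus confines the integer $j:=p_1+\epsilon_1-q_1$ to an interval of length $(2M-2)/r<1$ (recall $M\le r/2$), so $j$ is uniquely determined when a solution exists; one reads off
\[
c_1 \;=\; s-2\quad\text{with}\quad s:=2Mp_1+2\epsilon_1(t+1)-jr\in[2,2M].
\]

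The heart of the argument is then a parity check showing that the unique $j$ forced by $c_1\in[0,2\h]$ has the \emph{wrong} parity relative to the requirement $q_1\equiv p_1+\epsilon_1\pmod 2$. I would split into two cases. When $r$ is odd, Proposition~\ref{prop lifting}(2) identifies legality with oddness of the twist, so illegality of $c_1$ is equivalent to $c_1=s-2$ being even, i.e.\ $s$ even. Since $r$ is odd and the other summands defining $s$ are even, $s\equiv j\pmod 2$, and the illegal case corresponds to $j$ even. A direct computation—using the stability ranges $p_1\in[2,\N-1]$ (for $\epsilon_1=0$) or $p_1\in[1,\N-2]$ (for $\epsilon_1=1$), together with the identity $r=M\N+t$—shows that whenever $s$ actually lies in $[2,2M]$ the forced $j$ is odd, contradicting $j$ even. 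When $r$ is even, Proposition~\ref{prop lifting}(3) forces all boundary twists to be even; dividing the rank equation by $2$ gives $q_1(r/2)=p_1(\m/2)+\epsilon_1(\b/2)+c_1/2+1$, and the parity condition is handled analogously, splitting further according to the parity of $r/2$.

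The principal obstacle is the final parity verification for $j$. Because the range for $j$ has length strictly less than $1$, there is at most one integer $j$ to analyze, but its parity depends delicately on $t$ (whether $M$ divides $r$ or not) and, in the even-$r$ case, on the parity of $r/2$. The proof reduces to a clean elementary check in each of these subcases, after which the assumed NS node with $c_1\le 2\h$ illegal is seen to be incompatible with the combined rank, parity, and stability conditions—establishing the lemma.
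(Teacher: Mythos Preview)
Your plan is correct and tracks the paper's proof in spirit: both arguments combine the rank formula, the legality/parity condition \eqref{eq parity}, and the stability bounds on the number of markings to reach a contradiction. The paper's presentation is organized differently---it fixes the side containing $\b$, writes $y=l\m-sr$, and feeds the constraint $\m\le\b\le r-2$ into bounds on $r/\m$ (equation \eqref{eqroverm}) to squeeze out an inequality chain ending in $2<2$. Your substitution $M=\h+1$, $t=r-M\N$, and the variable $j=p_1+\epsilon_1-q_1$ repackage the same information.

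One remark on your ``direct computation'': what you actually need is simply that $j\in(0,2)$, hence $j=1$. Indeed, from $s=2Mp_1+2\epsilon_1(t+1)-jr\in[2,2M]$ and the stability ranges $p_1\in[2,\N-1]$ (for $\epsilon_1=0$) or $p_1\in[1,\N-2]$ (for $\epsilon_1=1$), together with $r=M\N+t$ and $0\le t<M$, one checks immediately that
\[
0<\frac{2M(p_1-1)+2\epsilon_1(t+1)}{M\N+t}\le j\le\frac{2Mp_1+2\epsilon_1 t}{M\N+t}<2,
\]
so the only possible integer is $j=1$, which is odd---contradicting the parity requirement $j\equiv 0\pmod 2$. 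This is arguably cleaner than the paper's inequality chain. Note also that this argument works \emph{uniformly} in $r$: the parity condition \eqref{eq parity} gives $q_1\equiv p_1+\epsilon_1\pmod 2$ regardless of the parity of $r$, so your proposed further case split on the parity of $r/2$ is unnecessary.
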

\begin{proof}
	If $\m=0$, then $\N=2$ and  $\overline{\mathcal M}_{0,B,\emptyset}^{\frac{1}{r}}$ has dimension 0, in this case the lemma holds automatically. 
	\\We now assume $\m>0$.
	Notice first that since
	$$
	\m\le \b= (\N-2)r-\N\m+r-2\le r-2,
	$$
	then	\begin{equation}\label{eqroverm}	
	\frac{\N+1}{\N-1}+\frac{2}{(\N-1)\m}\le \frac{r}{\m}\le \frac{\N}{\N-2}.
	\end{equation}
	Consider the boundary node in Figure \ref{fig: boundary node no internal marking}, where $k\ge 1$, $l\ge 2$, $k+l=\N$ and $x+y=r-2$.
	
 \begin{figure}[h]
 \centering
	\begin{tikzpicture}
	\draw (0,0) arc (0:-80:1.5 and 0.5);
	\draw[dotted] (0,0) arc (0:-180:1.5 and 0.5);
	\draw (-3,0) arc (180:260:1.5 and 0.5);
	\draw[dashed](0,0) arc (0:180:1.5 and 0.5);
	\draw(0,0) arc (0:180:1.5);
	
	\node at (-2,-0.48) [circle,fill,inner sep=1pt]{};
	\node at (-2,-0.61) {\small$m$};
	
	\node at (-1,-0.48) [circle,fill,inner sep=1pt]{};
	\node at (-1,-0.61) {\small$m$};
	
	\node at (-0.6,-0.4) [circle,fill,inner sep=1pt]{};
	\node at (-0.6,-0.6) {\small$m$};
	
	\node at (-1.3,-0.8) {$\underbrace{\phantom{dddddddd}}_{l}$};
	
	\draw (0,0) arc (180:260:1.5 and 0.5);
	\draw (0,0)[dotted] arc (180:360:1.5 and 0.5);
	\draw (3,0) arc (0:-80:1.5 and 0.5);
	\draw[dashed](0,0) arc (180:0:1.5 and 0.5);
	\draw(0,0) arc (180:0:1.5);
	
	\node at (2,-0.48) [circle,fill,inner sep=1pt]{};
	\node at (2,-0.61) {\small$m$};
	
	\node at (1,-0.48) [circle,fill,inner sep=1pt]{};
	\node at (1,-0.61) {\small$m$};
	
	\node at (0.6,-0.4) [circle,fill,inner sep=1pt]{};
	\node at (0.6,-0.6) {\small$m$};
	
	\node at (2.4,-0.4) [circle,fill,inner sep=1pt]{};
	\node at (2.44,-0.55) {\small$\b$};
	
	\node at (1.3,-0.8) {$\underbrace{\phantom{dddddddd}}_{k}$};
	
	\node at (0,0) [circle,fill,inner sep=1pt]{};
	\node at (-0.2,0) {\small$x$};
	\node at (0.2,0) {\small$y$};
	
	\end{tikzpicture}
    \caption{}\label{fig: boundary node no internal marking}
 \end{figure}
	
	By \eqref{eq rank open} we have 
	$x\equiv k\m+\b$ and $y\equiv l\m$   mod  $r$. Let $s:=\frac{l\m-y}{r}$.
	There are two possibilities.
	\begin{enumerate}
		\item When the right half-edge is legal, we prove $y< \m$ by contradiction. Assume this is not the case, then
		$$
		\m\le y=l\m-sr\le r-2,
		$$
		which means
		$$
		\frac{sr}{\m}+1\le l\le \frac{(s+1)r}{\m}-\frac{2}{\m}.
		$$
		Thus by \eqref{eqroverm} 	\begin{equation}\label{eqestilegal}
		s+\frac{2s}{\N-1}+\frac{2s}{(\N-1)\m}+1\le l \le s+1+\frac{2(s+1)}{\N-2}-\frac{2}{\m}.
		\end{equation}
		Because the right half-edge is legal, by parity condition \eqref{eq parity}, 
        we can deduce that $s\not\equiv l \mod 2$. In fact, when $r$ is odd, we have $\m\equiv r \equiv 1$ and $1\equiv y=l\m-sr\equiv l-s$; when $r$ is even, we have 
		$$
		l+1\equiv \frac{l\m+x-r+2}{r}=s.
		$$
		
		For $s\not=0$, by the first inequality in \eqref{eqestilegal} we have $s+1<l$, hence $s+3\le l$; for $s=0$, since $l\ge 2$, we also have $s+3\le l$. Then, by the second inequality in \eqref{eqestilegal}, 
		$$
		 2\le \frac{2(s+1)}{\N-2}-\frac{2}{\m} <\frac{2(s+1)}{\N-2}\le\frac{2(l-2)}{\N-2}<2,
		$$
		which is a contraction.

		\item When the right half-edge is illegal, we prove $y\ge r-2-\m,$ again by contradiction.  Assume the opposite, that
		$$
		0\le y=l\m-sr\le r-2-\m,
		$$
		which means
		$$
		\frac{sr}{\m}\le l\le \frac{(s+1)r}{\m}-\frac{2}{\m}-1.
		$$
		Then by \eqref{eqroverm}		\begin{equation}\label{eqestiillegal}
		s+\frac{2s}{\N-1}+\frac{2s}{(\N-1)\m}\le l \le s+1+\frac{2(s+1)}{\N-2}-\frac{2}{\m}-1.
		\end{equation}
		Because the right half-edge is illegal, we have $s\equiv l$ mod 2. By the first inequality in \eqref{eqestiillegal} we get $s+2\le l$. Then by the second inequality in \eqref{eqestiillegal} we have
		$$
		2\le \frac{2(s+1)}{\N-2}-\frac{2}{\m}<\frac{2(l-1)}{\N-2}=\frac{2(\N-k-1)}{\N-2}\le 2,
		$$
		which is a contraction.
	\end{enumerate}
\end{proof}
\begin{rmk}
	As a special case for $\h=0$, Lemma \ref{lemnopointinsertion} works for BCT moduli spaces $\overline{\mathcal M}_{0,B^r,I}^{\frac{1}{r}}$ for $B^r=\{\underbrace{r-2,r-2,\dots,r-2}_{r+1}\}$ and $I=\emptyset$. For simplicity we denote this BCT moduli space by $\overline{\mathcal M}_{0,r+1,\emptyset}^{\frac{1}{r}}.$
\end{rmk}  
\begin{prop}\label{propfemptyset}
We have
	$$
	\left\langle
	\begin{array}{c}
	\hfill  \emptyset\hfill\null\\
	\hfill\smash{\underbrace{ \m\quad \hfill \m\quad\hfill\dots\hfill\quad \m}_{\N}}\quad\hfill \b\hfill\null\\
	\end{array}
	\right\rangle_0^{\frac{1}{r},\text{o},\h}=\left\langle
	\begin{array}{c}
	\hfill  \emptyset\hfill\null\\
	\hfill \smash{\underbrace{\N-2\quad \hfill \N-2\quad\hfill\dots\hfill\quad \N-2\quad \hfill \N-2}_{\N+1}}\hfill\null\\
	\end{array}
	\right\rangle_0^{\frac{1}{\N},\text{o,BCT}}.
	$$
\end{prop}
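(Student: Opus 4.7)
The proof relies crucially on Lemma \ref{lemnopointinsertion}, which rules out every NS boundary node of illegal twist $\le 2\h$ in the relevant moduli. First I would unpack what this lemma buys us: it forces $\partial^{PI}\overline{\mathcal{PM}}^{\frac{1}{r},\h}_{0,B,\emptyset}=\emptyset$, so the point-insertion procedure is trivial and $\overline{\mathcal{PM}}^{\frac{1}{r},\h}_{0,B,\emptyset}=\overline{\mathcal{PM}}^{\frac{1}{r}}_{0,B,\emptyset}$. Hence the left-hand correlator reduces to an Euler number of $\mathcal{W}\to\overline{\mathcal{PM}}^{\frac{1}{r}}_{0,B,\emptyset}$ with only positive boundary conditions at NS+ and R strata. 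Applying Lemma \ref{lemnopointinsertion} in the specialization $(r,\h)=(\N,0)$ (where $\m=\N-2=\b$) shows that the $\N$-spin moduli on the right side is likewise free of type-BI nodes, so the BCT boundary conditions of \cite{BCT2} coincide with pure positivity there.

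Next I would construct a canonical identification of the two Witten-bundle data over the common coarse moduli of marked disks $\overline{\mathcal{M}}_{0,\N+1,0}$. On every smooth fiber both coarse spin bundles have degree $-(\N-1)$ on $|C|\cong\mathbb{P}^1$: on the left because $\N\m+\b=(\N-1)r-2$, and on the right because $(\N+1)(\N-2)=\N^2-\N-2$. Consequently both Serre-dual bundles $|J|$ and $|J'|$ are abstractly $\mathcal{O}_{\mathbb{P}^1}(\N-3)$, and the unique (up to positive real scalar) $\widetilde{\phi}$-equivariant identification compatible with the grading gives an isomorphism of Witten-bundle fibers as real vector spaces of $\widetilde{\phi}$-invariant sections. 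This identification extends across the boundary strata: by Proposition \ref{prop decomposition} each Witten bundle decomposes along a node in a manner determined solely by the NS+ twist data, and Lemma \ref{lemnopointinsertion} guarantees that on \emph{both} sides only NS+ and Ramond nodes occur, with matching legal/illegal patterns since all boundary markings lie in the ``upper'' legal range on both sides.

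The final step is to verify that the resulting bundle isomorphism preserves the canonical relative orientation of \cite[Definition 3.5]{TZ1} and transports positive multisections to positive multisections; once this is done, a single transverse nowhere-vanishing canonical multisection pulls back to both sides, and its weighted zero count gives the common value of the two Euler numbers. The main obstacle is the third step: one must carefully match the signs of the canonical orientation across the full boundary stratification and confirm that the orbifold inertia contributions (the generic $\mathbb{Z}/r\mathbb{Z}$ and $\mathbb{Z}/\N\mathbb{Z}$ isotropies on the two spin moduli) enter the raw Euler number identically, so that no extra factor of $r/\N$ creeps in. Since the canonical orientation depends only on the legality of boundary markings and the combinatorics of NS+ nodes -- both of which are preserved under the identification -- this should reduce to a bookkeeping exercise analogous to the orientation argument in the proof of Theorem \ref{thm: compare primary}.
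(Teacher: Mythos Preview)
Your proposal is correct and follows essentially the same route as the paper: Lemma~\ref{lemnopointinsertion} reduces both correlators to a pure-positivity Euler count on the common smooth moduli $\mathcal{M}_{0,\N+1,0}$, and the two Witten bundles are then matched via the equal degrees $\deg|J|=\deg|J'|=\N-3$. The paper's argument is a bit terser than yours---it constructs the map $\Theta$ directly by matching zero loci of sections, invokes Observation~\ref{obs zero on boundary} (all boundary markings being legal) to transport positivity in one stroke rather than extending across boundary strata, and sidesteps your isotropy concern by observing that the forgetful maps to $\mathcal{M}_{0,\N+1,0}$ are isomorphisms on both sides, so no $r/\N$ factor appears.
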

\begin{rmk}
    According to \cite{BCT2}, we have 
    $$
    \left\langle
	\begin{array}{c}
	\hfill  \emptyset\hfill\null\\
	\hfill \smash{\underbrace{\N-2\quad \hfill \N-2\quad\hfill\dots\hfill\quad \N-2\quad \hfill \N-2}_{\N+1}}\hfill\null\\
	\end{array}
	\right\rangle_0^{\frac{1}{\N},\text{o,BCT}}=\pm\N!.$$
 The $\pm$ sign comes from the orientation conventions, see Remark \ref{rmk: orientation issue}.
\end{rmk}
\begin{proof}[Proof of Proposition \ref{propfemptyset}]
	
	We write $B=\{\underbrace{ \m, \hfill \m,\hfill\dots\hfill, \m}_{\N},\b\}$. By lemma \ref{lemnopointinsertion}, no point insertion is carried out at any boundary of  $\overline{\mathcal M}_{0,B,\emptyset}^{\frac{1}{r}}$, therefore $\overline{\mathcal M}_{0,B,\emptyset}^{\frac{1}{r},\h}=\overline{\mathcal M}_{0,B,\emptyset}^{\frac{1}{r}}$.

	 The smooth locus $\mathcal M_{0,B,\emptyset}^{\frac{1}{r},\h}$ is isomorphic to $\mathcal M_{0,\N+1,0}$ via the forgetful morphism. The smooth locus $\mathcal M_{0,\N+1,\emptyset}^{\frac{1}{\N}}$ of the BCT moduli space is also isomorphic to $\mathcal M_{0,\N+1,0}$ via the forgetful morphism. Moreover, the fibers of both Witten bundles over  $\mathcal M_{0,B,\emptyset}^{\frac{1}{r},\h}$ and  $\mathcal M_{0,\N+1,\emptyset}^{\frac{1}{\N}}$ over $[C]\in \mathcal M_{0,\N+1,0} $ are characterized by the positions of zeros of the sections of corresponding spin bundles $\lvert J \rvert$ over $\lvert C\rvert$, up to scaling by a real scalar. We can construct, up to a real scalar, a map $\Theta$ between the sections of the two Witten bundles: $\Theta$ takes a section to another section with the same zeros. 
    Moreover, since (in both ${\mathcal M}_{0,B,\emptyset}^{\frac{1}{r},\h}$ and ${\mathcal M}_{0,\N+1,\emptyset}^{\frac{1}{\N}}$) all boundary markings are legal, according to Observation \ref{obs zero on boundary}, if the positivity condition is preserved at a single boundary point on $\partial \Sigma\subset \lvert C\rvert$ by $\Theta$, then the positivity conditions at all non-special boundary points are preserved; this allows us to choose the scalar to make $\Theta$ preserve the positivity condition at every non-special boundary point on $\partial \Sigma$. 
    
	Note that $\overline{\mathcal M}_{0,B,\emptyset}^{\frac{1}{r},\h}$ and $\overline{\mathcal M}_{0,\N+1,\emptyset}^{\frac{1}{\N}}$ contain no stratum which parameterizes disks with internal nodes. By Lemma \ref{lemnopointinsertion}, the canonical sections of Witten bundles are given by sections on their smooth loci satisfying positivity boundary condition. By construction $\Theta$ preserves the positivity boundary conditions, hence it sends canonical sections to canonical sections. By definition the open $r$-spin correlators are the numbers of zeros of the canonical sections, the proposition is proven since $\Theta$ preserves the number of zeros. 
\end{proof}

\subsection{Computation of $
	\left\langle
	\tau_0^{a_1}\dots\tau_0^{a_l}\sigma^{\m}\sigma^{\m}\dots\sigma^{\m}
	\right\rangle_0^{\frac{1}{r},\text{o},\h}
	$}
We turn to consider the correlators of the form
$
\left\langle
\begin{array}{c}
	\hfill  a_1\quad \hfill\dots\quad \hfill a_l\hfill\null\\
	\hfill\smash{\underbrace{ \m\quad \hfill \m\quad\hfill\dots\hfill\quad \m}_{K+1}}\hfill\null\\
\end{array}
\right\rangle_0^{\frac{1}{r},\text{o},\h}.
$
When $l=1$, the correlators are zero unless 
$$2a_1+(K+1)\m-r+2=Kr,$$
which means $$K+1=\frac{2a_1+2}{r-\m}=\frac{a_1+1}{\h+1}.$$
\begin{lem}\label{leml1xm}
	For $0\le K \le \N-1$ we have
	$$
	\left\langle
	\begin{array}{c}
	\hfill (K+1)(\h+1)-1\hfill\null\\
	\hfill\smash{\underbrace{ \m\quad \hfill \m\quad\hfill\dots\hfill\quad \m}_{K+1}}\hfill\null\\
	\end{array}
	\right\rangle_0^{\frac{1}{r},\text{o},\h}=K! \vphantom{\left\langle
		\begin{array}{c}
		\hfill K(h+1)-1\hfill\null\\
		\hfill\underbrace{ m\quad \hfill m\quad\hfill\dots\hfill\quad m}_{K}\hfill\null\\
		\end{array}
		\right\rangle_0^{\frac{1}{r},\text{o},\h}}.
	$$
\end{lem}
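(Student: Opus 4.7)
The plan is to prove the lemma by identifying the moduli space $\Mbar^{\frac{1}{r},\h}_{0,\{\m^{K+1}\},\{a_1\}}$ (with $a_1=(K+1)(\h+1)-1$) with a BCT $\N$-spin moduli and invoking a known BCT computation, extending the strategy of Proposition~\ref{propfemptyset}. The base case $K=0$ is immediate: the moduli is zero-dimensional and the Witten bundle is the zero bundle, so $A_0 = 1 = 0!$.

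The first key step is to establish an analogue of Lemma~\ref{lemnopointinsertion}: every Neveu--Schwarz boundary node in $\Mbar^{\frac{1}{r},\h}_{0,\{\m^{K+1}\},\{a_1\}}$ has its illegal half-edge of twist strictly greater than $2\h$, so that no point insertion is performed and $\Mbar^{\frac{1}{r},\h}_{0,\{\m^{K+1}\},\{a_1\}} = \Mbar^{\frac{1}{r}}_{0,\{\m^{K+1}\},\{a_1\}}$. The proof parallels Lemma~\ref{lemnopointinsertion}: at a potential boundary node split the $K+1$ boundary markings as $l_1 + l_2 = K+1$ and place the single internal marking on one component, then solve \eqref{eq rank open} for the half-node twists modulo $r$. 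The integrality, parity, and the bound $K\le\N-1$ force the inequalities analogous to \eqref{eqroverm}--\eqref{eqestiillegal}, ruling out any illegal twist in $\{0,\ldots,2\h\}$; the internal marking contributes an additional shift which the case analysis tracks carefully.

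Given this, I would follow the proof of Proposition~\ref{propfemptyset} to identify $\mathcal{M}^{\frac{1}{r},\h}_{0,\{\m^{K+1}\},\{a_1\}}$ with $\mathcal{M}^{\frac{1}{\N}}_{0,\{(\N-2)^{K+1}\},\{K\}}$ via the common forgetful map to $\mathcal{M}_{0,K+1,1}$. Both moduli have dimension $K$ and carry Witten bundles of rank $K$ whose fibers over $[C]$ are determined by the zero divisors on the preferred half; a map $\Theta$ sending sections to sections with identical zeros, rescaled to respect positivity at non-special boundary points (via Observation~\ref{obs zero on boundary}), is a bundle isomorphism. Since neither moduli needs point insertion, canonical multisections correspond under $\Theta$, and the Euler numbers agree:
\[
A_K \;=\; \left\langle\tau^{K}_{0}\,\sigma^{(\N-2)(K+1)}\right\rangle_0^{\frac{1}{\N},o,\text{BCT}} \;=\; K!,
\]
where the last equality is the known primary open $\N$-spin correlator from \cite{BCT2}.

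The main obstacle will be the first step: unlike Lemma~\ref{lemnopointinsertion}, the presence of the internal marking of twist $a_1$ breaks the homogeneity used in the earlier argument, and the tight hypothesis $K\le\N-1$ must be used in precisely the extremal case where $a_1+1=(K+1)(\h+1)$ comes close to $\N(\h+1)$. Careful Diophantine bookkeeping is needed to confirm that all candidate illegal twists at a putative type-BI node are excluded across the full range of possibilities for $(l_1,l_2)$ and the position of the internal marking.
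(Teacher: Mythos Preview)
Your approach is genuinely different from the paper's, which proceeds by induction on $K$ using the TRRs of Theorem~\ref{thm TRR}: one applies the TRRs to $\bigl\langle (\h+1)\psi,\ (K_0+1)(\h+1)-1 \mid \m^{K_0+2}\bigr\rangle$, identifies only two possibly nonzero terms $\alpha,\beta$, and compares the TRR with respect to a boundary marking against the TRR with respect to the internal marking to obtain the recursion $A_{K_0+1}=(K_0+1)A_{K_0}$.

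Your approach has a genuine gap at the first step: the analogue of Lemma~\ref{lemnopointinsertion} that you need is \emph{false} once an internal marking is present. The stratum where the internal marking collides with the boundary is typically a type-BI boundary. Concretely, take $r=5$, $\h=1$ (so $\m=1$) and $K=1$ (so $a_1=3$). The unique codimension-$1$ boundary of $\Mbar^{1/r}_{0,\{1,1\},\{3\}}$ has side~$1$ carrying only the internal point $a_1=3$ and the half-node, and side~$2$ carrying the two boundary tails of twist $1$ and the other half-node. Equation~\eqref{eq rank open} forces the half-node twists to be $x=2$ on side~$1$ and $y=1$ on side~$2$; the parity condition~\eqref{eq parity} on side~$2$ makes the twist-$1$ half-node legal, so the twist-$2$ half-node on side~$1$ is illegal. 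Since $2=2\h$, this is a type-BI node and point insertion is performed. The same phenomenon persists for larger $K$ (e.g.\ $r=6$, $\h=1$, $K=2$, with $l_1=0$, $l_2=3$ giving an illegal half-node of twist $0$). So no amount of Diophantine bookkeeping will rescue the claim.

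Because point insertion genuinely occurs, $\Mbar^{\frac{1}{r},\h}_{0,\{\m^{K+1}\},\{a_1\}}\neq\Mbar^{\frac{1}{r}}_{0,\{\m^{K+1}\},\{a_1\}}$, and the zero-matching map $\Theta$ from Proposition~\ref{propfemptyset} no longer carries canonical multisections to canonical multisections: on the $(r,\h)$ side canonicity imposes the PI-compatibility of Definition~\ref{def canonical for Witten} at these strata, while on the BCT $\N$-spin side the boundary conditions are of forgetful type. These do not correspond under $\Theta$, so the proposed identification of Euler numbers breaks down.
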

\begin{proof}
	 We induct on $K$. For $K=0$, the moduli space is zero-dimensional hence the lemma holds. For $K=1$, the moduli space is one-dimensional, and the lemma can be easily checked from definitions as in \cite[Example 3.25]{BCT2}. Assuming the lemma holds for $K_0\le N-2$. We apply TRRs to the following correlator which contains a single $\psi$ class
	 $$\left\langle
	 \begin{array}{c}
	 \hfill (\h+1)\psi\quad\hfill (K_0+1)(\h+1)-1\hfill\null\\
	 \hfill\smash{\underbrace{ \m\quad \hfill \m\quad\hfill\dots\hfill\quad \m}_{ K_0+2}}\hfill\null\\
	 \end{array}
	 \right\rangle_0^{\frac{1}{r},\text{o},\h}\vphantom{\left\langle
	 	\begin{array}{c}
	 	\hfill (h+1)\psi\quad\hfill K(h+1)-1\hfill\null\\
	 	\hfill\underbrace{ \m\quad \hfill \m\quad\hfill\dots\hfill\quad \m}_{ K+1}\hfill\null\\
	 	\end{array}
	 	\right\rangle_0^{\frac{1}{r},\text{o},\h}}.$$
 	If $K_0+2< N$, there are two possibly non-zero terms contributing in TRRs: the main term
 	$$
 	\alpha=\left\langle
 	\begin{array}{c}
 	\hfill \h+1 \quad\hfill (K_0+1)(\h+1)-1\quad\hfill r-1-(K_0+2)(\h+1)  \hfill\null\\
 	\end{array}
 	\right\rangle_0^{\text{ext}}\left\langle
 	\begin{array}{c}
 	\hfill (K_0+2)(\h+1)-1\hfill\null\\
 	\hfill\smash{\underbrace{ \m\quad \hfill \m\quad\hfill\dots\hfill\quad \m}_{K_0+2}}\hfill\null\\
 	\end{array}
 	\right\rangle_0^{\frac{1}{r},\text{o},\h}\vphantom{\left\langle
 		\begin{array}{c}
 		\hfill (K+1)(h+1)-1\hfill\null\\
 		\hfill\underbrace{ m\quad \hfill m\quad\hfill\dots\hfill\quad m}_{K+1}\hfill\null\\
 		\end{array}
 		\right\rangle_0^{\frac{1}{r},\text{o},\h}}
 	$$
 	and
 	$$
 	\beta=\left\langle
 	\begin{array}{c}
 	\hfill (K_0+1)(\h+1)-1\hfill\null\\
 	\hfill\smash{\underbrace{ \m\quad \hfill \m\quad\hfill\dots\hfill\quad \m}_{K_0+1}}\hfill\null\\
 	\end{array}
 	\right\rangle_0^{\frac{1}{r},\text{o},\h}\vphantom{\left\langle
 		\begin{array}{c}
 		\hfill K(\h+1)-1\hfill\null\\
 		\hfill\underbrace{ \m\quad \hfill \m\quad\hfill\dots\hfill\quad \m}_{K}\hfill\null\\
 		\end{array}
 		\right\rangle_0^{\frac{1}{r},\text{o},\h}}
 	\left\langle
 	\begin{array}{c}
 	\hfill \h \quad\hfill \h+1\quad\hfill r-3-2\h  \hfill\null\\
 	\end{array}
 	\right\rangle_0^{\text{ext}}\left\langle
 	\begin{array}{c}
 	\hfill 2\h+1\hfill\null\\
 	\hfill \m \quad \hfill \m\hfill\null\\
 	\end{array}
 	\right\rangle_0^{\frac{1}{r},\text{o},\h}.
 	$$
 	If $K_0+2=\N$, there is a third possibly non-zero contribution
 	$$
 	\gamma=\left\langle
	\begin{array}{c}
	\hfill  \emptyset\hfill\null\\
	\hfill\smash{\underbrace{ \m\quad \hfill \m\quad\hfill\dots\hfill\quad \m}_{\N}}\quad\hfill \b\hfill\null\\
	\end{array}
	\right\rangle_0^{\frac{1}{r},\text{o},\h}\left\langle
 	\begin{array}{c}
 	\hfill \frac{r-2-\b}{2} \quad\hfill \h+1\quad\hfill \frac{r-4+\b-2\h}{2}  \hfill\null\\
 	\end{array}
 	\right\rangle_0^{\text{ext}}\left\langle
 	\begin{array}{c}
 	\hfill \frac{r-\b+2\h}{2}\quad (\N-1)(\h+1)-1\hfill\null\\
 	\hfill \emptyset\hfill\null\\
 	\end{array}
 	\right\rangle_0^{\frac{1}{r},\text{o},\h}.
 	$$
 	The last contribution always vanishes, since we can calculate directly from the definitions as in \cite[Example 3.24]{BCT2} that 
 	$$\left\langle
 	\begin{array}{c}
 	\hfill \frac{r-\b+2\h}{2}\quad (\N-1)(\h+1)-1\hfill\null\\
 	\hfill \emptyset\hfill\null\\
 	\end{array}
 	\right\rangle_0^{\frac{1}{r},\text{o},\h}=0.$$
	Therefore in any case, the TRR with respect to a boundary marking with twist $\m$ gives
	$$
	 \alpha-(K_0+1)\beta;
	$$
	the TRR with respect to the internal marking with twist $ (K_0+1)(\h+1)-1$ gives $0$. Since all TRRs calculate the same number, it must hold that
	$$
	\alpha=(K_0+1)\beta,
	$$
	which means
	$$
	\left\langle
	\begin{array}{c}
	\hfill ((K_0+1)+1)(\h+1)-1\hfill\null\\
	\hfill\smash{\underbrace{ \m\quad \hfill \m\quad\hfill\dots\hfill\quad \m}_{(K_0+1)+1}}\hfill\null\\
	\end{array}
	\right\rangle_0^{\frac{1}{r},\text{o},\h}=(K_0+1)
	\left\langle
	\begin{array}{c}
	\hfill (K_0+1)(\h+1)-1\hfill\null\\
	\hfill\smash{\underbrace{ \m\quad \hfill \m\quad\hfill\dots\hfill\quad \m}_{K_0+1}}\hfill\null\\
	\end{array}
	\right\rangle_0^{\frac{1}{r},\text{o},\h};\vphantom{\left\langle
	\begin{array}{c}
	\hfill (K_0+1)(\h+1)-1\hfill\null\\
	\hfill{\underbrace{ \m\quad \hfill \m\quad\hfill\dots\hfill\quad \m}_{K_0+1}}\hfill\null\\
	\end{array}
	\right\rangle_0^{\frac{1}{r},\text{o},\h}}, 
	$$
	and the induction step follows.	
\end{proof}

\begin{dfn}
	For a set of internal twists $I=\{a_1,\dots,a_l\colon \h+1\le a_i\le r-1\}$, we define $K(I)$ and $x(I)$ to be the unique integers satisfying
	$$
	2\sum_{a_i\in I} a_i+K(I)\m+x(I)-r+2=(2\lvert I\rvert+K(I)-2)r
	$$
	and
	$$
	\m\le x(I)\le r-2.
	$$
	When $K(I)\ge 0$, we define $$f(I):=
	\left\langle
	\begin{array}{c}
	\hfill  a_1\quad \hfill\dots\quad \hfill a_l\hfill\null\\
	\hfill\smash{\underbrace{ \m\quad \hfill \m\quad\hfill\dots\hfill\quad \m}_{K(I)}\quad\hfill x(I)}\hfill\null\\
	\end{array}
	\right\rangle_0^{\frac{1}{r},\text{o},\h}\vphantom{\left\langle
		\begin{array}{c}
		\hfill  a_1\quad \hfill\dots\quad \hfill a_l\hfill\null\\
		\hfill\underbrace{ m\quad \hfill m\quad\hfill\dots\hfill\quad m}_{K(I)}\quad\hfill x(I)\hfill\null\\
		\end{array}
		\right\rangle_0^{\frac{1}{r},\text{o},\h}}.
	$$
	When $K(I)=-1$ and $x(I)=\m$, we define $$f(I):=
	\left\langle
	\begin{array}{c}
	\hfill  a_1\quad \hfill\dots\quad \hfill a_l\hfill\null\\
	\hfill\emptyset\hfill\null\\
	\end{array}
	\right\rangle_0^{\frac{1}{r},\text{o},\h}.
	$$
	We also write $$\mathcal I:=\{I\colon K(I)\ge 0\}\cup\{I\colon K(I)=-1,x(I)=\m\}.$$
	For $I\in \mathcal I$, we define
	$$
	D(I):=2\lvert I\rvert+K(I)-2.
	$$
	Note that $\N=K(\emptyset)$ and $\b=x(\emptyset)$, we also denote by $\c$ the integer such that $\b+2\c=r-2$ as in the type-II method.
\end{dfn}
\begin{rmk}\label{rmk bound of K}
	Observe that it always holds that $K(I)\le \N=K(\emptyset)$. If $x(I)=\m$, then $K(I)\le \N-1$.
\end{rmk}
\begin{rmk}
    For all $I\in \mathcal I$, the set 
    $
    B:=\{\underbrace{\m,\dots,\m}_{K(I)},x(I)\}
    $
    is the unique set satisfying $(B,I)\in \mathcal E$. Moreover, $(B,I)\in \mathcal E_2$ means $x(I)=\m$.
\end{rmk}

\begin{lem}\label{lemxm}
	If $\lvert I\rvert\ge 2$, $x(I)= \m$ and $-1\le K(I) \le \N-1$, then $f(I)$ can be calculated from the following given data: $f(\emptyset)$, the closed extended correlators (which were all calculated in \cite{BCT_Closed_Extended}), and correlators of the form $f(I')$ with $\lvert I'\rvert<\lvert I\rvert$.
\end{lem}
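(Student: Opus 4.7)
\medskip

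\noindent\emph{Proof proposal.}
The plan is to apply the type-II method of Section~\ref{subsec general strategy} together with induction on $|I|$. First, using the dimension equation for $f(I)$ (namely $2\sum a_i + (K+1)\m = r(2l+K-1)-2$), one checks that when $|I|\ge 2$ there must exist some internal twist $a_i\ge \c+1$; after relabelling one may assume $a_1\ge \c+1$. The key auxiliary correlator is
$$
\left\langle
\tau^{a_1}_{a_1-\c-1}\tau^{a_2}_0\cdots\tau^{a_l}_0
\underbrace{\sigma^{\m}\cdots\sigma^{\m}}_{\N+K+1}
\right\rangle_0^{\frac{1}{r},\text{o},\h},
$$
obtained by adding $\N$ boundary markings of twist $\m$ and putting a $\psi$-power of $a_1-\c-1$ on the first internal marking; a direct rank count shows this correlator matches the dimension of its moduli.

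I would then apply Theorem~\ref{thm TRR} in two different ways: once using the TRR~\eqref{eqtrr1} with respect to any boundary marking (all of twist $\m$), and once using the TRR~\eqref{eqtrr2} with respect to the internal marking $a_2$ (which exists since $|I|\ge 2$). In the type-II decomposition, the main term is
$$
\alpha=\left\langle\tau^{\c}_0\tau^{a_1-\c}_0\tau^{r-2-a_1}_0\right\rangle^{\text{ext}}\cdot f(I)\cdot f(\emptyset),
$$
corresponding to the term where the $\N$ added boundaries, together with a $\sigma^{\b}$ arising from point insertion with $t_j=\c$, split off as the side factor $f(\emptyset)$, while the central factor retains all the original markings of $f(I)$. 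Because the central contribution of \eqref{eqtrr2} always contains $a_2$ but omits $a_1$, whereas $\alpha$'s central contains both, the coefficient of $\alpha$ in \eqref{eqtrr2} is $0$; in \eqref{eqtrr1} it is $-1$. Since the closed-extended factor in $\alpha$ equals $1$ (three-point, rank zero) and $f(\emptyset)\neq 0$ by Proposition~\ref{propfemptyset}, equating the two TRR expansions solves for $f(I)$ in terms of the remaining terms.

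The remaining task is to verify that every other term on the right-hand sides lies in the allowed data. The non-$\alpha$ contributions naturally split into three groups: closed-extended correlators, which are all known by~\cite{BCT_Closed_Extended}; side and central contributions at which the point insertion uses $t_j=\h$ (hence $r-2-2t_j=\m$), which are precisely correlators $f(I')$ with $|I'|<|I|$ and hence covered by the inductive hypothesis; and side contributions arising from $t_j<\h$, which introduce an extra boundary twist strictly greater than $\m$. The latter live in $\mathcal S\setminus(\mathcal E_1\cup\mathcal E_2)$, so Proposition~\ref{prop general computation} reduces them along the partial order $\prec$.

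The main obstacle is the bookkeeping of this last reduction: one has to check that every $\prec$-chain starting from such a side contribution terminates in data already at our disposal, namely closed-extended correlators, $f(\emptyset)$, and $f(I')$ with $|I'|<|I|$, without ever feeding back into another $\mathcal E_2$-correlator with $|I'|=|I|$. This is a finite combinatorial check: since every side contribution has $|R_j|\le l-1$, the number of internal markings strictly decreases, and the $\prec$-order on the non-exceptional correlators produced by Proposition~\ref{prop general computation} can only further decrease $|I|$, $D$, or move the twist weights, so the recursion cannot loop back into the present case.
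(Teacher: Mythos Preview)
Your approach is the right one (type-II), but the auxiliary correlator is set up incorrectly and this breaks the argument completely. In the type-II method the first internal twist is \emph{replaced} by $a_1-\c$ and carries a single $\psi$; you instead keep the twist at $a_1$ and put $\psi^{\,a_1-\c-1}$ on it. In every term of the TRR (Theorem~\ref{thm TRR}) the marking carrying the $\psi$-class lands in the closed-extended factor, so with your auxiliary the twist $a_1$ is always absorbed into $\langle\tau^a_0\tau^{a_1}_{a_1-\c-2}\cdots\rangle^{\text{ext}}$ and never appears in an open contribution. Consequently no open factor can have the full internal twist set $\{a_1,\ldots,a_l\}$: the central open factor in TRR(b) contains $a_2$ but not $a_1$, and the same holds in TRR(a). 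The ``main term'' you write down simply does not occur in either expansion, and there is nothing to solve for. (Your stated closed-extended factor $\langle\tau^{\c}_0\tau^{a_1-\c}_0\tau^{r-2-a_1}_0\rangle^{\text{ext}}$ is also not what your TRR produces; it would be the three-point correlator $\langle\tau^a_0\tau^{a_1}_{a_1-\c-2}\tau^{\c}_0\rangle^{\text{ext}}$, which vanishes whenever $a_1>\c+2$.) The coefficient claim $(-1$ versus $0)$ is lifted from the type-I analysis; it does not apply here.

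The paper's proof uses the correct type-II auxiliary $\langle\tau^{a_1-\c}_1\tau^{a_2}_0\cdots\tau^{a_l}_0\;\sigma^{\m}\cdots\sigma^{\m}\rangle$ with $K(I)+\N+1$ boundary markings. Then $f(I)$ does appear as a central factor (via $r-2-a=a_1$, one side contribution $f(\emptyset)$ with $t_1=\c$), and the coefficients of this main term in the boundary versus internal TRR are the distinct binomials $-\binom{K(I)+\N}{\N}$ and $-\binom{K(I)+\N+1}{\N}$. With this setup every remaining open factor automatically has strictly fewer internal markings, so the detour through Proposition~\ref{prop general computation} in your last paragraph is unnecessary.
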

\begin{proof}
	 To calculate $f(I)$, we use the type-II method and apply TRRs to
	$$
	\left\langle
	\begin{array}{c}
	\hfill  (a_1-\c)\psi\quad\hfill a_2\quad\hfill\dots\quad \hfill a_l\hfill\null\\
	\hfill\smash{\underbrace{ \m\quad \hfill \m\quad\hfill\dots\hfill\quad \m}_{K(I)+\N}\quad\hfill \m}\hfill\null\\
	\end{array}
	\right\rangle_0^{\frac{1}{r},\text{o},\h}\vphantom{\left\langle
		\begin{array}{c}
		\hfill  a_1\quad \hfill\dots\quad \hfill a_l\hfill\null\\
		\hfill\underbrace{ m\quad \hfill m\quad\hfill\dots\hfill\quad m}_{K(I)}\quad\hfill x(I)\hfill\null\\
		\end{array}
		\right\rangle_0^{\frac{1}{r},\text{o},\h}}.
	$$
	One possibly non-zero term which appears in the TRRs is the main term
	$$
	\alpha:=f(\emptyset)f(I)\left\langle
	\begin{array}{c}
	\hfill \c \quad\hfill a_1-\c\quad\hfill r-2-a_1 \hfill\null\\
	\end{array}
	\right\rangle_0^{\text{ext}}.
	$$
	According to Remark \ref{rmk determine central side contribution}, all other terms contributing to TRRs are products of a closed extended correlator and correlators of the form $f(I')$ with $\lvert I'\rvert<\lvert I\rvert$.
	\par
	On the other hand, in the TRR expression with respect to a boundary marking, the coefficient multiplying $\alpha$ is $-\binom{K(I)+\N}{\N}$; in the TRR expression with respect to an arbitrary internal marking, the coefficient multiplying $\alpha$ is $-\binom{K(I)+\N+1}{\N}$. 
 
 The lemma is proven by the type-II method: we subtract the two different TRR expressions. This gives
 \[\left(\binom{K(I)+\N+1}{\N}-\binom{K(I)+\N}{\N}\right)\alpha+\ldots=0,\]
 where the terms in $\ldots$ are products of a closed extended correlator and correlators of the form $f(I')$ with $\lvert I'\rvert<\lvert I\rvert$. Thus we can express $\alpha,$ hence also $f(I)$ by using such terms together with $f(\emptyset)$. \end{proof}

\begin{lem}\label{lemknottoolargexnm}
	If $I\ne \emptyset$, $x(I)\neq \m$ and $0\le K(I) \le \N-2$, then $f(I)$ can be calculated from the following given data: $f(\emptyset)$, closed extended correlators, correlators of the form $f(I')$ with $\lvert I'\rvert<\lvert I\rvert$ and, in the case $\lvert I\rvert=1$,  correlators of the form $f(I')$ with $\lvert I'\rvert=1$ and $x(I')=\m$.
	
\end{lem}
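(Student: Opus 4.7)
The plan is to apply the type-I method of Section \ref{subsec general strategy} with $b_1=x(I)$, which is well-defined since $x(I)>\m$ by hypothesis. I would introduce the auxiliary correlator
\[
P := \left\langle \tau^{(x(I)-\m)/2}_1 \prod_{\tilde a\in I}\tau^{\tilde a}_0 \cdot \underbrace{\sigma^{\m}\cdots\sigma^{\m}}_{K(I)+1}\right\rangle_0^{\frac{1}{r},o,\h},
\]
obtained from $f(I)$ by replacing $\sigma^{x(I)}$ with $\sigma^{\m}$ and adjoining a new internal marking of twist $(x(I)-\m)/2$ carrying $\psi$-degree one. A routine rank-versus-dimension count shows $P=0$. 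I would then apply TRR~(a) of Theorem~\ref{thm TRR} with respect to one $\sigma^{\m}$ (valid since $K(I)+1\ge 1$) and TRR~(b) with respect to one $\tilde a\in I$ (valid since $|I|\ge 1$ gives $P$ at least two internal markings); both TRR expressions evaluate to zero.

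The main term prescribed by the method is
\[
\alpha = \langle \tau^{(x(I)-\m)/2}_0 \tau^{(r-2-x(I))/2}_0 \tau^{(r-2+\m)/2}_0\rangle_0^{\text{ext}} \cdot \langle \tau^{\h}_0 \sigma^{\m}\rangle_0^{\frac{1}{r},o,\h}\cdot f(I),
\]
with both three-point factors zero-dimensional and equal to $1$, hence $\alpha=f(I)$. It appears with coefficient $-1$ in TRR~(a) and coefficient $0$ in TRR~(b); the latter because TRR~(b) forces the chosen $\tilde a$ into the central open, whereas the main term's central open contains no marking from $I$. Subtracting the two equations expresses $f(I)$ as a polynomial in (known) closed-extended correlators and open-correlator factors. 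For any side open with internal set $I^{\text{sd}}\subseteq I$, $K'$ copies of $\sigma^{\m}$, and one new $\sigma^{r-2-2t_j}$, combining its rank-equals-dimension identity with the defining equations of $(K(I^{\text{sd}}),x(I^{\text{sd}}))$ gives
\[
(K'-K(I^{\text{sd}}))(2\h+2) = r-2-2t_j-x(I^{\text{sd}});
\]
the right-hand side has absolute value at most $2\h$ while the left is an integer multiple of $2\h+2$, so both vanish. Each side open therefore equals $f(I^{\text{sd}})$ exactly, and only the main term realizes $I^{\text{sd}}=I$ (by a boundary-counting argument), so every other side factor has $|I^{\text{sd}}|<|I|$ and lies in the given data.

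The decisive step, where the hypothesis $K(I)\le \N-2$ enters, is controlling the central open, whose internal twists form $\{r-2-a\}\cup I^{\text{ctr}}$ with $I^{\text{ctr}}\subseteq I$ and all boundary twists $\m$. If $r-2-a\le\h$, Proposition~\ref{prop:vanish small internal} eliminates the contribution except in trivial subcases; if $r-2-a\ge\h+1$, the central open equals $f(I')$ with $x(I')=\m$ and $|I'|=|I^{\text{ctr}}|+1$. The main obstacle is ruling out $I^{\text{ctr}}=I$: if this held, every side open would have empty internal set and hence equal $f(\emptyset)$ by the rank analysis, consuming exactly $\N$ original boundary markings each; combined with the two-point closed-extended vanishing that forces $s\ge 1$, this would require $s\N\le K(I)+1$, giving $\N\le K(I)+1$ and contradicting $K(I)\le \N-2$. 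Hence $I^{\text{ctr}}\subsetneq I$, so $|I'|\le|I|$, and $f(I')$ lies in the given data directly when $|I'|<|I|$, by the lemma's hypothesis when $|I|=1$ and $|I'|=1$ with $x(I')=\m$, or via Lemma~\ref{lemxm} when $|I'|=|I|\ge 2$. The desired formula for $f(I)$ follows.
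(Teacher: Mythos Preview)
Your approach is correct and essentially the same as the paper's: both apply the type-I method to the same auxiliary correlator $P$, compare TRR~(a) with respect to a boundary $\sigma^{\m}$ against TRR~(b) with respect to an internal marking, and extract the main term $\alpha=f(I)$ from the difference. Your formula $(K'-K(I^{\text{sd}}))(2\h+2)=r-2-2t_j-x(I^{\text{sd}})$ is a clean way to force every nonzero side open to equal $f(I^{\text{sd}})$ exactly, and your boundary-count ruling out $I^{\text{ctr}}=I$ (via $s\N\le K(I)+1$) is precisely how the hypothesis $K(I)\le\N-2$ enters; the paper establishes the same conclusion by listing the surviving term types $\beta_i,\gamma_i$ explicitly.

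One slip: your claim that ``a routine rank-versus-dimension count shows $P=0$'' is unfounded. In fact $\operatorname{rank}(\mathcal W)=2|I|+K(I)-2$ and $\dim=2|I|+K(I)$, so with one $\psi$-class the rank of $E$ exactly matches the dimension; $P$ is a well-defined (generally nonzero) number. This does not damage the argument, since the type-I method only needs the two TRR expressions to equal each other (both equal $P$), and you work with their difference. Simply drop the assertion that $P=0$ and that ``both TRR expressions evaluate to zero''; the rest stands.
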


\begin{proof}
	To calculate $f(I)$, we use the type-I method and apply TRRs to
	$$
	\left\langle
	\begin{array}{c}
		\hfill \frac{x(I)-m}{2}\psi \quad\hfill a_1\quad \hfill\dots\quad \hfill a_l\hfill\null\\
		\hfill\smash{\underbrace{ \m\quad \hfill \m\quad\hfill\dots\hfill\quad \m}_{K(I)}\quad\hfill \m}\hfill\null\\
	\end{array}
	\right\rangle_0^{\frac{1}{r},\text{o},\h}\vphantom{\left\langle
		\begin{array}{c}
			\hfill  a_1\quad \hfill\dots\quad \hfill a_l\hfill\null\\
			\hfill\underbrace{ m\quad \hfill m\quad\hfill\dots\hfill\quad m}_{\frac{K(I)}{2}}\quad\hfill x(I)\hfill\null\\
		\end{array}
		\right\rangle_0^{\frac{1}{r},\text{o},\h}}.
	$$
	One possibly non-zero term which appears in the TRRs is the main term
	\begin{equation}\label{eq main term knottoolargexnm}
	\alpha:=f(I)\left\langle
	\begin{array}{c}
	\hfill \frac{r-2-x(I)}{2} \quad\hfill \frac{x(I)-\m}{2}\quad\hfill \frac{r-2+\m}{2} \hfill\null\\
	\end{array}
	\right\rangle_0^{\text{ext}}
	\left\langle
	\begin{array}{c}
	\hfill   \frac{r-2-\m}{2}\hfill\null\\
	\hfill \m\hfill\null\\
	\end{array}
	\right\rangle_0^{\frac{1}{r},\text{o},\h}.
	\end{equation}
	Another type of terms which are possibly non-zero and may appear in the TRRs is
	\begin{equation}\label{eq another term knottoolargexnm}
	\beta_i:=\left\langle
	\begin{array}{c}
	\hfill a_i \quad\hfill\frac{x(I)-\m}{2}\quad\hfill r-2-a_i-\frac{x(I)-\m}{2}\hfill\null\\
	\end{array}
	\right\rangle_0^{\text{ext}}\left\langle
	\begin{array}{c}
	\hfill \frac{x(I)-\m}{2}+a_i \quad\hfill \smash{\overbrace{a_1\quad \hfill\dots\quad \hfill a_l}^{\text{without }a_i}}\hfill\null\\
	\hfill\smash{\underbrace{ \m\quad \hfill \m\quad\hfill\dots\hfill\quad \m}_{K(I)}\quad\hfill \m}\hfill\null\\
	\end{array}
	\right\rangle_0^{\frac{1}{r},\text{o},\h}\vphantom{\left\langle
		\begin{array}{c}
		\hfill  a_1\quad \hfill\dots\quad \hfill a_l\hfill\null\\
		\hfill\underbrace{ m\quad \hfill m\quad\hfill\dots\hfill\quad m}_{K(I)}\quad\hfill x(I)\hfill\null\\
		\end{array}
		\right\rangle_0^{\frac{1}{r},\text{o},\h}}.	
	\end{equation}
	If $\lvert I\rvert=1$, then $\beta_i$ is a product of a closed extended correlator and a correlator of the form $f(I')$ with $\lvert I'\rvert=1$ and $x(I')=\m$; if $\lvert I\rvert\ge2$, by lemma \ref{lemxm}, all $\beta_i$ can be calculated from $f(\emptyset)$, closed extended correlators and correlators of the form $f(I')$ with $\lvert I'\rvert<\lvert I\rvert$.
	\par
	In the case  $\lvert I\rvert\ge2$, there is an additional type of possibly non-zero terms
	$$
	\gamma_i:=f(\{a_i\})\left\langle
	\begin{array}{c}
	\hfill \frac{r-2-x(\{a_i\})}{2} \quad\hfill\frac{x(I)-\m}{2}\quad\hfill \frac{r-2+\m+x(\{a_i\})-x(I)}{2}\hfill\null\\
	\end{array}
	\right\rangle_0^{\text{ext}}\left\langle
	\begin{array}{c}
	\hfill \frac{r-2-\m-x(\{a_i\})+x(I)}{2} \quad\hfill \smash{\overbrace{a_1\quad \hfill\dots\quad \hfill a_l}^{\text{without }a_i}}\hfill\null\\
	\hfill\smash{\underbrace{ \m\quad \hfill \m\quad\hfill\dots\hfill\quad \m}_{K(I)-K(\{a_i\})}\quad\hfill \m}\hfill\null\\
	\end{array}
	\right\rangle_0^{\frac{1}{r},\text{o},\h}\vphantom{\left\langle
		\begin{array}{c}
		\hfill  a_1\quad \hfill\dots\quad \hfill a_l\hfill\null\\
		\hfill\underbrace{ m\quad \hfill m\quad\hfill\dots\hfill\quad m}_{K(I)-K(\{a_i\})}\quad\hfill x(I)\hfill\null\\
		\end{array}
		\right\rangle_0^{\frac{1}{r},\text{o},\h}}.	
	$$
	These terms can be calculated from the data given in the statement of the lemma, by applying lemma \ref{lemxm}.
	\par
	Since $K(I)<\N-1$, according to Remark \ref{rmk determine central side contribution}, all other terms contributing in TRRs are products of a closed extended correlator and correlators of the form $f(I')$ with $\lvert I'\rvert<\lvert I\rvert$.
	\par
	On the other hand, in the TRR with respect to a boundary marking $\m$, the coefficient of the main term $\alpha$ is $-1$; in the TRR with respect to an arbitrary internal marking, the coefficient of $\alpha$ is $0$. Thus $f(I)$ can be calculated from the data given in the statement of the lemma by the type-I method and the lemma is proven.
\end{proof}

The above lemmas do not cover the cases where $I\ne \emptyset$, $x(I)\neq \m$ and $K(I)=\N\text{ or }\N-1$. We treat them separately for different $\N$ in the following subsections.
\subsubsection{Case $\N\ge 4$}
In this subsection we assume  $\N\ge 4$. This implies $\m\ge \frac{r}{2}$. 

\begin{lem}\label{lemmnxlargek}
	If $\N\ge 4$, $I\ne \emptyset$, $x(I)\neq \m$ and $K(I)=\N\text{ or }\N-1$, then $f(I)$ can be calculated from the following given data: $f(\emptyset)$, closed extended correlators, correlators of the form $f(I')$ with $\lvert I'\rvert<\lvert I\rvert,$ and, in the case $\lvert I\rvert=1$, correlators of the form $f(I')$ with $\lvert I'\rvert=1$ and $x(I')=\m$.
\end{lem}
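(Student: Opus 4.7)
The plan is to apply the type-II method of Section \ref{subsec general strategy} to $f(I)$, taking as auxiliary correlator
\[
\left\langle (a_1-\c)\psi\cdot\tau_0^{a_2}\cdots\tau_0^{a_l}\,\sigma^{x(I)}\,\underbrace{\sigma^{\m}\cdots\sigma^{\m}}_{K(I)+\N}\right\rangle_0^{\frac{1}{r},\text{o},\h},
\]
obtained from $f(I)$ by inserting $\N$ extra boundary markings of twist $\m$ and adding a descendant $\psi$ on the internal of twist $a_1-\c$. The hypothesis $a_1\ge\c$ needed by the type-II method follows from $\c=r-(\h+1)\N\le\h+1\le a_1$, using $\N=\lfloor r/(\h+1)\rfloor$. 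The main term is
\[
\alpha:=\left\langle\tau_0^{\c}\,\tau_0^{a_1-\c}\,\tau_0^{r-2-a_1}\right\rangle_0^{\text{ext}}\cdot f(I)\cdot f(\emptyset),
\]
in which the closed extended $3$-point factor equals $1$ and the factor $f(\emptyset)$ is non-zero by Proposition \ref{propfemptyset}; hence isolating the coefficient of $\alpha$ in a suitable TRR identity determines $f(I)$.

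First I would compute the coefficient of $\alpha$ in two different TRRs and verify they differ. In TRR \eqref{eqtrr1} applied to the unique non-$\m$ boundary $\sigma^{x(I)}$, this marking is forced onto the central component, and the $f(\emptyset)$-side may absorb any $\N$ of the available $K(I)+\N$ $\m$-boundaries; this gives coefficient $-\binom{K(I)+\N}{\N}$. In TRR \eqref{eqtrr1} applied to one of the $K(I)$ original $\sigma^{\m}$-markings (such a marking exists since $K(I)\ge\N-1\ge3$), the chosen $\m$ is forced onto the central, leaving only $\binom{K(I)+\N-1}{\N}$ choices; the coefficient becomes $-\binom{K(I)+\N-1}{\N}$. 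The difference $\binom{K(I)+\N-1}{\N-1}$ is strictly positive, so subtracting the two TRR identities isolates $\alpha$.

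The remaining terms in the two TRR expressions factor, by Remark \ref{rmk determine central side contribution}, into a closed extended correlator (computed in \cite{BCT_Closed_Extended}), a central open contribution, and several side contributions. The main technical obstacle is to verify that each such factor is computable from the given data. Using the dimension bookkeeping \eqref{eq central side dimension}, every factor either (i) has strictly fewer than $|I|$ internals, so it is of the form $f(I')$ with $|I'|<|I|$ and hence in the given data, or (ii) has exactly $|I|$ internals but lies strictly below $(B,I)$ in the partial order $\prec$ (forced by the fact that any redistribution of internals or of the added $\m$-markings away from the main configuration strictly decreases the dimension of the central contribution), so it is reducible to given data via the joint induction through Proposition \ref{prop general computation} together with Lemmas \ref{lemxm} and \ref{lemknottoolargexnm} for the $f$-form intermediates.

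For the special case $|I|=1$, TRR \eqref{eqtrr2} is unavailable but the two boundary-based TRRs above still apply (each requires only $l,k\ge1$). Since $l=1$, there are no internals available to migrate to side components, so every side contribution is a pure-boundary correlator; the analysis in (ii) simplifies accordingly. The only subtle point is that a side contribution in $\mathcal E_1$-form distinct from $f(\emptyset)$ arises precisely when $\sigma^{x(I)}$ is moved to a side, and dimension counting then shows the surviving central is necessarily of the form $f(I')$ with $|I'|=1$ and $x(I')=\m$, which is exactly the extra datum allowed in the statement of the lemma for the $|I|=1$ case.
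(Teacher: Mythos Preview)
Your approach differs substantially from the paper's: you apply the type-II method directly, keeping the non-$\m$ boundary $\sigma^{x(I)}$ in the auxiliary correlator. The paper instead uses the type-I method as the main tool (replacing $\sigma^{x(I)}$ by $\sigma^{\m}$ and adding the internal $\frac{x(I)-\m}{2}\psi$), so that the auxiliary has only $\m$-boundaries; a targeted type-II sub-step then disposes of the single extra term $\delta$ (with central $f(I\cup\{\frac{x(I)-\m}{2}+\c\})$), by showing that the relevant $J$ satisfies $K(J)\le\N-2$ so that Lemma~\ref{lemknottoolargexnm} applies.

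There is a concrete gap in your step (i). You claim that any non-main open factor with fewer than $|I|$ internals ``is of the form $f(I')$ with $|I'|<|I|$''. This fails for side contributions that absorb $\sigma^{x(I)}$ in the TRR with respect to a $\sigma^{\m}$-marking: such a side has boundary set $\{x(I),\,r-2-2t_j,\,\m,\ldots,\m\}$, and whenever $t_j<\h$ (which occurs for $\N\ge 4$ as soon as $\h\ge 1$) this has \emph{two} entries strictly larger than $\m$, so it is an $F(B',I^{sd})$ with $(B',I^{sd})\notin\mathcal E$ --- not an $f(\cdot)$ at all. (Your $|I|=1$ discussion mislabels these as ``$\mathcal E_1$-form''.) Your appeal to Proposition~\ref{prop general computation} does not close this: that proposition reduces along $\prec$, which can \emph{increase} the number of internals, and iterating it together with Lemma~\ref{lemxm} may feed back into $f(I'')$ with $|I''|=|I|$, $x(I'')\neq\m$, $K(I'')\in\{\N-1,\N\}$ --- exactly the quantities you are trying to determine. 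The paper's all-$\m$ auxiliary avoids this entirely, since then every side and every central automatically has at most one non-$\m$ boundary and is therefore of $f$-form.
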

\begin{proof}
	Similarly to the proof of Lemma \ref{lemknottoolargexnm}, we use the type-I method to calculate $f(I)$: we apply different TRRs to
	$$
	\left\langle
	\begin{array}{c}
	\hfill \frac{x(I)-\m}{2}\psi \quad\hfill a_1\quad \hfill\dots\quad \hfill a_l\hfill\null\\
	\hfill\smash{\underbrace{ \m\quad \hfill \m\quad\hfill\dots\hfill\quad \m}_{K(I)}\quad\hfill \m}\hfill\null\\
	\end{array}
	\right\rangle_0^{\frac{1}{r},\text{o},\h}\vphantom{\left\langle
		\begin{array}{c}
		\hfill  a_1\quad \hfill\dots\quad \hfill a_l\hfill\null\\
		\hfill\underbrace{ m\quad \hfill m\quad\hfill\dots\hfill\quad m}_{K(I)}\quad\hfill x(I)\hfill\null\\
		\end{array}
		\right\rangle_0^{\frac{1}{r},\text{o},\h}}.
	$$
	As in the proof of Lemma \ref{lemknottoolargexnm}, we still have the main term $\alpha$ as \eqref{eq main term knottoolargexnm}, terms $\beta_i$ as \eqref{eq another term knottoolargexnm}, and terms which can be written as products of a closed extended correlator and correlators of the form $f(I')$ with $\lvert I'\rvert<\lvert I\rvert$. However, there is an additional possibly non-zero term $\delta$ that may appear in a TRR in this case:
	$$
	\delta=f(\emptyset)\left\langle
	\begin{array}{c}
	\hfill \c \quad\hfill \frac{x(I)-\m}{2}\quad\hfill r-2-\c-\frac{x(I)-\m}{2} \hfill\null\\
	\end{array}
	\right\rangle_0^{\text{ext}}
	\left\langle
	\begin{array}{c}
		\hfill \frac{x(I)-\m}{2}+\c \quad\hfill a_1\quad \hfill\dots\quad \hfill a_l\hfill\null\\
		\hfill\smash{\underbrace{ \hfill \m\quad\hfill\dots\hfill\quad \m}_{K(I)+1-\N}}\hfill\null\\
	\end{array}
	\right\rangle_0^{\frac{1}{r},\text{o},\h}\vphantom{\left\langle
		\begin{array}{c}
			\hfill  a_1\quad \hfill\dots\quad \hfill a_l\hfill\null\\
			\hfill\underbrace{ m\quad \hfill m\quad\hfill\dots\hfill\quad m}_{K(I)}\quad\hfill x(I)\hfill\null\\
		\end{array}
		\right\rangle_0^{\frac{1}{r},\text{o},\h}}.
	$$
	Assuming $\frac{x(I)-\m}{2}+\c\le r-1$ (otherwise this term is zero), we need to prove that
	$$
	f\left(I\cup\left\{\frac{x(I)-\m}{2}+\c\right\}\right)=\left\langle
	\begin{array}{c}
	\hfill \frac{x(I)-\m}{2}+\c \quad\hfill a_1\quad \hfill\dots\quad \hfill a_l\hfill\null\\
	\hfill\smash{\underbrace{ \hfill \m\quad\hfill\dots\hfill\quad \m}_{K(I)+1-\N}}\hfill\null\\
	\end{array}
	\right\rangle_0^{\frac{1}{r},\text{o},\h}\vphantom{\left\langle
		\begin{array}{c}
		\hfill  a_1\quad \hfill\dots\quad \hfill a_l\hfill\null\\
		\hfill\underbrace{ m\quad \hfill m\quad\hfill\dots\hfill\quad m}_{K(I)}\quad\hfill x(I)\hfill\null\\
		\end{array}
		\right\rangle_0^{\frac{1}{r},\text{o},\h}}
	$$
	can be calculated from the data given in the statement of the lemma. We use an additional type-II method to do this: we apply TRRs to
	$$
	\left\langle
	\begin{array}{c}
	\hfill \frac{x(I)-\m}{2}+\c \quad\hfill (a_1-\c)\psi\quad \hfill a_2 \quad \hfill\dots\quad \hfill a_l\hfill\null\\
	\hfill\smash{\underbrace{ \m\quad \hfill \m\quad\hfill\dots\hfill\quad \m}_{K(I)}\quad\hfill \m}\hfill\null\\
	\end{array}
	\right\rangle_0^{\frac{1}{r},\text{o},\h}\vphantom{\left\langle
		\begin{array}{c}
		\hfill  a_1\quad \hfill\dots\quad \hfill a_l\hfill\null\\
		\hfill\underbrace{ m\quad \hfill m\quad\hfill\dots\hfill\quad m}_{K(I)}\quad\hfill x(I)\hfill\null\\
		\end{array}
		\right\rangle_0^{\frac{1}{r},\text{o},\h}}.
	$$
	According to Remark \ref{rmk determine central side contribution}, all the possibly non-zero terms contributing here are listed below.
	\begin{itemize}
		\item 
        Terms with a central contribution $f(I')$ such that $\lvert I' \rvert>\lvert I \rvert$. The only possibly non-zero one among them is
  $$		f(\emptyset)\left\langle
		\begin{array}{c}
		\hfill \c \quad\hfill a_1-\c\quad\hfill r-2-a_1 \hfill\null\\
		\end{array}
		\right\rangle_0^{\text{ext}}f\left(I\cup\left\{\frac{x(I)-\m}{2}+\c\right\}\right).
		$$ 
		This is the main term in the additional type-II method. Note that its coefficient in the TRR with respect to a boundary marking $\m$ is $-\binom{K(I)}{\N}$, and its coefficient in the TRR with respect to an arbitrary internal marking is $-\binom{K(I)+1}{\N}$. Therefore, we need to show that all other terms can be calculated from the data given in the statement of the lemma.
		\item Terms which can be written as products of a closed extended correlator and correlators of the form $f(I')$ with $\lvert I'\rvert<\lvert I\rvert$.
		\item Terms with a central contribution $f(I')$ such that $\lvert I' \rvert=\lvert I \rvert$. Since $f(I')$ is a central contribution, we have $x(I')=\m$; 
        by Lemma \ref{lemxm} it can be calculated from the data given in the statement of the lemma. Moreover, all side contributions of these terms are of the form $f(I'')$ for $\lvert I''\rvert\le 1$, they are covered by the data given in the statement of the lemma when $\lvert I \rvert \ge 2$; the only exceptional case $\lvert I \rvert=\lvert I'' \rvert=1$ is also a special case of the next item.
		\item 
        Terms with a side contribution $f(I')$ such that $\lvert I' \rvert\ge \lvert I \rvert$. The only possibility is that $I'=J:=\left\{\frac{x(I)-\m}{2}+\c,a_2,\dots,a_l\right\}$ and the term is
		$$
			f(J)\left\langle
			\begin{array}{c}
			\hfill \frac{r-2-x(J)}{2} \quad\hfill a_1-\c\quad\hfill\frac{r-2+x(J)}{2}-a_1+\c \hfill\null\\
			\end{array}
			\right\rangle_0^{\text{ext}}f\left(\left\{\frac{r-2+x(J)}{2}-a_1+\c\right\}\right).
		$$
	\end{itemize}
	We will show that $K(J)\le \N-2$, then the lemma will follow from Lemma \ref{lemknottoolargexnm}. From 
	$$
		\N\m+\b-r+2=(\N-2)r
	$$
	we get
	$$
	2\N(\h+1)=r+\b+2.
	$$
	Similarly we have 
	$$
	2\sum_{i=1}^l (r-a_i)+2K(I)(\h+1)=r+x(I)+2
	$$
	and
	$$
	2\sum_{i=1}^l (r-a_i)+2K(J)(\h+1)=r+x(J)+2-2a_1+x(I)-\m+2\c.
	$$
	Since 
	$$
	2(r-a_1)\le2\sum_{i=1}^l (r-a_i)=x(I)-\b+2(\N-K(I))(\h+1),
	$$
	we have
	\begin{equation*}
	\begin{split}
	2(\N-K(J))(\h+1)&=2(\N-K(I))(\h+1)+2(K(I)-K(J))(\h+1)\\
	&\ge 2(r-a_1)-x(I)+\b-x(J)+2a_1+\m-2\c\\
	&=r+2\b+2+\m-x(I)-x(J)\\
	\end{split}
	\end{equation*}
	Note that $\m\le \b,x(I),x(J)\le r-2$ and $\m\ge \frac{r}{2},\h\le \frac{r-4}{4}$ (since $\N\ge 4$), we get
	$$
	2(\N-K(J))(\h+1)\ge 3\m-r+6 =2r-6\h>2\h+2
	$$
	which implies 
	$$
	\N-K(J)\ge 2
	$$
\end{proof}
\begin{rmk}
	The above proof works for all $\m$ satisfying $\m>\frac{r-3}{2}$. When $ \frac{r}{3}\le \m <\frac{r}{2}$  we have $\N=3$, so this proof covers some case of $\N=3$.
\end{rmk}
\begin{prop}\label{propnge4}
	For $\N\ge 4$, we can calculate $f(I)$ for all $I\in \mathcal I$.
\end{prop}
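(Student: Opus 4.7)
The strategy is a straightforward induction on $|I|$, using the lemmas established earlier in this subsection to perform the inductive step; the key point is just to check that the union of the cases they cover exhausts $\mathcal I$, and that the data each lemma relies on has already been computed by the time we invoke it.

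For the base case $|I|=0$, the only element of $\mathcal I$ is $I=\emptyset$ (with $K(\emptyset)=\N$ and $x(\emptyset)=\b$), and $f(\emptyset)$ is computed explicitly by Proposition \ref{propfemptyset} together with the BCT computation of $\langle \sigma^{\N-2}\cdots\sigma^{\N-2}\rangle_0^{1/\N,\text{o,BCT}}$.

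For the inductive step, fix $I\in\mathcal I$ with $|I|\ge 1$ and assume $f(I')$ is known for every $I'\in\mathcal I$ with $|I'|<|I|$. By Remark \ref{rmk bound of K}, $K(I)\le \N$ always, and $K(I)\le \N-1$ when $x(I)=\m$. Split into cases:
\begin{itemize}
\item If $x(I)=\m$ and $|I|=1$, then $I$ is of the form $\{(K+1)(\h+1)-1\}$ with $0\le K\le \N-1$ (in particular $K(I)\ge 0$, so the degenerate possibility $K(I)=-1$ does not arise), and $f(I)=K!$ by Lemma \ref{leml1xm}.
\item If $x(I)=\m$ and $|I|\ge 2$, then $-1\le K(I)\le \N-1$ automatically, and Lemma \ref{lemxm} expresses $f(I)$ in terms of $f(\emptyset)$, closed extended correlators (all known by \cite{BCT_Closed_Extended}), and values $f(I')$ with $|I'|<|I|$, all of which are covered by the inductive hypothesis.
\item If $x(I)\ne \m$ and $0\le K(I)\le \N-2$, Lemma \ref{lemknottoolargexnm} reduces $f(I)$ to $f(\emptyset)$, closed extended correlators, $f(I')$ with $|I'|<|I|$, and (only when $|I|=1$) correlators $f(I')$ with $|I'|=1$ and $x(I')=\m$. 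The last set is precisely what Lemma \ref{leml1xm} computes.
\item If $x(I)\ne \m$ and $K(I)\in\{\N-1,\N\}$, the same kind of reduction is provided by Lemma \ref{lemmnxlargek}, whose hypothesis $\N\ge 4$ is exactly the standing assumption of the proposition.
\end{itemize}
In each case the inputs required are either given by Proposition \ref{propfemptyset}/Lemma \ref{leml1xm}, or lie strictly below $I$ in the induction on $|I|$, so $f(I)$ is determined.

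There is no ``main obstacle'' left: all the real work has already been done in Lemmas \ref{leml1xm}--\ref{lemmnxlargek} (in particular, the delicate arithmetic estimate ensuring $\N-K(J)\ge 2$ in the proof of Lemma \ref{lemmnxlargek} is exactly where the assumption $\N\ge 4$ is used). The only thing to verify when writing out the proof is that the cases above indeed partition $\mathcal I\setminus\{\emptyset\}$, which follows directly from the bounds in Remark \ref{rmk bound of K}.
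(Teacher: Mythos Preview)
Your proposal is correct and follows essentially the same approach as the paper: both argue by induction on $|I|$, handling $I=\emptyset$ via Proposition \ref{propfemptyset}, the $|I|=1$ cases via Lemma \ref{leml1xm} (when $x(I)=\m$) and Lemmas \ref{lemknottoolargexnm}/\ref{lemmnxlargek} (when $x(I)\ne\m$), and the inductive step via Lemmas \ref{lemxm}, \ref{lemknottoolargexnm}, and \ref{lemmnxlargek}. Your case split is just a slightly more explicit version of the paper's one-paragraph summary.
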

\begin{proof}
	The case $I=\emptyset$ is calculated in Proposition \ref{propfemptyset}. The case $\lvert I\rvert=1$, $x(I)=\m$ is calculated in Lemma \ref{leml1xm}. The case $\lvert I\rvert=1$, $x(I)\ne \m$ is covered by Lemma \ref{lemknottoolargexnm} and Lemma \ref{lemmnxlargek}. Then we can induct on $\lvert I\rvert$ by using Lemma \ref{lemxm}, Lemma \ref{lemknottoolargexnm} and Lemma \ref{lemmnxlargek} repeatedly.
\end{proof}

\subsubsection{Case $\N=3$}
In this case we have 
$$
\frac{r}{3}\le \m\le \frac{r-2}{2}.
$$
\begin{lem}\label{lemn3k3}
	If $\N=3$, $I\ne \emptyset$ and $K(I)=3$ (then $x(I)\ne \m$), then $f(I)$ can be calculated from the following given data: closed extended correlators and correlators of the form $f(I')$ with $D(I')\le D(I)-1$.
\end{lem}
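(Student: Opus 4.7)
The plan is to adapt the type-I method used in the proof of Lemma \ref{lemknottoolargexnm}. I would apply the two TRRs (a) and (b) of Theorem \ref{thm TRR} to
\[
\left\langle
\begin{array}{c}
\hfill \frac{x(I)-\m}{2}\psi\hfill a_1\hfill\dots\hfill a_l\hfill\null\\
\hfill\smash{\underbrace{\m\hfill \m\hfill\dots\hfill\m}_{K(I)+1=4}}\hfill\null
\end{array}
\right\rangle_0^{\frac{1}{r},\text{o},\h},
\]
once with respect to the first boundary marking $\m$ (giving coefficient $-1$ on the main term) and once with respect to an internal marking $a_i$ (giving coefficient $0$). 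Subtracting the two resulting expressions isolates the main term, which is essentially $f(I)$, as a polynomial combination of the other contributions.

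Following the bookkeeping used in Lemma \ref{lemknottoolargexnm}, these remaining terms fall into four families. Generic products of a closed extended correlator with a single open factor $f(I')$ satisfying $|I'|<l$ automatically obey $D(I')\le 2(l-1)+3-2=2l-1\le D(I)-1$. The $\gamma_i$-type terms contain an open side factor $f(\{a_i\})$ together with a central factor whose $K$ has dropped by $K(\{a_i\})$; since $a_i\ge\h+1$ forces $K(\{a_i\})\ge 1$, the central factor has $K\le\N-1$ and $x=\m$, hence is computable via Lemma \ref{lemxm}. The observation specific to $\N=3$, $K(I)=\N$ is that the exceptional $\delta$ term
\[
\delta = f(\emptyset)\cdot\langle\text{ext}\rangle\cdot \left\langle \tfrac{x(I)-\m}{2}+\c,\ a_1,\dots,a_l \,\bigg|\, \underbrace{\m,\dots,\m}_{K(I)+1-\N=1}\right\rangle_0^{\frac{1}{r},\text{o},\h}
\]
has an open factor with a single boundary marking $\m$. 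Setting $J:=I\cup\{\tfrac{x(I)-\m}{2}+\c\}$, this factor is $f(J)$ with $K(J)=0$, $x(J)=\m$, so
\[
D(J) = 2(l+1) - 2 = 2l = D(I)-1,
\]
placing $f(J)$ within the allowed data. Unlike the case $\N\ge 4$ treated in Lemma \ref{lemmnxlargek}, no auxiliary type-II step is needed to dispose of $\delta$.

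The main obstacle I foresee is handling the $\beta_i$-type terms, whose open factor has $K(I)+1=4=\N+1$ boundary markings all equal to $\m$ and hence cannot be interpreted as $f(I')$ for any $I'\in\mathcal I$ (by Remark \ref{rmk bound of K}). My plan is to show these terms simply vanish: the associated closed-extended 3-point factor is $\left\langle a_i,\tfrac{x(I)-\m}{2},r-2-a_i-\tfrac{x(I)-\m}{2}\right\rangle^{\text{ext}}$, whose third twist is non-negative iff $a_i+\tfrac{x(I)-\m}{2}\le r-1$. Substituting $x(I) = (2l+2)r - 3\m - 2\sum_j a_j - 2$ (obtained from $K(I)=\N=3$) rewrites this as $\sum_{j\ne i} a_j \ge lr-2\m$. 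Since each $a_j\le r-1$, the maximum of the left-hand side is $(l-1)(r-1)$, and the needed inequality $(l-1)(r-1)\ge lr-2\m$ simplifies to $2\m\ge l+r-1$, which fails for every $l\ge 1$ because $\N=3$ forces $\m\le (r-1)/2$. Thus $\beta_i=0$ in all cases, and combining the two TRR expressions expresses $f(I)$ explicitly in terms of closed extended correlators and $f(I')$ with $D(I')\le D(I)-1$, as desired.
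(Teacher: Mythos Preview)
Your type-I setup matches the paper's. The gap is that your four-family breakdown, borrowed from Lemma~\ref{lemknottoolargexnm}, misses a case that appears only when $K(I)=\N$: take $s=1$ with side contribution $f(\emptyset)$ and exactly one $a_i$ sent to the closed-extended factor. The resulting central contribution has $|I^{ctr}|=l$ and $K(I^{ctr})=0$; it is not $\gamma_i$ (the side is $f(\emptyset)$, not $f(\{a_i\})$), not your $\delta$ (which has $R_{-1}=\emptyset$ and $|I^{ctr}|=l+1$), not $\beta_i$ (which has $s=0$), and not ``generic'' (since $|I^{ctr}|=l$). The fix is immediate once noticed, since $D(I^{ctr})=2l-2\le D(I)-1$. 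A smaller remark: for the $\gamma_i$ central factor you do not need Lemma~\ref{lemxm}; the direct bound $D(I^{ctr})\le 2l=D(I)-1$ already places it in the allowed data.

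The paper avoids the case-chase by recording the identities
\[
|I^{ctr}|+\sum_j|I^{sd}_j|\le l+1,\qquad K(I^{ctr})+\sum_j K(I^{sd}_j)=K(I),\qquad D(I^{ctr})+\sum_j D(I^{sd}_j)+2s-2\le D(I),
\]
and arguing uniformly: (i) every side contribution other than $f(I)$ itself has $|I^{sd}|<l$, hence $D(I^{sd})\le D(I)-2$; (ii) every non-zero term has $s\ge 1$, because $s=0$ would force $K(I^{ctr})=3$ with $x(I^{ctr})=\m$, contradicting Remark~\ref{rmk bound of K} (this is equivalent to, and more uniform than, your $\beta_i=0$ twist computation); then splitting on whether some side equals $f(\emptyset)$ yields $D(I^{ctr})\le D(I)-1$ in either subcase.
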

\begin{proof}
    To calculate
    $$f(I)=
	\left\langle
	\begin{array}{c}
	\hfill  a_1\quad \hfill\dots\quad \hfill a_l\hfill\null\\
	\hfill \m\quad \hfill \m\quad\hfill \m\hfill\quad x(I)\hfill\null\\
	\end{array}
	\right\rangle_0^{\frac{1}{r},\text{o},\h},
	$$
    we use the type-I method and apply TRRs to
	$$
	\left\langle
	\begin{array}{c}
	\hfill \frac{x(I)-\m}{2}\psi \quad\hfill a_1\quad \hfill\dots\quad \hfill a_l\hfill\null\\
	\hfill \m\quad \hfill \m\quad\hfill \m\hfill\quad \m\hfill\null\\
	\end{array}
	\right\rangle_0^{\frac{1}{r},\text{o},\h}.
	$$
	We analyze all possibly non-zero terms appearing in the TRRs. 
    Each non-zero term in the TRRs is of the form $$f^{ext}\cdot f({I^{ctr}})\cdot\prod_{i=1}^s f(I^{sd}_{i}),$$ where $f^{ext}$ is the closed extended correlator. According to Remark \ref{rmk determine central side contribution} we have \begin{equation}\label{eq sum of I}\lvert I^{ctr}\rvert \le \lvert I\rvert+1,~\lvert I^{sd}_i\rvert \le \lvert I\rvert,~\lvert I^{ctr}\rvert+\sum_{i=1}^s \lvert I^{sd}_i\rvert\le \lvert I\rvert+1,
    \end{equation}
    \begin{equation}\label{eq sum of K}
    K(I^{ctr})+\sum_{i=1}^s K(I^{sd}_i)=K(I)
    \end{equation}
    and
    \begin{equation}\label{eq sum of D}
     D(I^{ctr})+\sum_{i=1}^s D(I^{sd}_i)+2s-2 \le D(I).
    \end{equation}
	\begin{itemize}
		\item The main term is $$\alpha=
		f(I)\left\langle
		\begin{array}{c}
		\hfill \frac{r-2-x(I)}{2} \quad\hfill \frac{x(I)-\m}{2}\quad\hfill \frac{r-2+\m}{2} \hfill\null\\
		\end{array}
		\right\rangle_0^{\text{ext}}\left\langle
    	\begin{array}{c}
    	\hfill \frac{r-2-\m}{2}\hfill\null\\
    	\hfill \m\hfill\null\\
    	\end{array}
    	\right\rangle_0^{\frac{1}{r},\text{o},\h}.
		$$
		\par
		In the TRR with respect to a boundary marking $\m$, the coefficient of $\alpha$ is $-1$; in the TRR with respect to the internal marking $a_1$, the coefficient of $\alpha$ is $0$.
		\item Note that for $\lvert I^{sd}\rvert<\lvert I\rvert$ we have (note that $K(I^{sd})\le \N$ by Remark \ref{rmk bound of K})
		$$
		D(I^{sd})=2\lvert I^{sd}\rvert+K(I^{sd})-2\le 2\lvert I\rvert-2+\N-2=D(I)-2.
		$$
		Therefore for all possibly non-zero terms except $\alpha$, the side contributions are of the form $f(I^{sd})$ with $D(I^{sd})\le D(I)-2$.
		\item If $f(I^{ctr})$ is the central contribution, we can also show that $$D(I^{ctr})\le D(I)-1.$$ 
  Indeed, every non-zero term has at least one side contribution, since otherwise we have $K(I^{ctr})=3=\N,$ which contradicts  Remark \ref{rmk bound of K} (note that $x(I^{ctr})=\m$). In the case where this term has a side contribution $f(\emptyset)$, by \eqref{eq sum of K} we have $K(I^{ctr})\le K(I)-\N=0$, hence
		$$
		D(I^{ctr})=2\lvert I^{ctr}\rvert+K(I^{ctr})-2\le 2\left(\lvert I\rvert+1\right)+0-2=D(I)-1.
		$$
		In the case this term has a side contribution $f(J)$ for $J\ne \emptyset$, by \eqref{eq sum of I} we have $\lvert I^{ctr}\rvert\le\lvert I\rvert$ and
		$$
		D(I^{ctr})=2\lvert I^{ctr}\rvert+K(I^{ctr})-2\le 2\lvert I\rvert+(\N-1)-2=D(I)-1.
		$$
		\end{itemize}
        Thus, all non-zero terms other than $\alpha,$ can be written as products of a closed extended correlator and correlators of the form $f(I')$ with $D(I')\le D(I)-1$. We can thus calculate $f(I)$ from the data given in the statement of the lemma using the type-I method.
\end{proof}

\begin{lem}\label{lemn3k2xm}
	If $\N=3$, $\lvert I\rvert\ge 2$, $K(I)=2$ and $x(I)=\m$, then $f(I)$ can be calculated from the following given data: closed extended correlators and correlators of the form $f(I')$ with $D(I')\le D(I)-2$.
\end{lem}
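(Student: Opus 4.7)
\emph{Plan.} My approach mirrors the type-II method of Lemma~\ref{lemxm}, but with a sharper dimension-counting argument that tracks $D(\cdot)$ rather than only $|I|$. First, since every internal twist satisfies $a_i\ge \h+1$ while $\c=(3\m-r)/2=r-3-3\h$, the inequality $\h+1\ge\c$ amounts to $4\h\ge r-4$, which holds throughout the $\N=3$ regime $\m\in[r/3,r/2)$ (equivalently $\h\in((r-4)/4,r/3]$). Thus $a_1\ge \c$ and the type-II method applies; I will relabel if necessary so that $a_1$ realises the maximum.

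I apply both \eqref{eqtrr1} (with respect to a boundary $\m$) and \eqref{eqtrr2} (with respect to $a_2$) to
\[
\left\langle (a_1-\c)\psi,\,a_2,\,\ldots,\,a_l\;\Big|\;\underbrace{\m,\,\m,\,\ldots,\,\m}_{6}\right\rangle_0^{\frac{1}{r},\text{o},\h}.
\]
In both expansions the main term is
\[
\alpha=\langle\tau^{\c}_0\,\tau^{a_1-\c}_0\,\tau^{r-2-a_1}_0\rangle_0^{\frac{1}{r},\text{ext}}\cdot f(I)\cdot f(\emptyset)=f(I)\cdot f(\emptyset),
\]
the closed-extended factor being a zero-dimensional three-point integral with twists summing to $r-2$, hence equal to $1$. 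By the coefficient computation in the proof of Lemma~\ref{lemxm}, $\alpha$ enters with coefficient $-\binom{K+\N}{\N}=-10$ in the boundary TRR and $-\binom{K+\N+1}{\N}=-20$ in the internal TRR. Since both expansions equal the same (dimensionally vanishing) integral, their difference isolates $10\,\alpha$ as a combination of the other terms $\beta_j$.

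The heart of the argument is the classification of those $\beta_j=f^{\text{ext}}\cdot f(J^{\text{ctr}})\prod_i f(J^{\text{sd}}_i)$, governed by the identity $K(J^{\text{ctr}})+\sum_i K(J^{\text{sd}}_i)=5$ (from the six $\m$-boundaries) together with the dimension equation~\eqref{eq central side dimension}. A case-by-case check will establish the following:
\begin{itemize}
\item The only $\beta_j$ containing an $f$-factor of dimension $2l=D(I)$ is $\alpha$ itself. Such a factor would require $|I^{\text{ctr}}_F|=l$, $K=2$, $x=\m$, which forces $|I^{\text{ext}}|=\sum_i|I^{\text{sd}}_{F,i}|=0$; every side must then be $f(\emptyset)$ (the unique $f$-correlator with $|I_F|=0$, having $K=\N=3$), and the three-pointed closed-extended factor pins the node twist to $a=r-2-a_1$, giving $J^{\text{ctr}}=I$.
\item No term has $|I^{\text{ctr}}_F|=l$ with $K=3$: one would need $\sum_iK_{\text{side}}=2$ distributed over $s\ge 1$ sides, each forced to be $f(\emptyset)$ with $K=3$, which is impossible.
\item Every remaining factor $f(J)$ has $D(J)\le 2l-1$. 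Those with $D(J)=2l-1$ are of two kinds: either $K(J)=\N=3$ (with $|J|\le l-1$), which Lemma~\ref{lemn3k3} reduces to $f$-values of $D\le 2l-2$; or $|J|=l$ with $K(J)=1$, $x(J)=\m$, which Lemma~\ref{lemxm} reduces to $f(J'')$ with $|J''|<l$. Any residual $f(J'')$ of dimension $2l-1$ must then have $|J''|=l-1$ and $K=3$, landing again in the scope of Lemma~\ref{lemn3k3}.
\end{itemize}

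Since $f(\emptyset)=\pm\N!=\pm 6\ne 0$ by Proposition~\ref{propfemptyset} and the coefficient $10$ is nonzero, I can solve for $f(I)$ in terms of closed-extended correlators and $f(I')$ with $D(I')\le D(I)-2$. The principal obstacle will be the bookkeeping in the third bullet above: ensuring that the chain of reductions through Lemmas~\ref{lemn3k3} and~\ref{lemxm} strictly decreases $D$ at every step and never reintroduces a factor of dimension $2l-1$ outside the scope of those two lemmas. This requires a careful accounting using the identities $|I^{\text{ctr}}_F|+\sum_i|I^{\text{sd}}_{F,i}|=l-|I^{\text{ext}}|$ and $K(J^{\text{ctr}})+\sum_iK(J^{\text{sd}}_i)=5$, plus the existence constraints on $f$-correlators (in particular the uniqueness of $f(\emptyset)$ among $|I_F|=0$ contributions).
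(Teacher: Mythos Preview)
Your approach is essentially the same as the paper's: apply the type-II method to the correlator with internal markings $(a_1-\c),a_2,\ldots,a_l$ and six boundary $\m$'s, compare the two TRR expansions, and note that the main term $\alpha=f(\emptyset)\,f(I)$ appears with different binomial coefficients $-\binom{5}{3}$ and $-\binom{6}{3}$.

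A few corrections and simplifications are in order. First, your case~(b), a central contribution $f(J)$ with $|J|=l$ and $K(J)=1$, never occurs. Once $|I^{\text{ctr}}|=l$ you correctly observe that every side must be $f(\emptyset)$ with $K=3$; the constraint $K(I^{\text{ctr}})+3s=5$ then forces $(K(I^{\text{ctr}}),s)\in\{(2,1),(-1,2)\}$, so $K(I^{\text{ctr}})=1$ is impossible. The paper avoids this entirely by bounding $D(I^{\text{ctr}})\le D(I)-2$ directly for every central contribution other than $\alpha$: with exactly one $f(\emptyset)$ side one has $K(I^{\text{ctr}})=2$ and (unless it is $\alpha$) $|I^{\text{ctr}}|\le l-1$; with two $f(\emptyset)$ sides the dimension identity $D(I^{\text{ctr}})+\sum D(I^{\text{sd}}_i)+2s-2\le D(I)+1$ gives $D(I^{\text{ctr}})\le D(I)-3$; with a side $f(J)$, $J\ne\emptyset$, one has $|I^{\text{ctr}}|\le l-1$ and $K(I^{\text{ctr}})\le 2$. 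Consequently the only factors with $D=D(I)-1$ are \emph{side} contributions with $|I^{\text{sd}}|=l-1$ and $K(I^{\text{sd}})=3$, which Lemma~\ref{lemn3k3} reduces to $D\le D(I)-2$. The detour through Lemma~\ref{lemxm} is unnecessary.

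Second, the parenthetical ``(dimensionally vanishing)'' is incorrect: the $\psi$-correlator has matching rank and dimension and is a genuine number. This does not matter for the argument, since subtracting two equal expressions still isolates $\alpha$.
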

\begin{proof}
    To calculate
    $$f(I)=
	\left\langle
	\begin{array}{c}
	\hfill  a_1\quad \hfill\dots\quad \hfill a_l\hfill\null\\
	\hfill \m\quad \hfill \m\quad\hfill \m\hfill\null\\
	\end{array}
	\right\rangle_0^{\frac{1}{r},\text{o},\h},
	$$
    we use the type-II method and apply TRRs to
	$$
	\left\langle
	\begin{array}{c}
	\hfill  (a_1-\c)\psi\quad \hfill a_2\quad \hfill\dots\quad \hfill a_l\hfill\null\\
	\hfill \m\quad \hfill \m\quad\hfill \m\hfill\quad \m\hfill\quad \m\hfill\quad \m\hfill\null\\
	\end{array}
	\right\rangle_0^{\frac{1}{r},\text{o},\h}.
	$$
        We analyze all possibly non-zero terms appearing in the TRRs. Again, each non-zero term in the TRRs is of the form $$f^{ext}\cdot f({I^{ctr}})\cdot\prod_{i=1}^s f(I^{sd}_{i}).$$ 
        According to Remark \ref{rmk determine central side contribution} we have \begin{equation}\label{eq sum of I type 2}\lvert I^{ctr}\rvert \le \lvert I\rvert,~\lvert I^{sd}_i\rvert \le \lvert I\rvert-1,~\lvert I^{ctr}\rvert+\sum_{i=1}^s \lvert I^{sd}_i\rvert\le \lvert I\rvert,
        \end{equation}
        \begin{equation}\label{eq sum of K type 2}
    K(I^{ctr})+\sum_{i=1}^s K(I^{sd}_i)=K(I)+3
    \end{equation}
    and
    \begin{equation}\label{eq sum of D type 2}
     D(I^{ctr})+\sum_{i=1}^s D(I^{sd}_i)+2s-2 \le D(I)+1.
    \end{equation}
    
	\begin{itemize}
		\item The main term is 
        $$\alpha=
		f(\emptyset)\left\langle
		\begin{array}{c}
		\hfill \c \quad\hfill a_1-\c\quad\hfill r-2-a_1 \hfill\null\\
		\end{array}
		\right\rangle_0^{\text{ext}}f(I),
		$$
		\par
		In the TRR with respect to a boundary marking $\m$, the coefficient of $\alpha$ is $-\binom{5}{3}$; in the TRR with respect to the internal marking $a_2$, the coefficient of $\alpha$ is $-\binom{6}{3}$. Note also that the factor $f(\emptyset)$ is covered by the given date since $D(\emptyset)=1$.
		\item Note that for $\lvert I^{sd}\rvert\le\lvert I\rvert-1$ we have (note that $K(I^{sd})\le \N$ by Remark \ref{rmk bound of K})
		$$
		D(I^{sd})=2\lvert I^{sd}\rvert+K(I^{sd})-2\le 2\lvert I\rvert-2+\N-2=D(I)-1.
		$$
		Therefore for all possibly non-zero terms, the side contributions are of the form $f(I^{sd})$ with $D(I^{sd})\le D(I)-1$. The equality holds only if $K(I^{sd})=\N=3$.
	    \item Each possibly non-zero term has at least one side contribution, since otherwise its central contribution $f(I^{ctr})$ satisfies $K(I^{ctr})=5>\N-1$, which contradicts Remark \ref{rmk bound of K}.  In the case where this term only has one $f(\emptyset)$ as side contributions, then the equality in \eqref{eq sum of I type 2}, \textit{i.e.} $\lvert I^{ctr} \rvert=\lvert I \rvert$ holds only if this term is just $\alpha$; when this term is not $\alpha$, by \eqref{eq sum of K type 2} we have $K(I^{ctr})=K(I)$ and
     $$
     D(I^{ctr})=2\lvert I^{ctr}\rvert+K(I^{ctr})-2\le 2(\lvert I\rvert-1)+K(I)-2=D(I)-2.
     $$
     In the case this term has at least two $f(\emptyset)$ contributions, by \eqref{eq sum of D type 2} its central contribution $f(I^{ctr})$  $$D(I^{ctr})\le D(I)+1+2-2\cdot 2-2D(\emptyset)=D(I)-3.$$ 
     In the case this term has a side contribution $f(J)$ for $J\ne \emptyset$, then by \eqref{eq sum of I type 2} its central contribution $f(I^{ctr})$ satisfies $\lvert I^{ctr}\rvert\le\lvert I\rvert-1$, hence by Remark \ref{rmk bound of K}$$D(I^{ctr})=2\lvert I^{ctr}\rvert+K(I^{ctr})-2\le 2\lvert I\rvert-2+\N-1-2=D(I)-2.$$
		\end{itemize}
        Therefore all possibly non-zero terms other than $\alpha$ can be written as products of a closed extended correlator and correlators of the form $f(I')$ with either $$D(I')\le D(I)-2$$ or $$K(I')=3\text{ and } D(I')=D(I)-1.$$
        After applying Lemma \ref{lemn3k3} to the latter case, we can calculate $f(I)$ from the data given in the statement of the lemma by the type-II method, and the lemma is proven.
\end{proof}

\begin{lem}\label{lemn3k2xnm}
	If $\N=3$, $I\ne \emptyset$, $K(I)=2$ and $x(I)\ne \m$, then $f(I)$ can be calculated from the following given data: closed extended correlators and correlators of the form $f(I')$ with $D(I')\le D(I)-1$.
\end{lem}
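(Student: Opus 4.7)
My plan is to apply the type-II method. I would start by considering the auxiliary correlator
$$\left\langle
	\begin{array}{c}
	 (a_1-\c)\psi\quad a_2\quad \dots\quad  a_l\\
	 \m\quad  \m\quad x(I)\quad \m\quad \m\quad \m\\
	\end{array}
	\right\rangle_0^{\frac{1}{r},\text{o},\h}$$
obtained from $f(I)$ by replacing the internal twist $a_1$ with $(a_1-\c)$, adding a descendant $\psi$, and appending three extra $\m$-boundary markings. A direct rank-dimension check (using $\N=3,\ 2\c=3\m-r$) shows the rank is one less than the dimension plus the descendant degree, so this correlator vanishes; each TRR then yields an equation of the form $0=\sum(\text{terms})$.

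The main term should be
$$\alpha\;=\;f(I)\cdot f(\emptyset)\cdot \left\langle\tau_0^{r-2-a_1}\tau_0^{a_1-\c}\tau_0^{\c}\right\rangle_0^{\frac{1}{r},\text{ext}},$$
arising from the partition in which the central contribution equals $f(I)$ and a single side contribution equals $f(\emptyset)$ (from the insertion $t_1=\c$, producing the boundary twist $\b$). I expect $\alpha$ to appear with coefficient $-\binom{5}{2}=-10$ in TRR(a) taken with respect to $b_1=x(I)$, and with coefficient $-\binom{4}{1}=-4$ in TRR(a) with respect to any $b_1=\m$. Subtracting the two TRR expressions isolates $\alpha$ with coefficient $-6\ne 0$, which, together with the known values of $f(\emptyset)$ (Proposition \ref{propfemptyset}) and of the three-point closed-extended correlator, computes $f(I)$ in terms of the remaining terms.

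The bulk of the argument is the classification of the remaining terms. Each has the form $f^{\text{ext}}\cdot f(I^{ctr})\cdot\prod_j f(I^{sd}_j)$, and a dimensional count using Remark \ref{rmk determine central side contribution} combined with the bound $K(\cdot)\le\N=3$ from Remark \ref{rmk bound of K} should yield $D(I^{sd}_j)\le D(I)-1$ and $D(I^{ctr})\le D(I)+1$. The boundary cases would be treated as follows: terms with $D(I^{ctr})=D(I)+1$ force $K(I^{ctr})=3$ and reduce by Lemma \ref{lemn3k3}; terms with $D(I^{ctr})=D(I)$ force $(K(I^{ctr}),|I^{ctr}|)=(2,l)$, and the sum constraint on internal twists then forces either $I^{ctr}=I$ (contributing to $\alpha$) or $x(I^{ctr})=\m$, the latter handled by Lemma \ref{lemn3k2xm} for $l\ge 2$ or Lemma \ref{leml1xm} for $l=1$; sides attaining $D(I^{sd}_j)=D(I)-1$ have $K(I^{sd}_j)=3$ and are again reduced via Lemma \ref{lemn3k3}.

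The main obstacle will be that Lemma \ref{lemn3k3} reduces $D$ by only one step, so its output applied to a central with $D=D(I)+1$ may still have $D=D(I)$, potentially landing back in our lemma's own setting and threatening circularity. My plan to resolve this is a joint induction across Lemmas \ref{lemxm}, \ref{lemn3k3}, \ref{lemn3k2xm}, and \ref{lemn3k2xnm}, with well-ordering by the lexicographic pair $(D(I),|I|)$: each invocation of Lemma \ref{lemxm} strictly decreases $|I|$, each invocation of Lemma \ref{lemn3k3} strictly decreases $D$, and Lemma \ref{lemn3k2xm} produces $D\le D(I)-2$. Since $(D(I),|I|)$ is bounded below, this cascade terminates, so the complete expansion of every non-main term involves only closed-extended correlators and $f(I')$ with $D(I')\le D(I)-1$, as required.
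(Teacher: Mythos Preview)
There is a genuine gap in your approach. The paper uses the type-I method here, applying TRRs to
\[
\left\langle
\begin{array}{c}
 \tfrac{x(I)-\m}{2}\psi \quad a_1\quad \dots\quad a_l\\
 \m\quad  \m\quad \m
\end{array}
\right\rangle_0^{\frac{1}{r},\text{o},\h},
\]
whose boundary twists are \emph{all} equal to $\m$. The main term carries $f(I)$ as a \emph{side} contribution (via $t_1=\tfrac{r-2-x(I)}{2}$, so $b'_1=x(I)$), with coefficient $-1$ in TRR(a) with respect to $\m$ and $0$ in TRR(b) with respect to $a_1$. The only other contribution with $D\ge D(I)$ is a central $f(I^{ctr})$ with $|I^{ctr}|=|I|$, $K(I^{ctr})=2$, $x(I^{ctr})=\m$, handled by Lemma~\ref{lemn3k2xm} (which reduces to $D\le D(I)-2$) or Lemma~\ref{leml1xm}; your ``main obstacle'' never arises.

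Your type-II auxiliary correlator, by contrast, retains the boundary twist $x(I)\ne\m$, and this invalidates your assertion that every term has the form $f^{\text{ext}}\cdot f(I^{ctr})\cdot\prod_j f(I^{sd}_j)$. A side contribution may receive the marking $x(I)$ together with an inserted boundary $b'_j>\m$, producing a factor $F(\{x(I),\m,\ldots,\m,b'_j\},I^{sd}_j)$ with \emph{two} boundary twists strictly larger than $\m$, hence not of the form $f(\cdot)$. Concretely, take $s=1$, $I^{sd}_1=\{a_2,\ldots,a_l\}$, $B^{sd}_1=\{x(I),\m,\m,\m\}$, and central $\langle\tau_0^{r-1-\m}\sigma^\m\sigma^\m\rangle$: a routine rank check shows this side has dimension exactly $D(I)$ whenever $a_1=\tfrac{\m+b'_1}{2}$ for some $b'_1\in\{\m+2,\ldots,r-2\}$, and it appears in the TRR with respect to a boundary $\m$ but not in the TRR with respect to $x(I)$, so it survives your subtraction. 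Reducing such non-$f$ correlators would require invoking Proposition~\ref{prop general computation} and tracking termination along the order $\prec$ rather than along $D$, a cascade you have not set up. The paper's type-I choice sidesteps all of this simply by arranging that the auxiliary correlator has no non-$\m$ boundary.
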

\begin{proof}
    Just like the proof of Lemma \ref{lemn3k3}, we use the type-I method to calculate $f(I)$ and apply TRRs to
	$$
	\left\langle
	\begin{array}{c}
	\hfill \frac{x(I)-\m}{2}\psi \quad\hfill a_1\quad \hfill\dots\quad \hfill a_l\hfill\null\\
	\hfill \m\quad \hfill \m\quad\hfill  \m\hfill\null\\
	\end{array}
	\right\rangle_0^{\frac{1}{r},\text{o},\h}.
	$$
	The equations \eqref{eq sum of I}, \eqref{eq sum of K} and \eqref{eq sum of D} still hold for non-zero terms in this case. We consider all the contributions $f(I')$ in a possibly non-zero term satisfying $D(I')\ge D(I)$: 
	\begin{itemize}
	    \item 
		If a side contribution $f(I^{sd})$ satisfies $D(I^{sd})\ge D(I)$, then by \eqref{eq sum of D} the only possibility is that $f(I^{sd})$ is the only side contribution in this term and $I^{sd}=I$, so this term is the main term $\alpha$. Similarly to Lemma \ref{lemn3k3}, the coefficients of $\alpha$ in TRRs with respect to the boundary marking $\m$ and the internal marking $a_1$ are different.
		\item
		If a central contribution $f(I^{ctr})$ satisfies $D(I^{ctr})\ge D(I)$, then by \eqref{eq sum of D}, either there is no side contribution, or there is only one zero-dimensional side contribution. However, there is no possible  zero-dimensional side contribution; this is because $D(\emptyset)=1$, and $D(I^{sd})=2\lvert I^{sd}\rvert+K(I^{sd})-2=0$ hold for a side contribution $f(I^{sd})$ with $I^{sd}\ne \emptyset$ only if $\lvert I^{sd}\rvert=1,~K(I^{sd})=0$, which is not possible since $a_i\ge \h+1$ and $K(\{a_i\})\ge 1$ for all $a_i\in I$.
  
        So the only possibility is that there is no side contribution. In this case we have $\lvert I^{ctr}\rvert=\lvert I\rvert$, $K(I^{ctr})=2$, $x(I^{ctr})=\m$ and $D(I^{ctr})=D(I)$. We can calculate this contribution from the data given in the statement of the lemma by Lemma \ref{lemn3k2xm} (when $\lvert I \rvert\ge 2$)  or Lemma  \ref{leml1xm} (when $\lvert I \rvert=1$).
	\end{itemize}
 Therefore we can use the type-I method to calculate $f(I)$ from the data given in the statement of the lemma, and the lemma is proven.
\end{proof}

\begin{lem}\label{lemn3k1xm}
	If $\N=3$, $\lvert I\rvert\ge 2$, $K(I)=1$ and $x(I)=\m$, then $f(I)$ can be calculated from the following given data: closed extended correlators and correlators of the form $f(I')$ with $D(I')\le D(I)-1$.
\end{lem}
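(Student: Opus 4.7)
The plan is to use the type-II method in direct analogy with the proof of Lemma \ref{lemn3k2xm}. Writing $I=\{a_1,\ldots,a_l\}$ with $l=|I|\ge 2$, I would apply the TRRs \eqref{eqtrr1} (with respect to a boundary marking $\sigma^{\m}$) and \eqref{eqtrr2} (with respect to the internal marking $a_2$) to the modified correlator
\[
\left\langle
\begin{array}{c}
\hfill (a_1-\c)\psi \quad \hfill a_2 \quad \hfill \dots \quad \hfill a_l \hfill\null \\
\hfill \smash{\underbrace{\m \quad \hfill \m \quad \hfill\dots\hfill\quad \m}_{K(I)+1+\N=5}}\hfill\null \\
\end{array}
\right\rangle_0^{\frac{1}{r},\text{o},\h}.
\]
The main term produced in both expansions is
\[
\alpha := f(\emptyset)\cdot\left\langle \tau_0^{\c}\,\tau_0^{a_1-\c}\,\tau_0^{r-2-a_1}\right\rangle_0^{\text{ext}}\cdot f(I),
\]
which arises uniquely from the configuration with $s=1$ side contribution equal to $f(\emptyset)$ (where the $K$-and-rank constraints force $t_1=\c$), central contribution $f(I)$, and $R_{-1}=\emptyset$. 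By counting the unordered partitions in the two TRR formulas, the coefficient of $\alpha$ is $-\binom{K(I)+\N}{\N}=-\binom{4}{3}=-4$ in the boundary TRR and $-\binom{K(I)+1+\N}{\N}=-\binom{5}{3}=-10$ in the internal TRR. Since these differ, subtracting the two TRR expansions isolates $\alpha$, and $f(I)$ is then determined because the three-point closed extended correlator equals $1$ and $f(\emptyset)$ is given by Proposition~\ref{propfemptyset}.

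The main obstacle is to verify that every non-$\alpha$ contribution in the two TRR expansions lies in the given data. Imitating the derivation of \eqref{eq sum of I type 2}--\eqref{eq sum of D type 2}, one obtains for every non-zero term the constraints $|I^{ctr}|\le|I|$, $|I^{sd}_j|\le|I|-1$, $|I^{ctr}|+\sum_j|I^{sd}_j|\le|I|$, and $K(I^{ctr})+\sum_j K(I^{sd}_j)=K(I)+\N=4$. Crucially, because every boundary of the modified correlator has twist $\m$, each central contribution $f(I^{ctr})$ has an all-$\m$ boundary set, which forces $x(I^{ctr})=\m$ and hence $K(I^{ctr})\le \N-1=2$ by Remark~\ref{rmk bound of K}. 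A short case analysis on $|I^{ctr}|$ then shows the following dichotomy: either $|I^{ctr}|=|I|$, in which case the remaining constraints force $K(I^{ctr})=1$, $s=1$ and the unique side contribution to be $f(\emptyset)$ (recovering exactly $\alpha$); or $|I^{ctr}|\le|I|-1$, in which case $D(I^{ctr})\le 2(|I|-1)+2-2=D(I)-1$, so $f(I^{ctr})$ lies in the given data.

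For the side contributions, the inequalities $|I^{sd}_j|\le|I|-1$ and $K(I^{sd}_j)\le\N=3$ yield $D(I^{sd}_j)\le D(I)$, with equality only when $|I^{sd}_j|=|I|-1$ and $K(I^{sd}_j)=3$. In this extremal case Remark~\ref{rmk bound of K} forces $x(I^{sd}_j)\ne\m$, and Lemma~\ref{lemn3k3} then expresses $f(I^{sd}_j)$ in terms of closed extended correlators and correlators $f(I'')$ with $D(I'')\le D(I^{sd}_j)-1=D(I)-1$. All remaining side contributions satisfy $D(I^{sd}_j)\le D(I)-1$ directly. Combining these reductions with the linear relation obtained by taking the difference of the two TRR expansions yields $f(I)$ as a $\mathbb{Q}$-linear combination of closed extended correlators and correlators $f(I')$ with $D(I')\le D(I)-1$, proving the lemma.
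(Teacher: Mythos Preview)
Your proof is correct and follows essentially the same approach as the paper's own proof: the type-II method applied to the correlator with five boundary markings $\m$, isolating the main term $\alpha=f(\emptyset)\cdot\langle\tau_0^{\c}\tau_0^{a_1-\c}\tau_0^{r-2-a_1}\rangle^{\text{ext}}\cdot f(I)$ via the difference of the boundary and internal TRRs, and reducing the extremal side contributions with $K(I^{sd})=3$ to lower-dimensional data via Lemma~\ref{lemn3k3}. Your explicit dichotomy on $|I^{ctr}|$ and the computation of the coefficients $-\binom{4}{3}$ versus $-\binom{5}{3}$ make the argument slightly more detailed than the paper's version, but the structure is identical.
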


\begin{proof}
    Similarly to the proof of Lemma \ref{lemn3k2xm}, we use the type-II method to calculate $f(I)$ and apply TRRs to
	$$
	\left\langle
	\begin{array}{c}
	\hfill  (a_1-\c)\psi\quad \hfill a_2\quad \hfill\dots\quad \hfill a_l\hfill\null\\
	\hfill \m\quad \hfill \m\quad\hfill \m\hfill\quad \m\hfill\quad \m\hfill\null\\
	\end{array}
	\right\rangle_0^{\frac{1}{r},\text{o},\h}.
	$$
	The equations \eqref{eq sum of I type 2}, \eqref{eq sum of K type 2} and \eqref{eq sum of D type 2} still hold for non-zero terms in this case. All the contributions $f(I')$ in a possibly non-zero term satisfying $D(I')\ge D(I)$ are:
	\begin{itemize}
	    \item $f(I)$ as a central contribution in the main term $\alpha$. Similarly to Lemma \ref{lemn3k2xm}, the coefficients of $\alpha$ in TRRs with respect to the boundary marking $\m$ and the internal marking $a_2$ are different.

        There are no other central contributions $f(I^{ctr})$ because by Remark \ref{rmk bound of K}, equations \eqref{eq sum of K type 2} and \eqref{eq sum of D type 2} there must be exactly one side contribution $f(I^{sd})$ in the same term, and it satisfies $D(I^{sd})\le 1$ and $K(I^{sd})\ge 2$,  which is impossible for $I^{sd}\ne \emptyset$.
        
	    \item 
	    $f(I^{sd})$ with $\lvert I^{sd}\rvert=\lvert I\rvert-1$, $K(I^{sd})=3$ and $D(I^{sd})=D(I)$ as side contributions. We can calculate these contributions from the data given in the statement of the lemma by Lemma \ref{lemn3k3}.

        It can be easily seen from \eqref{eq sum of I type 2} and Remark \ref{rmk bound of K} that there are no other possible side contributions.
	\end{itemize}
	Therefore we can calculate $f(I)$ from the data given in the statement of the lemma by the type-II method, and the lemma is proven.
\end{proof}

\begin{lem}\label{lemn3k1xnm}
	If $\N=3$, $I\ne \emptyset$, $K(I)=1$ and $x(I)\ne \m$, then $f(I)$ can be calculated from the following given data: closed extended correlators and correlators of the form $f(I')$ with $D(I')\le \max\{ D(I)-1,1\}$.
\end{lem}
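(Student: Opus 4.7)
The plan is to adapt the type-I method used in Lemmas \ref{lemknottoolargexnm}, \ref{lemn3k3}, and \ref{lemn3k2xnm}, applying TRRs \eqref{eqtrr1} and \eqref{eqtrr2} to
$$\left\langle \tfrac{x(I)-\m}{2}\psi,\, a_1,\ldots,a_l \,\Big|\, \m,\m\right\rangle_0^{\frac{1}{r},\text{o},\h}$$
and isolating the main term
$$\alpha \;=\; f(I)\,\cdot\, \left\langle \tfrac{r-2-x(I)}{2},\, \tfrac{x(I)-\m}{2},\, \tfrac{r-2+\m}{2}\right\rangle_0^{\text{ext}}\,\cdot\, \left\langle \tfrac{r-2-\m}{2}\,\Big|\,\m\right\rangle_0^{\frac{1}{r},\text{o},\h}.$$
Both trailing factors are zero-dimensional and evaluate to $1$, so once we solve for $\alpha$ we recover $f(I)$. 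The coefficient of $\alpha$ equals $-1$ in the TRR with respect to a boundary $\m$ and $0$ in the TRR with respect to any internal marking (for $l\geq 1$ we can always take the designated internal to be some $a_i$), since $\alpha$ places every original $a_i$ on the side, incompatibly with the central-open requirement of (b).

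Every other non-zero term is a product of a closed-extended correlator with a central factor $f(I^{ctr})$ and some side factors $f(I^{sd}_j)$ obeying the analogues of (5.1)–(5.3):
$|I^{ctr}|\le l+1$, $|I^{sd}_j|\le l$, $|I^{ctr}|+\sum |I^{sd}_j|\le l+1$, $K(I^{ctr})+\sum K(I^{sd}_j)=K(I)=1$, and $D(I^{ctr})+\sum D(I^{sd}_j)+2s-2\le D(I)=2l-1$. A direct case analysis then shows that the only contributions with $D(I')\ge D(I)$ are: a side with $|I^{sd}|=l-1$ and $K(I^{sd})=\N=3$ (reducible to strictly lower-dimensional data by Lemma \ref{lemn3k3}); or a central with $|I^{ctr}|=l$, $K(I^{ctr})=1$, $x(I^{ctr})=\m$ (computed by Lemma \ref{lemn3k1xm} when $l\ge 2$ and by Lemma \ref{leml1xm} when $l=1$). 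A hypothetical central with $|I^{ctr}|=l+1$ is ruled out because its accompanying closed-extended factor would carry fewer than three markings, hence be unstable.

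The main obstacle is the corner case $l=1$. Here TRR \eqref{eqtrr2} with respect to $a_1$ has, after imposing stability of each component, essentially a single partition type contributing: the central is $\langle r-2-a, a_1\,|\,\emptyset\rangle$ with $K=-1$, $x=\m$ (handled by Lemma \ref{lemxm}), and the lone side is of the form $\langle \emptyset \,|\, r-2-2t_1,\m,\m\rangle$ (a base case in $\mathcal{E}$), coupled to a three-marking closed-extended correlator. Subtracting the two TRR expansions now yields $\alpha$ as a rational combination of these previously-computed data, and hence determines $f(I)$. The relaxed bound ``$\max\{D(I)-1,1\}$'' in the statement is exactly what is needed so that for $l=1$ (where $D(I)=1$) these borderline $f(I')$ of dimension $1$ may be used directly, without a further dimensional reduction.
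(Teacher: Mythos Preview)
Your approach is the paper's type-I method and is essentially correct, but the case analysis has a couple of slips worth flagging.

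First, the side case ``$|I^{sd}|=l-1$, $K(I^{sd})=\N=3$'' that you list cannot occur. From your own relation $K(I^{ctr})+\sum_j K(I^{sd}_j)=K(I)=1$, together with $K(I^{ctr})\ge -1$ and $K(I^{sd}_j)\ge 0$, one gets $K(I^{sd})\le 2$ for every side. So this bullet is vacuous; the paper simply rules out non-main side contributions by this bound, obtaining $D(I^{sd})\le 2(l-1)+2-2=D(I)-1$ directly, without invoking Lemma~\ref{lemn3k3}.

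Second, your separate treatment of $l=1$ is both unnecessary and slightly off. When $l=1$ one has $D(I)=1$, so the lemma is vacuous: $f(I)$ itself lies in the given data $\{f(I'):D(I')\le 1\}$. Your description of TRR~\eqref{eqtrr2} in that case is also inaccurate: the ``lone side'' $\langle\emptyset\mid r-2-2t_1,\m,\m\rangle$ satisfies \eqref{eq dim equals rank} only for $t_1=\m>\h$, hence vanishes; so TRR~(b) with respect to $a_1$ is identically zero rather than producing the term you describe. Correspondingly, invoking Lemma~\ref{lemxm} for the alleged central $\langle r-2-a,a_1\mid\emptyset\rangle$ is moot (and would in any case yield dependence on $f(I')$ with $D(I')$ possibly equal to $2$, outside your allowed range). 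The paper handles $l=1$ uniformly: the only central with $D(I^{ctr})=D(I)$ is the $s=0$ term with $|I^{ctr}|=l$, $K(I^{ctr})=1$, $x(I^{ctr})=\m$, treated by Lemma~\ref{leml1xm}.

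Finally, your exclusion of $|I^{ctr}|=l+1$ by ``fewer than three closed markings'' covers only $s=0$; the paper rules out $s\ge 1$ via the dimension constraint \eqref{eq sum of D}, which forces a zero-dimensional side that cannot exist (same argument as in Lemma~\ref{lemn3k2xnm}).
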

\begin{proof}
   Similarly to the proof of Lemma \ref{lemn3k3}, we use the type-I method to calculate $f(I)$ and apply TRRs to
	$$
	\left\langle
	\begin{array}{c}
	\hfill \frac{x(I)-\m}{2}\psi \quad\hfill a_1\quad \hfill\dots\quad \hfill a_l\hfill\null\\
	\hfill \m\quad \hfill  \m\hfill\null\\
	\end{array}
	\right\rangle_0^{\frac{1}{r},\text{o},\h}.
	$$
	The equations \eqref{eq sum of I}, \eqref{eq sum of K} and \eqref{eq sum of D} still hold for non-zero terms in this case. All the contributions $f(I')$ in a possibly non-zero term satisfying $D(I')\ge D(I)$ are:
	\begin{itemize}
	    \item 
		$f(I)$ as a side contribution in the main term $\alpha$. Similarly to Lemma \ref{lemn3k3}, the coefficients of $\alpha$ in TRRs with respect to the boundary marking $\m$ and the internal marking $a_1$ are different.

        There are no other side contribution $f(I^{sd})$ because if $I^{sd}\ne I$ we have $\lvert I^{sd}\rvert\le \lvert I\rvert-1$; on the other hand, by \eqref{eq sum of K} we have $K(I^{sd})\le K(I)+1$ which means $D(I^{sd})\le D(I)-1$.
		\item
		$f(I^{ctr})$ with $\lvert I^{ctr}\rvert=\lvert I\rvert$, $K(I')=1$, $x(I^{ctr})=\m$ and $D(I^{ctr})=D(I)$ as central contributions. We can calculate these contributions from the data given in the statement of the lemma by Lemma \ref{lemn3k1xm} (when $\lvert I \rvert\ge 2$) or Lemma  \ref{leml1xm} (when $\lvert I \rvert=1$).

        By the same argument as in Lemma \ref{lemn3k2xnm}, there are no other possible central contributions.
	\end{itemize}
 Therefore we can calculate $f(I)$ from the data given in the statement of the lemma by the type-I method, and the lemma is proven
\end{proof}

\begin{lem}\label{lemn3aismall}
    Let $\N=3$, $I\in 
    \mathcal I$ and  $x(I)=\m$. If 
    $$
    a_i < \m \quad\forall a_i \in I,
    $$
    then
    $$
    D(I)\le 1.
    $$
\end{lem}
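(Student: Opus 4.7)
The plan is to establish the lemma by a direct numerical argument using only the defining equation of $K(I)$ and $x(I)$, together with the hypothesis $a_i<\m$ and the constraints $\N=3$ imposes on $r$ and $\m$.

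First, I would record the consequences of $\N=3$: from $\b=(r-\m)\N-r-2=2r-3\m-2$ and $\m\le\b\le r-2$, one gets $r/3\le\m\le (r-1)/2$, and in particular $r-\m\ge (r+1)/2$. Next, I would substitute $x(I)=\m$ into the defining identity
\[
2\sum_{a_i\in I}a_i+K(I)\m+x(I)-r+2=(2|I|+K(I)-2)r,
\]
and, writing $D(I)=2|I|+K(I)-2$ and $K(I)+1=D(I)-2|I|+3$, rearrange to obtain the clean expression
\[
2\sum_{a_i\in I}a_i = D(I)(r-\m)+(r-2)+(2|I|-3)\m.
\]

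The second step is to use the hypothesis $a_i<\m$, i.e.\ $a_i\le\m-1$, which yields $2\sum a_i\le 2|I|(\m-1)$. Combined with the identity above this simplifies to the key inequality
\[
D(I)(r-\m)\le 3\m+2-r-2|I|.
\]
Note that the anchor case $I=\emptyset$ with $x(\emptyset)=\m$ only occurs when $\b=\m$, i.e.\ $\m=(r-1)/2$, and then $D(\emptyset)=\N-2=1$, which is consistent with the claim. I would also briefly dispose of the formal case $K(I)=-1$ (which only appears in $\mathcal I$ with $x(I)=\m$) by checking that no $I$ with $a_i<\m$ can realize it: the corresponding equation $\sum a_i=(|I|-1)r-1$ is incompatible with $a_i\le\m-1\le (r-3)/2$.

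The third, concluding step is a contradiction argument: suppose $D(I)\ge 2$. Then
\[
2(r-\m)\le D(I)(r-\m)\le 3\m+2-r-2|I|,
\]
so $3r+2|I|\le 5\m+2$. Substituting $\m\le (r-1)/2$ gives $6r+4|I|\le 5r-1$, i.e.\ $r+4|I|\le -1$, which is absurd. Hence $D(I)\le 1$, as claimed. There is no serious obstacle here; the whole argument is a one-line inequality chase once the identity for $2\sum a_i$ is written down correctly. The only minor care needed is to keep track of the relation $D(I)=2|I|+K(I)-2$ and to use the tightest of the standing bounds $\m\le(r-1)/2$ coming from $\N=3$.
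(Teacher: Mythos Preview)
Your proof is correct and follows essentially the same route as the paper: both start from the defining identity with $x(I)=\m$, use $a_i<\m$ (equivalently $a_i\le\m-1$) to bound $\sum a_i$, assume $D(I)\ge2$ for contradiction, and arrive at an inequality of the form $5\m>3r-2$ (the paper) or equivalently $3r+2|I|\le 5\m+2$ (your version), which contradicts $\m\le(r-1)/2$. The only cosmetic difference is that you eliminate $K(I)$ in favor of $D(I)$ early on, whereas the paper keeps $K(I)$ until the end; your side discussions of $I=\emptyset$ and $K(I)=-1$ are harmless but unnecessary, since your main inequality chain already covers those cases uniformly.
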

\begin{proof}
    Since $I\in \mathcal I$ we have 
    $$
    2\sum_{a_i\in I}a_i+(K(I)+1)\m-r+2=\left(2\lvert I\rvert+K(I)-2\right)r,
    $$
    then
    $$
    2\sum_{a_i\in I}(r-a_i)=(1-K(I))r+(1+K(I))\m+2 > 2\lvert I\rvert (r-\m).
    $$
    Assuming the lemma does not hold, that is, 
    $$
    D(I)=2\lvert I\rvert+K(I)-2\ge 2,
    $$
    then
    $$
    (1-K(I))r+(1+K(I))\m+2> (4-K(I)) (r-\m),
    $$
    hence
    $$
    5\m > 3r-2.
    $$
    But this is impossible when $\N=3$, since then $\m\le \frac{r-2}{2}$.
\end{proof}

\begin{lem}\label{lemn3k0xm}
	If $\N=3$, $\lvert I\rvert\ge 2$, $a_1\ge \m$, $K(I)=0$ and $x(I)=\m$, then $f(I)$ can be calculated from the following given data: closed extended correlators and correlators of the form $f(I')$ with $D(I')\le D(I)-1$.
\end{lem}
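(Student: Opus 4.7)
The plan is to apply the type-II method of Section~\ref{subsec general strategy}. I would invoke both TRRs~\eqref{eqtrr1} (with respect to one of the boundary markings of twist $\m$) and~\eqref{eqtrr2} (with respect to the internal marking $a_2$) applied to
$$
\left\langle
\begin{array}{c}
\hfill (a_1-\c)\psi\quad\hfill a_2\quad\hfill\cdots\quad\hfill a_l\hfill\null\\
\hfill \m\quad\hfill \m\quad\hfill \m\quad\hfill \m\hfill\null\\
\end{array}
\right\rangle_0^{\frac{1}{r},\text{o},\h},
$$
obtained from $f(I)$ by adjoining $\N=3$ extra boundary markings of twist $\m$ and decorating $a_1-\c$ with a $\psi$-class. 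The hypotheses $a_1\ge \m$ and $\N=3$ force $a_1\ge\c$ and $\c\le\h$, so the method applies. The main term is
$$
\alpha \;=\; f(\emptyset)\cdot f(I)\cdot \left\langle \tau^{\c}_0\,\tau^{a_1-\c}_0\,\tau^{r-2-a_1}_0\right\rangle_0^{\frac{1}{r},\text{ext}},
$$
whose closed-extended factor equals $1$; since $f(\emptyset)\neq 0$ by Proposition~\ref{propfemptyset}, isolating $\alpha$ will give $f(I)$.

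Repeating the unordered-partition count carried out in the proof of Lemma~\ref{lemxm}, the coefficient of $\alpha$ is $-\binom{K(I)+\N}{\N}=-1$ in the boundary TRR and $-\binom{K(I)+\N+1}{\N}=-4$ in the internal TRR. Because these two coefficients differ, subtracting the two expressions eliminates the left-hand side and solves for $\alpha$ in terms of the remaining terms, provided each of them is computable from the given data.

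The next step is to classify the possibly non-zero terms using the constraints analogous to~\eqref{eq sum of I type 2},~\eqref{eq sum of K type 2}, and~\eqref{eq sum of D type 2}, now with $K(I)+\N=3$ and $D(I)+1=2l-1$. Every central contribution $f(I^{ctr})$ must have $x(I^{ctr})=\m$ because all boundary twists of the TRR correlator equal~$\m$, and hence $K(I^{ctr})\le \N-1=2$ by Remark~\ref{rmk bound of K}. This constrains any factor $f(J)$ with $D(J)\ge D(I)$ to fall into one of three classes: the main term itself; a central factor with $|I^{ctr}|=l-1$ and $K(I^{ctr})=2$, calculable via Lemma~\ref{lemn3k2xm} (for $l\ge 3$) or Lemma~\ref{leml1xm} (for $l=2$); or a side factor with $|I^{sd}|=l-1$ and $K(I^{sd})=2$, calculable via Lemma~\ref{lemn3k2xm}, Lemma~\ref{lemn3k2xnm}, or Lemma~\ref{leml1xm} depending on $x(I^{sd})$ and~$l$.

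The hard part will be ruling out the one remaining apparently dangerous configuration: a side factor with $|I^{sd}|=l-1$ and $K(I^{sd})=\N=3$ (so $x(I^{sd})=\b$). Combining $K(I^{ctr})+\sum K(I^{sd}_j)=3$ with $|I^{ctr}|+\sum|I^{sd}_j|\le l$ will force $s=1$ and $I^{ctr}=\{a'\}$; matching rank and dimension for the resulting central and side correlators will then pin down $a'=\h$ and $a_1=\h$, contradicting the hypothesis $a_1\ge\m>\h$ (the latter inequality holds since $\N=3$ implies $\h<(r-3)/3$, whence $\m=r-2-2\h>\h$). Once this case is excluded, every remaining term in the subtracted identity is computable, and $f(I)$ is thereby determined.
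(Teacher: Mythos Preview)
Your overall plan matches the paper's: apply the type-II method, compare the two TRRs, and classify the possibly nonzero contributions with $D\ge D(I)$. The coefficient count, the identification of the main term, and the handling of the $K(I^{ctr})=2$ and $K(I^{sd})=2$ cases via Lemmas~\ref{lemn3k2xm}, \ref{lemn3k2xnm}, \ref{leml1xm} are all correct.

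The gap is in your treatment of the ``hard'' case $|I^{sd}|=l-1$, $K(I^{sd})=3$. The parenthetical claim ``(so $x(I^{sd})=\b$)'' is unjustified and in general false: $K(J)=\N$ does not force $x(J)=\b$. Consequently the deduction $a_1=\h$ does not follow. What rank/dimension matching actually gives is this. With $I^{sd}=\{a_2,\dots,a_l\}$ and $K(I^{sd})=3$, the defining relation for $x(I^{sd})$ together with the relation for $I$ (namely $2\sum_{i=1}^l a_i+\m-r+2=(2l-2)r$) yields
\[
x(I^{sd})\;=\;r+2a_1-2\m.
\]
Since $a_1\ge\m$ by hypothesis, $x(I^{sd})\ge r>r-2$, which is impossible; equivalently, the required insertion twist $t_1=\m-1-a_1$ is negative. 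This is exactly the computation the paper performs. Your intended conclusion $a_1=\h$ would only follow if one \emph{assumed} $x(I^{sd})=\b$, i.e.\ solved $r+2a_1-2\m=\b$; but that is precisely the thing you have not established. Replace the parenthetical and the ``$a_1=\h$'' step by the direct computation above and the argument goes through.
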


\begin{proof}
    Similarly to the proof of Lemma \ref{lemn3k2xm}, we use the type-II method to calculate $f(I)$ and apply TRRs to
	$$
	\left\langle
	\begin{array}{c}
	\hfill  (a_1-\c)\psi\quad \hfill a_2\quad \hfill\dots\quad \hfill a_l\hfill\null\\
	\hfill \m\quad \hfill \m\quad\hfill \m\hfill\quad \m\hfill\null\\
	\end{array}
	\right\rangle_0^{\frac{1}{r},\text{o},\h}.
	$$
	The equations \eqref{eq sum of I type 2}, \eqref{eq sum of K type 2} and \eqref{eq sum of D type 2} still hold for non-zero terms in this case. All the contributions $f(I')$ in a possibly non-zero term satisfying $D(I')\ge D(I)$ are:
	\begin{itemize}
	    \item $f(I)$ as a central contribution in the main term $\alpha$. Again, similarly to Lemma \ref{lemn3k2xm}, the coefficients of $\alpha$ in different TRRs are different.
	    \item 
	    $f(I^{sd})$ with $\lvert I^{sd}\rvert=\lvert I\rvert-1$, $K(I^{sd})=2$ and $D(I^{sd})=D(I)$ as side contributions. We can calculate these contributions from the data given in the statement of the lemma by Lemma \ref{lemn3k2xnm} (when $x(I')\ne \m$) or Lemma \ref{lemn3k2xm} (when $x(I^{sd})= \m$, $\lvert I \rvert\ge 2$) or Lemma  \ref{leml1xm} (when $x(I^{sd})= \m$, $\lvert I \rvert=1$).
	    \item 
	    $f(I^{ctr})$ with $\lvert I^{ctr}\rvert=\lvert I\rvert-1$, $K(I^{ctr})=2$, $x(I^{ctr})=\m$ and $D(I^{ctr})=D(I)$ as central contributions. We can calculate these contributions from the data given in the statement of the lemma by Lemma \ref{lemn3k2xm} (when  $\lvert I \rvert\ge 2$) or Lemma  \ref{leml1xm} (when  $\lvert I \rvert=1$).
	    \item Note that there exist no side contributions $f(I^{sd})$ with $\lvert I'\rvert=\lvert I\rvert-1$, $K(I^{sd})=3$ and $D(I^{sd})=D(I)+1$ because $a_1\ge \m$. In fact, in this case $I^{sd}=I\setminus\{a_1\}$. From
	    $$
	   2\sum_{i=1}^{l}a_i+\m-r+2=(2l-2)r
	    $$
	    we get
	    $$
	   (2l-1)r-\left(3\m+2\sum_{i=2}^{l}a_i-r+2\right)=r-2\m+2a_1\ge r>r-2;
	    $$
	    this means $K(I^{sd})\ne 3$.
	\end{itemize}
	Therefore every contribution to the TRRs, except $f(I),$ can be calculated from closed extended correlators and correlators of the form $f(I')$ with $D(I')\le D(I)-1$. Then we can calculate $f(I)$ by combining two different TRRs.
\end{proof}

\begin{lem}\label{lemn3k0xnm}
	If $\N=3$, $I\ne \emptyset$, $K(I)=0$ and $x(I)\ne \m$, then $f(I)$ can be calculated from the following given data: closed extended correlators and correlators of the form $f(I')$ with $D(I')\le \max\{ D(I)-1,1\}$.
\end{lem}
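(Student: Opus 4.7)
The plan is to mirror the proof of Lemma \ref{lemn3k1xnm} using the type-I method. I will apply TRRs \eqref{eqtrr1} and \eqref{eqtrr2} to
\[
\left\langle
\begin{array}{c}
\frac{x(I)-\m}{2}\psi \quad a_1 \quad a_2 \quad \cdots \quad a_l \\
\m
\end{array}
\right\rangle_0^{\frac{1}{r},\text{o},\h}.
\]
The main term $\alpha$ has $f(I)$ as its unique side contribution, the zero-dimensional central $\left\langle \tau^{\h}_0 \sigma^{\m}\right\rangle_0^{\frac{1}{r},\text{o},\h}=1$, and a $3$-point closed-extended factor equal to $1$. Its coefficient is $-1$ in the TRR with respect to $\m$ (from $(-1)^s$ with $s=1$) and $0$ in the TRR with respect to $a_1$, since the partition defining $\alpha$ places $a_1$ in the side component carrying $f(I)$, not in the central.

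First I would note that $l=1$ is vacuous: the dimension equation $2a_1+x(I)-r+2=0$ together with $x(I)>\m$ forces $a_1\le \h$, contradicting $a_1\ge \h+1$, so $l\ge 2$. I would then enumerate all contributions $f(I')$ appearing in the two TRRs with $D(I')\ge D(I)=2l-2$. For side contributions: in TRR \eqref{eqtrr1} each side has only the single boundary $\sigma^{r-2-2t_j}$ (as the original $B$ has only $x(I)$, now replaced by $\m$ and absorbed in the central), so $K(I^{sd})=0$ and $D(I^{sd})\le 2(l-1)-2=D(I)-2$ whenever $I^{sd}\ne I$; in TRR \eqref{eqtrr2} a side may additionally absorb $\sigma^{\m}$, giving $K(I^{sd})\le 1$, but combined with $\lvert R_j\rvert\le l-1$ (since $a_j$ sits in the central) this still yields $D(I^{sd})\le D(I)-1$.

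The hard part will be bounding the central contributions. A naive counting suggests the possibility $D(I^{ctr})=D(I)+1$, corresponding to $\lvert I^{ctr}\rvert=l$ with $K(I^{ctr})=1$, which would lie outside the data allowed by the statement. I will rule this out using the specific structure of our TRRs: in \eqref{eqtrr1} with respect to $b_1=\m$ the central always contains $\sigma^{b_1}=\sigma^{\m}$ as its only boundary marking, since $T_0\subseteq\{b_2,\ldots,b_k\}=\emptyset$; and in \eqref{eqtrr2} the central has at most one boundary marking, $\sigma^{\m}$ when $T_0=\{1\}$ and none otherwise. In either case $K(I^{ctr})\le 0$, with $x(I^{ctr})=\m$ whenever $D(I^{ctr})\ge D(I)$. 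The extremal possibility $\lvert I^{ctr}\rvert=l+1$ is impossible because it empties $R_{-1}$ and all sides, leaving a $2$-point closed-extended factor (which vanishes); any attempt to absorb the leftover internal into a side with $\lvert R_j\rvert=1$ fails by rank--dimension matching, as this would force $a_i=t_j\le\h$, contradicting $a_i\ge\h+1$.

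The only remaining central case is $\lvert I^{ctr}\rvert=l$, $K(I^{ctr})=0$, $x(I^{ctr})=\m$, with $D(I^{ctr})=D(I)$. Since $l\ge 2$, we have $D(I^{ctr})=2l-2\ge 2$, and Lemma \ref{lemn3aismall} then forces some element of $I^{ctr}$ to be $\ge\m$; hence Lemma \ref{lemn3k0xm} applies (with that element playing the role of $a_1$), expressing $f(I^{ctr})$ through closed-extended correlators and correlators $f(I')$ with $D(I')\le D(I^{ctr})-1=D(I)-1$. Subtracting the two TRR expressions, the coefficient of $\alpha$ becomes $-1-0=-1$, and we solve for $f(I)$ in terms of the data prescribed by the statement, completing the proof.
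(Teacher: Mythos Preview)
Your proof is correct and follows essentially the same type-I approach as the paper: apply TRRs to the correlator with $\frac{x(I)-\m}{2}\psi$ and boundary $\{\m\}$, identify the main term $\alpha$ with coefficients $-1$ and $0$ in the two TRRs, and reduce every other contribution to the allowed data. Your handling of the central contribution $f(I^{ctr})$ with $\lvert I^{ctr}\rvert=l$, $K(I^{ctr})=0$, $x(I^{ctr})=\m$ via Lemma~\ref{lemn3k0xm} matches the paper.

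Two small points where your write-up is actually sharper than the paper's: you explicitly rule out the case $l=1$ (which is indeed vacuous under $K(I)=0$ and $x(I)>\m$), whereas the paper gestures at Lemma~\ref{leml1xm} for that nonexistent case; and you explicitly invoke Lemma~\ref{lemn3aismall} to verify the hypothesis $\max_{a\in I^{ctr}}a\ge\m$ needed to apply Lemma~\ref{lemn3k0xm}, a check the paper leaves implicit.
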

\begin{proof}
   Similarly to the proof of Lemma \ref{lemn3k3}, we use the type-I method to calculate $f(I)$ and apply TRRs to
	$$
	\left\langle
	\begin{array}{c}
	\hfill \frac{x(I)-\m}{2}\psi \quad\hfill a_1\quad \hfill\dots\quad \hfill a_l\hfill\null\\
	\hfill   \m\hfill\null\\
	\end{array}
	\right\rangle_0^{\frac{1}{r},\text{o},\h}.
	$$
	The equations \eqref{eq sum of I}, \eqref{eq sum of K} and \eqref{eq sum of D} still hold for non-zero terms in this case. Similarly to Lemma \ref{lemn3k1xnm}, all the contributions $f(I')$ in a possibly non-zero term satisfying $D(I')\ge D(I)$ are:
	\begin{itemize}
	    \item 
		$f(I)$ as a side contribution in the main term $\alpha$. Again, similarly to Lemma \ref{lemn3k3}, the coefficients of $\alpha$ in different TRRs are different.
		\item
		$f(I^{ctr})$ with $\lvert I^{ctr}\rvert=\lvert I\rvert$, $K(I^{ctr})=0$, $x(I^{ctr})=\m$ and $D(I^{ctr})=D(I)$ as central contributions. We can calculate these contributions from the data given in the statement of the lemma by Lemma \ref{lemn3k0xm} (when $\lvert I \rvert\ge 2$) or Lemma  \ref{leml1xm} (when $\lvert I \rvert=1$).
	\end{itemize}
 Therefore we can calculate $f(I)$ from the data given in the statement of the lemma by the type-I method, and the lemma is proven.
\end{proof}

\begin{lem}\label{lemn3k-1xm}
	If $\N=3$, $\lvert I\rvert\ge 2$, $a_1\ge \m$, $K(I)=-1$ and $x(I)=\m$, then $f(I)$ can be calculated from the following given data: closed extended correlators and correlators of the form $f(I')$ with $D(I')\le \max \{D(I)-1,1\}$.
\end{lem}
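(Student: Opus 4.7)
The plan is to apply the \emph{type-II method} of Subsection \ref{subsec general strategy}. Since $\c=(3\m-r)/2\le(r-6)/4<\m\le a_1$, the hypothesis $a_1\ge\c$ holds, so we may apply both TRR \eqref{eqtrr1} (with respect to one of the boundary markings $\sigma^\m$) and TRR \eqref{eqtrr2} (with respect to the internal marking $a_2$) to
\[
\left\langle (a_1-\c)\psi\,\tau_0^{a_2}\cdots\tau_0^{a_l}\,\sigma^\m\sigma^\m\sigma^\m\right\rangle_0^{\frac{1}{r},\mathrm{o},\h}.
\]
The relevant main term is $\alpha:=f(\emptyset)\cdot\left\langle\tau_0^\c\,\tau_0^{a_1-\c}\,\tau_0^{r-2-a_1}\right\rangle_0^{\mathrm{ext}}\cdot f(I)$. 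The closed extended three-point factor equals $1$, and $f(\emptyset)$ has $D(\emptyset)=1\le\max\{D(I)-1,1\}$, so it belongs to the given data.

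The key asymmetry is that in TRR \eqref{eqtrr1} the chosen boundary $\sigma^\m$ must appear inside the central contribution, which forces the central factor to carry at least one boundary marking; but the central factor of $\alpha$ is $f(I)$, which (because $K(I)=-1$, $x(I)=\m$) carries \emph{no} boundary markings. Hence the coefficient of $\alpha$ is $0$ in TRR \eqref{eqtrr1}, whereas in TRR \eqref{eqtrr2} the main term arises from the unique choice $s=1$, $t_1=\c$, $T_1=\{1,2,3\}$ and contributes $-1$. Subtracting the two TRR expressions produces a linear equation for $\alpha$, equivalently for $f(I)$, with nonzero coefficient $1$.

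It remains to check that every other factor $f(I')$ appearing in either TRR with $D(I')>\max\{D(I)-1,1\}$ either does not actually occur or is reducible to the stated given data by an earlier lemma. Using the relation $2\sum_{i} a_i=(2l-2)r-2$ (which follows from $K(I)=-1$, $x(I)=\m$) in the equation defining $K(I^{sd})$, the only side contributions with $D(I^{sd})\ge D(I)$ are $I^{sd}=I\setminus\{a_1\}$ with $K(I^{sd})=1$ in TRR \eqref{eqtrr1}, and $I^{sd}=I\setminus\{a_1,a_2\}$ with $K(I^{sd})=3$ in TRR \eqref{eqtrr2}; the cases $K(I^{sd})\in\{2,3\}$ with $\lvert I^{sd}\rvert=l-1$ are eliminated by exactly the same substitution used in Lemma \ref{lemn3k0xm} together with $a_1\ge\m$. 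These surviving sides are then handled by Lemma \ref{lemn3k3} (for $K=3$) and Lemmas \ref{lemn3k1xnm}/\ref{lemn3k1xm}/\ref{leml1xm} (for $K=1$, according to whether $x(I^{sd})=2a_1-\m$ exceeds or equals $\m$). For central contributions $f(\hat I^{ctr})$ with $\lvert \hat I^{ctr}\rvert=l$, the constraint $(K(\hat I^{ctr})+1)(\h+1)=a'-a_1$ derived from the defining equation, together with the dimension identity \eqref{eq central side dimension} and the fact that a putative empty-side factor would require exactly $\N+1=4$ boundary markings while only $3$ are available, forces $a'=a_1$ and hence identifies the contribution with $\alpha$; all other central factors with $\lvert\hat I^{ctr}\rvert<l$ satisfy $D(\hat I^{ctr})\le D(I)$ and fall under Lemmas \ref{lemn3k3}, \ref{lemn3k2xm}, \ref{lemn3k2xnm}, \ref{lemn3k1xm}, \ref{lemn3k1xnm}, \ref{lemn3k0xm}, or \ref{lemn3k0xnm}.

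The main obstacle will be this last combinatorial verification: enumerating every $(s,a,t_j,R_j,T_j)$ in the two TRR sums and checking that each boundary-distribution pattern either produces a factor within the given data, vanishes on dimensional grounds (notably via the vanishing of generic $2$-point closed extended correlators), or coincides with $\alpha$ itself. Once this bookkeeping is carried out, solving the linear equation from the two TRRs expresses $f(I)$ in terms of closed extended correlators and correlators $f(I')$ with $D(I')\le\max\{D(I)-1,1\}$, as required.
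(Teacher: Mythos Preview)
Your approach---applying the type-II method to $\langle(a_1-\c)\psi\,\tau_0^{a_2}\cdots\tau_0^{a_l}\,\sigma^\m\sigma^\m\sigma^\m\rangle$ and comparing TRR with respect to a boundary $\m$ against TRR with respect to $a_2$---is exactly the paper's, and your coefficient computation ($0$ versus $-1$) matches the pattern $-\binom{K(I)+\N}{\N}$ versus $-\binom{K(I)+\N+1}{\N}$. One imprecision to correct: central factors with $\lvert\hat I^{ctr}\rvert=l-1$ and $K(\hat I^{ctr})=2$ actually have $D(\hat I^{ctr})=D(I)+1$, not $\le D(I)$ as you assert; the paper lists this case explicitly, and Lemma~\ref{lemn3k2xm} (which you already cite) reduces it to data with $D\le D(I)-1$, so your overall argument survives.
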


\begin{proof}
    Similarly to the proof of Lemma \ref{lemn3k2xm}, we use the type-II method to calculate $f(I)$ and apply TRRs to
	$$
	\left\langle
	\begin{array}{c}
	\hfill  (a_1-\c)\psi\quad \hfill a_2\quad \hfill\dots\quad \hfill a_l\hfill\null\\
	\hfill \m\quad \hfill  \m\hfill\quad \m\hfill\null\\
	\end{array}
	\right\rangle_0^{\frac{1}{r},\text{o},\h}.
	$$
	The equations \eqref{eq sum of I type 2}, \eqref{eq sum of K type 2} and \eqref{eq sum of D type 2} still hold for non-zero terms in this case. All the contributions $f(I')$ in a possibly non-zero term satisfying $D(I')\ge D(I)$ are:
	\begin{itemize}
	    \item $f(I)$ as a central contribution in the main term $\alpha$. Again, similarly to Lemma \ref{lemn3k2xm}, the coefficients of $\alpha$ in two different TRRs are different.
	    \item 
	    $f(I^{sd})$ with $\lvert I^{sd}\rvert=\lvert I\rvert-1$, $K(I^{sd})=1$ and $D(I^{sd})=D(I)$ as side contributions. We can calculate these contributions from the data given in the statement of the lemma by Lemma \ref{lemn3k1xnm} (when $x(I^{sd})\ne \m$) or Lemma \ref{lemn3k1xm} (when $x(I^{sd})= \m$, $\lvert I \rvert\ge 2$) or Lemma  \ref{leml1xm} (when $x(I^{sd})= \m$, $\lvert I \rvert=1$).
	    \item 
	    $f(I^{ctr})$ with $\lvert I^{ctr}\rvert=\lvert I\rvert-1$, $K(I^{ctr})=1$, $x(I^{ctr})=\m$ and $D(I^{ctr})=D(I)$ as central contributions. We can calculate these contributions from the data given in the statement of the lemma by Lemma \ref{lemn3k1xm} (when  $\lvert I \rvert\ge 2$) or Lemma  \ref{leml1xm} (when  $\lvert I \rvert=1$).
	    \item $f(I^{ctr})$ with $\lvert I^{ctr}\rvert=\lvert I\rvert-1$, $K(I^{ctr})=2$, $x(I^{ctr})=\m$ and $D(I^{ctr})=D(I)+1$ as central contributions. We can calculate these contributions from the data given in the statement of the lemma by Lemma \ref{lemn3k2xm} (when  $\lvert I \rvert\ge 2$) or Lemma  \ref{leml1xm} (when  $\lvert I \rvert=1$).
	    \item Note that there exist no side contributions $f(I^{sd})$ with $\lvert I'\rvert=\lvert I\rvert-1$, $K(I^{sd})=2$ and $D(I^{sd})=D(I)+1$ since $a_1\ge \m$. In fact, in this case $I^{sd}=I\setminus\{a_1\}$. From
	    $$
	   2\sum_{i=1}^{l}a_i-r+2=(2l-3)r
	    $$
	    we get
	    $$
	   (2l-2)r-\left(2\m+2\sum_{i=2}^{l}a_i-r+2\right)=r-2\m+2a_1\ge r>r-2;
	    $$
	    this means $K(I')\ne 2$.
	\end{itemize}
 Therefore, we can calculate $f(I)$ from the data given in the statement of the lemma by the type-II method, and the lemma is proven.
\end{proof}

\begin{prop}\label{propn3}
	For $\N=3$, we can calculate $f(I)$ for all $I\in \mathcal I$.
\end{prop}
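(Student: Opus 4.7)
The plan is to establish the proposition by a double induction: the primary induction is on $D(I)$, and within a fixed value of $D(I)$ we first treat the subcases with $x(I)=\m$ and then use them to handle the subcases with $x(I)\neq\m$. This secondary ordering is forced on us because several of the lemmas established above (e.g.\ Lemmas \ref{lemn3k2xnm}, \ref{lemn3k1xnm}, \ref{lemn3k0xnm}) reduce an $x(I)\neq\m$ correlator to central contributions of the form $f(I')$ with the \emph{same} $D(I')=D(I)$ but with $x(I')=\m$, so a naive induction on $D(I)$ alone does not close.

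The base cases are the two correlators whose computation is already self-contained: $f(\emptyset)$ is given by Proposition \ref{propfemptyset}, and $f(\{a_1\})$ with $x(\{a_1\})=\m$ is given by Lemma \ref{leml1xm}. For the inductive step one performs a case analysis on the pair $(K(I), x(I))$ and invokes the corresponding lemma:
\begin{itemize}
\item $K(I)=3$: Lemma \ref{lemn3k3} (then necessarily $x(I)\neq\m$ by Remark \ref{rmk bound of K}).
\item $K(I)=2,\ x(I)=\m,\ |I|\geq 2$: Lemma \ref{lemn3k2xm}; and $x(I)\neq\m$: Lemma \ref{lemn3k2xnm}.
\item $K(I)=1,\ x(I)=\m,\ |I|\geq 2$: Lemma \ref{lemn3k1xm}; and $x(I)\neq\m$: Lemma \ref{lemn3k1xnm}.
\item $K(I)=0,\ x(I)=\m,\ |I|\geq 2$: Lemma \ref{lemn3k0xm}; and $x(I)\neq\m$: Lemma \ref{lemn3k0xnm}.
\item $K(I)=-1,\ x(I)=\m,\ |I|\geq 2$: Lemma \ref{lemn3k-1xm}.
\end{itemize}
In each case the relevant lemma expresses $f(I)$ as a polynomial in closed extended correlators (all computed in \cite{BCT_Closed_Extended}) together with correlators $f(I')$ which are strictly smaller in our lexicographic order; no other subcase can arise because for $\N=3$ the integer $K(I)$ is confined to $\{-1,0,1,2,3\}$ by Remark \ref{rmk bound of K}, and the combinations $|I|=1$ with $K(I)\in\{0,-1\}$ or with $K(I)\in\{2,3\}$ simply do not occur, as one verifies by unpacking the defining equation for $K(I)$ and $x(I)$.

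The main obstacle is justifying the hypothesis ``$a_1\geq \m$'' in Lemmas \ref{lemn3k0xm} and \ref{lemn3k-1xm}, since these lemmas do not apply when every internal twist is smaller than $\m$. This is exactly where Lemma \ref{lemn3aismall} intervenes: it asserts that if $x(I)=\m$ and all $a_i<\m$ then $D(I)\leq 1$. For $K(I)=0,\ x(I)=\m,\ |I|\geq 2$ one has $D(I)=2|I|-2\geq 2$, so some $a_i\geq \m$ and we may reorder. The genuinely delicate situation is $K(I)=-1,\ x(I)=\m,\ |I|=2$, where $D(I)=1$ and Lemma \ref{lemn3aismall} is not directly conclusive; here a direct computation from the definitions gives $a_1+a_2=r-1$, which already forces $\max(a_1,a_2)\geq\lceil (r-1)/2\rceil\geq \m$, again permitting the reordering. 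Once this reordering is justified in all cases, the induction closes and every $f(I)$ with $I\in\mathcal I$ is computable.
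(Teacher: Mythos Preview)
Your approach—induction on $D(I)$ using Lemmas \ref{lemn3k3}--\ref{lemn3k-1xm}—is exactly the paper's, but you have overcomplicated it and left a small gap.

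The secondary induction is unnecessary. You claim that Lemmas \ref{lemn3k2xnm}, \ref{lemn3k1xnm}, \ref{lemn3k0xnm} reduce an $x\neq\m$ correlator to same-$D$ correlators with $x=\m$, but this is a misreading: those reductions happen \emph{inside the proofs} of the lemmas, while the lemma \emph{statements} already absorb them and list only $f(I')$ with $D(I')\le D(I)-1$ (or $D(I')\le\max\{D(I)-1,1\}$) as required input. For $D(I)\ge 2$ this is simply $D(I')\le D(I)-1$, so a plain induction on $D(I)$ suffices—this is the paper's one-line argument.

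Your base cases are also too narrow, and this is where the gap lies. The paper takes all $D(I)\le 1$ as base cases, computed directly from the definition (the moduli are zero- or one-dimensional). With only $f(\emptyset)$ and $f(\{a_1\})$, $x=\m$, as base cases, your induction does not close at $D=1$: applying Lemma \ref{lemn3k-1xm} to $|I|=2,\ K=-1$ (which has $D=1$) requires as input all $f(I')$ with $D(I')\le 1$, including the $|I'|=1,\ K'=1,\ x\ne\m$ case, which in your secondary ordering has not yet been treated. Taking $D\le 1$ as base cases also makes your ``genuinely delicate'' analysis of $|I|=2,\ K=-1$ superfluous, and the hypothesis $a_1\ge\m$ in Lemmas \ref{lemn3k0xm} and \ref{lemn3k-1xm} then follows immediately from Lemma \ref{lemn3aismall}, since those lemmas are only invoked for $D(I)\ge 2$. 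Finally, your assertion that $|I|=1$ with $K(I)\in\{2,3\}$ does not occur is false (for instance $|I|=1,\ K=2,\ x=\m$ gives $D=2$), though it is harmless since such cases are covered either by Lemma \ref{leml1xm} or by the $I\neq\emptyset$ lemmas already in your list.
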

\begin{proof}
	The case $D(I)=0$ or $1$ can be calculated directly by definition. Then we can induct on $D(I)$ by using the lemmas \ref{lemn3k3}--\ref{lemn3k-1xm} repeatedly.
	
\end{proof}

\subsubsection{Case $\N=2$}
In this case we have 
$$
\m<\frac{r}{3},~\b=r-2-2\m,~\c=\m.
$$
The strategy in this subsection is slightly different from the previous two. Instead of calculating $f(I)$ for all $I\in \mathcal I$ independently, we enhance Proposition \ref{prop general computation} by showing that we can calculate $F(B,I)$ for $(B,I)\in \mathcal E_1\cup \mathcal E_2$ from lower dimension correlators.
\begin{lem}\label{lem n2 ai small}
If $\N=2$, $I\in \mathcal I$, $I\ne \emptyset$, $x(I)=\m$ and $$\max_{a\in I}a\le\frac{r-2+\m}{2},$$ then $$D(I)\le 1.$$
\end{lem}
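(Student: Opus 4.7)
The plan is to argue by contradiction: assume $D(I) = 2l + K - 2 \geq 2$, where I write $l := |I|$ and $K := K(I)$, and then derive a numerical contradiction from the rank-equals-dimension constraint together with the two hypotheses (the twist bound and $\N = 2$).

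First, I would solve equation \eqref{eq dim equals rank} for $2\sum_{a\in I} a$ using $x(I) = \m$, obtaining
\[2\sum_{a\in I} a = (2l+K-1)r - (K+1)\m - 2.\]
Plugging in the hypothesis $\max_{a\in I} a \leq \tfrac{r-2+\m}{2}$, which gives $2\sum_{a\in I} a \leq l(r-2+\m)$, yields the linear inequality
\[(l+K-1)r \leq (l+K+1)\m - 2l + 2.\]

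Next, I would use that $\N = 2$ is equivalent to $3\m \leq r-1$ (since $\N = \lfloor 2r/(r-\m)\rfloor = 2$ forces $3\m < r$, hence $3\m \leq r-1$ by integrality) to eliminate $\m$ from the right-hand side. Multiplying the inequality above by $3$ and substituting gives, after rearrangement,
\[(2l + 2K - 4)r \leq -7l - K + 5.\]
The right-hand side is strictly negative for every $l \geq 1$ and $K \geq -1$, while under the assumption $2l+K \geq 4$ the coefficient on the left is non-negative. The three minimal admissible pairs $(l,K) \in \{(1,2),\,(2,0),\,(3,-1)\}$ (recall that $K = -1$ is permitted here since $x(I) = \m$) have to be checked individually, because in the last two the LHS vanishes; in each the RHS is a concrete negative integer, giving immediate contradictions. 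All remaining admissible $(l,K)$ satisfy $2l+2K-4 \geq 2$, so LHS $\geq 2r$ dominates trivially.

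I do not expect any genuine obstacle: this is a short numerical estimate built from the three ingredients above. The only point requiring attention is including the case $K = -1$ (which forces $l \geq 3$) among the borderline pairs, since the definition of $\mathcal I$ explicitly allows this whenever $x(I) = \m$.
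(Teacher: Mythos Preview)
Your proof is correct and follows essentially the same approach as the paper: both start from the rank-equals-dimension identity with $x(I)=\m$ to obtain $(l+K-1)r \le (l+K+1)\m - 2l + 2$, then exploit $\m < r/3$. The paper rearranges this as $(r-\m)(l+K+1)\le 2r+2-2l$ and, using $r-\m>2r/3$ and $l\ge 1$, concludes $l+K+1<3$, whence $D(I)=2(l+K+1)-K-4\le 2\cdot 2+1-4=1$; this avoids your terminal case split, but the substance is identical. (A small cosmetic point: your pair $(1,2)$ already has $2l+2K-4=2$, so it falls under your ``LHS $\ge 2r$'' clause rather than requiring individual checking --- only $(2,0)$ and $(3,-1)$ have vanishing LHS.)
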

\begin{proof}
Since $I\in \mathcal I$, we have that 
\begin{equation*}
    \begin{split}
        (2\lvert I\rvert+K(I)-2)r&=2\sum_{a\in I}a+(K(I)+1)\m-r+2\\
        &\le(r-2+\m)\lvert I\rvert+(K(I)+1)\m-r+2,
    \end{split}
\end{equation*}
which means
$$
(r-\m)(\lvert I\rvert+K(I)+1)\le 2r+2-2\lvert I\rvert.
$$
Since $\lvert I\rvert\ge 1$ and $\m<\frac{r}{3}$ we have
$$
\lvert I\rvert+K(I)+1<3.
$$
By the integrality and $K(I)\ge -1$ we have
$$
D(I)=2\lvert I\rvert+K(I)-2=2(\lvert I\rvert+K(I)+1)-K(I)-4\le 2\cdot 2 +1-4=1.
$$
\end{proof}

\begin{dfn}
Define the set $\mathcal J\subseteq \mathcal I$ by
$$
\mathcal J:=\left\{I\in \mathcal I\colon x(I)=\m,~\max_{a\in I}a>\frac{r-2+\m}{2}\right\}.
$$
\end{dfn}
\begin{lem}\label{lem n2 ai large}
If $\N=2$, $I\in \mathcal J$, $\lvert I\rvert\ge 2$, then $f(I)$ can be calculated from the following given data: closed extended correlators,  correlators of the form $f(I')$ with $D(I')\le \max\{ D(I)-1,1\}$, correlators of the form $f(I')$ with $I'\in \mathcal J$, $D(I')=D(I)$ and $\max_{a\in I'}a>\max_{a\in I}a$.
\end{lem}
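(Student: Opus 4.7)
The plan is to apply the Type-II method with $a_1$ chosen as the internal marking of $I$ of maximum twist. Since $I \in \mathcal{J}$ we have $a_1 > (r-2+\m)/2 \ge \m = \c$ (using $\N = 2$, which forces $\c = \m$), so the Type-II hypothesis $a_1 \ge \c$ holds. I will apply TRRs \eqref{eqtrr1} and \eqref{eqtrr2} to
\[
\left\langle
\begin{array}{c}
(a_1-\c)\psi \quad a_2 \quad \dots \quad a_l \\
\underbrace{\m \quad \m \quad \dots \quad \m}_{K(I)+1+\N}
\end{array}
\right\rangle_0^{\frac{1}{r},\text{o},\h},
\]
with respect to a boundary $\m$-marking and the internal marking $a_2$ (available since $\lvert I \rvert \ge 2$). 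The main term is $\alpha = \left\langle \c,\, a_1-\c,\, r-2-a_1 \right\rangle_0^{\mathrm{ext}} \cdot f(I) \cdot f(\emptyset)$, and as in the proof of Lemma \ref{lemxm} the two TRRs assign different binomial-coefficient weights to $\alpha$, so subtracting them isolates $\alpha$ modulo the other terms. The closed extended three-pointer equals $1$ and $f(\emptyset)$ is known by Proposition \ref{propfemptyset}, so inverting yields $f(I)$.

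Next I will enumerate the remaining terms, which are of the form $f^{\mathrm{ext}} \cdot f(I^{\mathrm{ctr}}) \cdot \prod_{i=1}^s f(I^{\mathrm{sd}}_i)$ and are controlled by the conservation laws \eqref{eq sum of I type 2}--\eqref{eq sum of D type 2} together with $K(I') \le \N = 2$ from Remark \ref{rmk bound of K}. For each side contribution one checks that $D(I^{\mathrm{sd}}) \le D(I)$, with equality forcing $K(I^{\mathrm{sd}}) = \N$; in the equality case either $x(I^{\mathrm{sd}}) \ne \m$, handled by Lemma \ref{lemknottoolargexnm}, or $x(I^{\mathrm{sd}}) = \m$ with $\lvert I^{\mathrm{sd}} \rvert < \lvert I \rvert$, handled by induction on $\lvert I \rvert$ combined with Lemma \ref{lem n2 ai small} (which already pins down the correlators of dimension $\le 1$). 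For central contributions $f(I^{\mathrm{ctr}})$ the dimension inequality shows that $D(I^{\mathrm{ctr}}) = D(I)$ can be achieved only in a small list of combinatorial configurations involving a minimal three-point closed extended factor.

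The main obstacle, specific to the $\N = 2$ setting, is that several central contributions with $D(I^{\mathrm{ctr}}) = D(I)$ may lie in $\mathcal{J}$; these cannot be handled by a straightforward dimension induction. The key observation is that such an $I^{\mathrm{ctr}}$ arises when the closed extended three-point factor attaches to the central vertex at a node of central-side twist $r-2-a$ (where $a$ is the closed-side twist), and the closed extended dimension condition then reads $a + (a_1 - \c) + t_1 = r-2$ for the unique point insertion twist $t_1$. Either $a = r-2-a_1$, which reconstructs the main term $\alpha$, or $a < r-2-a_1$, in which case the new internal marking of twist $r-2-a$ strictly exceeds $a_1 = \max_{a \in I} a$. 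In the latter case $I^{\mathrm{ctr}} \in \mathcal{J}$ with $\max_{a \in I^{\mathrm{ctr}}} a > \max_{a \in I} a$, so $f(I^{\mathrm{ctr}})$ is already within the inductive data the lemma allows us to use. The hardest step will be verifying that this dichotomy exhausts all problematic terms for every value of $K(I)$ and $\lvert I \rvert$, along the lines of the casework in Lemmas \ref{lemn3k0xm} and \ref{lemn3k-1xm}, but now exploiting the new induction on $\max_{a \in I} a$ instead of a dimension induction alone.
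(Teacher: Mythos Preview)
Your overall strategy matches the paper's: apply the Type-II method with $a_1 = \max_{a \in I} a$, compare the TRRs with respect to a boundary $\m$-marking and the internal marking $a_2$, and isolate the main term by the difference of binomial coefficients. The induction on $\max_{a\in I}a$ is exactly the paper's idea. However, your analysis of the non-main terms contains two concrete errors.

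\emph{Side contributions.} You claim that $D(I^{\mathrm{sd}}) \le D(I)$ with equality forcing $K(I^{\mathrm{sd}}) = \N = 2$, and propose to handle the equality case via Lemma~\ref{lemknottoolargexnm}. In fact equality in $D(I^{\mathrm{sd}}) \le D(I)$ (with $|I^{\mathrm{sd}}| = |I|-1$) forces $K(I^{\mathrm{sd}}) = K(I)+2$, which need not equal $\N$; more importantly, Lemma~\ref{lemknottoolargexnm} requires $K \le \N-2 = 0$, so it \emph{never} applies to $K(I^{\mathrm{sd}}) \in \{1,2\}$. The paper's argument is entirely different: such a side forces the central to be the zero-dimensional $\langle \tau^{\h};\sigma^{\m}\rangle$ (hence $a = r-2-\h$), and then the accompanying closed extended three-point factor $\bigl\langle \tfrac{r-2-x(I^{\mathrm{sd}})}{2},\ a_1-\m,\ r-2-\h \bigr\rangle^{\text{ext}}$ has twist sum strictly greater than $r-2$ --- precisely because $a_1 > (r-2+\m)/2$ --- and therefore vanishes. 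Without this vanishing argument your proof has a genuine gap.

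\emph{Central contributions with larger maximum.} You correctly anticipate the dichotomy, but you place it in the wrong combinatorial configuration. A term with a single point-insertion twist $t_1$ (i.e.\ $s=1$ with side $f(\emptyset)$) forces $t_1 = \c = \m$ and $r-2-a = a_1$: this is exactly the main term, never a new one. The genuinely new central contributions with $D(I^{\mathrm{ctr}}) = D(I)$ instead come from the $s=0$ case with some $a_i$ ($i\ge 2$) in the closed factor: the three-point relation reads $a + (a_1-\m) + a_i = r-2$, so the central acquires an internal marking of twist $a_1 - \m + a_i$. Since $a_i \ge \h+1 = (r-\m)/2 > \m$ (using $\m < r/3$ for $\N=2$), this twist strictly exceeds $a_1$, which is what places $I^{\mathrm{ctr}}$ in $\mathcal J$ with larger maximum.
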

\begin{proof}
Write
$$
I=\{a_1,\dots,a_l\}
$$
and assume 
$$
a_1=\max_{i=1,\ldots,l}a_i.
$$
To calculate 
	$$
	f(I)=\left\langle
	\begin{array}{c}
	\hfill a_1 \quad\hfill a_2 \quad \hfill\dots\quad \hfill a_l\hfill\null\\
	\hfill\smash{\underbrace{ \m\quad \hfill \m\quad\hfill\dots\hfill\quad \m}_{K(I)}\quad\hfill \m}\hfill\null\\
	\end{array}
	\right\rangle_0^{\frac{1}{r},\text{o},\h}\vphantom{\left\langle
		\begin{array}{c}
		\hfill  a_1\quad \hfill\dots\quad \hfill a_l\hfill\null\\
		\hfill\underbrace{ m\quad \hfill m\quad\hfill\dots\hfill\quad m}_{K(I)}\quad\hfill x(I)\hfill\null\\
		\end{array}
		\right\rangle_0^{\frac{1}{r},\text{o},\h}},
	$$
	we use the type-II method and apply TRRs to 
	$$
	\left\langle
	\begin{array}{c}
	\hfill (a_1-\m)\psi \quad\hfill a_2 \quad \hfill\dots\quad \hfill a_l\hfill\null\\
	\hfill\smash{\underbrace{ \m\quad \hfill \m\quad\hfill\dots\hfill\quad \m}_{K(I)}\quad\hfill \m\quad\hfill \m\quad\hfill \m}\hfill\null\\
	\end{array}
	\right\rangle_0^{\frac{1}{r},\text{o},\h}\vphantom{\left\langle
		\begin{array}{c}
		\hfill  a_1\quad \hfill\dots\quad \hfill a_l\hfill\null\\
		\hfill\underbrace{ m\quad \hfill m\quad\hfill\dots\hfill\quad m}_{K(I)}\quad\hfill x(I)\hfill\null\\
		\end{array}
		\right\rangle_0^{\frac{1}{r},\text{o},\h}}.
	$$
	According to Remark \ref{rmk determine central side contribution}, all the contributions $f(I')$ in a possibly non-zero term satisfying $D(I')\ge D(I)$ are:
	\begin{itemize}
	    \item $f(I)$ as a central contribution. The term to which it contributes is the main term $\alpha$. The coefficient of $\alpha$ in the TRR with respect to the internal marking $a_2$ is $-\binom{K(I)+3}{2}$,
     while the coefficient of $\alpha$ in the TRR with respect to the internal marking $\m$ is $-\binom{K(I)+2}{2}$.
	    \item $f(I')$ with $\lvert I'\rvert=\lvert I\rvert-1$, $K(I')=K(I)+2$, $x(I')=\m$ and $D(I')=D(I)$ as a central contribution. There are two cases.
	    \begin{itemize}
	        \item $f(I')$ contributes in a term with no side contributions. In this case we have 
	        $$
	        I'=\{a_1-\m+a_i,a_2,\dots,a_{i-1},a_{i+1},\dots,a_l)\}
	        $$
	        for some $i\in \{2,3,\dots,l\}$. Since $a_i\ge \h+1$ and $\m< \frac{r}{3}$ we have 
	        $$
	        a_1-\m+a_i\ge a_1-\m+\frac{r-\m}{2}>a_1,
	        $$
	        hence $I'\in \mathcal J$ and
	        $$
	        \max_{a\in I'}a>\max_{a\in I}a.
	        $$
	        \item
	        $f(I')$ contributes in a term with one side contribution of the form $f(\{a_i\})$ for some $i\in \{2,3,\dots,l\}$ with $K(\{a_i\})=0$; however $a_i\ge \h+1$ implies $K(\{a_i\})\ne 0$, so this case is impossible.
	    \end{itemize}
	    \item Note that there is no side contribution of the form $f(I')$ with $\lvert I'\rvert=\lvert I\rvert-1$, $K(I')=K(I)+2$ and $D(I')=D(I)$. In fact, assuming $f(I')$ appears in some term, then the central contribution of this term is $
        \left\langle
        \begin{array}{c}
        \hfill  \h\hfill\null\\
        \hfill \m\hfill\null\\
        \end{array}
        \right\rangle_0^{\frac{1}{r},\text{o},\h}
        $ and the closed contribution of this term is $\left\langle
        \begin{array}{c}
        \hfill \frac{r-2-x(I')}{2} \quad\hfill a_1-\m\quad\hfill r-2-\h  \hfill\null\\
        \end{array}
        \right\rangle_0^{\text{ext}}$. However, since $x(I')\le r-2$ and $a_1>\frac{r-2+\m}{2}$, it must hold that
        \[
        \begin{split}
        &\frac{r-2-x(I')}{2} +(a_1-\m)+(r-2-\h)\\&\quad\qquad\quad\qquad> 0+\frac{r-2+\m}{2}-\m+r-2-\frac{r-2-\m}{2}=r-2. \end{split}\]
        This means the closed contribution in this term is zero.
\end{itemize}
Therefore we can calculate $f(I)$ from the data given in the statement of the lemma using the type-II method, and the lemma is proven.
\end{proof}
\begin{prop}\label{propn2}
	For $\N=2$, we can calculate all the correlators $F(s)$ for $s\in \mathcal S$.
\end{prop}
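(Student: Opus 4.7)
The plan is to combine Proposition \ref{prop general computation}, which already handles every $s \in \mathcal S \setminus (\mathcal E_1 \cup \mathcal E_2)$ by a well-founded induction on $\prec$, with a separate treatment of the exceptional families $\mathcal E_1$ and $\mathcal E_2$. Since $\mathcal S$ is finite, the outer induction on $\prec$ is well-founded; I will assume that $F(s')$ has been computed for every $s' \prec s$ and then compute $F(s)$, splitting into the three cases $s \notin \mathcal E_1 \cup \mathcal E_2$ (where Proposition \ref{prop general computation} is immediate), $s \in \mathcal E_1$ (where $F(s) = F(\emptyset)$, supplied by Proposition \ref{propfemptyset}), and $s \in \mathcal E_2$. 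The last case corresponds to computing $f(I)$ with $x(I) = \m$, and is where the substantive work lies.

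For the $\mathcal E_2$ case I plan to distinguish three subcases. If $|I| \leq 1$, the correlator is either trivially zero-dimensional or given by Lemma \ref{leml1xm}. If $|I| \geq 2$ and $I \notin \mathcal J$, i.e.\ $\max_{a \in I} a \leq (r-2+\m)/2$, then Lemma \ref{lem n2 ai small} forces $D(I) \leq 1$, so the correlator is handled by direct computation on a zero- or one-dimensional moduli space as in \cite[Example 3.25]{BCT2}. The genuine work is in the third subcase, $|I| \geq 2$ with $I \in \mathcal J$, where I will set up a secondary reverse induction on $\max_{a \in I} a$: Lemma \ref{lem n2 ai large} expresses $f(I)$ in terms of closed extended correlators, correlators $f(I')$ with $D(I') \leq \max\{D(I) - 1, 1\}$ (handled by the outer induction on $\prec$, since strictly smaller $D$ implies $\prec$-smaller), and correlators $f(I')$ with $I' \in \mathcal J$, $D(I') = D(I)$, and $\max_{a \in I'} a > \max_{a \in I} a$, which is exactly the hypothesis of the secondary induction. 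The base case of the secondary induction is the largest attainable value of $\max_{a \in I} a$ within $\mathcal J$ at the given dimension, where the last type of term cannot arise and Lemma \ref{lem n2 ai large} reduces $f(I)$ to purely lower-dimensional data.

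The main obstacle, as in Propositions \ref{propnge4} and \ref{propn3}, will be the bookkeeping: one must verify that each invocation of Lemma \ref{lem n2 ai large} strictly either lowers $D(I)$ or increases $\max_{a \in I} a$ among $\mathcal J$ elements, that the bound $\max_{a \in I} a \leq r - 1$ guarantees termination of the secondary induction, and that the $f(I')$ with $x(I') \neq \m$ or $I' \notin \mathcal J$ that may be produced along the way at strictly lower dimension are already covered by the outer induction (either via Proposition \ref{prop general computation} if they lie outside $\mathcal E_1 \cup \mathcal E_2$, or by an earlier stage of the same three-case split if they lie in $\mathcal E_1 \cup \mathcal E_2$). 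Once this interlocking of the two inductions is set up cleanly, every $F(s)$ with $s \in \mathcal S$ becomes computable and the proposition follows.
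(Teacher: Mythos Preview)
Your approach is essentially identical to the paper's: both reduce to Proposition \ref{prop general computation} for non-exceptional $s$, Proposition \ref{propfemptyset} for $\mathcal E_1$, Lemma \ref{lem n2 ai small} for $\mathcal E_2$ with small internal twists, Lemma \ref{leml1xm} for $|I|=1$, and Lemma \ref{lem n2 ai large} together with a reverse induction on $\max_{a\in I}a$ for the remaining $\mathcal J$ cases.

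There is one organizational point to tighten. You frame the outer induction as an induction on $\prec$, but the correlators $f(I')$ with $I'\in\mathcal J$, $D(I')=D(I)$, $\max_{a\in I'}a>\max_{a\in I}a$ produced by Lemma \ref{lem n2 ai large} satisfy $|I'|=|I|-1$ (see the proof of that lemma), hence $(B',I')\succ(B,I)$ in the $\prec$-order. So these terms are \emph{not} supplied by a $\prec$-induction hypothesis, and your secondary induction is implicitly stepping outside the $\prec$ framework. The paper avoids this by running the outer induction on $D$ rather than on $\prec$: at each level $D_0$ it first computes all of $\mathcal J_{D_0}$ in one pass via the reverse-$\max$ induction (whose inputs are only data with $D<D_0$, closed extended correlators, and the $|I|=1$ values from Lemma \ref{leml1xm}), and only afterwards applies Proposition \ref{prop general computation} to the non-exceptional elements at level $D_0$. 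With that small restructuring your argument goes through verbatim.
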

\begin{proof}
    We prove the proposition by induction on $D(s)$. For all $s$ with $D(s)=0$ or $D(s)=1$, we can calculate $F(s)$ by definition.
    \par
    We assume that we have calculated all $F(s)$ with $D(s)\le D_0-1$, we need to calculate all $F(s)$ with $D(s)=D_0$. By Proposition \ref{prop general computation}, it is enough to calculate all $F(s)$ with $s\in \mathcal E_1\cup \mathcal E_2$ and $D(s)=D_0$. 
    \begin{itemize}
    \item
    For the unique $s\in \mathcal E_1$, the correlator $F(s)=f(\emptyset)$ is calculated in Proposition \ref{propfemptyset}.
    \item
    For $s=(B,I)\in \mathcal E_2$ and $I\notin \mathcal J$, Lemma \ref{lem n2 ai small} shows that $D(s)\le 1$, so we can calculate $F(s)$ by definition.
    \item
    It remains to calculate $F(s)=f(I)$ for $s=(B,I)\in \mathcal E_2$ and $I\in \mathcal J$. Write
    $$
    \mathcal J_{D_0}:=\{I\in \mathcal J\colon D(s)=D_0\}. 
    $$
     We can calculate $f(I)$ for all $I\in \mathcal J_{D_0}$ in the following way. The case $\lvert I\rvert=1$ is covered by Lemma \ref{leml1xm}. For $\lvert I\rvert\ge2$, by our induction hypothesis (\textit{i.e.} we have calculated all $F(s)$ with $D(s)\le D_0-1$) and Lemma \ref{lem n2 ai large}, $f(I)$ can be calculated from $f(I')$ with $I'\in \mathcal J_{D_0}$ and $\max_{a\in I'}a>\max_{a\in I}a$. Since $\mathcal J_{D_0}$ is finite, we can calculate all $f(I)$ for all $I\in \mathcal J_{D_0}$ by an additional induction on $\max_{a\in I}a$.
    \end{itemize}

\end{proof}

\subsection{Conclusion}
We summarize this section by the following theorem.
\begin{thm}\label{thm calculate}
    All primary open $(r,\h)$-spin correlators are determined by the topological recursion relation Theorem \ref{thm TRR}, the vanishing result of Proposition \ref{prop:vanish small internal}, closed extend $r$-spin correlators defined and calculated in \cite{BCT_Closed_Extended}, and the initial values
    \begin{itemize}
        \item $f(\emptyset)$, which is calculated in Section \ref{subsec computation minimal};
        
        \item $F(B,I)$ for $(B,I)\in \mathcal S$ with $D(B,I)\le 1$, which is calculated directly from the definition;
        
        \end{itemize}
        Moreover, as a consequence, all $(r,\h)$-spin correlators (including non-primary ones) are determined, again by the topological recursion relation Theorem \ref{thm TRR}.
\end{thm}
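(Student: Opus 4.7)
The plan is to chain together the reductions from the previous subsections in the right order, so that every open $(r,\h)$-spin correlator is either an explicit initial value or is expressed as a polynomial in strictly simpler data.

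First I would handle the descendant reduction. Given any $(r,\h)$-spin correlator with at least one positive descendant index $d_i$, the topological recursion relations of Theorem~\ref{thm TRR} rewrite it as a polynomial in closed-extended $r$-spin correlators and in open correlators in which either the total descendant degree $\sum d_i$ has strictly decreased, or $\sum d_i$ is unchanged but the number of $\psi$-classes on a specific marked point has dropped. Iterating the TRR on each marked point in turn terminates, so we are reduced to the case of primary correlators. This justifies the ``moreover'' clause once the primary case is settled.

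Next, for primary correlators I would argue by a double reduction. The vanishing statement of Proposition~\ref{prop:vanish small internal} lets us assume that every internal twist satisfies $a_i > \h$. If some boundary twist is $< r-2-2\h$ the correlator vanishes automatically by legality and the definition of level $\h$; and if the dimension does not match the rank the correlator is zero by definition. So it suffices to treat $F(B,I)$ for $(B,I)\in\mathcal S$. I would then invoke Proposition~\ref{prop general computation}, which expresses any such $F(s)$ with $s\in\mathcal S\setminus(\mathcal E_1\cup\mathcal E_2)$ in terms of closed-extended correlators and values $F(s')$ with $s'\prec s$. Since the partial order $\prec$ is well-founded on the finite set $\mathcal S$ (it refines a finite lexicographic order on $(D,-|I|,-\Sigma,-M)$), an induction on $\prec$ reduces the whole primary problem to computing $F(s)$ for $s\in\mathcal E_1\cup\mathcal E_2$.

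Finally I would dispose of the base cases $\mathcal E_1\cup\mathcal E_2$. The single element of $\mathcal E_1$ is $f(\emptyset)$, evaluated in Proposition~\ref{propfemptyset}. For $\mathcal E_2$ I split on $\N=\lfloor 2r/(r-\m)\rfloor$: the case $\N\ge 4$ is handled by Proposition~\ref{propnge4}, the case $\N=3$ by Proposition~\ref{propn3}, and the case $\N=2$ by Proposition~\ref{propn2}, which together express each $f(I)$, $I\in\mathcal I$, inductively on $D(I)$ in terms of $f(\emptyset)$, closed-extended correlators, and lower-$D$ values; the induction bottoms out at $D\le 1$, which is evaluated directly from the definition. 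The main technical obstacle here is not any single step but making the induction close: one must verify that in every application of a type-I or type-II recursion, the main term has distinct coefficients under two different TRRs while every auxiliary term lies strictly earlier in the induction hierarchy ($\prec$, or $D$, or $\max_{a\in I}a$). This bookkeeping is precisely the content of Lemmas~\ref{lemxm}--\ref{lem n2 ai large}, and assembling them in the order dictated by $\prec$ completes the proof.
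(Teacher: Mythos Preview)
Your proposal is correct and matches the paper's approach: Theorem~\ref{thm calculate} is stated as a summary of Section~\ref{sec:computations} with no separate proof, and you have correctly identified how Proposition~\ref{prop general computation}, the case split on $\N$ via Propositions~\ref{propnge4}, \ref{propn3}, \ref{propn2}, and the initial values chain together, with the TRR then handling descendants. The only nuance is that for $\N=2$ the paper interleaves Proposition~\ref{prop general computation} with the $\mathcal E_2$-specific lemmas inside a single induction on $D(s)$ (so that non-$\mathcal E$ correlators of lower dimension are available when computing $f(I)$), rather than first disposing of all of $\mathcal E_2$ and only then invoking Proposition~\ref{prop general computation}; this is a harmless reorganization and your argument still closes.
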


\section{Point insertion in open FJRW theories and a Fermat quintic example}
\label{sec:open_fjrw}
Fan--Jarvis--Ruan--Witten (FJRW) theory of quantum singularity \cite{FJR} is an important generalization of Witten's $r$-spin theory. To describe this theory in a nutshell, let $W$ be the quasi-homogeneous polynomial
\begin{equation}\label{eq:W}W(x_1,\ldots, x_a)= \sum_{i=1}^a\prod_{j=1}^ax_i^{c_{ij}},\end{equation} and let $G_W$ be the maximal group of diagonal symmetries of $W,$
\[G_W=\{(\lambda_1,\ldots,\lambda_a)\in({\mathbb{C}}^*)^a|~W(\lambda_1x_1,\ldots,\lambda_ax_a)=W(x_1,\ldots,x_a)\}.\]
In \cite{FJR,FJR2,FJR3}, a moduli space ${\mathcal{W}}_{W,G}$ and a virtual fundamental class $c_{(W,G)}$ are associated to a \emph{non-degenerate} quasi homogeneous polynomial $W$ and an \emph{admissible} group $G<G_W$.
The moduli and the class $c_{(W,G)}$ give rise to the FJRW cohomological field theory, and in genus $0$ to the FJRW Frobenius algebra.
For example, $r$-spin theory corresponds to the special case where $W=x^r$ and $G=\mathbb Z/r\mathbb Z$. FJRW theory was proven to be related to integrable hierarchies \cite{FSZ10}, to mirror symmetry \cite{FJR,HeLiShenWebb} and to other geometric models \cite{ChiodoRuan, FJRGLSM}. 
We will now briefly describe one important family of FJRW theory, referring the reader to \cite{FJR,FJR2,FJR3} for a detailed description and introduction of FJRW theory. 

Consider the pair 
\[(x_1^r+\ldots+x_{m}^r,\mathbb{Z}/r\mathbb{Z}),\]of a Fermat polynomial and its \emph{minimal} symmetry group. 
Roughly speaking, in this theory we consider the usual moduli space of $r$-spin curves, where the Witten class $c_W$ is replaced by $c_W^{m},$ the cupping of $m$ copies of the Witten class, and intersection numbers are as usual integrals of the form
\[\langle\tau_{d_1}^{(a_1,\ldots,a_1)}\cdots\tau_{d_n}^{(a_n,\ldots,a_n)}\rangle^{\frac{1}{r},c,m}_g:=r^{(1-g)}\int_{\overline{\mathcal{M}}_{g,\{a_1,\ldots,a_n\}}}c_W^{ m}\cup\psi_1^{d_1}\cdots\psi_n^{d_n}.\]
This theory can also be thought of as an intersection theory on the moduli of curves together with $m$ $r$-spin structures, where the choice of the minimal symmetry group $\mathbb{Z}/r\mathbb{Z}$ implies that for each point the $m$ twists are the same, so that the vector of twists at a point has the form of the form $(a,\ldots,a),$ explaining the above notation.

An important special case of the above is the Fermat quintic with minimal symmetry group 
\[(x_1^5+x_2^5+x_3^5+x_4^5+x_5^5,\mathbb{Z}/5\mathbb{Z}).\]
In the Fermat quintic example this corresponds to working with the FJRW class $c_{\mathcal W}^{5},$ the cupping of five copies of Witten's $5$-spin class, and considering only twists of the form $(a,a,a,a,a)$ for $a\in\{0,1,2,3,4\}.$
In the (closed) FJRW setting this theory was shown to satisfy a beautiful correspondence with the Calabi-Yau (CY) Fermat quintic \cite{ChiodoRuan}, 
the well-known \emph{LG/CY correspondence} of Chiodo and Ruan.

The study of open FJRW theory beyond $r$-spin has been recently initiated in \cite{GKT2}. In
\cite{GKT2}, an open FJRW construction based on BCT boundary conditions was considered. In this section we will concentrate on defining open FJRW theories using the point insertion boundary conditions. It turns out that this type of boundary conditions is perfectly suited for the Fermat case with the minimal symmetry group.

We will now construct the open FJRW intersection theory for $(x_1^r+\ldots+x_{m}^r,\mathbb{Z}/r\mathbb{Z}).$ In this theory we consider correlators of the form
\[
\left\langle \prod_{i\in I}\tau^{a_i}_{d_i}\prod_{i\in B}\sigma^{b_i}\right\rangle^{1/r,o,\h,m}_0 := \int_{\oPMh_{0,B,I}}e({\mathcal{W}}^{\oplus m}\oplus\bigoplus_{i\in I} {\mathbb{L}}_i^{\oplus d_i} ; \mathbf{s})
\]
as in Definition \ref{dfn correlator point insertion}, involving direct sums of $m$ copies of the $r$-spin Witten bundle and the relative cotangent line bundles $\mathbb{L}_i.$
Note that by \cite[Theorem 4.8]{TZ1}, for odd $m=2d+1,$ the bundle $\widetilde{{\mathcal{W}}}^{\oplus(2d+1)}\to\widetilde{\mathcal{M}}_{0,k,l}^{\frac{1}{r},\h}$ over the glued moduli space is canonically relatively oriented. Also ${{\mathcal{W}}}^{\oplus(2d+1)}\to\overline{\mathcal{M}}_{0,k,l}^{\frac{1}{r},\h},$ the sum of an odd number of copies of the Witten bundle over the non-glued moduli, is therefore canonically relatively oriented. 

\begin{rmk}\label{rmk:BCT_quintic}
    One can also define a BCT-like open FJRW theory for this setting, by using positivity boundary conditions for all boundary nodes but those which have an illegal half-node of twist $0.$ For the latter nodes forgetful boundary conditions are used as in \cite{BCT2}. Similar arguments show that these numbers are also well-defined, but as we shall see below, in certain cases of interest, which include the Fermat quintic with minimal symmetry group, their correlators vanish.
\end{rmk}

\begin{prop}\label{prop:even_vanishing_quintic}
Fix $r\geq 2,\h\in\{0,1,\ldots,\lfloor r/2\rfloor-1\}$ and odd $m>1.$
Then if $|B|$ is even, every correlator of the form
\[\left\langle \prod_{i\in I}\tau^{a_i}_{d_i}\prod_{i\in B}\sigma^{b_i}\right\rangle^{1/r,o,\h,m}_0\]vanishes.
\end{prop}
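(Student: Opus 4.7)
The plan is to exhibit an orientation-reversing bundle automorphism that preserves the space of canonical multisections, forcing the Euler integral to equal its own negative. Set $k := \operatorname{rank}(\mathcal{W})$ and $E := \mathcal{W}^{\oplus m}\oplus\bigoplus_i\mathbb{L}_i^{\oplus d_i}$.

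First I would note the following parity constraint. If the integral is to be non-zero, we must have $\operatorname{rank}(E)=\dim \Mbar^{\frac{1}{r},\h}_{0,B,I}$, i.e.\ $mk+2\sum_i d_i = |B|+2|I|-3$. Reducing modulo $2$ and using that $m$ is odd gives $k\equiv |B|+1\pmod 2$. Hence when $|B|$ is even, $k$ is odd.

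Next, consider the bundle automorphism $\tau\colon E\to E$ that acts as the swap of the first two summands of $\mathcal{W}^{\oplus m}$ and as the identity on the remaining summands. Since $m>1$, such a swap is available. Because $\tau$ covers the identity on the moduli space, whether $\tau$ preserves or reverses the canonical relative orientation of $E$ is determined by its fiberwise determinant. Swapping two identical $k$-dimensional blocks amounts to $k$ transpositions of basis vectors, so $\det(\tau)=(-1)^k$. Under the parity constraint above, $(-1)^k=-1$, so $\tau$ reverses the canonical relative orientation of $E$ given by \cite[Theorem 4.13]{TZ1} (the specific recipe for that orientation is irrelevant; only its existence matters).

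Finally, given any nowhere-vanishing canonical multisection $\mathbf{s}=(s_1,\ldots,s_m,\mathbf{t})$ as in Theorem \ref{thm intersection numbers well-defined}, the swapped multisection $\tau\circ\mathbf{s}=(s_2,s_1,s_3,\ldots,s_m,\mathbf{t})$ is again a canonical multisection: canonicity of multisections of $\mathcal{W}^{\oplus m}$ is defined summand-by-summand in Definitions \ref{def positive for Witten}--\ref{def canonical for Witten} and is obviously preserved by permutations of the $\mathcal{W}$-factors. Both $\mathbf{s}$ and $\tau\circ\mathbf{s}$ therefore define the same Euler integral. On the other hand, at every transverse zero $p$ one has $d(\tau\circ\mathbf{s})_p=\tau\circ d\mathbf{s}_p$, so the sign of $p$ relative to the fixed orientation of $E$ flips by $\det(\tau)=-1$. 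Consequently
\[
\left\langle \prod_{i\in I}\tau^{a_i}_{d_i}\prod_{i\in B}\sigma^{b_i}\right\rangle^{\frac{1}{r},o,\h,m}_0 = -\left\langle \prod_{i\in I}\tau^{a_i}_{d_i}\prod_{i\in B}\sigma^{b_i}\right\rangle^{\frac{1}{r},o,\h,m}_0,
\]
and the correlator vanishes. No step appears to present a real obstacle; the only point requiring care is the parity computation that turns the hypothesis on $|B|$ into oddness of $k$.
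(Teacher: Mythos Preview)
Your argument is correct and genuinely different from the paper's. The paper also reduces to the parity observation that $\operatorname{rank}(E)$ is odd when $|B|$ is even, but instead of swapping two Witten summands it homotopes $\mathbf{s}$ to $-\mathbf{s}$ by flipping the sign on one Witten factor at a time; during each step the remaining $m-1$ factors retain a definite sign on $R=U_+\cup\partial\widetilde{\mathcal{PM}}$, so no zeros cross the boundary and Lemma~\ref{lem zero difference as homotopy} gives $\#Z(\mathbf{s})=\#Z(-\mathbf{s})=-\#Z(\mathbf{s})$. Your route is cleaner: the swap $\tau$ is an honest bundle automorphism covering the identity, so there is no homotopy to analyze, and the summand-by-summand formulation of canonicity in Definition~\ref{def canonical for L_i} makes $\tau\circ\mathbf{s}$ canonical for free. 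Both arguments use $m>1$ in the same essential way (to have a second Witten summand available), and both use oddness of $m$ only through the rank parity $mk\equiv k\pmod 2$. The paper's approach would survive if canonicity were ever defined by a condition that mixes the Witten summands, whereas yours relies on the symmetric-group invariance of the present definition; in the current setting that is a feature, not a bug.
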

\begin{proof}
Write 
\[E = {\mathcal{W}}^{\oplus m}\oplus\bigoplus_{i\in I}{\mathbb{L}}_i^{\oplus d_i}.\] We assume \[\text{rank}(E)=\dim\oPMh_{0,B,I}=|B|+2|I|-3,\] otherwise the correlators are defined to be $0.$
Since $|B|$ is even, $\text{rank}(E)$ is odd. 
Let $s$ be a smooth transverse canonical multisection of $E.$ 
Then the correlator of interest equals $z=\#Z(s),$ the weighted number of zeros of $s.$ As in Remarks \ref{rmk glued section canonical} and \ref{rmk intersection numbers on glued mod}, we may consider $s$ as a section on the glued moduli space $\widetilde{\mathcal{PM}}^{\frac{1}{r},\h}_{0,B,I},$ and the $z$ is the number of zeros of $s$ there.
Observe that $\#Z(-s)$ equals $-z,$ since the zeros of $-s$ are the same, but because of odd rank and dimension, the degree of each zero is altered to minus the degree.

Denote by $U_+\subseteq \widetilde{\mathcal{PM}}^{\frac{1}{r},\h}_{0,B,I}$ a neighbourhood of the removed boundaries of $\widetilde{\mathcal{PM}}^{\frac{1}{r},\h}_{0,B,I}$ in which $s$ is required to be positive in the sense of Section \ref{sec sections}. Write \[R=\partial\widetilde{\mathcal{PM}}^{\frac{1}{r},\h}_{0,B,I}\cup U_+.\]
We will now homotope between $s|_R$ and $-s|_R,$ 
and show that throughout the homotopy no zero is added. 
We may perform the homotopy for the $m$ different copies of the Witten bundle, one by one, and then perform an arbitrary homotopy to the components of $s$ corresponding to the lines ${\mathbb{L}}_i$. The homotopy performed to the $i$th copy changes its boundary conditions from positivity to negativity on $R.$  However, when we homotope the component of the section corresponding to the $i$th copy, each of the remaining $m-1$ components has a definite sign, hence does not vanish on $R.$ Then, throughout the remaining homotopies, all the $m$ components of $s$ are negative at $R.$ 
Thus, the homotopy between $s|_R$ to $-s|_R$ does not produce new zeros in $R.$ Hence
\[z=\#Z(s)=\#Z(-s)=-\#Z(s)=-z,\]where the middle equality follows from Lemma \ref{lem zero difference as homotopy}. This shows that $z=0.$ \end{proof}

We now consider the case of the quintic more carefully. The possible twists are of the form $(a,a,a,a,a)$ for $a=0,\ldots, 4.$

When one tries to define the open version of the FJRW theory for the quintic with minimal symmetry group, with \emph{BCT boundary conditions} as in Remark \ref{rmk:BCT_quintic}, one easily observes that the result is a trivial theory, in the sense that the only non-zero primary invariants are those which correspond to a moduli space of dimension $0$. The reason is that the BCT version constrains the boundary twists to be $(3,3,3,3,3),$ but it is easy to see that in this case for almost all choices of twists $(a,a,a,a,a)$ for internal points, the rank of the direct sum of five Witten bundles is higher than the dimension of the moduli, giving rise only to vanishing intersection numbers. The only cases in which the dimension of the moduli equals the rank of the direct sum of the Witten bundle, is when either there are no boundary points at all, or there is at least one boundary point, but also at least one internal insertion with twist $(0,0,0,0,0).$ In the former case there is a nowhere vanishing global section for each Witten bundle separately \cite[Proposition 3.19]{BCT2}, hence the intersection number vanishes. In the latter case, using the fact that the Witten bundle is pulled back from the moduli without $0$-twisted points, one easily verifies that the only non-zero number is when there is a single boundary marking of twist $(3,3,3,3,3)$ and a single internal marking of twist $(0,0,0,0,0),$ and this number is $1.$

Consider now the (maximal, \textit{i.e.} $\h=
\lfloor \frac{5}{2}\rfloor-1=1$) point insertion theory for the open quintic with minimal symmetry group. The main difference is that now we also allow $(1,1,1,1,1)$ boundary twists, in addition to $(3,3,3,3,3)$ boundary twists. While the latter twists still give rise to trivial intersection numbers, we believe that primary correlators involving only $(1,1,1,1,1)$ twists may be non-trivial. Note that an intersection number having $l$ internal markings and $k$ boundary markings with twists $(1,1,1,1,1)$ corresponds to a non-empty moduli space precisely when
\begin{equation}\label{eq:deg_quint}5|2l+k-3.\end{equation}In this case the rank and dimension of the moduli space agree. Now, with the point insertion boundary conditions, Proposition \ref{prop:vanish small internal}
shows that all primary intersection numbers with $l\geq 1$ are $0.$ 

So we are left with considering the case $l=0.$ In this case, by \eqref{eq:deg_quint} non-zero intersection numbers may appear only for $k=5d-2,$ for a natural number $d.$
When $d$ is even, Proposition \ref{prop:even_vanishing_quintic} has the following corollary.
\begin{cor}\label{cor:vanishing_quintic_even}
For even $d,$\[\langle(\sigma^{1})^{5d-2}\rangle^{\frac{1}{r}=\frac{1}{5},\text{o},\h=1,m=5}_0=0.\]
\end{cor}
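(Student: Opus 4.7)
The plan is that this corollary is an immediate specialization of Proposition \ref{prop:even_vanishing_quintic}, so the proof will amount to verifying the hypotheses for the quintic parameters. First I would note that the correlator in question is exactly of the form $\left\langle \prod_{i\in B}\sigma^{b_i}\right\rangle^{1/r,o,\h,m}_0$ with $r=5$, $\h=1$, $m=5$, $I=\emptyset$, and $|B|=5d-2$. The number $m=5$ is odd and greater than $1$, so the setting falls within the scope of Proposition \ref{prop:even_vanishing_quintic}.

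The only nontrivial observation is a parity check on $|B|$: since $d$ is even, $5d$ is even, hence $|B|=5d-2$ is even. Thus the hypothesis ``$|B|$ is even'' in Proposition \ref{prop:even_vanishing_quintic} is satisfied, and the proposition yields the vanishing. No further argument is needed, so the ``proof'' is essentially a single invocation of the proposition together with this parity observation. There is no obstacle here; the work was already done in establishing Proposition \ref{prop:even_vanishing_quintic} (the key mechanism being the homotopy that flips the sign of each of the $m$ Witten-bundle components one at a time, using positivity of the remaining components to avoid creating zeros on the boundary).
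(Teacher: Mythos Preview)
Your proposal is correct and matches the paper's approach exactly: the paper presents this as an immediate corollary of Proposition~\ref{prop:even_vanishing_quintic}, with the only content being the observation that $|B|=5d-2$ is even when $d$ is even.
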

We currently do not know how to calculate the open intersection numbers for $k=5d-2,$ in case $d$ is odd, and in particular we do not know if they vanish or not.

In \cite{PSW}, 
Pandharipande, Solomon and Walcher study the open GW theory of the quintic CY $3$-fold. They calculate, for each odd $d,$ a non-zero invariant. 
Recently, following Walcher \cite{Walcher}, Aleshkin and Liu conjectured \cite{Melissa} that there should exist an open FJRW for the Fermat quintic with minimal symmetry group whose genus-zero potential satisfies an open version of LG/CY correspondence \cite{ChiodoRuan}, relating its correlators to the enumerative invariants of \cite{PSW}.
\\\textbf{Conjecture.}
The open quintic theory described above satisfies the open LG/CY correspondence conjectured in \cite{Melissa}.

A (weak) evidence for this conjecture is that, also in \cite{Melissa}, the invariants are labelled by natural numbers $d,$ and those which correspond to even $d$ vanish.

\section{Extensions, generalizations and applications of the point insertion technique}\label{sec:generalization}

In this section we briefly describe how the point insertion procedure is applied to other intersection theories and to higher genus. We present only the general idea, leaving the details, including an axiomatization of the point insertion idea, to future works.

We will describe scenarios in which one can perform point insertion to obtain a moduli space which effectively has only spurious boundaries.
Then, on this moduli space one can define intersection numbers which may involve open $\psi$-classes,  additional classes like Hodge classes, and, in the case of Gromov--Witten theories, also evaluation maps (which are equivalent to constraining the marked points to lie on homology cycles in the target pair $(M,L)$). It can be shown, just like in Section \ref{sec sections}, that such correlators will be well-defined. 

\subsection{Point insertion in the presence of a target pair of spaces}\label{sec:target}
The point insertion method can also be used to define Open Gromov--Witten (OGW) theories for target manifolds in the presence of open $r$-spin structures, or, under certain assumptions, even without. We will sketch the constructions, leaving the details to future works. In what follows we ignore orientation and smoothness questions.

Let $(M,L)$ be a pair of a $2n$-dimensional symplectic manifold $M$ 
with a compatible or tame almost complex structure, and $L$ a Lagrangian submanifold. 

\subsubsection{OGW theories with $r$-spin}\label{subsub:OGW_r_spin} 
We begin with OGW theories which are enriched with $r$-spin structures. We will require, the additional data of a $2n$-dimensional submanifold $S\subset L\times M$ which intersects $L\times L\subset L\times M$ transversally, and the intersection is the diagonal $\Delta\subset L\times L.$ The theory will be $S$-dependant.
\begin{rmk}\label{rmk:homological_S}
There is a cohomological interpretation of $S.$ We will work with $\mathbb{Q}$ coefficients.
Let \[\iota: H^*(M)\to H^*(L)\] be the natural map induced from the inclusion $L\hookrightarrow M,$ and let 
\[[\Delta]=\sum_{i=1}^N\pi_i\otimes\zeta_i\]be the K\"unneth decomposition of the diagonal $\Delta\hookrightarrow L\times L$.
The additional data $S$ which satisfies the above conditions can be seen to have a K\"unneth decomposition of the form \[[S]=\sum_{i=1}^N\pi_i\otimes\rho_i,~~\text{where 
 }\iota(\rho_i)=\zeta_i.\]
 Thus, since $\{\zeta_i\}_i$ is a basis for $[H^*(L)],$ the data of $S$ cohomologically corresponds to the choice of a section
 \[\varsigma:H^*(L)\to H^*(M)\]for $\iota.$
 The existence of such a section, which is equivalent to $\iota$ being surjective, is a cohomological obstruction. When $\iota$ surjects, different choices of sections give rise to different theories.
\end{rmk}
With this data, we can define, for every $r\geq 2,\h\in\{0,\ldots,\lfloor\frac{r}{2}\rfloor-1\},$ and odd $m,$ an enumerative theory as follows by effectively cancelling every codimension-$1$ boundary.
Denote by $\overline{\mathcal{M}}_{0,B,I}^{1/r}(M,L;\gamma)$ the moduli space of stable maps from a graded marked $r$-spin disk into $(M,L)$, with boundary markings labelled by $B$ and internal markings labelled by $I$, and of degree $\gamma\in H_2(M,L)$.  We will assume niceness assumptions: that the moduli space is a smooth orbifold with corners of the expected dimension, and that boundary strata can be written as fibered products of orbifolds with corners. We denote the evaluation maps by
\[\text{ev}_{M,i}:\overline{\mathcal{M}}_{0,B,I}^{1/r}(M,L;\gamma)\to M,\] for the internal marking $i\in I,$ and\[\text{ev}_{L,b}:\overline{\mathcal{M}}_{0,B,I}^{1/r}(M,L;\gamma)\to L,\] for the boundary marking $b\in B.$

The only possible codimension-$1$ boundaries we may need to cancel are those which parameterize objects with a single boundary node, whose illegal side has twist at most $2\h.$ These are the spurious boundary strata that in this paper we treated by applying the point insertion method. The choice of $S$ allows us to lift the point insertion method to the current setting.

Consider a boundary stratum
\[\overline{\mathcal{M}}^{1/r}_{0,B_1\cup\{\star_1\},I_1}(M,L;\gamma_1)\times_{\Delta}\overline{\mathcal{M}}^{1/r}_{0,B_2\cup\{\star_2\},I_2}(M,L;\gamma_2),\]
where $(B_1,B_2)$ is a partition of the boundary markings, $(I_1,I_2)$ a partition of the internal markings, $\star_1,\star_2$ are the half-nodes, $\gamma=\gamma_1+\gamma_2,$ and the fiber product is with respect to the evaluation maps at $\star_1,\star_2:$ they are required to map to the same point in $L,$ that is, we intersect the Cartesian product of moduli space with the condition\footnote{We use  $(\text{ev}_{L,\star_1},\text{ev}_{L,\star_2})\in\Delta\subset L\times L$ as a shorthand notation for $(\text{ev}_{L,\star_1}(u),\text{ev}_{L,\star_2}(v))\in\Delta\subset L\times L,$ for $u\in\overline{\mathcal{M}}^{1/r}_{0,B_1\cup\{\star_1\},I_1}(M,L;\gamma_1),~v\in\overline{\mathcal{M}}^{1/r}_{0,B_2\cup\{\star_2\},I_2}(M,L;\gamma_2)$ }
\[(\text{ev}_{L,\star_1},\text{ev}_{L,\star_2})\in\Delta\subset L\times L.\]
Now, suppose, without loss of generality, that $\star_2$ is the illegal side, and consider the moduli space $\overline{\mathcal{M}}^{1/r}_{0,B_2,I_2\cup\{\star\}}(M,L;\gamma_2).$ This moduli space has a real codimension-$1$
boundary\[\overline{\mathcal{M}}^{1/r}_{0,\{\star'_1\},\{\star\}}(M,L;0)\times_{\Delta}\overline{\mathcal{M}}^{1/r}_{0,B_2\cup\{\star_2\},I_2}(M,L;\gamma_2)\] obtained by letting its new marking $\star$ collide with the boundary, thus forming a degree-$0$ bubble. Observe that
\[\overline{\mathcal{M}}^{1/r}_{0,\{\star'_1\},\{\star\}}(M,L;0)\simeq \text{point}\times L\simeq L\] canonically.
Thus,
\begin{equation}\label{eq:point_insertion_OGW}
\overline{\mathcal{M}}^{1/r}_{0,\{\star'_1\},\{\star\}}(M,L;0)\times_{\Delta}\overline{\mathcal{M}}^{1/r}_{0,B_2\cup\{\star_2\},I_2}(M,L;\gamma_2)\simeq\overline{\mathcal{M}}^{1/r}_{0,B_2\cup\{\star_2\},I_2}(M,L;\gamma_2) 
\end{equation}canonically.
Now, consider the moduli space
\[\overline{\mathcal{M}}^{1/r}_{0,B_1\cup\{\star_1\},I_1}(M,L;\gamma_1)\times_{S}\overline{\mathcal{M}}^{1/r}_{0,B_2,I_2\cup\{\star\}}(M,L;\gamma_2),\]
where this time the product is fibered on $S,$ that is, we have the constraint\[(\text{ev}_{L,\star_1},\text{ev}_{M,\star})\in S\subset L\times M.\]
It has a real codimension-$1$ boundary \begin{align*}\overline{\mathcal{M}}^{1/r}_{0,B_1\cup\{\star_1\},I_1}(M,L;\gamma_1)&\times_{S}\left(\overline{\mathcal{M}}^{1/r}_{0,\{\star'_1\},\{\star\}}(M,L;0)\times_{\Delta}\overline{\mathcal{M}}^{1/r}_{0,B_2\cup\{\star_2\},I_2}(M,L;\gamma_2)\right)\\&\simeq\overline{\mathcal{M}}^{1/r}_{0,B_1\cup\{\star_1\},I_1}(M,L;\gamma_1)\times_{\Delta}\overline{\mathcal{M}}^{1/r}_{0,B_2\cup\{\star_2\},I_2}(M,L;\gamma_2),\end{align*}where the last passage used \eqref{eq:point_insertion_OGW} and the fact that in this boundary stratum the fibered product $\overline{\mathcal{M}}^{(1/r,\h)}_{0,\{\star'_1\},\{\star\}}(M,L;0)\times_\Delta\ldots$ amounts to requiring $\text{ev}_{M,\star}\in L,$ and the constraint $(\text{ev}_{L,\star_1},\text{ev}_{M,\star})\in S$ is replaced by
\[(\text{ev}_{L,\star_1},\text{ev}_{M,\star})\in S\cap L\times L\Leftrightarrow
(\text{ev}_{L,\star_1},\text{ev}_{M,\star})\in \Delta,\]by our assumptions.
We can thus glue the two moduli spaces $\overline{\mathcal{M}}^{1/r}_{0,B_1\cup\{\star_1\},I_1}(M,L;\gamma_1)\times_{S}\overline{\mathcal{M}}^{1/r}_{0,B_2,I_2\cup\{\star\}}(M,L;\gamma_2)$ and $\overline{\mathcal{M}}_{0,B,I}^{1/r}(M,L;\gamma)$ along this common boundary.

We can do it consistently for as many nodes as possible, as we did when we defined $\widetilde{\mathcal M}^{\frac{1}{r},\h}_{0,B,I}$ in Section \ref{subsec PI} and \cite[Section 4]{TZ1} to obtain a glued moduli space $ \widetilde{\mathcal M}^{\frac{1}{r},\h}_{0,B,I}(M,L,S;\gamma).$
Note that we have a natural forgetful map
\[\text{For}_{\text{target}}:
\widetilde{\mathcal M}^{\frac{1}{r},\h}_{0,B,I}(M,L,S;\gamma)\to\widetilde{\mathcal M}^{\frac{1}{r},\h}_{0,B,I},\]which forget the map to the target pair $(M,L)$ and possibly contracts some components. The Witten bundle $\widetilde{\mathcal{W}}\to\widetilde{\mathcal M}^{\frac{1}{r},\h}_{0,B,I}(M,L,S;\gamma)$ is defined either directly or by pullback from
$\widetilde{\mathcal M}^{\frac{1}{r},\h}_{0,B,I}.$

The glued moduli space parameterizes stable maps from nodal, not necessarily connected, marked graded $r$-spin disks, where a subset of boundary (internal, respective) special points are labelled by $B$ ($I$, respectively), and certain pairs of (unlabelled) special points are required to lie on $S,$ under the assumptions that the total degree is $\gamma,$ and that the graph, whose vertices are the connected stable disk components and there is an edge between two vertices if they contain a pair of points as above, is connected and genus-zero. We combinatorially refer to a pair of points which are constrained to lie on $S$ as connected by a dashed line. 

The only codimension-$1$ boundaries of $\widetilde{\mathcal M}^{\frac{1}{r},\h}_{0,B,I}(M,L,S;\gamma)$ correspond to strata parameterizing objects with either a contracted boundary component (type-CB), or a boundary node whose illegal side has a twist greater than $2\h$ (type-R or type-NS+). These are precisely the boundaries that are handled by the positivity boundary conditions of the Witten bundle.

Thus, one can define correlators which involve the open Witten class, open $\psi$-classes and classes coming from the target pair: 
for $A_1,\ldots,A_l\in H^*(M),~B_1,\ldots,B_k\in H^*(L),\beta\in H_2(M,L),$ denote by 
\begin{align*}
        &\left\langle\tau^{a_1}_{d_1}(A_1)\dots\tau^{a_l}_{d_l}(A_l)\sigma^{b_1}(B_1)\dots\sigma^{b_k}(B_k)
		\right\rangle_{0,\beta}^{\frac{1}{r},\h,\text{o},m,M,L,S}\\
  &\quad\qquad=\int_{[\widetilde{\mathcal M}^{\frac{1}{r},\h}_{0,B,I}(M,L,S;\beta)]}e\left(\widetilde{{\mathcal{W}}}^{\oplus m}\oplus\bigoplus_{i\in I}\widetilde{\mathbb L}_i^{\oplus d_i},\widetilde{\mathbf{s}}_{total}\right)\prod_{i=1}^l \text{ev}_{M,i}^*(A_i)
  \prod_{j=1}^k\text{ev}_{L,i}^*(B_i),
\end{align*}
where
$e(\widetilde{{\mathcal{W}}}^{\oplus m}\oplus\bigoplus_{i\in I}\widetilde{\mathbb L}_i^{\oplus d_i},\widetilde{\mathbf{s}}_{total})$
is the relative Euler class of the glued bundle with the boundary conditions of this paper as in Remark \ref{rmk intersection numbers on glued mod}. 
It can be shown that such correlators are well-defined.

An almost spinless example is obtained by considering the $r=2$ case, where all markings are NS. In this case there is no Witten class, and we obtain, in $g=0,$ a usual OGW theory.

\subsubsection{OGWs with a selection rule}\label{subsub:even}

The ideas of the previous subsection generalize also to theories without spin structures, as long as we have the following additional data:
\begin{enumerate}
\item A $2n$-dimensional submanifold $S\subset L\times M$ as in the previous subsection. 
\item A \emph{selection rule} for boundary half-nodes: there is a consistent way to choose a half-node for every boundary node of every boundary stratum which appears in the theory's virtual fundamental chain. 
This rule generalizes the `legal'/`illegal' decorations of half-nodes, and is responsible for the choice of which half-node to replace by an internal marking in the glued component.
By `consistent' we mean that if two strata have a common smoothing, and under this smoothing a given node is identified with another node, then the two ways to choose the half-nodes of these nodes agree.
The virtual fundamental chain may be the moduli itself, a virtual analogue of it, or, for theories with additional data, such as $r$-spin theories with target manifolds, the pullback of the open Witten chain, that is, the zero locus of a canonical section of the open Witten bundle.  
\item If contracted boundary nodes may appear in the theory's virtual fundamental chain, we will also require the choice of a $(n+1)$-dimensional submanifold $C\subset M$ with boundary satisfying
\[\partial C=L,\]
and the theory will also be $C$-dependant.
\end{enumerate}
The third item does not appear in the $r$-spin setting, since there contracted boundaries are treated by the positivity boundary conditions.

With this data we can again define an enumerative theory by effectively cancelling every codimension-$1$ boundary which may appear in the virtual cycle.

Boundary nodes are treated as in the previous subsection, only that the selection rule replaces the graded spin structure in choosing the distinguished half-node to which we apply the point insertion procedure. 

The treatment in boundary strata which parameterizes objects with a contracted boundary node is more standard, and can be found, for example in \cite[Section 7]{FOOO_ii} (in a different formalism). Note that a moduli space $\overline{\mathcal{M}}_{0,B,I}(M,L;\gamma)$ (which may or may not have inserted nodes of as $\star$ above) may have a boundary component which parameterizes stable disks with a contracted boundary node
only if \[B=\emptyset~\text{and }\partial\gamma=0,\] where $\partial \colon H_2(M,L)\to H_1(L)$ is the natural map. Every such $\gamma$ is in the image of the natural map $q:H_2(M)\to H_2(M,L)$ from the relative homology exact sequence. For such $\gamma,$ every boundary stratum of  $\overline{\mathcal{M}}_{0,\emptyset,I}(M,L;\gamma)$ which parameterizes disks with a contracted boundary is canonically diffeomorphic to a closed moduli space
\[\overline{\mathcal{M}}_{0,I\cup\{\hat{\star}\}}(M;\hat{\gamma})\cap\left(\text{ev}_{M,\hat\star}\in L\right),\]
where $\hat{\gamma}\in q^{-1}(\gamma)\subseteq H_2(M),$
and the new special internal point $\hat\star$ is an \emph{inserted contracted boundary}. 
This space is naturally identified as a boundary of the moduli space
\[\overline{\mathcal{M}}_{0,I\cup\{\hat{\star}\}}(M;\hat{\gamma})\cap\left(\text{ev}_{M,\hat\star}\in C\right).\]We glue the latter space to $\overline{\mathcal{M}}_{0,\emptyset,I}(M,L;\gamma)$ along this common boundary. We can combine this gluing procedure with the previous gluing procedure described above (based on the point insertion scheme), in a consistent way, and the final result is a compact moduli space $$
    \widetilde{\mathcal M}_{0,B,I}(M,L,S,C;\gamma)
    $$ which is without boundary. This space can also be considered as a moduli space of stable maps from disconnected disks with a constrain that, as in the previous subsection, certain pairs of points are required to lie on $S$, together with an additional constraint that certain (unlabelled) internal special points on connected component not meeting $\text{Fix}(\phi)$ are required to lie on $C$. 

Thus, one can define correlators which involve the open Witten class, open $\psi$ classes and classes coming from the target pair: 
For $A_1,\ldots,A_l\in H^*(M),~B_1,\ldots,B_k\in H^*(L),\beta\in H_2(M,L),$ denote by \[\left\langle\tau^{a_1}_{d_1}(A_1)\dots\tau^{a_l}_{d_l}(A_l)\sigma^{b_1}(B_1)\dots\sigma^{b_k}(B_k)
		\right\rangle_{0,\beta}^{\text{o},M,L,S}
  =\int_{[\widetilde{\mathcal M}_{0,B,I}(M,L,S,C;\beta)]}\prod_{i=1}^l \text{ev}_{M,i}^*(A_i)\psi_i^{d_i}\prod_{j=1}^k\text{ev}_{L,i}^*(B_i),\]where the notations are as in the previous subsection, and $\psi_i=c_1(\widetilde{\mathbb{L}}_i)$.  Again, it can be shown that such correlators are well-defined.

\subsubsection{OGWs without a selection rule}\label{subsub:odd}

We now consider an important variant of the previous scenario. This time we require: 
\begin{itemize}
\item a smooth cobordism $cob\subseteq L\times L$ between $\Delta$ and $\Delta_1\cup \Delta_2,$ where $\Delta_1,\Delta_2=\Delta_1^{\text{op}}\subseteq L\times L,$\footnote{by $\Delta_1^{\text{op}}$ (and $S_1^{\text{op}}$ below) we mean the image of $\Delta_1$ (respectively, $S_1$) inside $L\times L$ (respectively, $M\times L$) under the map $L\times L\to L\times L$ (respectively, $L\times M\to M\times L$) which switches the order of factors.} are unions of $n$ dimensional smooth submanifolds which intersect transversally, and any two of $\Delta,\Delta_1,\Delta_2$ intersect transversally. Suppose that we have $S_1\subseteq L\times M,~S_2=S_1^{\text{op}}\subseteq M\times L,$ which satisfy
\[S_i\pitchfork L\times L=\Delta_i,~i=1,2.\]
\item If necessary, we have $C$ as above, satisfying $\partial C=L.$
\end{itemize}
We act similarly to the previous subsections.
First, observe that
\[\overline{\mathcal{M}}_{0,B_1\cup\{\star_1\},I_1}(M,L;\gamma_1)\times_{cob}\overline{\mathcal{M}}_{0,B_2\cup\{\star_2\},I_2}(M,L;\gamma_2),\] has three distinguished boundary components, induced from the boundary of $cob$, \textit{i.e.}
\begin{equation}\label{eq:cob_bdry_1}
\overline{\mathcal{M}}_{0,B_1\cup\{\star_1\},I_1}(M,L;\gamma_1)\times_{\Delta}\overline{\mathcal{M}}_{0,B_2\cup\{\star_2\},I_2}(M,L;\gamma_2),
\end{equation}
and
\begin{equation}\label{eq:cob_bdry_i}
\overline{\mathcal{M}}_{0,B_1\cup\{\star_1\},I_1}(M,L;\gamma_1)\times_{\Delta_i}\overline{\mathcal{M}}_{0,B_2\cup\{\star_2\},I_2}(M,L;\gamma_2),~i=1,2.
\end{equation}
The other boundaries of it are of the form
\begin{align*}\label{eq:cob_bdry_non_top}
\left(\partial\overline{\mathcal{M}}_{0,B_1\cup\{\star_1\},I_1}(M,L;\gamma_1)\right)&\times_{cob}\left(\overline{\mathcal{M}}_{0,B_2\cup\{\star_2\},I_2}(M,L;\gamma_2)\right)\\&\cup \left(\overline{\mathcal{M}}_{0,B_1\cup\{\star_1\},I_1}(M,L;\gamma_1)\right)\times_{cob}\left(\partial\overline{\mathcal{M}}_{0,B_2\cup\{\star_2\},I_2}(M,L;\gamma_2)\right).
\end{align*}
This time we will glue, to the boundary  
\[\overline{\mathcal{M}}_{0,B_1\cup\{\star_1\},I_1}(M,L;\gamma_1)\times_{\Delta}\overline{\mathcal{M}}_{0,B_2\cup\{\star_2\},I_2}(M,L;\gamma_2)\]
of $\overline{\mathcal{M}}_{0,B,I}(M,L;\gamma)$, the boundary \eqref{eq:cob_bdry_1} of $\overline{\mathcal{M}}_{0,B_1\cup\{\star_1\},I_1}(M,L;\gamma_1)\times_{cob}\overline{\mathcal{M}}_{0,B_2\cup\{\star_2\},I_2}(M,L;\gamma_2).
$
To the boundary \eqref{eq:cob_bdry_i} of $\overline{\mathcal{M}}_{0,B_1\cup\{\star_1\},I_1}(M,L,\gamma_1)\times_{cob}\overline{\mathcal{M}}_{0,B_2\cup\{\star_2\},I_2}(M,L;\gamma_2)
$, for $i=1,$ we glue, in a complete analogy way to the previous subsection, the boundary \begin{align*}\overline{\mathcal{M}}_{0,B_1\cup\{\star_1\},I_1}(M,L;\gamma_1)&\times_{S_1}\left(\overline{\mathcal{M}}_{0,\{\star''_1\},\{\star'_2\}}(M,L;0)\times_{\Delta}\overline{\mathcal{M}}_{0,B_2\cup\{\star_2\},I_2}(M,L;\gamma_2)\right)\\&\quad\qquad\simeq\overline{\mathcal{M}}_{0,B_1\cup\{\star_1\},I_1}(M,L;\gamma_1)\times_{\Delta_1}\overline{\mathcal{M}}_{0,B_2\cup\{\star_2\},I_2}(M,L;\gamma_2)\end{align*} of \[\overline{\mathcal{M}}_{0,B_1\cup\{\star_1\},I_1}(M,L;\gamma_1)\times_{S_1}\overline{\mathcal{M}}_{0,B_2,I_2\cup\{\star'_2\}}(M,L;\gamma_2),\]
which, as above, is canonically identified with it. We similarly treat the boundary \eqref{eq:cob_bdry_i} for $i=2,$ only that then we make $\star_1$ internal instead of $\star_2.$

The main purpose of the cobordism is to interpolate between the boundaries of the original moduli, and the boundaries coming from the point insertions.

We deal with boundary strata which parameterize disks with contracted boundary nodes using $C,$ as in the previous subsection, and we simultaneously and iteratively perform the cobordism and point insertion gluings to obtain a compact and boundaryless moduli space, denoted by $\widetilde{\mathcal M}_{0,B,I}(M,L,cob,S_1,C;\gamma).$
Again this space parameterizes stable maps from disconnected disks with additional constraints that certain pairs of points are required to lie on $S_1,S_2$ or $cob$, and that certain internal points on connected components not meeting $\text{Fix}(\phi)$ are required to lie on $C$. Graphically we can draw for this scenario two types of dashed lines, the usual one which connects an internal point and a boundary point which are constrained to lie on $S_i,$ and a \emph{boundary-dashed line} which connects a pair of boundary points that are constrained to lie on $cob.$ 
\begin{rmk}\label{rmk:homological_S_odd}
There is a cohomological interpretation of $S_1,S_2$. We use the same notations as in Remark \ref{rmk:homological_S}. Write
\begin{equation}\label{eq:Delta12}[\Delta_1]=\sum_{i=1}^N\pi_i\otimes\zeta_i,~[\Delta_2]=\sum_{i=1}^N\zeta_i\otimes\pi_i \in H^*(L)\otimes H^*(L)\simeq H^*(L\times L).\end{equation} We have \begin{equation}\label{eq:Delta=1+2}[\Delta]=[\Delta_1]+[\Delta_2].\end{equation}
The existence of $S_1$ (or $S_2$) implies the existence of elements $\rho_i\in H^*(M)$ for $i=1,\ldots, N,$ which satisfy
\[\iota(\rho_i)=\zeta_i,\]
such that 
\[[S_1]=\sum_{i=1}^N\pi_i\otimes\rho_i,~[S_2]=\sum_{i=1}^N\rho_i\otimes\pi_i\]
in $H^*(L\times M),~H^*(M\times L)$ respectively.
 The existence of $\rho_i,~i=1,\ldots, N$ requires weaker assumptions than $\iota$ being surjective: 
 it just needs to be surjective on $W=\text{Span}(\zeta_i)$ which may be strictly smaller than the whole $H^*(L).$
 The data of $S_1,S_2$ is cohomologically equivalent to the choice of a section
 \[\varsigma:W\to H^*(M)\]for $\iota.$
 Different choices of $W$ which allow decompositions \eqref{eq:Delta12},~\eqref{eq:Delta=1+2} for which $\iota$ surjects on $W,$ or of a section $\varsigma,$ give rise to different theories.
\end{rmk}

\begin{ex}\label{ex:sphere}
Suppose that $L$ is a Lagrangian sphere, and suppose that there exists a smooth cobordism $cob$ between $\Delta$ and $\Delta_1\cup\Delta_2:=\text{pt}\times L\cup L\times\text{pt}.$ Write \[S_1=\text{pt}\times M,~~S_2=M\times\text{pt}.\]
Then $S_i\pitchfork L\times L =\Delta_i,$ and in this case we can construct a compact boundaryless moduli space, and thus, if it is orientable and nice enough, also an intersection theory.

One such target pair is $(\mathbb{CP}^1,\mathbb{RP}^1).$ In \cite{BPTZ} the OGW theory of this space was studied, using another scheme of boundary conditions - a forgetful scheme. The relation between this scheme and the one described above is to some extent analogous to the relation between the open $r$-spin theory of \cite{BCT2}, and the point insertion open $(r,\h=0)$-spin theory presented in this paper. A notable difference is that in the \cite{BPTZ} theory the point constraints come from the \emph{relative homology} $H_*(\mathbb{CP}^1,\mathbb{RP}^1),$ while in the point insertion theory the constraints are not relative, that is, come from $H_*(\mathbb{CP}^1).$ 
\end{ex}
\begin{ex}\label{ex:odd}
A second example, which generalizes the former one, is the following. Suppose $n$ is odd, and that $H_*(L)$ has a basis represented by submanifolds $P_1,\ldots, P_{2N},$ where $P_1,\ldots,P_{N}$ are odd-dimensional basis elements, and the other elements are even-dimensional. Assume that the basis is self-dual with respect to Poincar\'e duality, and that the dual of $P_i$ is $P_{2N-i}.$ Thus, by K\"unneth formula
\[[\Delta]=\sum_{i=1}^{2N} [P_i]\otimes [P_{2N-i}].\]
Assume that $cob$ is a cobordism between $\Delta$ and
\[\Delta_1\cup\Delta_2:=
\left(\bigcup_{i=1}^N P_i\times P_{2N-i}\right)\cup\left( \bigcup_{i=1}^N P_{2N-i}\times P_i\right).
\]
Suppose that there exist submanifolds $R_1,\ldots, R_N\subseteq M$ with
\[R_i\pitchfork L = P_i,\]
then we can take \[S_1 = \bigcup_{i=1}^N R_i\times P_{2N-i},~~S_2 = \bigcup_{i=1}^N P_{2N-i}\times R_i.\]
\end{ex}
In light of Remark \ref{rmk:homological_S_odd}, the point insertion scheme presented in the last example, requires $\iota$ to surject only on the even part of $H^*(L)$. In the case of $L$ being a homology sphere (the first example), this requirement always holds.
\begin{rmk}[Dependence of choices]
Different choices of the cobordism $cob$ or cycle $C$ give rise to different enumerative theories, related by a wall-crossing formula that can be written in terms of closed and open GW invariants. 
\end{rmk}
\subsubsection{Equivariant theory}
Suppose that a real torus $T=(\mathbb R^\times)^m$ acts on $M,$ preserving $L.$ If, in the context of Section \ref{subsub:OGW_r_spin} $S$ is preserved, or in the context of Section \ref{subsub:even} $S,C$ are also preserved, or, in the context of Section \ref{subsub:odd}, $cob,S_1,S_2,C$ are also preserved, then the torus action lifts to the glued moduli spaces, and under niceness assumptions one can define an equivariant OGW theory.
\subsection{High genus}\label{sec:high_genus}
The construction presented in this work, as well as the constructions which were sketched in Section \ref{sec:target}, could have been extended to higher genus, if one could have constructed virtual fundamental chains for the higher genus open theories, which satisfy some natural assumptions on the boundary behaviour.
If such constructions were to be found, 
it is expected that the gluing procedure described here would extend to higher genus without a change. Then, under certain orientability assumptions, one would obtain higher genus extensions of the $g=0$ point insertion theories described here.
We have taken basic steps towards achieving this in the open $r$-spin case, in $g=1,$ and have a prediction for a genus-$1$ topological recursion.
It is likely to expect that higher genus theories will be related to integrable hierarchies.

\subsection{Additional classes and universal recursions}
\subsubsection{The open Hodge bundle}\label{subsec:hodge}
The point insertion idea can be implemented in other stringy-like intersection open theories. As an example, we briefly describe how one can apply the point insertion idea to the Hodge bundle, but the same idea applies to many other bundles of similar flavour, and variants of the Hodge bundle. 
In the closed setting the Hodge bundle $\mathcal{H}_{g,n}\to\Mbar_{g,n}$ is the rank $g$ complex vector bundle whose fiber over a moduli point $C$ is the vector space of Abelian differentials on $C.$ Abelian differentials are meromorphic $1$-forms whose only possible poles are at the nodes, and they are simple poles satisfying that the sum of residues at the two half-nodes is $0.$

In order to define the open Hodge bundle, we act in analogy to the definition of the open Witten bundle:
the natural conjugation on real curves, which include the surfaces of the form $\Sigma\sqcup_{\partial\Sigma}\overline{\Sigma}$ for a surface with boundary $\Sigma$, lifts to their cotangent bundles, and induce a conjugation on the pullback of the complex Hodge bundle.

We start without point insertions. The \emph{open Hodge bundle} (before point insertion) is the rank $g$ \emph{real}  bundle $\mathcal{H}_{g,k,l}\to\Mbar_{g,k,l}$ obtained as the conjugation fixed part of the complex $\mathcal{H}_{g,k+2l}.$

Fibers of this bundle are meromorphic $1$-forms on the doubled surface, which are invariant under the (natural lift of the) conjugation map. Observe that the invariance under conjugation implies that conjugate half-nodes have conjugate residues,  boundary nodes have real residues, and contracted boundaries have imaginary residues.
The open Hodge bundle is also defined on moduli space of stable maps or FJRW theories by pullback. We can similarly define twisted Hodge bundles by allowing poles of certain orders in some of the marked points.

Suppose that we are given an open intersection theory with a point insertion scheme, such as the $r$-spin theories, FJRW theories considered in this paper, or OGW theories briefly described in this section. 
Let $\widetilde{\mathcal{M}}$ be the corresponding (glued) moduli space, parameterizing disconnected surfaces with dashed lines and additional structure, as in the cases described above, and $\overline{\mathcal{M}} $ be its unglued version, made from the same connected components before gluing identified boundaries.

We define $\mathcal{H}\to\overline{\mathcal{M}}$ as the vector bundle whose fiber at a disconnected surface $\Sigma$ is a conjugation-invariant meromorphic $1$-form $\alpha$ satisfying
\begin{itemize}
\item all the poles are simple, and can appear at half-nodes or points coming from point insertion (inserted contracted boundary nodes and points paired by (boundary-)dashed lines);
\item the sum of residues at the two half-nodes of a given node is $0;$
\item the residue at an inserted contracted boundary node is purely imaginary;
\item if $\star$ is an inserted point, connected by a dashed line to the boundary point $\star_b$, then $\text{Res}_{\star}(\alpha),~\text{Res}_{\star_b}(\alpha)$ are real and satisfy

\[2\text{Res}_{\star}(\alpha)+\text{Res}_{\star_b}(\alpha)=0.\]
\item if $\star_1,\star_2$ are boundary points connected by a boundary-dashed line as in Subsection \ref{subsub:odd}, then $\text{Res}_{\star_1}(\alpha),~\text{Res}_{\star_2}(\alpha)$ are real and satisfy
\[\text{Res}_{\star_1}(\alpha)+\text{Res}_{\star_2}(\alpha)=0.\]
\end{itemize}
Note that the sum of residues of $\alpha$ on each irreducible connected component of the surface is $0.$
It is easy to see that the rank of the vector bundle is the genus $g.$ 

The extra conditions allow identifying the Hodge bundles of two boundary strata which are identified under the gluing.  
For example, as mentioned above, the residue of a conjugation invariant meromorphic $1$-form at a contracted boundary node is imaginary, explaining the third item above.
When an inserted point $\star$ with real residue $\text{Res}_{\star}(\alpha)$ reaches the boundary as in the right-hand side of Figure \ref{fig rh surface}, the residue at the left boundary half-node of $D_1^{AI}$ is $\text{Res}_{\star}(\alpha)+\overline{\text{Res}_{\star}(\alpha)}=2\text{Res}_{\star}(\alpha)$; on the other hand, in order for the gluing of the moduli spaces along this boundary stratum to lift to a gluing of the Hodge bundles, this residue must be minus of the residue at the boundary point $\star_b$ of $D_2$ that connects to $\star$ via the dashed line, 
which must therefore be $-2\text{Res}_{\star}(\alpha)$; this explains the condition of the fourth item above.

We should comment that in general the open Hodge bundle is not necessarily oriented. Therefore intersection theories based on it should either take an even number of copies of it, its complexification, or any other class compensating for the absence of orientation.
It can be shown that twisted Hodge bundles in which boundary markings are allowed to have poles of odd orders, are canonically relatively oriented.

Another comment is that one can also define positivity boundary conditions for open Hodge bundles, by requiring positivity of residues at certain half-nodes, similarly to the positivity boundary conditions we defined in open $r$-spin theories.

We leave the treatment of Hodge-like theories to future works.

\subsubsection{$\psi_i$-classes and universal TRRs}
In the contexts of Sections \ref{sec:open_fjrw},~\ref{sec:target},~\ref{sec:high_genus},~\ref{subsec:hodge} the relative cotangent line bundles ${\mathbb L}_i$ are defined, just as in Section \ref{sec mod and bundle}, and they are easily seen to be complex line bundles whose fibers are naturally identified along glued strata.

In genus $0,$ by studying the zero locus of TRR sections, as in Section \ref{sec:trr}, one can show that the zero locus is the same as the one found in the proof of Theorem \ref{thm TRR}.
The $\psi_i$-class induces a cohomology class.
When all boundaries of the virtual cycle are cancelled, as in the open $r$-spin theories of this paper, or the scenarios described above, any collection of $\psi$-classes with the glued virtual cycle gives rise to a well-defined homology class in the expected dimension, or a number in case this expected dimension is $0.$

This can be written, in $g=0,$ as universal topological recursion relations for such theories, which have the pictorial form\[[\psi_i]=\sum\left[
\begin{tikzpicture}[scale=0.45, every node/.style={scale=0.5},baseline={([yshift=-.5ex]current bounding box.center)}]
\draw (0,0) circle (1.5);
\draw (-1.5,0) arc (180:360:1.5 and 0.5);
\draw[dashed](1.5,0) arc (0:180:1.5 and 0.5);

\draw (-1.5,-3) arc (180:360:1.5 and 0.5);
\draw[dashed](1.5,-3) arc (0:180:1.5 and 0.5);
\draw(1.5,-3) arc (0:180:1.5);

\node at (0,-1.5) [circle,fill,inner sep=1pt]{};

\draw (-6.5,-2) arc (180:360:1.5 and 0.5);
\draw[dashed](-3.5,-2) arc (0:180:1.5 and 0.5);
\draw(-3.5,-2) arc (0:180:1.5);

\node at (-3.5,-2) [circle,fill,inner sep=1pt]{};

\draw (-5.5,2) arc (180:360:1.5 and 0.5);
\draw[dashed](-2.5,2) arc (0:180:1.5 and 0.5);
\draw(-2.5,2) arc (0:180:1.5);

\node at (-2.5,2) [circle,fill,inner sep=1pt]{};

\draw (2.5,2) arc (180:360:1.5 and 0.5);
\draw[dashed](5.5,2) arc (0:180:1.5 and 0.5);
\draw(5.5,2) arc (0:180:1.5);

\node at (2.5,2) [circle,fill,inner sep=1pt]{};

\draw (3.5,-2) arc (180:360:1.5 and 0.5);
\draw[dashed](6.5,-2) arc (0:180:1.5 and 0.5);
\draw(6.5,-2) arc (0:180:1.5);

\node at (3.5,-2) [circle,fill,inner sep=1pt]{};

\node at (-0.6,0.8){*};
\draw[dashed] (-2.5,2) -- (-0.6,0.85);

\node at (0.6,0.8){*};
\draw[dashed] (2.5,2) -- (0.6,0.85);

\node at (-0.6,-0.8){*};
\draw[dashed] (-3.5,-2) -- (-0.6,-0.75);

\node at (0.6,-0.8){*};
\draw[dashed] (3.5,-2) -- (0.6,-0.75);

\node at (0,1)[circle,fill,inner sep=1pt]{};
 \node at (0.15,0.7) {\Large $a_1$};

\node at (0,-2.1)[circle,fill,inner sep=1pt]{};
 \node at (0.28,-2.3) {\Large $a_2$};

\node at (0,2.5) {$\dots\dots$};
\end{tikzpicture} \right]=\sum\left[
\begin{tikzpicture}[scale=0.45, every node/.style={scale=0.5},baseline={([yshift=-.5ex]current bounding box.center)}]
\draw (0,0) circle (1.5);
\draw (-1.5,0) arc (180:360:1.5 and 0.5);
\draw[dashed](1.5,0) arc (0:180:1.5 and 0.5);

\draw (-1.5,-3) arc (180:360:1.5 and 0.5);
\draw[dashed](1.5,-3) arc (0:180:1.5 and 0.5);
\draw(1.5,-3) arc (0:180:1.5);

\node at (0,-1.5) [circle,fill,inner sep=1pt]{};

\draw (-6.5,-2) arc (180:360:1.5 and 0.5);
\draw[dashed](-3.5,-2) arc (0:180:1.5 and 0.5);
\draw(-3.5,-2) arc (0:180:1.5);

\node at (-3.5,-2) [circle,fill,inner sep=1pt]{};

\draw (-5.5,2) arc (180:360:1.5 and 0.5);
\draw[dashed](-2.5,2) arc (0:180:1.5 and 0.5);
\draw(-2.5,2) arc (0:180:1.5);

\node at (-2.5,2) [circle,fill,inner sep=1pt]{};

\draw (2.5,2) arc (180:360:1.5 and 0.5);
\draw[dashed](5.5,2) arc (0:180:1.5 and 0.5);
\draw(5.5,2) arc (0:180:1.5);

\node at (2.5,2) [circle,fill,inner sep=1pt]{};

\draw (3.5,-2) arc (180:360:1.5 and 0.5);
\draw[dashed](6.5,-2) arc (0:180:1.5 and 0.5);
\draw(6.5,-2) arc (0:180:1.5);

\node at (3.5,-2) [circle,fill,inner sep=1pt]{};

\node at (-0.6,0.8){*};
\draw[dashed] (-2.5,2) -- (-0.6,0.85);

\node at (0.6,0.8){*};
\draw[dashed] (2.5,2) -- (0.6,0.85);

\node at (-0.6,-0.8){*};
\draw[dashed] (-3.5,-2) -- (-0.6,-0.75);

\node at (0.6,-0.8){*};
\draw[dashed] (3.5,-2) -- (0.6,-0.75);

\node at (0,1)[circle,fill,inner sep=1pt]{};
 \node at (0.15,0.7) {\Large $a_1$};

\node at (0,-3.5)[circle,fill,inner sep=1pt]{};
 \node at (0.28,-3.75) {\Large $b_1$};

\node at (0,2.5) {$\dots\dots$};
\end{tikzpicture} \right].\]
We can write the recursion more explicitly in the scenarios of Section \ref{sec:target}. We will focus on the second type of topological recursion (which assumes $l\geq 2$) of Theorem \ref{thm TRR}. The formula for the first type TRR (analogous to the first item of Theorem \ref{thm TRR}, which assumes $k,l\geq 1$) is similar and will be omitted.
Let $\sum_{i=1}^K\nu_i\otimes\mu_i$ be the (cohomological) K\"unneth decomposition of the diagonal of $M\times M.$

We start with the scenario of Section \ref{subsub:OGW_r_spin}, which considers OGW theories coupled with open $r$-spin theories. 
We use the notations of Section \ref{subsub:OGW_r_spin} and Remark \ref{rmk:homological_S}, but for shortness we omit all superscripts, except for $\text{o},\text{c}$ which indicate whether the intersection number corresponds to an open or closed topology (for closed $r$-spin we also allow closed extended insertions).

The universal TRR in the context of Section \ref{subsub:OGW_r_spin}
reads (assuming $l\geq 2$)
\begin{equation}
		\begin{split}
		&\left\langle
		\tau^{a_1}_{d_1+1}(A_1)\tau^{a_2}_{d_2}(A_2)\dots\tau^{a_l}_{d_l}(A_l)\sigma^{b_1}(B_1)\sigma^{b_2}(B_2)\dots\sigma^{b_k}(B_k)
		\right\rangle_{0,\beta}^{\text{o}}
		\\=&\sum_{\substack{s\ge 0\\-1\le a \le r-2\\
  e\in[K]\\0 \le t_1,\ldots,t_s \le \h\\f_1,\ldots,f_s\in[N]
  }}\sum_{\substack{\coprod_{j=-1}^{s}U_j=\{3,4,\dots,l\}\\\coprod_{j=0}^{s}T_j=\{1,2,\dots,k\}\\\beta_{-1}\in H_2(M),\beta_0,\ldots,\beta_s\in H_2(M,L)\\\sum \beta_j=\beta\\ \{(U_j,T_j,t_j,\beta_j)\}_{1\le j \le s}\text{unordered}}}(-1)^s\left\langle
		\tau^{a}_{0}(\nu_e)\tau^{a_1}_{d_1}(A_1)\prod_{i\in U_{-1}}\tau^{a_i}_{d_i}(A_i)\prod_{m=1}^{s}\tau^{t_m}_{0}(\rho_{f_m})
		\right\rangle_{0,\beta_{-1}}^{\text{c}}\\
		&\cdot\left\langle
		\tau^{r-2-a}_{0}(\mu_e)\tau^{a_2}_{d_2}(A_2)\prod_{i\in U_0}\tau^{a_i}_{d_i}(A_i)\prod_{i\in T_0}\sigma^{b_i}(B_i)
		\right\rangle_{0,\beta_0}^{\text{o}}
		\prod_{m=1}^{s}\left\langle
		\sigma^{r-2-2t_m}(\pi_{f_m})\prod_{i\in U_m}\tau^{a_i}_{d_i}(A_i)\prod_{i\in T_m}\sigma^{b_i}(B_i)
		\right\rangle_{0,\beta_m}^{\text{o}}.
		\end{split}
		\end{equation}
Note that by $\sum\beta_j$ we mean $\beta_{-1}+\sum_{i\geq 0}\beta_j, $ where $\sum_{j\geq 0}\beta_j$ is taken in the group $H_2(M,L)$ and the first plus sign should be interpreted as the usual additive action of $H_2(M)$ on $H_2(M,L).$

In the context of Sections \ref{subsub:even},~\ref{subsub:odd} and Remarks \ref{rmk:homological_S},~\ref{rmk:homological_S_odd}, we have the same formula, but without the summation over twists, that is
\begin{equation}
		\begin{split}
		&\left\langle
		\tau_{d_1+1}(A_1)\tau_{d_2}(A_2)\dots\tau_{d_l}(A_l)\sigma(B_1)\sigma(B_2)\dots\sigma(B_k)
		\right\rangle_{0,\beta}^{\text{o}}
		\\=&\sum_{\substack{s\ge 0\\
  e\in[K]\\f_1,\ldots,f_s\in[N]
  }}\sum_{\substack{\coprod_{j=-1}^{s}U_j=\{3,4,\dots,l\}\\\coprod_{j=0}^{s}T_j=\{1,2,\dots,k\}\\\beta_{-1}\in H_2(M),\beta_0,\ldots,\beta_s\in H_2(M,L)\\\sum \beta_j=\beta\\ \{(U_j,T_j,\beta_j)\}_{1\le j \le s}\text{unordered}}}^{-} (-1)^s\left\langle
		\tau_{0}(\nu_e)\tau_{d_1}(A_1)\prod_{i\in U_{-1}}\tau_{d_i}(A_i)\prod_{m=1}^{s}\tau_{0}(\rho_{f_m})
		\right\rangle_{0,\beta_{-1}}^{\text{c}}\\
		&\cdot\left\langle
		\tau_{0}(\mu_e)\tau_{d_2}(A_2)\prod_{i\in U_0}\tau_{d_i}(A_i)\prod_{i\in T_0}\sigma(B_i)
		\right\rangle_{0,\beta_0}^{\text{o}}
		\prod_{m=1}^{s}\left\langle
		\sigma(\pi_{f_m})\prod_{i\in U_m}\tau_{d_i}(A_i)\prod_{i\in T_m}\sigma(B_i)
		\right\rangle_{0,\beta_m}^{\text{o}},
		\end{split}
		\end{equation}
where $\bar\Sigma$ means that we only sum over collections of $U_j, T_j,\beta_j$ which correspond to geometries which may appear in the glued moduli space, under the selection rule for the point insertion.

Also the vanishing result of Proposition \ref{prop:vanish small internal} extends to a more general setting (again we omit most superscripts).
In the scenario of Subsection \ref{subsub:OGW_r_spin}
\[\left\langle\tau_0^{a_1}(A_1)\cdots\right\rangle^{o}=0,\]where $a_1\leq\h$ and $A_1\in\text{Span}\{\rho_i\},$~$\cdots$ represents the remaining insertions, and the dimension of the moduli space on which the intersection numbers is calculated is positive.
In the context of Subsection \ref{subsub:even}, suppose that the selection rule satisfies the following additional conditions:
\begin{itemize}
    \item 
    Whenever an internal marked point $z_i$ collides with the boundary, resulting in a degree-$0$ bubble containing only the internal point $z_i$ and a boundary half-node $h$, the selection rule always chooses the half-node $\sigma_0(h)$ (\textit{i.e.} the half-node corresponding to same node as $h$ but not in the same bubble as $z_i$) to be the inserted point $z^*_i$.
    \item The selection rules for any node in a component containing $z_i,$ and in the same component only with $z^*_i$ replaced by $z_i,$ are the same.
\end{itemize}
Then, with the same assumptions on the dimension of the moduli space and on $A_1,$ and the same interpretation of $\cdots,$ we have
\[\left\langle\tau_0(A_1)\cdots\right\rangle^{o}=0.\]
The proof is again by construction a bijection $\nu$ as in the proof of Proposition \ref{prop:vanish small internal}.
We also expect this vanishing result to extend to higher genus (with the same proof), once a rigorous higher genus definition is found.

\subsection{Allowing a $-1$-boundary twist}
In closed $r$-spin theory \cite{Witten93} intersection numbers with Ramond insertions (twist $r-1$) vanish. The moral reason for this vanishing is the fact that the evaluation map at a Ramond marking is surjective, as degree reasons show (e.g. Lemma \ref{lem surjection of total evaluation map}), and that the fiber of the spin line at a Ramond marking is rationally trivial.

In \cite{BCT_Closed_Extended}, following \cite{JKV2}, it was noted that in $g=0$ one can still define intersection theories with one marked point of twist $-1,$ and the Witten bundle in this case is still an orbifold vector bundle. In this case the evaluation map at a Ramond marking is no longer surjective. This \emph{closed extended} theory plays an important role in open $r$-spin theory. An interesting feature of this theory is its rule of twists at the nodes. In usual closed $r$-spin, if the two branches of the node have twists $a,b$ respectively, then 
$a+b=r-2\mod~r$. In the NS case it restricts to $a+b=r-2,$ but in the Ramond case we have $a+b=2r-2$. In the closed extended theory the rule is effectively modified to 
\[a+b=r-2\] always, where in the Ramond case the half-node in the component which contains the $-1$ point gets twist $r-1,$ and the other side gets twist $-1.$

It turns out that when $r$ is odd, one can also extend the open theory \emph{with maximal point insertions} to allow a single $-1$-twisted boundary marking, which we shall refer to as the \emph{anchor}. We call this theory the \emph{open extended theory}.

The reasoning of \cite{JKV2,BCT_Closed_Extended}
shows that in this case there is no dimension jump in the open Witten fibers, and that they form an orbifold vector bundle.

Just like the surjectivity of the evaluation map to Ramond markings fails at the closed extended theory, in the open extended theory, the positivity argument of Proposition \ref{prop positivity pointwise} fails when adding a $-1$ boundary anchor. 

The way to overcome this failure again uses point insertions:
the point insertion at an NS boundary node is as usual. However, when we reach a Ramond node, we perform point insertion to the half-node on the side closer to the anchor. By ``closer" we mean that if we normalize the Ramond node (before we do the point insertion), then the closer half-node is the one which belongs to the connected component containing the anchor, and we consider different disk components connected by a dashed line as one connected component for this purpose. From the point of view of gluing moduli spaces, this point insertion gives rise to a moduli space of disconnected disks as before, only that we also allow $h=\frac{r-1}{2}$, and the selection rule for which side to make internal in this case, is according to which half-node is closer to the anchor. This selection rule guarantees at most one point of twist $-1$ for each disk.

The proof that this construction works is similar to the other cases studied in this paper, and also the proof of the following topological recursion relations is similar to that of Theorem \ref{thm TRR}.
\begin{thm}\label{thm TRR_open_ext}
\begin{itemize}
\item[(a)] (Boundary marked point TRR) If $l,k\ge 1$, then
\begin{equation*}
		\begin{split}
		&\left\langle
		\tau^{a_1}_{d_1+1}\tau^{a_2}_{d_2}\dots\tau^{a_l}_{d_l}\sigma^{b_1}\sigma^{b_2}\dots\sigma^{b_k}\sigma^{-1}
		\right\rangle_0^{\frac{1}{r},\text{o},\text{ext}}
		\\=&\sum_{\substack{s\ge 0\\-1\le a \le r-2\\ 0 \le t_j \le \frac{r-1}{2}}}\sum_{\substack{\coprod_{j=-1}^{s}R_j=\{2,3,\dots,l\}\\\coprod_{j=0}^{s}T_j=\{2,3,\dots,k\}\\ \{(R_j,T_j,t_j)\}_{1\le j\le s}\text{unordered}}}(-1)^s\left\langle
		\tau^{a}_{0}\tau^{a_1}_{d_1}\prod_{i\in R_{-1}}\tau^{a_i}_{d_i}\prod_{j=1}^{s}\tau^{t_j}_{0}
		\right\rangle_0^{\frac{1}{r},\text{ext}}\\
		&\cdot\left\langle
		\tau^{r-2-a}_{0}\sigma^{b_1}\prod_{i\in R_0}\tau^{a_i}_{d_i}\prod_{i\in T_0}\sigma^{b_i}\sigma^{-1}
		\right\rangle_0^{\frac{1}{r},\text{o},\text{ext}}
		\prod_{j=1}^{s}\left\langle
		\sigma^{r-2-2t_j}\prod_{i\in R_j}\tau^{a_i}_{d_i}\prod_{i\in T_j}\sigma^{b_i}
		\right\rangle_0^{\frac{1}{r},\text{o},\text{ext}}.
		\end{split}
		\end{equation*}

\item[(b)] (Internal marked point TRR) If $l\ge 2$, then
\begin{equation*}
		\begin{split}
		&\left\langle
		\tau^{a_1}_{d_1+1}\tau^{a_2}_{d_2}\dots\tau^{a_l}_{d_l}\sigma^{b_1}\sigma^{b_2}\dots\sigma^{b_k}
		\sigma^{-1}\right\rangle_0^{\frac{1}{r},\text{o},\text{ext}}
		\\=&\sum_{\substack{s\ge 0\\-1\le a \le r-2\\ 0 \le t_j \le \frac{r-1}{2}}}\sum_{\substack{\coprod_{j=-1}^{s}R_j=\{3,4,\dots,l\}\\\coprod_{j=0}^{s}T_j=\{1,2,\dots,k\}\\ \{(R_j,T_j,t_j)\}_{1\le j\le s}\text{unordered}}}(-1)^s\left\langle
		\tau^{a}_{0}\tau^{a_1}_{d_1}\prod_{i\in R_{-1}}\tau^{a_i}_{d_i}\prod_{j=1}^{s}\tau^{t_j}_{0}
		\right\rangle_0^{\frac{1}{r},\text{ext}}\\
		&\cdot\left\langle
		\tau^{r-2-a}_{0}\tau^{a_2}_{d_2}\prod_{i\in R_0}\tau^{a_i}_{d_i}\prod_{i\in T_0}\sigma^{b_i}\sigma^{-1}
		\right\rangle_0^{\frac{1}{r},\text{o},\text{ext}}
		\prod_{j=1}^{s}\left\langle
		\sigma^{r-2-2t_j}\prod_{i\in R_j}\tau^{a_i}_{d_i}\prod_{i\in T_j}\sigma^{b_i}
		\right\rangle_0^{\frac{1}{r},\text{o},\text{ext}}.
		\end{split}
		\end{equation*}

\end{itemize}
\end{thm}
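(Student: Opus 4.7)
The plan is to closely mimic the proof of Theorem \ref{thm TRR} given in Section \ref{subsec proof trr}, working on the glued moduli space $\widetilde{\mathcal M}^{\frac{1}{r},\h=\frac{r-1}{2},\text{ext}}_{0,B\cup\{-1\},I}$ built from the open extended theory's point insertion scheme. As described in the final paragraph of the excerpt, this extended moduli space is constructed by performing the usual point insertion at NS boundary nodes, while at Ramond nodes one applies point insertion on the half-node that is closer, along the dashed-line graph, to the unique $-1$ boundary marking (the anchor). The allowed insertion labels $t_j$ therefore range over $0\le t_j\le\frac{r-1}{2}$, where $t_j=\frac{r-1}{2}$ precisely corresponds to the Ramond insertions; this is reflected in the summation ranges in the statement.

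First I would define, in complete analogy with \eqref{eq trr section} and Definition \ref{dfn track point}, a global section $t\in C^\infty(\Mbar^{\frac{1}{r},\h,\text{ext}}_{0,B\cup\{-1\},I},\mathbb{L}_1)$: on each stratum $\Mbar_{\mathbf G}$, pick the vertex $\Gamma\in V(\mathbf G)$ carrying the marking $a_1$, lift the tracking map $\zeta_\Gamma$ to identify the partner of the chosen marking $c$ (either $b_1$ in part (a) or $a_2$ in part (b)), and define $t(C)$ to be the evaluation at $z_1$ of the unique meromorphic differential with simple poles of residue $\pm 1$ at $z_{\zeta,C}$ and $\bar z_1$ respectively. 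The same pole-matching argument as in the proof of the lemma following Definition \ref{dfn rh disk}'s successor shows $t$ descends to a canonical section of $\widetilde{\mathbb L}_1$ over the glued moduli, since the residue conventions that define $t$ respect the identification $\mathrm{PI}$ at paired type-BI/type-AI boundaries (in the extended theory this identification now includes Ramond boundaries treated as point-inserted).

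Next I would establish the analogue of Lemma \ref{lem zero locus trr section global}: the zero locus of $t$ is, with multiplicity $r$, the union over choices of $s\ge 0$, partitions $\coprod R_j$, $\coprod T_j$, insertion twists $0\le t_j\le\frac{r-1}{2}$, and an internal twist $-1\le a\le r-2$, of products of the form
\[
\hat{\mathcal M}^{I_{-1}\cup\{a_1,t_1,\dots,t_s\}}_{B_0\cup\{-1\},I_0}\times\prod_{j=1}^{s}\oPMb_{0,B_j\cup\{r-2-2t_j\},I_j},
\]
where the first factor carries the central closed-extended vertex containing $a_1$ and $a$, and the anchor marking $-1$ lies on the open component carrying the tracked partner $c$. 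Here the extra allowed value $t_j=\frac{r-1}{2}$ produces the Ramond half-node of twist $r-1$, whose partner across the node has twist $-1$; this is consistent because the selection rule places the anchor on a single component, guaranteeing at most one $-1$ per disk, exactly as in the open extended theory. The orientation comparison is the same as in Lemma \ref{lem zero locus trr section global}, since the relevant signs come from the combinatorics of $|E(\mathbf G)|$ in \eqref{eq orientation point insertion} rather than from the value of the twist.

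Finally, I would assemble canonical multisections $\sigma_c,\sigma_o,\sigma_1,\dots,\sigma_s$ on the closed extended, central open, and side open factors, respectively, and use the assembling operator of Section \ref{subsec coherent} (extended verbatim, since coherence was designed to handle the $\tw=-1$ internal tail, which is exactly what occurs both at the closed-extended vertex and, in a mirror fashion, at the Ramond insertions on the boundary side) to obtain a transverse canonical multisection $\bm\sigma$ of $E_2\to\mathcal Z_\alpha$ that is homotopic, through canonical transverse multisections, to the restriction of a chosen global canonical multisection $\bm\rho$. Counting weighted zeros and matching factors via the defining equation \eqref{eq:define closed ext} of closed extended correlators gives the recursion. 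The main obstacle I anticipate is verifying that the assembling/coherence construction of \cite[Section 4.1]{BCT2} extends cleanly across a Ramond node when the anchor lies on one side: this requires checking that the coherent multisection on $\overline{\mathcal R}_{\Gamma_c}$ can be chosen equivariantly with respect to the identification of fibers of $\mathcal J'$ that underlies the gluing at $t_j=\frac{r-1}{2}$, and that the resulting relative orientation propagates correctly through this Ramond insertion. Once this extension is in place, the two TRRs follow by applying the resulting formula with $c=b_1$ (respectively $c=a_2$) and noting that the anchor $\sigma^{-1}$ necessarily sits on the central open factor in part (a) (respectively in part (b)), which pins the placement of the $-1$ marking in each term.
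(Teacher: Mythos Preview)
Your proposal follows exactly the route the paper indicates: the paper gives no detailed argument for Theorem~\ref{thm TRR_open_ext} beyond saying that the proof ``is similar to that of Theorem~\ref{thm TRR}'', and you correctly spell out what this entails --- the TRR section of $\widetilde{\mathbb L}_1$, the zero-locus analysis with the insertion twists now ranging over $0\le t_j\le\frac{r-1}{2}$, and the assembling step, including your caveat about extending coherence across a Ramond half-node with the anchor on one side.

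The one step that is genuinely unsupported is your closing assertion that the anchor $\sigma^{-1}$ ``necessarily sits on the central open factor''. In your zero-locus decomposition you track $c=b_1$ (resp.\ $c=a_2$), so only $\zeta_\Gamma(c)$ is pinned to $v^o_\Delta$; nothing in that construction pins $\zeta_\Gamma(-1)$ there. The Ramond selection rule does guarantee that any \emph{Ramond} dashed line with internal end on $v^c_\Delta$ points away from the anchor, so a side component reached through $t_j=\frac{r-1}{2}$ cannot carry the labelled $-1$. But an NS dashed line with internal end $t_j<\frac{r-1}{2}$ on $v^c_\Delta$ carries no anchor-orientation, and a priori the unique edge from $\Gamma$ toward the root of the anchor-rooted tree could be such an edge --- in which case the labelled $-1$ would lie in a side factor, contrary to the stated formula. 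You need to close this: either argue directly that in every extended $(r,\h)$-graph the dashed line from $\Gamma$ toward the anchor has its $\Gamma$-end in $B'(\mathbf G)$ (hence lands on $v^o_\Delta$ after degeneration), or run the construction with $c$ equal to the anchor and then reconcile with the placement of $b_1$ in the stated identity. The paper does not address this point either, so it is precisely the place where ``similar to Theorem~\ref{thm TRR}'' requires genuine new input.
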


\bibliographystyle{abbrv}
\bibliography{OpenBiblio}

\end{document}